\documentclass[12pt]{amsart}

\usepackage{psfrag}
\usepackage{color}
\usepackage{mathrsfs}

\usepackage[mathscr]{euscript}

\usepackage[applemac]{inputenc}
\usepackage{graphicx,epsfig,amscd,graphics,xypic}
 \usepackage[all]{xy}
\usepackage{amsmath,amssymb}

\usepackage[margin=4cm]{geometry}

\usepackage{mathrsfs}

\usepackage{fourier}

\usepackage{pifont}

\usepackage{ulem}

\usepackage{multirow}

\usepackage{chngcntr}
\counterwithout{figure}{section}

\usepackage{hyperref}
\hypersetup{
    bookmarks=true,         
    unicode=false,          
    pdftoolbar=true,        
    pdfmenubar=true,        
    pdffitwindow=false,     
    pdfstartview={FitH},    
    pdftitle={My title},    
    pdfauthor={Author},     
    pdfsubject={Subject},   
    pdfcreator={Creator},   
    pdfproducer={Producer}, 
    pdfkeywords={keyword1} {key2} {key3}, 
    pdfnewwindow=true,      
    colorlinks=false,       
    linkcolor=red,          
    citecolor=green,        
    filecolor=magenta,      
    urlcolor=cyan           
}

\newcommand{\sk}{\smallskip}
\newcommand{\mk}{\medskip}
\newcommand{\bk}{\bigskip}

\newcommand{\F}{\mathscr{F}_{[\rho]}}

\setcounter{tocdepth}{1}





\renewenvironment{proof}{\noindent {\it Dï¿½monstration.}}{$\hfill \square$}

\newtheorem{thm}{Theorem}[section]

\newtheorem*{thm*}{Theorem}   
\newtheorem*{coro*}{Corollary}  
    
\newtheorem*{lemma*}{Lemma}

\newtheorem{prop}[thm]{Proposition}
\newtheorem{defi}{Definition}[section]
\newtheorem{quest}{Question} 
\newtheorem{lemma}[thm]{Lemma}
\newtheorem{rem}[thm]{Remark}

\newtheorem*{ex*}{Example \ref{exJ2} (continued)}

\renewenvironment{proof}{\hspace{-0.4cm}{\bfseries Proof.}}{\qed}


\title[{\bf Moduli spaces of flat tori}]{Moduli spaces of flat tori with prescribed holonomy}

\author[{\bf S. Ghazouani}]{\bf GHAZOUANI  Selim}

\address{S. Ghazouani, DMA - \'ENS,  45 rue d'Ulm  75230 Paris Cedex 05 - France}
\email{selim.ghazouani@ens.fr}

\author[{\bf L. Pirio}]{PIRIO Luc}
\address{L. Pirio, LMV,  UMR 8100 du CNRS,  
 Universit\'e Versailles--St.\,Quentin, 
  45 avenue des tats-Unis
78035 Versailles Cedex - France.}
\email{luc.pirio@uvsq.fr}

\begin{document}

\maketitle
\begin{abstract} 
We generalise to the genus one case several  results of Thurston concerning moduli spaces of flat Euclidean structures  with conical singularities on the two dimensional sphere. 

More precisely, we study the moduli space of flat tori with $n$ cone points and a prescribed holonomy $\rho$. In his paper {\it `Flat Surfaces'}\, Veech has established that under some assumptions on the cone angles,  such a moduli space $\F\subset \mathscr M_{1,n}$ carries a natural 
geometric structure modeled on the     
 complex hyperbolic space ${\mathbb C}{\mathbb{H} }^{n-1}$ which is not metrically complete.  Using surgeries for flat surfaces, 
we prove that the metric completion $\overline{\F}$ is obtained by adjoining  to $ \F $ certain strata that are themselves  moduli spaces  of flat surfaces of genus 0 or 1, obtained as degenerations of the flat tori whose moduli space is $ \F$.   
We show that the  ${\mathbb C}{\mathbb{H} }^{n-1}$-structure of  $ \F$   extends to a complex hyperbolic cone-manifold structure of finite volume on $ \overline{\F}$ and we compute the cone angles associated to the different strata of codimension 1. \sk

Finally, we address the question of whether or not the holonomy of\;Veech's ${\mathbb C}{\mathbb{H} }^{n-1}$-structure on 
$ \mathscr F_{[\rho]}$ has a discrete image in $ {\rm Aut}({\mathbb C}{\mathbb{H} }^{n-1})=\mathrm{PU}(1,n-1)$. We outline a general strategy to find moduli spaces $\mathscr F_{[\rho]}$ whose ${\mathbb C}{\mathbb{H} }^{n-1}$-holono\-my gives rise to lattices in $\mathrm{PU}(1,n-1)$ and eventually we give a finite list of $\mathscr F_{[\rho]}$'s whose holonomy is a complex hyperbolic arithmetic lattice.
\mk 

\end{abstract}




\section{\bf INTRODUCTION}

For any non-negative integers $g$ and $n$ such that $2g-2+n>0$, we denote by  $\mathscr M_{g,n}$  the moduli space of genus $g$ Riemann surfaces with $n$ marked points, viewed as a complex orbifold (see \cite[Chap. XII]{ACG} for instance).
\begin{center}
{$\star$} 
\end{center}

In their paper \cite{DeligneMostow} on the monodromy of Appell-Lauricella hypergeometric functions, Deligne and Mostow bring to light complex hyperbolic structures on $\mathscr{M}_{0,n}$ for $n\geq 4$,  parametrised by 
a $n$-tuple  $\mu = (\mu_1,\ldots,\mu_{n}) \in ]0,1[^{n}$ such that $\sum_{i=1}^n{\mu_i} = 2$. They prove that if $\mu$ verifies the arithmetic criterion 
 $$
 {\rm (INT)}\qquad \quad 
  \forall i ,  j
   \mbox{ with }\,   i \neq j \;\;  :\; \;   
    \mu_i+ \mu_j < 1  \;\;  \Longrightarrow  \;\;   \big(1 - \mu_i - \mu_j \big)^{-1}
  \in \mathbb{Z} \, , \qquad \quad$$
then the holonomy of the associated complex hyperbolic structure is a lattice in the automorphism group 
$$\mathrm{PU}(1, n-3) = \mathrm{Aut}\big({\mathbb C}{\mathbb{H} }^{n-3}\big)$$ of the $(n-3)$-dimensional complex hyperbolic space ${\mathbb C}{\mathbb{H} }^{n-3}$. The above criterion can be refined to the following (see \cite{MostowIHES}) : 
%
$$
 {\rm (\Sigma INT)}\qquad   \forall i ,  j
   \mbox{ with }\,   i \neq j \;\;  :\; \;   
    \mu_i+ \mu_j < 1  \;\;  \Longrightarrow  \;\;   \big(1 - \mu_i - \mu_j \big)^{-1}
    \in
     \begin{cases}
 \hspace{0.3cm} \mathbb{Z}    \quad  \text{if } \; \mu_i\neq \mu_j \, , \\ 
  \;  \frac{1}{2}\mathbb{Z} \quad 
 \text{if } \; \mu_i = \mu_j \, .
      \end{cases}
       $$
\sk

In  \cite{Thurston}, Thurston gives a geometric interpretation of these complex hyperbolic structures in terms of flat metrics with cone type singularities on the sphere $S^2$. 
Define $\theta = (\theta_1,\ldots,\theta_n)\in ]0,2\pi[^n$ by $\theta_i = 2\pi(1-\mu_i)$ for $i=1,\ldots,n$. One can think of $\mathscr{M}_{0,n}$ as the set of flat metrics with $n$ cone points of respective angles $\theta_1, \ldots, \theta_{n}$ on the sphere with area $1$ (up to isometry), which we denote by $\mathscr{M}_{0,\theta}$. Parametrising such flat structures 
 naturally endows 
 $\mathscr{M}_{0,\theta}$ with a complex hyperbolic structure (see \cite{Thurston}, \cite{Schwartz} or \cite{Parker}) which coincides with the one considered in \cite{DeligneMostow}. Thurston describes  the metric completion of  $\mathscr{M}_{0,\theta}$ in terms of degenerations of flat spheres and recovers that the criterion $\mathrm{({\Sigma}INT)}$ is {essentially} equivalent\footnote{{`Essentially equivalent' means `up to some particular cases' which all have been classified (in \cite{Mostow}). In particular, when $n\geq 5$, there is only a finite number of such particular cases.}} to the metric completion of $\mathscr{M}_{0,\theta}$ being an orbifold and therefore a lattice quotient of the complex hyperbolic space ${\mathbb C}{\mathbb{H} }^{n-3}$. \mk 

In \cite{Veech}, Veech extends to compact (oriented) surfaces of arbitrary genus several basic results of Thurston's approach. The starting point 
 is a theorem of Troyanov \cite{Troyanov} asserting that, given  $g\geq 0$ and $n>0$ such that $2g-2+n>0$, 
 if $\theta = (\theta_i)_{i=1}^n\in ]0,\infty[^n$ satisfies the following \textit{discrete Gau\ss-Bonnet formula}
\begin{equation}
\label{E:GBformula}
\sum_{i=1}^n{\big(2\pi - \theta_i\big)} = 2\pi\big(2-2g\big)
\end{equation}
\noindent then,  given a genus $g$ closed oriented surface $N_g$, a conformal structure on it and $n$ distinct points $p_1, \ldots, p_n$ on $N_g$, there exists a unique  flat structure of area 1 on $N_g$ which is compatible with the given conformal structure, singular exactly at $p_1, \ldots, p_n$ and such that   it is locally isometric at $p_i$ to a Euclidean cone of angle $\theta_i$, this for every $i=1,\ldots,n$.
\sk 

Troyanov's theorem gives a natural isomorphism between  
 $\mathscr{M}_{g,n}$ and the set, denoted by $\mathscr{M}_{g,\theta}$,  of 
isomorphism classes of 
flat structures on $N_g$ with $n$ cone points of angle data 
$\theta$.  
 The naive hope that a complex hyperbolic structure would arise when parametrising such a moduli space  
is doomed to failure. Such a fact actually happens in the genus 0 case because in that case prescribing the cone angles is equivalent to prescribing the parallel transport along any closed curve on the punctured surface. But this does not hold for a $n$-punctured surface $N_{g,n}$ of higher genus.
\sk

In \cite{Veech}, Veech shows that  the level sets of the (locally well defined) linear holonomy map


$$ \mathscr{M}_{g,\theta} \longrightarrow \mathrm{H}^1\big(N_{g,n}, \mathbb U\big) $$ 
form a real analytic foliation of $\mathscr{M}_{g,\theta}$ whose leaves are holomorphically embedded complex manifolds of dimension $2g-3+n$. Actually, this map is not well defined at the orbifold points of $\mathscr{M}_{g,\theta}$. To bypass this difficulty, one has to work, as Veech did with great care, not on this moduli space but on its orbifold universal covering. When $g=0$, this foliation is trivial and has only one leaf, which is the whole moduli space    $\mathscr{M}_{0,\theta}$, on which one can put a natural geometric structure modeled on a homogeneous space.  
 A geometric way to do this is as follows: given a  flat sphere with $n$ prescribed conical singularities, one can develop it into the Euclidean plane and get a  $(2n-2)$-gon from which the original flat sphere can be reconstructed.  The conical angles being prescribed, the polygons obtained this way depend only on  $n-2$ complex parameters and the area form is a non-degenerate Hermitian form in these parameters   if none of the conical angles $\theta$ is an integer multiple of $2\pi$.  This method, which was first introduced by Thurston in \cite{Thurston} in the genus 0 case,  extends 
 very naturally to the leaves of Veech's foliation, whatever the genus is: 
 one can parametrise locally such a leaf by means of Euclidean $(4g+2n-2)$-gons which depend only on  $2g+n-2$ complex parameters. We call {\it  linear parametrisation} such a parametrisation. 
\sk

Given two integers $p,q\in \mathbb N$, let $\mathrm U(p,q)$ be the group of linear automorphisms  of $\mathbb C^{p+q}$  which leave invariant the standard Hermitian form $h_{p,q}$ of signature $(p,q)$.  The projectivization of the set of 
 $z\in \mathbb C^{p+q}$ such that $h_{p,q}(z)>0$ is known as the {\it {\rm (}indefinite when $p>1${\rm )} complex hyperbolic space of type $(p,q)$} and will be denoted by $\mathbb C\mathbb H^{p+q-1}_{p}$.  It is homogeneous under $\mathrm{PU}(p,q)$, {\it cf.} \cite[\S12.2]{Wolf}.  \mk 

Let $\theta \in (\mathbb{R}_+^* \setminus 2\pi \mathbb{Z})^n$ be such that 
the Gau\ss-Bonnet relation \eqref{E:GBformula} holds true.
\label{ici}
\begin{thm*}[\cite{Veech}]
There exists $(p_{\theta}, q_{\theta}) \in \mathbb{N}^2$ with $p_\theta+q_\theta=2g+n-2$  such that the natural linear parametrisations  of the leaves of Veech's foliation together with their area form endow them with a $\big(\mathbb C\mathbb H^{2g+n-3}_{p_\theta} , \mathrm{PU}(p_{\theta}, q_{\theta}) \big)$-structure.
\end{thm*}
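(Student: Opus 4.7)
The plan is to construct an explicit linear parametrisation of each leaf of Veech's foliation by a polygon-development procedure, express the area as a Hermitian form in those parameters, and check that the transitions between such parametrisations preserve this form.

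First, I would make the linear parametrisation precise. Fix a leaf $L \subset \mathscr{M}_{g,\theta}$, i.e. a fiber of the linear holonomy map. For a representative flat surface $S$ of a point of $L$, pick a system of loops based at a regular point consisting of $2g$ loops forming a symplectic basis of $\pi_1(N_g)$ together with $n-1$ loops going around the cone points (the last being determined by the others via the relation in $\pi_1(N_{g,n})$). Cut $S$ along a regular neighborhood of this bouquet and develop the simply-connected complement into $\mathbb{C}$. One obtains a Euclidean $(4g+2n-2)$-gon $P \subset \mathbb{C}$ whose sides come in pairs identified by affine isometries whose rotational parts coincide with the fixed holonomy $\rho$. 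The edge vectors are then complex numbers subject to linear closure/identification relations, leaving exactly $2g+n-2$ complex degrees of freedom. This defines a local biholomorphism $\Phi \colon U \to V \subset \mathbb{C}^{2g+n-2}$ from a neighborhood $U$ of $[S]$ in $L$.

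Second, I would express the area. The signed area of a Euclidean polygon with complex vertices $z_1,\ldots,z_N$ is the Hermitian expression $\tfrac{1}{4i}\sum_k (z_k \bar z_{k+1} - \bar z_k z_{k+1})$. Substituting the parametrisation of the edge vectors in terms of the $2g+n-2$ free complex parameters, which is $\mathbb{C}$-linear because the closure equations are, turns the area into a Hermitian form $H$ on $\mathbb{C}^{2g+n-2}$. Surfaces of positive area lie in the cone $\{H > 0\}$, and rescaling a polygon by $\lambda \in \mathbb{C}^*$ corresponds to rescaling all parameters by $\lambda$. After passing to the projectivisation, the positive cone descends onto the type-$(p_\theta, q_\theta)$ complex hyperbolic space $\mathbb{CH}^{2g+n-3}_{p_\theta}$, provided $H$ is non-degenerate of some signature $(p_\theta, q_\theta)$ with $p_\theta+q_\theta = 2g+n-2$.

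The delicate point, and the main obstacle, is the non-degeneracy of $H$, where the hypothesis $\theta_i \notin 2\pi\mathbb{Z}$ enters. I would argue that an element in the kernel of $H$ would correspond to an infinitesimal deformation of the developed polygon producing a closed flat structure of vanishing area, and that such a deformation is obstructed whenever every cone angle is non-trivial, since otherwise the identification constraint at a cone point whose rotational holonomy is trivial degenerates from a complex (rank two over $\mathbb{R}$) to a real (rank one) condition. A direct rank computation on the system of closure relations, together with the Gauss--Bonnet identity \eqref{E:GBformula} to pin down its kernel, should give non-degeneracy; the exact signature $(p_\theta, q_\theta)$ is then determined by continuity in $\theta$ from a tractable base case, though the statement only asks for its existence.

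Finally, I would verify that changing the developing polygon (another cut system, or analytic continuation along a loop in $L$) amounts to composing $\Phi$ with a $\mathbb{C}$-linear map that preserves $H$, hence lies in $\mathrm{U}(p_\theta,q_\theta)$. Since every step is $\mathbb{C}^*$-equivariant, after projectivisation the transition cocycle lands in $\mathrm{PU}(p_\theta, q_\theta)$, yielding the claimed $(\mathbb{CH}^{2g+n-3}_{p_\theta}, \mathrm{PU}(p_\theta, q_\theta))$-structure on each leaf.
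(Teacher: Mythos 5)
Your parametrisation-and-transition apparatus — cut along a graph, develop the complement into a Euclidean polygon, take the edge vectors as $\mathbb{C}$-linear coordinates, express the area as a Hermitian form $H$, and check that transitions between developments are $\mathbb{C}$-linear and preserve $H$ — reproduces exactly what the paper develops in Section~\ref{Charts} (Propositions~\ref{pp} and~\ref{linear}), so that part of the proposal is correct and follows the same route. Be aware, though, that the paper does not itself give a complete proof of this theorem: it is cited from Veech. What the paper adds to the linear-chart machinery is a sketch of the twisted-cohomology picture in Section~\ref{Veechfoliation} (the area form is the intersection pairing on $\mathrm{H}^1_\rho(N_{g,n},\mathbb{C})$ and the relative period map is a local biholomorphism into $\mathbf{P}_+$), and, for the two signature-$(1,\cdot)$ cases actually used, an inductive signature computation via surgeries (Proposition~\ref{signature}).

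The genuine gap in your proposal is precisely the step you flag as delicate: the non-degeneracy of $H$. Your first argument confuses an isotropic vector with a null vector — a $v$ with $H(v,v)=0$ (which always exists in indefinite signature and carries no information) is not the same as a $v$ in the radical of $H$, i.e.\ pairing to zero with the whole space, and the claim that such a $v$ "would correspond to a flat structure of vanishing area" does not engage with this distinction. Your second argument, a "direct rank computation on the closure relations helped by Gauss--Bonnet," is asserted rather than carried out, and the remark about the identification constraint at a trivial-holonomy puncture degenerating from rank two to rank one is not connected to the rank of the form $H$. The clean place where the hypothesis $\theta_i \notin 2\pi\mathbb{Z}$ enters is visible in the twisted-cohomology picture the paper sketches: it is exactly the condition ensuring the natural map $\mathrm{H}^1_{c,\rho}\to\mathrm{H}^1_\rho$ is an isomorphism, so that Poincar\'e duality makes the intersection pairing perfect; a polygonal-rank argument would have to rederive this and you do not. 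Finally, your "continuity in $\theta$ from a tractable base case" for locating the signature is circular: the signature is locally constant only on the locus where $H$ is non-degenerate, so one must first establish non-degeneracy along the path — which is exactly the thing to be proved.
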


Moreover, in \cite[\S14]{Veech}, Veech performs a lengthy explicit calculation  leading to the conclusion that the geometric structure on the leaves of the preceding theorem is complex hyperbolic ({\it i.e.} $p_\theta=1$) in exactly two cases: 
\begin{enumerate}
\item[{\it (i)}] $g = 0$ and all the conical angles $\theta_i$ are in $] 0 , 2\pi [$; or 
\sk
\item[{\it (ii)}] $g=1$ and all the angles $\theta_i$ are in  $]0,2\pi[$ except  one which lies in $]2\pi, 4\pi[$.
\end{enumerate}

As said above, the former case was treated  in \cite{Thurston} (as well as in 
\cite{DeligneMostow} but with the approach involving hypergeometric functions). 
  In this paper we investigate the latter case. 
\mk

Let $\theta=(\theta_i)_{i=1}^n$ satisfying \eqref{E:GBformula} for $g=1$ and $n>1$ and such that condition {\it (ii)} above holds true.  
For any linear holonomy $\rho$, we denote by $\mathscr{F}_{[\rho]}$ the leaf of Veech's foliation on 
$\mathscr M_{1,\theta}$  that corresponds to (the orbit $[\rho]$ through the action of the pure mapping class group of) $\rho$. The analytic and very explicit description of Veech's foliation carried out in the twin paper \cite{GhazouaniPirio1} shows that when $\rho$ is rational (meaning that the subgroup $\mathrm{Im}(\rho) \subset \mathbb U$ is finite), $\F$ is an algebraic suborbifold of $\mathscr{M}_{1,n}$. 
\sk 

In this paper we give an extrinsic geometric description of the metric completion of such a leaf  $\F$ for the complex hyperbolic structure given by Veech's Theorem above. The main theorem of the paper is a generalisation of a result of Thurston in \cite{Thurston}.

\begin{thm*}
\label{thm}
Let $\rho \in  \mathrm{H}^1(N_{1,n}, \mathbb{U})$ be a rational linear holonomy data.
\begin{enumerate}
\item The metric completion 
 of $\F$ has a stratified analytic structure whose strata are finite unramified covers of lower dimensional rational leaves of Veech foliations  
 on $\mathscr M_{g',n'}$ with $g'=0$ and $n'\leq n+1$ or with $g'=1$ and $n'\leq n-1$; there is a finite number of such strata.
\sk 
\item This metric completion, denoted by $\overline{\F}$ hereafter, is a complex hyperbolic cone manifold of dimension $n-1$,  whose volume  is finite.
\sk 
\item The cone angles around strata of complex codimension $1$ of 
$\overline{\F}$ 
can be computed using appropriate surgeries.  
\end{enumerate}
\end{thm*}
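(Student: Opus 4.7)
My plan is to adapt Thurston's strategy from the genus $0$ case to genus $1$, the new ingredient being the presence of nontrivial homology on the torus so that short non-contractible geodesics can collapse in addition to the familiar collisions of cone points. I would organize the argument around two intertwined components: a classification of the metric degenerations that can occur along a Cauchy sequence in $\F$, and a family of geometric \emph{surgeries} providing local coordinate charts compatible with Veech's $\mathbb{CH}^{n-1}$-structure near each resulting boundary stratum.

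\medskip

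\emph{Describing the boundary.} Let $(T_k)_{k\ge 1}\subset \F$ be a Cauchy sequence that does not converge in $\F$. For each $T_k$ equipped with its Troyanov flat metric of area $1$, I would monitor the minimum distance between distinct cone points, together with the lengths of simple closed geodesics in each free homotopy class. By a Mumford-type compactness statement for flat tori with $n$ cone points of fixed angles, the sequence fails to converge precisely when one of these quantities tends to $0$, i.e.\ in one of two elementary situations: (i) a cluster of cone points collapses to a single cone point of angle $\sum\theta_{i_j}-2\pi(k-1)$, or (ii) a non-contractible simple closed geodesic shrinks, producing in the limit a nodal flat surface whose normalisation carries a flat structure with cone singularities and extra nodal marked points. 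Using the discrete Gau\ss--Bonnet relation and the compatibility of the degeneration with the prescribed rational linear holonomy $[\rho]$, one checks that the normalisation is either a torus with fewer cone points ($g'=1$, $n'\le n-1$) or a sphere with additional marked points coming from the node ($g'=0$, $n'\le n+1$), and that the induced holonomy $[\rho']$ is again rational. Combinatorial finiteness of the decorated dual graphs of stable degenerations shows that only finitely many strata appear; this establishes item (1).

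\medskip

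\emph{Inverse surgeries and the cone-manifold structure.} For items (2) and (3), I would exhibit, for each type of degeneration, an explicit geometric surgery: given a point in a boundary stratum and a small complex parameter $w$, the surgery produces a flat torus in $\F$ that tends to the given boundary point as $w\to 0$. For cone-point collisions, this is essentially Thurston's bubble/cone-cutting construction performed near the cluster of cone points. For the pinching of a homologically nontrivial curve, the surgery consists in cutting the degenerate limit along a pair of parallel flat slits dictated by the nodal data and gluing in a short flat cylinder, whose complex modulus provides the parameter $w$. Working in Veech's linear parametrisation by Euclidean polygons, one verifies that the Hermitian area form extends real-analytically to a neighbourhood of the stratum in $\overline{\F}$, with at worst a conical singularity along $\{w=0\}$. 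Reading off the monodromy of $w$ around the stratum gives the cone angle: for cone-point collisions one recovers Thurston's formula in terms of the residual angle, while for cylinder pinchings a new formula arises in terms of the length and twist of the pinched geodesic together with the holonomy data on either side.

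\medskip

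\emph{Finite volume and the main obstacle.} The finite volume claim in (2) follows from finiteness of the stratification together with a local estimate near each stratum: in the surgery coordinates $(w_1,\ldots,w_k)$ on a tubular neighbourhood of a codimension-$k$ stratum, the $\mathbb{CH}^{n-1}$-volume form is bounded by a polynomial in $|w_i|$ times the transverse volume form, yielding a finite integral; a finite covering argument then gives global finiteness. The main difficulty I anticipate lies in showing that the surgery charts patch together coherently into a global cone-manifold atlas. One must check that different surgeries at a given boundary point are related by an element of $\mathrm{PU}(1,n-1)$, and that the local models near higher codimension strata—where several collisions and pinchings interact simultaneously—restrict to the codimension-$1$ surgeries on each facet. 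The cylinder-insertion surgery has no counterpart in \cite{Thurston}, so a genuinely new verification is required to produce a well-defined $\mathbb{CH}^{n-1}$-local model with conical singularity along the stratum and to confirm the cone-angle formula predicted by the monodromy computation.
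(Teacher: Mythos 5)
Your overall strategy—classify the degenerations of flat tori along Cauchy sequences, invert each degeneration by an explicit surgery, and patch the resulting linear parametrisations into a global cone-manifold atlas—is the right one, and matches the architecture of the paper. But there is a genuine gap in the treatment of the pinching of a non-contractible curve, and it would cause the argument to break down exactly where you flag the main difficulty.

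You propose to invert the pinching by cutting two slits in the degenerate flat sphere and gluing in a short flat cylinder whose complex modulus $w$ serves as the transverse coordinate. That surgery does exist (the paper's $\mathcal{S}_5$), but it does \emph{not} produce a chart near a boundary stratum of the metric completion: in cylindrical coordinates the area form takes the form $A = B + \mathrm{Im}(z_n \bar z_0)$, so shrinking the cylinder drives the renormalised area of the inserted piece to zero and the projective coordinate toward the ideal boundary of $\mathbb{CH}^{n-1}$. The hyperbolic distance to the locus where the cylinder has degenerated is therefore \emph{infinite}, not zero, and such a sequence is not Cauchy. The $\mathcal{S}_5$-surgery instead parametrises the non-compact cusp regions (long cylinders, diameter going to infinity), which is exactly how the paper uses it to prove finiteness of volume. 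The pinching degenerations that do yield codimension-one strata are those with bounded diameter and shrinking systole (Propositions 5.8–5.10); the inverse surgery there is a direct handle-closing with no cylinder at all (the paper's Devil's surgery $\mathcal{S}_3$): remove small cone neighbourhoods of two cone points $p_1', p_1''$ of the limiting sphere and isometrically glue the two boundary circles — closed broken geodesics with a single angular point — directly to each other. The surgery parameter is then the (angular) position of the gluing point on the small cone, and one must account for the fact that for rational angles the surgery map has a nontrivial finite deck group, which is precisely what produces the finite covers in item (1) and the cone-angle formula $2\pi\,\mathrm{lcm}(m',m'')/M$ in item (3). None of this information is visible from the cylinder picture, so the cone-angle computation you envisage would not be correct.

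Two further items you would need: first, the case when all cone points collapse to a single regular point requires a separate surgery (the kite surgery $\mathcal{S}_4$), with a cone angle of $\pi$ coming from the hyperelliptic involution. Second, the appeal to a Mumford-type compactness statement does not by itself give the exhaustiveness you need; the paper devotes Section 5 to establishing finer quantitative control (Delaunay decompositions, the bound $D \leq 2n\,s$, the impossibility of two sufficiently positively curved cone points colliding at bounded area — a lemma the authors explicitly note is missing even in Thurston's paper, and the dichotomy in Proposition 5.10 separating the cusp case from the Devil's-surgery case). Without those estimates, one cannot carry out the inductive case analysis in the surjectivity proof, which is where the real work of establishing items (1) and (2) happens.
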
 

The genus 0 case invites us to wonder if some of these leaves $\overline{\F}$ are lattice quotients of $\mathbb{C} {\mathbb{H}}^{n-1}$. Unfortunately, the computation of the cone angles around codimension $1$ strata (point $(3)$ of the previous theorem) shows that as soon as $n>2$,  the cone angle around a certain stratum of codimension $1$ (formed by collisions involving the only cone point whose angle is larger than $2\pi$) is bigger than $2\pi$. This prevents any leaf  $\overline{\F}$ from being a lattice quotient provided that $n \geq 3$. 

Nevertheless, it does not exclude the possibility that the holonomy of the complex hyperbolic structure is a lattice in $\mathrm{PU}(1,n-1)$, as both Mostow and Sauter showed that it can happen when $g=0$ (see \cite{Mostow,Sauter}). As a  nice corollary of Theorem \ref{thm}, we obtain that the holonomy of a finite number of moduli spaces $\F$ is an arithmetic lattice. More precisely:

\begin{coro*}

Let $\rho \in  \mathrm{H}^1(N_{1,n}, \mathbb{U})$ be such that $\mathbb{Z}[\mathrm{Im}(\rho)] \subset \mathbb{C}$ is discrete. Then the image of the complex hyperbolic holonomy of (each connected component of) $\F$ is an arithmetic lattice in $\mathrm{PU}(1,n-1)$.

\end{coro*}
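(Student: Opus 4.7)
The plan is to exhibit an explicit arithmetic lattice $\Gamma_0 \subset \mathrm{PU}(1,n-1)$ containing the holonomy image $\Gamma$ of a fixed connected component, and then to use the finite-volume statement in the main theorem to conclude that $\Gamma$ has finite covolume, hence finite index in $\Gamma_0$, and is therefore itself an arithmetic lattice.

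First, under the hypothesis the ring $\mathcal{O} := \mathbb{Z}[\mathrm{Im}(\rho)]$ is a discrete subring of $\mathbb{C}$, so it is a $\mathbb{Z}$-module of rank at most two. Setting aside the degenerate case $\mathcal{O} = \mathbb{Z}$ (occurring only when $\rho$ is $\pm 1$-valued, which one handles separately), $\mathcal{O}$ is an order in the imaginary quadratic field $K := \mathbb{Q}(\mathrm{Im}(\rho))$. Since the only roots of unity lying in such a field have order in $\{1,2,3,4,6\}$, the discreteness assumption actually selects only a finite list of possible $\rho$'s.

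Second, I would prove that the linear parametrization underlying Veech's theorem carries an $\mathcal{O}$-rational structure on every leaf. Developing a representative flat torus into the Euclidean plane produces a $(2n+2)$-gon whose complex vertex coordinates parametrize a chart of $\mathscr{F}_{[\rho]}$ by an open subset of $\mathbb{C}^{n-1}$, and for which the area form is a Hermitian polynomial with coefficients in $\mathrm{Im}(\rho) \cup \overline{\mathrm{Im}(\rho)} \subset K$. Changing the polygonal presentation (different system of cuts, or analytic continuation around a loop in $\mathscr{F}_{[\rho]}$) amounts to a cut-and-paste surgery whose moved pieces are rotated by elements of $\mathrm{Im}(\rho)$ and translated by $\mathcal{O}$-linear combinations of previous coordinates; hence the transition functions belong to the affine group $\mathcal{O}^{n-1} \rtimes \mathrm{GL}_{n-1}(\mathcal{O})$ and preserve the Hermitian form $h$. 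Consequently $\Gamma \subset \mathrm{PU}(h,\mathcal{O})$.

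Third, the group $\Gamma_0 := \mathrm{PU}(h,\mathcal{O})$ is arithmetic in $\mathrm{PU}(1,n-1)$ by Borel--Harish-Chandra, applied to the $\mathbb{Q}$-form of $\mathrm{PU}(h)$ obtained from $K/\mathbb{Q}$ and the indefinite Hermitian form $h$ of signature $(1,n-2)$. By part (2) of the main theorem, $\overline{\mathscr{F}_{[\rho]}}$ has finite complex hyperbolic volume, which translates into $\Gamma$ having finite covolume in $\mathrm{PU}(1,n-1)$. A discrete subgroup of finite covolume in an arithmetic lattice is automatically of finite index in it, so $\Gamma$ is itself an arithmetic lattice.

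I expect the main obstacle to be the second step: the careful verification that all transition functions of the linear parametrization lie in $\mathrm{PU}(h,\mathcal{O})$, especially for those coming from loops winding around the codimension-one strata of $\overline{\mathscr{F}_{[\rho]}}$ described in part (3) of the main theorem. The explicit cone-angle computations there, combined with the surgery description used to establish them, should provide the generators one needs. A secondary subtlety is to deduce finite covolume of $\Gamma$ from finite cone-manifold volume of $\overline{\mathscr{F}_{[\rho]}}$: one must identify the smooth locus of this cone-manifold with the quotient of an open dense subset of $\mathbb{C}\mathbb{H}^{n-1}$ by $\Gamma$, which is furnished by the stratified description in part (1) of the main theorem together with standard completeness arguments for complex hyperbolic cone-manifolds of finite volume.
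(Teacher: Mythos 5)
Your proposal is correct and follows essentially the same route as the paper: both rest on the lemma that transition maps of polygonal linear parametrisations have coefficients in $\mathbb{Z}[\mathrm{Im}(\rho)]$ (so the holonomy is conjugate into $\mathrm{PU}(1,n-1)\cap\mathrm{SL}_n(\mathbb{Z}[\mathrm{Im}(\rho)])$), and both conclude arithmeticity from this integrality together with the finite-volume result of Section \ref{cusps}. The paper is terser — it deduces discreteness directly from the hypothesis on $\mathbb{Z}[\mathrm{Im}(\rho)]$ and does not introduce an explicit ambient lattice $\Gamma_0$ — but the ingredients and their logical roles coincide with yours (your phrase ``affine group $\mathcal{O}^{n-1}\rtimes\mathrm{GL}_{n-1}(\mathcal{O})$'' should read $\mathrm{GL}_n(\mathcal{O})$ acting projectively, since the transitions are projectivisations of linear maps, but this does not affect the argument).
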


This leads us to ask the question of determining all $\rho$ such that the complex hyperbolic holonomy of $\F$ is a lattice. The moduli spaces $\F$ are not always connected; consequently it is more relevant to ask the aforementioned question for the connected components of $\F$. Of course, 
 one must first determine 
these components, which already seems interesting and not completely trivial.
\sk 

\vspace{-0.3cm}
We give in Section \ref{holonomy} some necessary conditions for a component of $\F$'s holonomy to be a lattice. These conditions should reduce the problem to the study of a finite number of candidates.
\mk 

Finally, we would like to draw attention on a possible interpretation of our work. If $\mathrm{Im}(\rho) = \langle \exp({2i\pi}/{n}) \rangle \subset \mathbb U$, the leaf $\F$ can be seen as a stratum of the space of meromorphic differential forms of order $n$ on elliptic curves.

\subsection{Organisation of the paper.}
{\bf Section 1} is the present Introduction.\sk 

{\bf Section \ref{flatsurfaces}} and {\bf Section \ref{Veechfoliation}} are dedicated to introducing the central objects of the article: flat surfaces and Veech isoholonomic foliations on $\mathscr{M}_{g,n}$ respectively.
\sk

 Building on \cite{Schwartz,Thurston,Veech}, we introduce in {\bf Section \ref{Charts}} natural parametrisations of the leaves of Veech's foliations that will be used 
 in the sequel.
\sk 

{\bf Section \ref{geometricproperties}} is devoted to proving technical lemmas on the geometry of flat surfaces that are crucial for describing the metric completion of $\F$.  According to us, some of them, such as Lemma \ref{collisions}, are missing in \cite{Thurston} and could help to complete some proofs in the genus 0 case.  
\sk 

We describe in {\bf Section \ref{surgeries}} several surgeries on flat surfaces which are the major tools of the paper. They allow us to reinterpret some results of \cite{Thurston} and to  formally understand the possible ways flat tori can geometrically degenerate. This leads to a definition of {\it `geometric convergence'} for flat surfaces distinguishing limits by  taking into account not only the isometry class of the limit metric space but also the way to degenerate to it in $\F$. This definition coincides with the  one of convergence for the complex hyperbolic metric but is susceptible to be generalised to cases when Veech's $\mathbb C\mathbb H^{p+q-1}_p$-structure of $\F$ is not Riemannian. Finally, using these surgeries and a simple inductive process, we compute by a geometric argument the signature of Veech's area form, recovering Veech's result.
\sk 

 Sections from \ref{completion} to \ref{cone-manifoldness} are devoted to analysing the geometric structure of $\F$. In {\bf Section \ref{completion}}, we  describe the metric completion of $\F$ in terms of the surgeries introduced in Section \ref{surgeries}, while in {\bf Section \ref{cusps}}, we prove that the complex hyperbolic volume of $\F$ is finite by performing an explicit calculation using special  coordinates. We finally prove in {\bf Section \ref{cone-manifoldness}} that the metric completion of $\F$ has the structure of a complex hyperbolic cone manifold, which is a refinement of the stratified structure brought to light in Section \ref{completion}.\sk 
 
After describing a general algorithm to determine the strata appearing in the metric completion of $\F$, we analyse in {\bf Section \ref{listing}} the particular case of tori with two cone points. In particular,  we show how the rather abstract material developed in this article is used to analyse the (one dimensional in that case) complex hyperbolic structure. We prove that the $\F$ are hyperbolic surfaces with a finite number of cone points and we compute their angles. This section strongly echoes the article \cite{GhazouaniPirio1}.  In particular, this analysis shows that some of them  are lattice quotients of ${\mathbb{C}} \mathbb{H}^1$.\sk 

 In {\bf Section \ref{holonomy}}, we draw a strategy to answer the following question : in the case of tori with {$n\geq 3$} conical points, when is the holonomy of the $ {(n-1)}$-dimensional complex hyperbolic structure on $\F $  a lattice in $\mathrm{PU}(1,{n-1})$ ? We give necessary conditions for the answer to be positive and use them to outline a strategy to reduce the question to a finite number of candidates. We also exhibit cases, {for $n=3,4,5,6$},  where the holonomy group of $\F$ is an arithmetic lattice in $\mathrm{PU}(1,{n-1})$. 
\sk 

The paper ends with two short appendices. In {\bf Appendix A}, after recalling some basic points of complex hyperbolic geometry, we introduce some special coordinates 
which appear  useful in our study. Finally, {\bf Appendix B} is devoted to the notion of cone manifold. We focus in particular on the case of complex hyperbolic cone manifolds.

\subsection{Notes and references}
We think it could be helpful to the reader  to mention 
the main other mathematical works to which the present paper is linked.\sk

As  is more than obvious from the previous lines, this text text must be seen as an attempt to generalize  some results of Thurston's seminal 
 paper \cite{Thurston} concerning moduli spaces of flat spheres to the case of tori.  Even if the term does not appear formally in Thurston's paper, we believe it is fair to say that the crucial  geometric tools used by Thurston are `{\it surgeries}' for objects of this type.   This is a standard but powerful technique to study flat surfaces which has been widely used in the more specific realm of (half-)translation surfaces, see for instance  \cite[Section 6]{MasurSmillie},  \cite{MasurZorich} or \cite{EskinMasurZorich} among many other papers of this field.   It is then not so surprising that surgeries play a central role in the present paper as well.   
 
Thurston's article \cite{Thurston}  has been very influential. 
Among  the papers deep\-ly relying on it  about the theory of conical flat structures on the Riemann sphere, one can mention  \cite{Weber}, \cite{Parker}, \cite{GonzalezLopezLopez}, \cite{BoadiParker} and \cite{Pasquinelli} where some particular cases are considered in detail. The recent paper \cite{McMullen} deserves to be mentioned as well: in it, the author gives a more detailed treatment of the notion of cone-manifold than in \cite{Thurston} and obtains a nice version of the Gau\ss-Bonnet theorem for complex hyperbolic cone-manifolds that he eventually  uses to compute the volumes of the Picard/Deligne-Mostow/Thurston's moduli spaces. 
\mk 
 
  As is well known, some of the main results of \cite{Thurston} coincide with some results obtained previously by Deligne and Mostow in the celebrated papers \cite{DeligneMostow} and \cite{Mostow} (see also their book \cite{DeligneMostow2}). In the true masterpiece \cite{DeligneMostow}, they pursue and obtain definitive results that conclude researches on the monodromy groups of Appell-Lauricella hypergeometric functions going back  at least to Picard. Their approach is not geometric as in \cite{Thurston} but relies essentially on arguments of analytic and/or cohomological nature. \mk

 In addition to \cite{Thurston} and \cite{DeligneMostow}, the other starting point of our research is the remarkable paper \cite{Veech} by Veech that concerns moduli spaces of flat surfaces with conical singularities and seems to have been deeply influenced by the two former articles.  In it, Veech establishes basic and important results concerning flat surfaces of arbitrary genus.  One can say that the methods used by Veech are a mix of the geometric ones of Thurston, and of the analytic ones of Deligne and Mostow.

 It seems to us that   this paper by Veech has not received the attention it deserved 
 despite the importance of the results obtained therein and the interesting problems it suggests. 
 Some of the reasons for this could be that \cite{Veech} is quite long and technical. If most of its  arguments are basically elementary, the analytic treatment used by Veech as well as 
some long computations at some points hide   at first sight the geometrical beauty of its main results. Note also that the topic discussed in \cite{Veech} is quite general since the linear holonomies of the flat surfaces considered in it, if unitary,   are not  just $\pm 1$. 
 It seems that the researchers interested in this subject, Veech included, have focused 
  on the  case of (half-)translations surfaces that is nowadays very   popular and for which a lot of deep results have been obtained during the last twenty years.
    \sk

 We believe that an interesting fact highlighted by our work  
  is that both  Thur\-ston's geometric approach and  Deligne-Mostow's hypergeometric one can be generalised to the genus 1 case.  
 As said above, this is what is done  for Thurston's approach in the present paper.  The hypergeometric approach  la Deligne-Mos\-tow  is developed in the dizygotic twin paper \cite{GhazouaniPirio1}. In it, we first prove that,  as in  the genus 0 case for which this is well-known, Veech's constructions of \cite{Veech} can be made completely explicit when working with elliptic curves. We then specialise    to the case of elliptic curves with two marked points and are able to describe exactly  the moduli spaces of such marked tori that are algebraic subvarieties of the moduli space $\mathscr M_{1,2}$: these are the modular curves $Y_1(N)$ for $N\geq 2$ and we can describe very precisely the complex hyperbolic structure constructed by Veech which each of them carries.     \sk 
 
 Readers are encouraged to take a look at \cite{GhazouaniPirio1} and to compare the methods and the results of the latter to the ones of the present text.

\subsection{Acknowledgements}

We are thankful to Adrien Boulanger for some interesting discussions about the notion of holonomy. 
  We are also grateful to Richard 
     Schwartz for kindly answering several technical questions about the notion of cone manifold, as well as to John Parker and Curtis 
   McMullen for useful correspondences. We are very thankful to Bertrand Deroin  
for the constant support and deep interest he has shown in our work since its very beginning. We are indebted to Irene Pasquinelli and John Parker for very many useful comments and suggestions on the substance and structure of this paper, which have rendered it much clearer. Finally, the second author thanks Brubru for her many    
 corrections.
\sk

While investigating moduli spaces of flat metric, we have taken an interest in historical questions related to the genesis of the papers \cite{DeligneMostow} and  \cite{Thurston}. We are 
  thankful to  Nicolas Bergeron,  Pierre Deligne, \'Etienne Ghys, Curtis McMullen, Jean-Pierre Otal, John Parker,  Marc Troyanov and William Veech for 
  taking  the time to answer some of our historically flavoured questions.
\mk 

{Finally, we are grateful to the two anonymous referees for their  careful readings of the paper and valuable comments
and suggestions.}


\mk

\section{\bf FLAT SURFACES}
\label{flatsurfaces}
We collect in this section some well-known notions and basic results on flat surfaces.
For some general references, 
see \cite{Troyanov,Veech,TroyanovModuli}.

\subsection{Generalities}
\label{S:Generalities}
The Gau\ss-Bonnet formula ensures that the only compact orientable surface carrying a flat metric is the torus. Nevertheless, relaxing the requirement that the metric is flat everywhere and allowing singular points make it possible to build flat surfaces in every genus. \sk 

\begin{figure}[!h]
\centering
\includegraphics[scale=0.4]{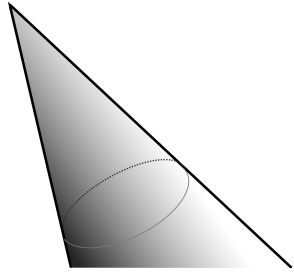}  
\caption{The Euclidean cone $C_\theta$ of angle $\theta\in ]0,2\pi[$ embeds in $\mathbb R^3$.}   
\label{cube}
\end{figure}

\subsubsection{}{\hspace{-0.2cm}}
\label{CTheta}
We first define the kind of singularities that will be allowed for flat surfaces in this paper. 
For any $\theta>0$ distinct from $2\pi$, the  {\bf Euclidean cone of angle} $\boldsymbol{\theta}$, denoted by $C_\theta$ throughout the paper, is the quotient of 
$\mathbb R^+\times (\mathbb R/\theta\mathbb Z)$ obtained by contracting $\{0\}\times (\mathbb R/\theta\mathbb Z)$ onto a point (called the {\bf apex} of the cone) endowed with the flat metric $dr^2+r^2dt^2$  in the standard  coordinates $(r,t)\in \mathbb R_{>0}\times (\mathbb R/\theta\mathbb Z)$.  

 For any positive $\epsilon$, we denote by $C_\theta(\epsilon)$ 
 the image of $[0,\epsilon]\times (\mathbb R/\theta\mathbb Z)$ into $C_\theta$ and the superscript symbol ${}^*$ will mean that the apex has been removed.

\begin{defi}[\cite{Troyanov}]
A {\bf flat surface with conical singularities} is an orientable  compact surface endowed with a flat Riemannian
  metric  
  singular at $n$ points $p_1, \ldots,  p_n$, such that  any  $p_i$ has a neighbourhood  isometric to a Euclidean cone.
\end{defi}

For the sake of simplicity, we will use \textbf{flat surface} throughout the paper instead of {\it flat surface with conical singularities}.  A singular point $p$ is called a \textbf{cone point} or a \textbf{conical point} and the angle $\theta_p$ of the associated Euclidean cone  its \textbf{cone angle}. The quantity $2\pi - \theta_p$ is called the \textbf{curvature at} $\boldsymbol{p}$.

\subsection{Examples.}
We describe below some classical examples of flat surfaces. 
\subsubsection{\bf}
A very intuitive  example of a flat structure is given by the surface of a  cube embedded in $\mathbb{R}^3$.
\begin{figure}[!h]
\centering
\includegraphics[scale=0.25]{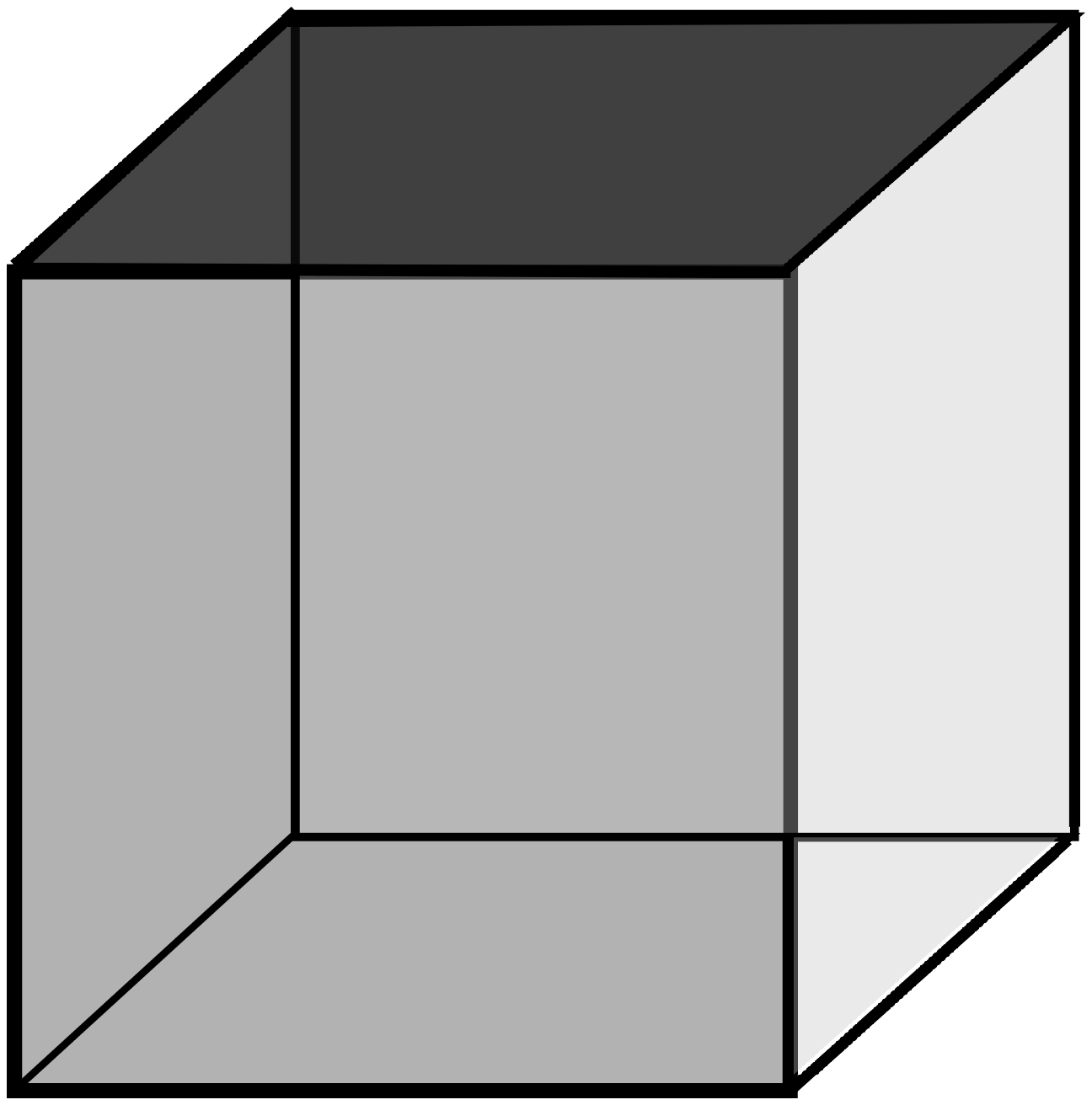} 
\label{cube}
\end{figure}
 The pull-back of the ambient metric defines a flat metric on the 2-dimensional sphere away from the edges and the vertices. On the edges, away from the vertices, the pulled-back metric can be extended in such a way that it is still flat on each edge (this corresponds to the intuitive operation of bending the faces around an edge). We have defined a flat metric on the $8$-punctured sphere. A neighbourhood of each vertex is isometric to a neighbourhood of a Euclidean cone of angle ${3\pi}/{2}$.
 
\subsubsection{\bf} The case of the cube considered above generalises in a straightforward manner to the boundary of any polyhedron $P$ in 
the 3-dimensional Euclidean space:
 the natural flat structures of the polygonal 2-faces of $\partial P$ glue together along the straight edges of $\partial P$ and induce a global flat structure which is regular outside 
 the vertices of $P$ and with conical singularities at these points.
%
 

\subsubsection{\bf} 
Another way to build flat surfaces consists  in gluing  isometrically the sides of only one  Euclidean polygon. 
 We will see later on in Section \ref{Charts} that,  in some sense to be made precise, every flat surface can be built this way. 
This approach is 
quite useful and  will be extensively used  throughout this paper.
\sk

 In order to give a concrete example, we consider the case of a hexagon. One can glue its sides  in three essentially different ways (see \cite[p.89]{JacksonVisentin}) to build (topologically) a torus  as in Figure \ref{patterns} beneath:
\begin{figure}[!h]
\centering
\includegraphics[scale=0.5]{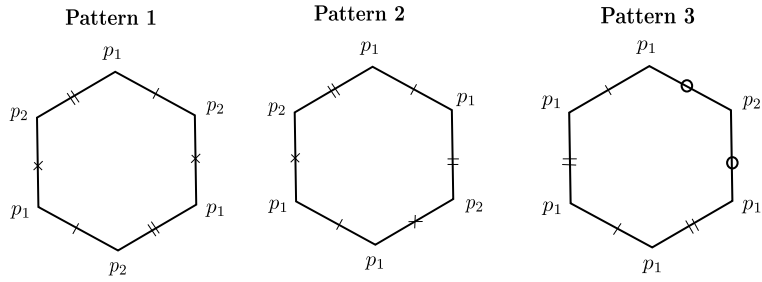}
\caption{Gluing patterns for flat tori with two singular points.}
\label{patterns}
\end{figure}\\
\noindent which respectively give after gluing  the three following  tori with 2 marked points:  
\begin{figure}[!h]
\centering
\includegraphics[scale=0.6]{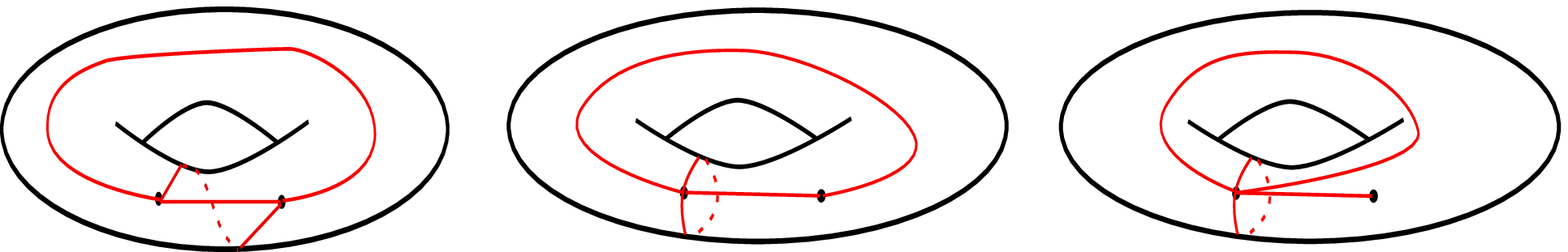}
\caption{}
\label{Fig:GraphsOnATorus}
\end{figure}

Now assume that $H$ is a Euclidean hexagon and choose one of the three gluing patterns of Figure \ref{patterns}, say Pattern 2.  
We choose $H$ such that the sides which are glued together have the same length (see Figure \ref{torus}). 
\begin{figure}[!h]
\centering
\includegraphics[scale=0.5]{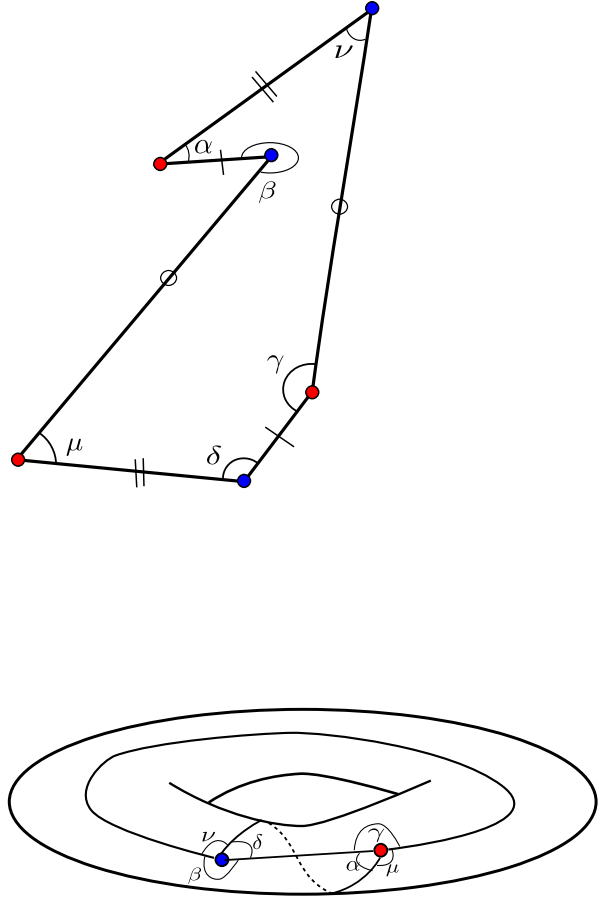}

\caption{A flat torus with two cone points built from gluing the sides of the Euclidean hexagon at the top.}
\label{torus}
\end{figure}
The Euclidean metric on the hexagon can be extended to the whole torus, except at the points corresponding to the vertices of the hexagon. These points have a punctured neighbourhood isometric to a Euclidean cone with angle $\alpha + \gamma + \mu$ for the first point and $ \beta + \delta + \nu$ for the second one. Since $H$ can be triangulated using four Euclidean triangles, one obtains that $ \alpha + \beta + \gamma + \delta + \mu + \nu = 4\pi$. 

Rewriting this equality 
$$ \big(2\pi - (\alpha + \gamma + \mu) \big) + \big(2\pi - (\beta + \delta + \nu) \big) = 0\, , 
 $$ 
one obtains that the sum of the curvatures at the singular points vanishes, which is exactly  Gau\ss-Bonnet formula in this case (see \S \ref{SS:GaussBonnetformula} below). 

\subsubsection{\bf} 
A 
 popular and very  much studied example of flat surfaces is  given by the so-called {\bf (half-)transla\-tion surfaces}, namely  pairs $(X,\omega)$ (resp. $(X,\eta)$) where $X$ is a compact Riemann surface and $\omega$  (resp.\;$\eta$) an abelian (resp.\;a quadratic)  differential on it.  The  flat metric associated to such a pair is just $\lvert \omega\lvert^2$ (resp.\;$\lvert \eta\lvert$).  Note that these objects can be characterised as the flat surfaces whose linear holonomy ({\it cf.}\;Section \ref{SS:AffineLinearHolonomy} below) is trivial (resp.\;has values in $\{\pm 1\}$) hence they form a very particular class of flat surfaces.

\subsubsection{\bf} A \textbf{flat cylinder} $C$ is the metric space one gets by gluing two opposite sides of a Euclidean  rectangle. 
 It is a flat surface with two totally geodesic boundary components. Its \textbf{length} is the length of the sides  glued together and its \textbf{width} is the length of one of its boundary component. 
More intrinsically, the length of $C$ is the distance between its two boundary components and its {width} is  its systole (its systole being the shortest essential closed curve).
\subsubsection{\bf} A \textbf{saddle connection} is a totally geodesic path joining two singular points and meeting no singular points in its interior.

\subsection{\bf On the geometry of flat surfaces}
\label{properties}
In the subsections below, we collect some classical material about the geometry of flat surfaces and fix some definitions and  notations that will be used in the sequel. 

\subsubsection{\bf Flat surfaces as length spaces.} 
We recall in this subsection basic but important properties of flat surfaces that will be used throughout the paper. For a general exposition of the theory of length spaces, we refer to \cite{BridsonHaefliger} or to \cite{Gromov} for the proofs of the results stated in this subsection. \sk

Let $N$ be an arbitrary flat surface with cone type singularities. If $\gamma : [0,1] \longrightarrow N$ is a piecewise $\mathrm{C}^1$-path, one defines its {\bf length} as 
$$ L(\gamma) = \int_{[0,1]}{|\gamma'(t)|dt} \, .$$

As usual, the {\bf distance between two points} $\boldsymbol{x,y \in N}$ is defined as
$$ d(x,y) = \inf_{\gamma}{L(\gamma)} $$
 where $\gamma$ is taken amongst all the 
  $\mathrm{C}^1$-paths  such that $\gamma(0) = x$ and $\gamma(1)=y$. \sk
  
  The following basic results will be extensively used throughout the article:

\begin{itemize}

\item The map  $d : N \times N \longrightarrow \mathbb{R}_+$ is a distance on $N$. Whenever we refer to a distance on the flat surface $N$, it will be this one. \sk

\item For any two points $x,y \in N$,  there exists a piecewise geodesic path $\gamma$ from $x$ to $y$ such that $d(x,y) = L(\gamma)$. \sk

\item For any non-trivial free homotopy class $c$ of closed curves on $N$, there exists a closed, piecewise geodesic path $\varphi$ whose free homotopy class is $c$  such that $L(\varphi) =  \inf_{ \{ \gamma \ | \ [\gamma] = c \}}{L(\gamma)}$. \sk

\item There exists a closed, piecewise geodesic path $\sigma$ whose free homotopy class is non-trivial such that $L(\sigma) =  \inf_{ \{ \gamma \ | \ [\gamma] \neq 0 \}}{L(\gamma)}$. \sk
\end{itemize}

From now on, we will use the notation $N$ for a surface, which depending on the context will be understood as endowed with either a topological, a flat or a conformal structure. We will also use the notation $N_{g,n}$ when we will need to specify the genus and/or the number of cone points (resp.\;marked points) of the flat 
(resp.\;conformal) structure. Finally whenever we will refer to the (co)homology groups $\mathrm{H}_{1}(N_{g,n}, G)$ or $\mathrm{H}^{1}(N_{g,n}, G)$ for a given group $G$, $N_{g,n}$ will stand for  the underlying topological surface of genus $g$ with $n$ punctures.

\subsubsection{\bf The Gau\ss-Bonnet formula} 
\label{SS:GaussBonnetformula}
  A regular Riemannian metric $h$ on a surface $\Sigma$ enjoys the fact that its curvature function $\kappa_h $ satisfies 

$$ \int_{\Sigma}{\kappa_h d\mu_h} = 2\pi\chi(\Sigma) $$ \noindent where $\mu_h$ is the measure on $\Sigma$ induced by $h$ and $\chi(\Sigma)$ is the Euler characteristic of $\Sigma$. This relation is called the Gau\ss-Bonnet formula and can be generalised to the case of flat surfaces with conical singularities. One must think of the associated singular Riemannian metric as a metric whose curvature is concentrated at its singular locus, and therefore think of its curvature function as a linear combination of Dirac masses at the singular points. \sk 

If $N$ is a compact orientable flat surface with $n$ singular points of respective cone angles $\theta_1, \ldots, \theta_n$, the following \textbf{Gau\ss-Bonnet formula} holds true:
$$ \sum_{i=1}^n{(2\pi - \theta_i)} = 2\pi \chi(N). $$   
We refer to \cite[\S3]{Troyanov} or \cite[\S3]{Veech} for proofs and more details on this matter. 

\subsubsection{\bf Exponential maps} 

Let $p$ be a regular point of $N$ and denote by $r_p$ the distance from $p$ to the set of singularities. For $r>0$, one denotes by $D(r)$ the Euclidean disk of radius $r$ centered at the origin. We will say that  `the' {\bf exponential map} $\boldsymbol{i_p}$ {\bf at $\boldsymbol{p}$} is the map (well defined and unique up to rotations)
$$  i_p : D(r_p) \longrightarrow N $$ 
such that $i_p(0) = p$ and which is a local isometry. This map can be extended to the whole Euclidean plane  except for a countable union of semi-lines  which correspond to the geodesics starting at $p$ which cannot be extended because they meet a singular point. The proof is elementary and left to the reader. \sk
 
This definition generalises at a singular point $p$ of $N$. If $\theta$ stands for the cone angle  at $p$, let $r_p$ be the biggest $r>0$ such that 
  the portion of cone $C_{\theta}(r)$  ({\it cf.} \S\ref{CTheta}) can be isometrically embedded in $N$ at $p$.  Then one defines the {\bf exponential map at the cone point} $\boldsymbol{p}$ as the corresponding embedding
  $$ 
i_p : C_{\theta}(r_p) \longrightarrow N 
$$ 
which is unique, up to the isometries ({\it i.e.}\;rotations) of the  
 cone  $C_\theta$. This map enjoys the same properties as the exponential map at a regular point.

\subsection{\bf Affine and linear holonomy of a flat surface}
\label{SS:AffineLinearHolonomy}
If $N$ is a flat surface, the punctured surface $N^*=N \setminus S$ (where $S$ is the set of singular points of $N$) is endowed with a (non complete) flat metric which is everywhere regular. Another way to phrase this is to say that $N \setminus S$ carries a $(\mathbb{C}, \mathrm{Iso}^+(\mathbb{C}))$-structure ($\mathrm{Iso}^+(\mathbb{C})$ denotes the group of orientation preserving Euclidean isometries of $\mathbb{C} \simeq \mathbb{R}^2$). 

With such a structure comes a \textbf{holonomy representation} 
\begin{equation}
\label{E:FullHolonomy}
 \mathrm{Hol} : \pi_1(N ^*) \longrightarrow \mathrm{Iso}^+(\mathbb{C})\, ,  
 \end{equation}
 whose class for the action by conjugation of  $\mathrm{Iso}^+(\mathbb{C})$,  is a geometric invariant of the flat structure. The group $ \mathrm{Iso}^+(\mathbb{C})$ being the set of affine transformations of the form $z \mapsto az+b $ with $a \in \mathbb{U}$ and $b \in \mathbb{C}$, it is isomorphic to the semi-direct product $\mathbb{U} \ltimes \mathbb{C}$. The projection onto the first factor $\mathbb{U}$ is a group homomorphism and post-composing $\mathrm{hol}$ by this projection produces a new representation 
$$ \rho : \pi_1\big(N^*\big) \longrightarrow \mathbb{U} $$
called the \textbf{linear holonomy} of the considered flat surface.\sk

 The group $\mathbb{U}$ being commutative, $\rho$ factors through the abelianisation of $\pi_1(N^*)$ namely the first homology group $\mathrm{H}_1(N^*, \mathbb{Z})$ of the punctured surface $N^*$.  Let $n$ be the cardinality of $S$ and denote by $p_1,\ldots,p_n$ the $n$ cone points of $N$. If $\theta = (\theta_1, \ldots, \theta_n)$ 
 is the associated angle datum ({\it i.e.} the cone angle at $p_k$ is $\theta_k$ for any $k$) and if $\delta_k$ is a simple closed curve turning anticlockwise around $p_k$, then necessarily $\rho(\delta_k) = e^{i\theta_k}$ for $k =1,\ldots,n$. We denote by $\mathrm{H}^1(N ^*, \mathbb{U}, \theta)$ the set of  $\mathbb Z$-linear forms on 
 $\mathrm{H}_1(N^*, \mathbb{Z})$ 
 which maps $\delta_k$ onto  $e^{i\theta_k}$ for every $k$: 
$$
 \mathrm{H}^1\big(N ^*, \mathbb{U}, \theta\big)=\Bigg\{ 
 \rho\in {\rm Hom}\Big( 
 \mathrm{H}_1\big(N ^*, \mathbb Z \big), \mathbb{U}
 \Big)\; \, \Big\lvert \;\, \rho\big(\delta_k\big)=e^{i\theta_k}\, \mbox{ for }\, k=1,\ldots,n\; 
 \Bigg\}\, .
 $$

In  what follows, we will consider $\rho$ as an element of this  space. 
 Remark that basically,  $\rho$ is nothing else but 
 the parallel transport of the flat Riemannian metric on $N^*$ being considered.

\subsection{\bf Isometries}  
\label{SS:IsometryGroup}
We end this section with a few words about  the 
group   $\boldsymbol{\mathrm{\bf Iso}^+(N)}$ of direct isometries  of a given compact flat surface $N$. \sk 

First, remark that this group is finite since it embeds into the group of biholomorphisms of the underlying Riemann surface which is known to be finite. 
Second, the subgroup of $\mathrm{Iso}^+(N)$ made of elements fixing a given cone  point must be cyclic, since its elements are completely determined by their differential at the fixed point which is a rotation.  It follows easily that the subgroup $\boldsymbol{\mathrm{\bf PIso}^+(N)}$ formed by pure direct isometries of $N$ (here `pure' means that the considered isometries fix pointwise the set of cone points) is necessarily cyclic.

\section{\bf VEECH'S ISOHOLONOMIC FOLIATIONS ON\;$\boldsymbol{\mathscr{M}_{g,n}}$}
\label{Veechfoliation}

\subsection{\bf Moduli spaces of flat surfaces and Troyanov's Theorem.}
\label{S:ModuliSpacesFlatSurfacesTroyanovTheorem}
Let $N$ be a compact oriented surface of genus $g$ with $n$ marked points $p_1, \ldots, p_n$ and $\theta =(\theta_i)_{i=1}^n$ a set of angle data satisfying the Gau\ss-Bonnet relation \eqref{E:GBformula}. Since we are only interested in this case, and because making such an assumption will simplify the exposition, we will always assume that 
\begin{equation}
\label{HYP}
\mbox{\it 
 none of the angles } \, \theta_i \; \mbox{\it  is  an integer multiple of }\, 2\pi.
\end{equation}


We define $\mathcal E_{g,\theta}$ as the set of flat structures on $N$ such that the metric is singular at $p_i$ with a cone angle $\theta_i$ at this point, up to the action of $\mathrm{Diff}_0^+(N,S)$ where $S= \{ p_1, \ldots, p_n\}$. One can think of $\mathcal E_{g,\theta}$ as the set of flat surfaces of genus $g$ with cone angles $\theta$ with a marking of its fundamental group. For more details on this construction and the ones to come, we refer to \cite[Theorem 1.13]{Veech}.
\mk 

 Notice that a flat structure defines canonically a conformal structure on $N$. Away from the singularities,  this conformal structure is given by the regular flat structure. At a singular point $p$ of cone angle $\theta_p$, there is an essentially unique local coordinate $z$ centered at $p$ such that the flat metric is $|z^{\alpha_p} dz|^2$
with $\alpha_p=({\theta_p}/{2\pi}) - 1$. By means of $z$, one extends the conformal structure of the punctured surface through $p$.  Since this can be done for every conical singularity of $N$, one obtains a well-defined map 
\begin{equation}
\label{E:E(g,n)->T(g,n)}
\mathcal E_{g,\theta}\longrightarrow {\mathcal T}\!\!\!{\it eich}_{g,n}\, , 
\end{equation}
where $ {\mathcal T}\!\!\!{\it eich}_{g,n}$ denotes the usual Teichm\"uller space of conformal structures on a surface of genus $g$ with $n$ marked points. The remarkable fact is that this map is one-to-one.  This is a consequence of 
 Troyanov's theorem stated below. 

\begin{thm*}[\cite{Troyanov}]
Every conformal structure on $N$ is induced by a flat metric with conical singularities of angle $\theta_i$ at $p_i$ for $i=1,\ldots,n$.  Moreover, this flat  metric is unique up to normalization.
\end{thm*}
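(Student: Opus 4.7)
The plan is to prove Troyanov's Theorem by a conformal change of metric argument, reducing it to the solvability of a Poisson equation on $N$ whose right-hand side is a signed measure prescribed by the cone angles and the curvature of any reference metric in the given conformal class.

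First I would fix a representative $g_0$ of the given conformal structure on $N$, a smooth Riemannian metric with Gauss curvature $K_0$ and Laplace operator $\Delta_0$. Any conformally equivalent metric can be written $g=e^{2u}g_0$, and under such a change the curvature transforms by the standard formula
\begin{equation*}
K_g\,e^{2u}= K_0-\Delta_0 u.
\end{equation*}
The goal is to produce a metric $g$ which is flat away from $p_1,\ldots,p_n$ and which has a conical singularity of angle $\theta_i$ at each $p_i$. Since a conical singularity of angle $\theta_i$ concentrates curvature $2\pi-\theta_i$ at $p_i$, the desired Gaussian curvature of $g$ is the signed measure $\sum_i(2\pi-\theta_i)\delta_{p_i}$, and the above formula becomes, in the distributional sense on $N$, the linear equation
\begin{equation*}
-\Delta_0 u = \sum_{i=1}^n (2\pi-\theta_i)\,\delta_{p_i}- K_0.
\end{equation*}

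Next I would verify that this Poisson problem is solvable. Integration of the right-hand side against the area form of $g_0$ yields $\sum_i(2\pi-\theta_i)-\int_N K_0\,dA_{g_0}$, which vanishes thanks to the Gauss–Bonnet identity \eqref{E:GBformula} for $\theta$ and the classical Gauss–Bonnet theorem for $(N,g_0)$. The compatibility condition being met, Hodge/Fredholm theory on the compact surface $N$ produces a distributional solution $u$, unique up to an additive constant; elliptic regularity shows $u$ is smooth off $\{p_1,\ldots,p_n\}$. A local analysis near each $p_i$, using that $\Delta_0\log\lvert z\rvert=2\pi\delta_{p_i}$ in an isothermal coordinate $z$ for $g_0$ centred at $p_i$, gives the expansion $u(z)=\alpha_i\log\lvert z\rvert+u_{\mathrm{reg}}(z)$ with $\alpha_i=\theta_i/(2\pi)-1$ and $u_{\mathrm{reg}}$ smooth; hence $e^{2u}g_0$ is asymptotic to $\lvert z\rvert^{2\alpha_i}\lvert dz\rvert^2$, which is exactly the model for a Euclidean cone of angle $\theta_i$. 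Consequently $g:=e^{2u}g_0$ is the sought flat metric with prescribed conical singularities compatible with the given conformal class.

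For uniqueness, I would observe that if $g_1=e^{2u_1}g_0$ and $g_2=e^{2u_2}g_0$ are two such metrics, then $w=u_1-u_2$ satisfies $\Delta_0 w=0$ on $N\setminus\{p_1,\ldots,p_n\}$ and behaves like $O(1)$ near each $p_i$ (the singular logarithmic terms coincide since the cone angles agree). A removable singularity argument upgrades $w$ to a distributional harmonic function on all of $N$, hence a constant by the maximum principle on the closed surface $(N,g_0)$. This constant corresponds precisely to the global rescaling freedom of the metric, and fixing it (for instance by normalising the total area to $1$) kills the ambiguity.

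The main obstacle I anticipate is the careful control of $u$ at the cone points: one must check that the distributional solution really has the asserted logarithmic asymptotics so that $e^{2u}g_0$ defines a genuine cone metric with angle $\theta_i$ at $p_i$ (not merely a metric with integrable singularity). I would handle this by subtracting off an explicit local model $\alpha_i\log\lvert z\rvert$ via a cut-off, reducing to a Poisson problem with a smooth right-hand side whose solution is smooth by standard elliptic regularity. Once this is in place, the hypothesis \eqref{HYP} that no $\theta_i$ is an integer multiple of $2\pi$ plays no role in existence, but it guarantees that the cone points are genuine singularities of the metric, as required for the map \eqref{E:E(g,n)->T(g,n)} to be a bijection onto the image.
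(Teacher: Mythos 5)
Your proposal is correct and follows the approach the paper itself attributes to Troyanov: the paper gives no proof of this theorem, stating only that the argument in \cite{Troyanov} ``essentially consists in solving the PDE that the metric tensor associated to a given conformal structure must satisfy'' and referring the reader there. Your reduction to the Poisson equation $-\Delta_0 u = \sum_i (2\pi-\theta_i)\delta_{p_i} - K_0$, the solvability check via Gau\ss--Bonnet, the local subtraction of the model $\alpha_i\log\lvert z\rvert$ to verify the cone asymptotics, and the removable-singularity argument for uniqueness are all the standard ingredients of that PDE proof and are sound.
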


The proof (given in \cite{Troyanov}) essentially consists in solving the PDE that the metric tensor associated to a given conformal structure must satisfy. For more details, we refer to the original article \cite{Troyanov} which is very pleasant to read. 
\sk 

Consequently, one has a one-to-one correspondance  $\mathcal E_{g,\theta}\simeq {\mathcal T}\!\!\!{\it eich}_{g,n}$ allowing these two moduli spaces to be identified. 
  In particular, this endows $\mathcal E_{g,\theta}$ with the structure of a complex manifold of dimension $3g-3+n$.

\subsection{Veech's foliations}
Since we are considering marked flat structures, the {\bf linear holonomy map}
\begin{equation}
\label{E:LinearHolonomyMap}
 {\rm hol}={\rm hol}_\theta: {\mathcal{T}\!\!\!{\it eich}}_{g,n} \simeq  \mathcal{E}_{g,\theta}  \longrightarrow \mathrm{H}^1\big(N_{g,n}, \mathbb{U}\big) 
 \end{equation}
 which associates its linear holonomy morphism to a flat structure, is well defined.
 Clearly, ${\rm hol}$  maps ${\mathcal{T}\!\!\!{\it eich}}_{g,n}$  into  $\mathrm{H}^1(N_{g,n}, \mathbb{U},\theta)$. \mk

From  hypothesis \eqref{HYP},  it follows  that the trivial character (the one sending any holomogy class  onto $1\in \mathbb U$) does not belong to $\mathrm{H}^1(N_{g,n}, \mathbb{U},\theta)$. This case being excluded, the following theorem holds true:

\begin{thm*}[\cite{Veech}]
The linear holonomy map 
\eqref{E:LinearHolonomyMap}
 is an open real-analytic submersion.   Moreover, for any 
$\rho\in {\rm Im}({\rm hol})$, the level set ${\rm hol}^{-1}(\rho)$ is a complex submanifold of\,\;${\mathcal{T}\!\!\!{\it eich}}_{g,n}$ of complex dimension $2g-3+n$.
\end{thm*}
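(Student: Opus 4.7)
My plan is to prove real-analyticity, submersiveness (which yields openness), and the complex-analytic fiber structure at a fixed flat surface $N\in \mathcal{E}_{g,\theta}$ with holonomy $\rho_0$, by working in explicit local coordinates. Choose a geodesic triangulation of $N$ with $0$-skeleton $S=\{p_1,\ldots,p_n\}$, and develop it into $\mathbb{C}$. This produces complex edge vectors $\underline{z}=(z_e)_{e\in E}\in\mathbb{C}^E$ such that nearby points of $\mathcal{E}_{g,\theta}$ correspond bijectively to nearby collections $\underline{z}$ subject to: $\mathbb{C}$-linear triangle-closing relations, real-analytic angle-sum conditions $\theta_i$ at each $p_i$, and the $\mathbb{C}^*$-action of global rotation/scaling. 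By Troyanov's Theorem this chart covers an open neighborhood of $N$ in ${\mathcal{T}\!\!\!{\it eich}}_{g,n}$. For any loop $\gamma$ on $N^*$ transverse to the triangulation, $\rho(\gamma)$ is expressed as a product of phase ratios $\bigl(z_{e_{k+1}}/|z_{e_{k+1}}|\bigr)\bigl(z_{e_k}/|z_{e_k}|\bigr)^{-1}$ of consecutive edges crossed by $\gamma$. This is a real-analytic function of $\underline{z}$, so $\mathrm{hol}$ is real-analytic.

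Next I would prove that $d\,\mathrm{hol}_N$ surjects onto $T_{\rho_0}\mathrm{H}^1(N_{g,n},\mathbb{U},\theta)$, a real vector space of dimension $2g$ consisting of infinitesimal characters $\dot\rho\colon \mathrm{H}_1(N^*;\mathbb{Z})\to i\mathbb{R}$ vanishing on each $\delta_k$. Surjectivity will come from $2g$ explicit deformations produced by \emph{rotational surgeries}: pick a symplectic basis $\{a_j,b_j\}_{j=1}^g$ of $\mathrm{H}_1(N;\mathbb{Z})$, represent each class $c$ by a simple geodesic loop on $N$ disjoint from $S$, cut $N$ along this loop, rotate one side by a small angle $\alpha$, and reglue. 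The resulting one-parameter family is real-analytic, preserves all cone angles (hence lies in $\mathcal{E}_{g,\theta}$), and at first order in $\alpha$ modifies $\rho_0$ by the character $\gamma'\mapsto e^{i\alpha\langle c,\gamma'\rangle}$, so in particular preserves every $\rho(\delta_k)$. By non-degeneracy of the intersection pairing, the $2g$ characters so produced span $T_{\rho_0}\mathrm{H}^1(N_{g,n},\mathbb{U},\theta)$; hence $d\,\mathrm{hol}_N$ is surjective and $\mathrm{hol}$ is open. Each fiber then has real dimension $(6g-6+2n)-2g=2(2g-3+n)$.

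Finally, to promote the fiber $\mathrm{hol}^{-1}(\rho_0)$ to a \emph{complex} submanifold, I would invoke the polygon parametrization already sketched in the introduction. Near $N$, the leaf is the image of an open subset of $\mathbb{C}^{2g+n-2}/\mathbb{C}^*$ (the complex positions of the $4g+2n-2$ vertices of a developing polygon with one vertex fixed at the origin, modulo the $\mathbb{C}^*$-action of global rotation and scaling) under a holomorphic immersion into $\mathcal{E}_{g,\theta}$. The image is contained in $\mathrm{hol}^{-1}(\rho_0)$ because the identifications of the polygon's sides are performed using Euclidean isometries whose unitary linear parts are held fixed and equal to the values of $\rho_0$. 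Its complex dimension is $(2g+n-2)-1=2g-3+n$, which matches the real half-dimension of the fiber from the previous step. Hence the image fills out an open subset of the fiber, proving that $\mathrm{hol}^{-1}(\rho_0)$ is a complex submanifold of ${\mathcal{T}\!\!\!{\it eich}}_{g,n}$ of complex dimension $2g-3+n$.

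\textbf{Main obstacle.} The subtlest step is matching the image of the polygon parametrization with the \emph{entire} fiber of $\mathrm{hol}$, not just a proper holomorphic subvariety. What makes this work is the balance between two dimension counts: the complex dimension $2g-3+n$ of the holomorphic polygonal family, and the real codimension $2g$ of the fiber produced by the surgery argument, together filling up the $6g-6+2n$ real dimensions of ${\mathcal{T}\!\!\!{\it eich}}_{g,n}$. A secondary delicate point is to verify that the $2g$ surgery deformations are genuinely independent at the level of holonomies; this follows from non-degeneracy of the intersection pairing and genericity in the choice of representative loops.
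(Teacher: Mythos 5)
The paper itself offers no proof of this theorem: it is quoted directly from Veech's \emph{Flat Surfaces}, where it is established by analytic and cohomological means (essentially as a consequence of the deformation theory of $(\mathbb{C},\mathrm{Iso}^+(\mathbb{C}))$-structures and the identification of the leaves with projectivised twisted cohomology). Your proposal substitutes an explicitly geometric argument, which would be a genuinely different route if it worked — but the crucial submersiveness step contains a real gap.

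The problem is the ``rotational surgery.'' Cutting $N$ along a simple closed geodesic $c$ avoiding $S$ and then applying a rotation $R_\alpha$ of the plane to one side before regluing does not change the flat structure at all: a rotation of $\mathbb{C}$ is a global isometry, so precomposing the developing map on one side of $c$ with $R_\alpha$ yields a surface isometric to the original (for a separating $c$), and the induced gluing of the two boundary circles is the same as before. The only isometries of a geodesic circle are translations in arc length, and the deformations they generate are Dehn twists. Dehn twists change the \emph{translational} part of the full holonomy \eqref{E:FullHolonomy} but leave the \emph{linear} holonomy unchanged — parallel transport is insensitive to shearing along a geodesic. So the family you construct does not in fact modify $\rho_0$ by the character $\gamma'\mapsto e^{i\alpha\langle c,\gamma'\rangle}$; it fixes $\rho_0$. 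Consequently the claimed $2g$ independent images under $d\,\mathrm{hol}_N$ are not produced, and the surjectivity of $d\,\mathrm{hol}_N$ is not established. This also poisons the final step: you rely on the fiber $\mathrm{hol}^{-1}(\rho_0)$ having real dimension exactly $2(2g-3+n)$, which needs the submersion property, so the ``two dimension counts balance'' argument is circular without it. To genuinely deform $\rho$ while preserving the cone angles, one must change the flat metric non-isometrically — for instance replace a collar of $c$ by a flat annulus whose linear holonomy around its core is $e^{i\alpha}$ (a sector of a cone, not a cylinder), with care taken that the glue-in piece matches the existing metric on both boundary components — or, more simply, argue as Veech does via the Ehresmann--Thurston local homeomorphism between $(\mathbb{C},\mathrm{Iso}^+(\mathbb{C}))$-structures and their holonomy representations, noting that the forgetful map from representations in $\mathrm{Iso}^+(\mathbb{C})\cong\mathbb{U}\ltimes\mathbb{C}$ to their linear parts in $\mathbb{U}$ is a submersion. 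Your triangulation chart for real-analyticity and the polygon parametrization of the fiber are both fine as far as they go; the missing piece is a correct construction of holonomy-deforming families.
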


  This result  implies in particular  that the level sets $\mathcal{F}_{\rho} ={\rm hol}^{-1}(\rho)$ for $\rho \in 
 {\rm Im}({\rm hol})$  
  form a real-analytic foliation  by complex submanifolds of $\mathcal{T}\!\!\!{\it eich}_{g,n}$. This foliation will be denoted by $\mathcal F(\theta)$ (or just $\mathcal F$ for short, when $\theta$ has been fixed) and will be called the {\bf Veech foliation of $\boldsymbol{
  {\mathcal T}  \!\!\!{\it \boldsymbol{eich}}_{g,n}}$ associated to  $\boldsymbol{\theta}$}.

\subsection{Invariance by the pure  mapping class group}

\label{invariance}
We now explain how this foliation descends to $\mathscr{M}_{g,n}$. The pure mapping class group $\mathrm{PMCG}_{g,n}$
acts on $\mathcal{T}\!\!\!{\it eich}_{g,n}$ preserving Veech's foliation: namely any element $f \in \mathrm{PMCG}_{g,n}$ sends $\mathcal{F}_{\rho}$ onto $\mathcal{F}_{f^*\rho}$. Hence the foliation $\mathcal{F}(\theta)$ factors through the projection  
\begin{equation}
\label{E:Teichgn-to-Mgn}
\mathcal{T}\!\!\!{\it eich}_{g,n} \longrightarrow \mathscr{M}_{g,n} = \mathcal{T}\!\!\!{\it eich}_{g,n}/\mathrm{PMCG}_{g,n}
\end{equation} 
to define a singular foliation on the moduli space  $\mathscr{M}_{g,n}$. The latter is denoted by $\boldsymbol{\mathscr F(\theta)}$ (or just by $\boldsymbol{\mathscr F}$ when $\theta$ is fixed) and will also be called {\bf Veech's foliation}. Strictly speaking, since ${\rm PMCG}_{g,n}$ acts with fixed points on $\mathcal{T}\!\!\!{\it eich}_{g,n}$, one should more rigorously speak of ${\mathscr F(\theta)}$ as an `{\it orbifoliation}' on $\mathscr{M}_{g,n}$. However, because it will not be the source of real problems, we will ignore this subtlety in the whole paper.

\begin{itemize}
\item We will now refer to a specific leaf $\mathscr{F}_{[\rho]}$ where $[\rho]$ is the orbit of an element of $\mathrm{H}^1(N_{g,n}, \mathbb{U}, \theta)$ under the action of     
$\mathrm{PMCG}_{g,n}$. 
 Note that it is the image of $\mathcal{F}_{\rho} 
\subset \mathcal{T}\!\!\!{\it eich}_{g,n}$ by the quotient map 
 \eqref{E:Teichgn-to-Mgn}. 
\sk 
\item 
 Since $\mathrm{PMCG}_{g,n}$ acts on $\mathrm{H}^1(N_{g,n}, \mathbb{U},\theta)$ preserving its symplectic form, the foliation has a transverse symplectic structure of dimension $2g$.
\sk 
\item We say that $\mathscr{F}_{[\rho]}$ is a leaf of Veech's foliation. That is not rigorously correct, because usually, in foliation theory, one demands that leaves  be  connected. It is actually proven in \cite[\S4.2.5]{GhazouaniPirio1}, through some explicit analytic computations,  that $\mathscr{F}_{[\rho]}$ can have several  distinct connected components.  Nevertheless, we will refer below to the $\mathscr{F}_{[\rho]}$'s as leaves for convenience.
\end{itemize}

\subsection{Geometric structures on the leaves}

Let $p$ and $q$ be non-negative integers and let 
$$h_{p,q} :  (z,w) \longmapsto \sum_{i=1}^p{z_i\overline{w_i}} - \sum_{j=p+1}^{p+q}{z_j\overline{w_j}}$$
 be the standard Hermitian form of signature $(p,q)$ on $V=\mathbb{C}^{p+q}$. Let $V_+$ be the set of  elements $ z \in \mathbb{C}^{p+q}$ such that $h_{p,q}(z,z) > 0$ and let $\mathbb C\mathbb H^{p+q-1}_p$ be the image of $V_+$ in $\mathbb{CP}^{p+q-1}$. The group of automorphisms of $h_{p,q}$, namely $\mathrm{PU}(p,q) $, acts transitively by biholomorphisms on $\mathbb C\mathbb H^{p+q-1}_p$, see \cite[\S12.2]{Wolf}.  
 Note that for $p=1$, $\mathbb C\mathbb H^{q}_1$ is nothing else but the usual complex hyperbolic space $\mathbb C\mathbb H^{q}$.
\mk

Recall that we are assuming that hypothesis \eqref{HYP} holds true: the angle datum 
 $\theta=(\theta_i)_{i=1}^n\in ]0,+\infty[^n$ is supposed to be such that  $\theta_i \notin 2\pi \mathbb{Z}$ for any $i=1,\ldots,n$.

\begin{thm*}[\cite{Veech}]

There exists a pair of integers $(p,q)=(p_\theta,q_\theta) \in \mathbb{N}^2$  with $p+q=2g-2+n$,  such that the leaves of Veech's foliation 
 $\mathcal F(\theta)$ on $\mathcal{T}\!\!\!{\it eich}_{g,n}$ are endowed with {natural} $(\mathbb C\mathbb H^{p+q-1}_{p}, \mathrm{PU}(p,q) )$-structures. These  geometric structures 
 are invariant under the action of the pure mapping class group  
hence can be pushed-forward on the leaves of Veech's foliation 
$\mathscr F(\theta)$ 
on  $\mathscr{M}_{g,n}$.
\end{thm*}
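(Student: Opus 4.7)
The plan is to construct local period coordinates on each leaf $\mathcal{F}_\rho$ with values in the twisted cohomology group $H^1(N^*, \mathbb{C}_\rho)$, where $\mathbb{C}_\rho$ denotes the flat line bundle on $N^*$ defined by $\rho$, and to recognize the area functional as a natural Hermitian form on this vector space. Fix $[N_0] \in \mathcal{F}_\rho$ and a geodesic triangulation $\mathcal{T}$ of $N_0$ whose vertex set equals the set $S$ of cone points (e.g.\ a Delaunay triangulation). For $[N]$ in a neighbourhood $U$ of $[N_0]$ in $\mathcal{F}_\rho$, the triangulation deforms to a triangulation of $N$; developing each triangle into $\mathbb{C}$ and recording for each oriented edge the resulting complex displacement produces a twisted $1$-cocycle whose class $\Pi([N]) \in H^1(N^*, \mathbb{C}_\rho)$ depends holomorphically on $[N]$. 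This implements concretely the ``linear parametrisations'' alluded to in the introduction.

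A standard Euler characteristic computation gives $\dim_\mathbb{C} H^1(N^*, \mathbb{C}_\rho) = 2g+n-2$: hypothesis \eqref{HYP} ensures that $\rho$ is nontrivial around each puncture, so $H^0 = 0$ (and $H^2 = 0$ since $N^*$ is open), whence $\dim H^1 = -\chi(N^*) = 2g+n-2$. The $\mathbb{C}^*$-action rescaling the developing map (rotation plus homothety) multiplies the period class by a scalar without changing the underlying conformal marked surface; passing to the quotient yields a holomorphic map $\overline{\Pi}: U \to \mathbb{P}\bigl(H^1(N^*, \mathbb{C}_\rho)\bigr) \cong \mathbb{CP}^{2g+n-3}$. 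A local Torelli argument (a nearby flat surface sharing the projective period class of $N$ must coincide with it up to rescaling, since its polygonal realisation is rigidly determined by its edge vectors) combined with the equality of dimensions ($\dim_\mathbb{C} \mathcal{F}_\rho = 2g-3+n$ by Veech's earlier submersion theorem) shows that $\overline{\Pi}$ is a local biholomorphism onto an open subset of $\mathbb{CP}^{2g+n-3}$.

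A triangle-by-triangle computation expresses the area $\mathcal{A}(N) = \tfrac{i}{2}\int_N \omega \wedge \overline{\omega}$ as (the value on $\Pi([N])$ of) a Hermitian form $\mathcal{H}$ on $H^1(N^*, \mathbb{C}_\rho)$. The decisive technical step is the nondegeneracy of $\mathcal{H}$ under \eqref{HYP}: up to a positive scalar, $\mathcal{H}$ is the twisted intersection form on $H^1(N^*, \mathbb{C}_\rho)$, and Poincar\'e duality for $\mathbb{C}_\rho$ is nondegenerate precisely when the local system has no invariants at infinity, which is exactly hypothesis \eqref{HYP}. Let $(p,q) = (p_\theta, q_\theta)$ denote its signature, so $p+q = 2g+n-2$. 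The map $\overline{\Pi}$ then lands in the projectivization of the positive cone $\{\mathcal{H}>0\}$, which is by definition $\mathbb{C}\mathbb{H}^{p+q-1}_{p}$. Two charts coming from different triangulations differ by an automorphism of $H^1(N^*, \mathbb{C}_\rho)$ preserving the intrinsic quantity $\mathcal{A}$, hence by an element of $\mathrm{PU}(p,q)$ after projectivization, producing the claimed $(\mathbb{C}\mathbb{H}^{p+q-1}_p, \mathrm{PU}(p,q))$-structure on $\mathcal{F}_\rho$. Since the signature is locally constant and the construction depends only on the intrinsic data of the marked flat structure, everything is $\mathrm{PMCG}_{g,n}$-equivariant and descends to a $(\mathbb{C}\mathbb{H}^{p+q-1}_p, \mathrm{PU}(p,q))$-structure on $\mathscr{F}_{[\rho]} \subset \mathscr{M}_{g,n}$.

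The main obstacle I anticipate is controlling the signature precisely: while $p+q = 2g+n-2$ drops out of the cohomology count, the exact value of $p_\theta$ as a function of $\theta$ is much more subtle, and is what ultimately singles out the two cases in which the leaves carry a genuine Riemannian (rather than indefinite, i.e.\ $p_\theta = 1$) complex hyperbolic structure. A direct attack requires either an explicit diagonalisation of $\mathcal{H}$ in some combinatorially chosen basis (Veech's approach in his Section~14, which is a lengthy computation) or an inductive geometric argument based on surgeries, as announced in Section~\ref{surgeries} of this paper.
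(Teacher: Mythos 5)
Your proposal is correct and takes essentially the same route as the paper, which itself only sketches Veech's original argument (citing \cite[Theorems 0.6, 0.7]{Veech}) via the relative period map $\mathcal{F}_\rho \to \mathbf{P}\bigl(\mathrm{H}^1_\rho(N_{g,n},\mathbb{C})\bigr)$ and the area Hermitian form. You fill in the points the paper leaves implicit — the Euler characteristic computation of $\dim \mathrm{H}^1$, the role of \eqref{HYP} in killing $\mathrm{H}^0$ and ensuring nondegeneracy of the twisted pairing, and the local Torelli / rigidity argument for injectivity — and you correctly separate the existence of the signature $(p_\theta,q_\theta)$ with $p_\theta+q_\theta=2g+n-2$ (which is all the theorem asserts) from the much harder problem of computing $p_\theta$ explicitly (Veech's \S 14). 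One small caveat: the $\mathrm{PMCG}_{g,n}$-equivariance does not descend a structure on a single $\mathcal{F}_\rho$ to $\mathscr{M}_{g,n}$; rather $f \in \mathrm{PMCG}_{g,n}$ carries $(\mathcal{F}_\rho,h_\rho)$ isomorphically onto $(\mathcal{F}_{f^*\rho},h_{f^*\rho})$, and it is the whole family of leaves (i.e.\ the foliation) that descends, as the paper states.
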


In \cite[\S14]{Veech}, Veech gives an explicit closed formula 
 for the signature $(p_{\theta},q_{\theta})$ as a  function of $\theta$. We explain briefly where this geometric structure comes from. Consider $\rho \in  \mathrm{H}^1(N_{g,n}, \mathbb{U},\theta)$ in the image of  ${\rm hol}_\theta$
and consider the associated leaf $\mathcal{F}_{\rho} \subset \mathcal{T}\!\!\!{\it eich}_{g,n}$. Given a flat surface in it, one can consider its full Euclidean holonomy 
\eqref{E:FullHolonomy}. Since its linear part  is fixed (and equal to $\rho$), the meaningful geometric information is contained in the translation part of this full holonomy, which  can be viewed as an element of the projectivization  of a certain twisted cohomology group denoted here by $\mathrm{H}^1_{\rho}(N_{g,n}, \mathbb{C})$.  One can then construct a  {\bf relative period map} 
\begin{equation}
\label{E:RelativePeriodMorphism}
 \mathcal{F}_{\rho} \longrightarrow  \mathbf{P}\Big(\mathrm{H}^1_{\rho}\big(N_{g,n}, \mathbb{C}\big)\Big) \, .
 \end{equation} 
 
Veech (and previously Thurston in \cite{Thurston} for  flat surfaces of genus $0$) proves that the preceding map  is a local biholomorphism (see \cite[Theorem 0.6]{Veech}). Moreover, one can define a non-degenerate Hermitian form $h_{\rho}$ on $\mathrm{H}^1_{\rho}(N_{g,n}, \mathbb{C})$ (which is actually the area of the corresponding flat surface) such that the relative period map  
 lands in 
 $\mathbf{P}_+(\mathrm{H}^1_{\rho}(N_{g,n}, \mathbb{C}))$, where the latter stands for           
the set of complex lines in $\mathrm{H}^1_{\rho}(N_{g,n}, \mathbb{C})$ on which $h_\rho$ is positive. Thus the target space of \eqref{E:RelativePeriodMorphism} is nothing else but a model of $\mathbb C\mathbb H^{p+q-1}_p$ and any element 
 $f$ of the    $ \mathrm{PMCG}_{g,n}$  
 induces an isomorphism of $(\mathbb C\mathbb H^{p+q-1}_p,{\rm PU}(p,q))$-structures  
 $$f : \big(\mathcal{F}_{\rho}, h_{\rho}\big) \longrightarrow \big(\mathcal{F}_{f^* \rho}, h_{f^*\rho}\big)\,  $$ (see \cite[Theorem 0.7]{Veech}; this  amounts 
 to saying that changing the marking of a flat surface does not change its area).\sk


 Except for very few cases, those $\mathbb C\mathbb H^{p+q-1}_p$-structures are known not to be complete. The term {\it `complete'} has to be understood here  in the sense of geometric structures, {\it cf.} \cite[\S3.5]{Thurston2}. For geometric structures modeled on a  (possibly indefinite) complex hyperbolic space $\mathbb C\mathbb H^{p+q-1}_p$, this  coincides with the fact of being  geodesically complete for the associated Levi-Civita connection (see Proposition 1.2 of \cite{Tholozan} for instance). A more geometric description of some of those structures in the following sections will make this fact obvious.


\section{\bf LINEAR CHARTS ON THE LEAVES OF VEECH'S FOLIATION. }
\label{Charts}
In this section we present material about local parametrisations of moduli spaces of flat surfaces. Although this material is well known, there is no standard point of view or unified theory of these parametrisations. Depending on the context, parameters obtained from gluing Euclidean polygons, analytic calculations, twisted cohomology or a combination of several of these are better suited to formalize an idea or to simply perform a computation. Nevertheless all these points of view (to be detailed) are essentially the same. References developing various material are \cite[Section 3]{Thurston}, \cite{GhazouaniPirio1}, \cite{Schwartz} and \cite[Sections 9,10 and 11]{Veech}. 
\sk

For the remainder of the section $g$, $n$ and $\rho : \mathrm{H}_1(N_{g,n} , \mathbb{Z}) \longrightarrow \mathbb{U}$ are fixed. 
We also suppose that $2g + n - 3 > 0$ in order for $\F$ to have positive dimension.

\subsection{Polygonal models for flat surfaces.}
\label{polygonalmodel}

We describe here a geometrically intuitive local parametrisation of $\mathcal{F}_{\rho}$. Take $N$ a flat surface in $\mathcal{F}_{\rho}$ such that $N$ can be recovered from gluing isometrically suitable sides of a Euclidean polygon $P$ with $2k$ sides whose vertices project to singular points. Necessarily, $k = 2g-1 + n$. We identify to $\mathbb{C}$ the Euclidean plane in which $P$ lies in and associate to each side the corresponding complex number $z_i$, $1 \leq i \leq 2k $ (with the convention that $P$ is positively oriented relatively to its interior)  defined by 
$$z_i=\mbox{end point of the side }  - \mbox{ initial point of the side\,.}$$

\begin{figure}[!h]
\centering
\includegraphics[scale=0.7]{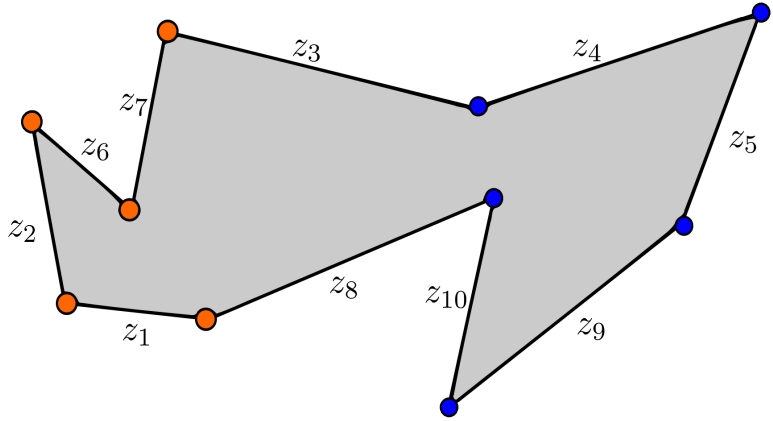}
\caption{Polygonal model for a genus $2$ surface with two cone points}
\label{polygonalmodel2}
\end{figure}

 \noindent Assuming that (the side associated to the complex number) $z_i$ is paired with (the side associated to) $z_{k+i}$, the $2k$-tuple $(z_1, \ldots, z_{2k})$ must satisfy the following relations :
$$ \sum_{i=1}^{2k}{z_i} = 0 
\qquad  \mbox{and} \qquad
 \big| z_{i}\big|  = \big|z_{k+i}\big|  \quad \mbox{for } \, i=1, \ldots, k \, .
$$

Note that since we require that $z_i$ be paired with $z_{i+k}$, the $z_j$'s for $j=1,\ldots,2k$ do not necessarily appear in cyclic order (see Figure \ref{polygonalmodel2} for instance). 
Each complex number $\rho_i ={z_i}/{z_{k+i}} \in \mathbb{U}$ is the holonomy of a  curve (which is closed in the corresponding 
flat surface $N$) joining the middle of $z_i$ to the middle of $z_{k+i}$ and therefore belongs to $\mathrm{Im}(\rho)$. 
  One rewrites the previous equations as 
\begin{equation}
\label{E:RelationPolygonalFRho}
 \sum_{i=1}^{2k}{z_i} = 0 
\qquad  \mbox{and} \qquad
 z_{i}  = \rho_i \, z_{k+i}  \quad \mbox{for } \, i=1, \ldots, k \, .
\end{equation}

After eliminating $z_k, z_{k+1}, \ldots, z_{2k}$, one sees that $z = (z_1, \ldots, z_{k-1})\in \mathbb C^{k-1}$ completely characterises  the polygon $P$ and therefore the associated flat surface $N$. 

Any $(k-1)$-tuple $ u=(u_1, \ldots, u_{k-1})$ close to $z $ in $\mathbb C^{k-1}$ defines a polygon $P_u$ whose sides satisfy the equations above. Performing the associated gluing (meaning that one glues the side associated to $u_i$ to the one associated to  $u_{i+k} = \rho_i^{-1}u_i $ 
for $i=1,\ldots,k-1$) builds another element of $\mathcal{F}_{\rho}$.  

 Let $U \subset \mathbb{C}^{k-1}$ be a small open subset containing $z$ such that all the $2k$-gons corresponding to elements  of $U$ are non-degenerate. 
  One defines a map 
$$ \varphi : U \longrightarrow \mathcal{F}_{\rho} $$
by associating to each $u\in U$ the 
\textbf{renormalized flat surface  associated to $P_u$}, that is the 
one 
of area one. Notice that $\varphi$ is not locally injective since $\varphi( \lambda u) =  \varphi( u)$ for all $(\lambda,u) \in \mathbb{C}^*\times U $  such that $\lambda u\in U$.  That being said, $\varphi$ induces a map $\psi : V 
 \longrightarrow \mathcal{F}_{\rho}$ where $V=\mathbf P U $ is the image of $U$ in $\mathbf P(\mathbb{C}^{k-1})$. It is possible to prove that $\psi$ is a local biholomorphism for the structure inherited as a leaf of a foliation of $\mathcal{T}\!\!\!{\it eich}_{g,n}$ as has been done by Veech, see \cite[Lemma 10.23]{Veech}. 
  Nonetheless, we want to adopt an intrinsic point of view on the geometry of $\mathcal{F}_{\rho}$ and will therefore ignore Veech's results. 
 
  We remark that $\psi : V \longrightarrow \mathcal{F}_{\rho}$ 
   is a local homeomorphism : 
\begin{itemize}
\item The fact that $\psi$ is one-to-one follows straightforwardly from the following remark: since we are looking at marked flat structures, any isometry preserving the marking between close surfaces in the parametrisation $V$ must come from an isometry of the polygons themselves 
being the identity on the boundary of the polygon; and therefore be the identity.\sk 
\item The fact that $\psi$ is onto is a consequence of the fact that the polygonal model survives small deformations. 

\end{itemize}

We will actually ignore the second point and define a structure of (complex) manifold on $\mathcal{F}_{\rho}$ using $\psi$. Two details remain to be settled :
\begin{enumerate}
 \item we have been able to build $\psi$ only if $N$ is built out from gluing sides of a polygon. We now need to extend this construction to the general case;
 \sk 
 \item we need to prove that if two charts have overlapping images, then  the transition maps are biholomorphisms. 
\end{enumerate}
 
The first difficulty can be settled by introducing the notion of \textit{pseudo-polygon}. We follow here \cite{Schwartz}.  
 A \textbf{pseudo-polygon} is a flat metric on a (closed) disk whose boundary is locally isometric to a piecewise geodesic path in $\mathbb{C}$. By developing a pseudo-polygon, we can also define it as an immersion of the closed disk into the plane whose boundary is piecewise geodesic.
\begin{prop}
\label{pp}
Every flat surface $N$ can be built out from gluing sides of a pseudo-polygon.

\end{prop}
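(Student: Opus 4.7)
The plan is to represent $N$ as a flat closed disk with piecewise geodesic boundary together with a side-pairing, by triangulating $N$ with geodesic triangles and then cutting along a suitable subset of edges to open $N$ into a simply connected piece.

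First, I would establish the existence of a geodesic triangulation $\mathcal{T}$ of $N$ whose set of vertices is exactly the set of cone points $\{p_1, \ldots, p_n\}$ and whose edges are saddle connections. One classical way is a Delaunay-type construction: consider the Voronoi decomposition of $N$ with sites at the cone points, where each cell is a locally Euclidean convex polygon; its combinatorial dual is a cell decomposition of $N$ into Euclidean polygons with vertices at the $p_i$ and edges being saddle connections. Refining each non-triangular face by adding diagonals realized by shortest geodesic segments yields a genuine triangulation. Alternatively, one can proceed inductively by adding shortest saddle connections one at a time and using an Euler-characteristic count to see that the process terminates with triangular faces.

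Next, let $\Gamma$ be the dual graph of $\mathcal{T}$, whose vertices are the triangles and whose edges are the adjacencies across edges of $\mathcal{T}$. Pick a spanning tree $T$ of $\Gamma$ and let $E$ denote the set of edges of $\mathcal{T}$ that are \emph{not} crossed by $T$. Cutting $N$ along all edges in $E$ produces a flat surface with boundary $\widetilde{N}$ whose combinatorial structure is the union of the triangles of $\mathcal{T}$ glued along exactly the edges dual to $T$: this is a tree of triangles, hence topologically a closed disk. The flat structure on $N$ restricts to a flat structure on $\widetilde{N}$, and its boundary is a concatenation of the doubled saddle connections in $E$, hence piecewise geodesic. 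Therefore $\widetilde{N}$ is a pseudo-polygon in the sense of the paper, and $N$ is recovered from $\widetilde{N}$ by isometrically identifying in pairs the boundary arcs coming from the same edge of $E$.

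The main obstacle is really the first step: producing a geodesic triangulation with vertices at the cone points for a general flat surface with conical singularities, without any further restriction on the angles. One must check that the Voronoi/Delaunay construction is well-defined despite the presence of cone points (so that Voronoi cells do not degenerate badly and saddle connections do not accumulate), and that refining non-triangular cells by shortest diagonals is always possible. Once this step is secured, the cut-along-a-spanning-cotree argument and the identification of $\widetilde{N}$ with a pseudo-polygon are formal consequences of the definitions.
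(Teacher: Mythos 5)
Your proposal is correct and follows essentially the same route as the paper, which also reduces the statement to the existence of a geodesic (Delaunay) triangulation with vertices at the cone points and then cuts along a subgraph $\Gamma$ of the $1$-skeleton so that $N\setminus\Gamma$ is simply connected. Your spanning-tree-of-the-dual-graph construction is the standard explicit way of producing such a $\Gamma$, and the paper's reference to ``any graph $\Gamma$ in the $1$-skeleton such that $N\setminus\Gamma$ is simply connected'' together with the Delaunay material of Section~\ref{geometricproperties} (citing \cite{MasurSmillie}, \cite{Veech}) covers the ``obstacle'' you flag as the first step.
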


The proof of this proposition is carefully done in the case $g = 0$ in \cite{Schwartz}, and in the general case in \cite{Veech}. The crucial point is the existence of a totally geodesic triangulation (see Lemma 6.23 in \cite{Veech} or 
 the construction of the Delaunay decomposition that we will detail in Section \ref{geometricproperties}) for a given flat surface $N$). Starting from there, one easily checks that for any graph $\Gamma$ in the $1$-skeleton of such a triangulation such that $N \setminus \Gamma $ is simply connected, then (the metric completion for the length metric of) the latter is a pseudo-polygon. 
\sk

The main remark at this point is that the parametrisation built when $N$ comes from a polygonal model straightforwardly generalises to the case when $N$ is built out from a pseudo-polygonal model, simply by immersing (using the developing map of the flat structure) such a pseudo-polygon in $\mathbb{C}$. According to Proposition \ref{pp}, every surface has a pseudo-polygonal model and therefore the maps $\psi$ built this way form an atlas of charts for $\mathcal{F}_{\rho}$. \sk 

From now on, a local parametrisation $(z_1, \ldots, z_{k-1})$ arising in this way will be referred to as a \textbf{polygonal parametrisation}.

\subsection{Area form and linear parametrisation.} Another very important remark at this point is that a polygonal parametrisation comes with a natural Hermitian form which is the signed area of the corresponding flat surface. If $U$ is an open subset of $\mathbb{C}^{k-1}$ on which is defined  a polygonal parametrisation $\varphi$ of $\mathcal{F}_{\rho}$, we denote by $A_{\varphi, U}$ the corresponding Hermitian form. 
\sk

The proof that $A_{\varphi, U}$ is actually a Hermitian form in $z=(z_1, \ldots, z_{k-1})$ goes the following way : every immersed pseudo-polygon can be triangulated in such  a way that each side is a geodesic path joining two edges. Let $T_1, \ldots, T_L$ be the triangles of the triangulation. For any $l$, the area of $T_l$ is 
$$ A(T_l) =  \frac{1}{2} \mathrm{Im}\left(z_{T_l} \overline{w_{T_l}}\right)\, $$ 
 where $z_{T_l}$ and  $w_{T_l}$ are the complex numbers associated to two consecutive sides of $T_l$,  oriented in such a way that they form a direct basis of $\mathbb{C}$ (seen as a $2$-dimensional real vector space). Both $z_{T_l}$ and  $w_{T_l}$ are  linear combinations of $z_1, \ldots, z_{2k}$ and therefore of $z_1, \ldots, z_{k-1} $ thanks to \eqref{E:RelationPolygonalFRho}.  For any $l$, the area $A(T_l)$ is a Hermitian form in $z$. Since the area of the whole surface is given by 
$$A_{\varphi, U}(N) = \sum_{l=1}^L{A(T_l)} \, , $$ 
it follows that  $A_{\varphi, U}$ is indeed a Hermitian form in $(z_1, \ldots, z_{k-1})$.

\vspace{3mm}

The next proposition describes the regularity of the transition maps and settles point $(2)$ of Section \ref{polygonalmodel}.

\begin{prop}
\label{linear}

Let $(\varphi_1, U_1)$ and $(\varphi_2, U_2)$ be two polygonal parametrisations of $\mathcal{F}_{\rho}$ such that $W = \varphi_1(U_1) \cap \varphi_2(U_2) 
\subset \mathcal{F}_{\rho}$ is non-empty, connected and sufficiently small for that the projectivizations $\psi_i : \mathbf PU_i\rightarrow \mathcal{F}_{\rho}$ of the $\varphi_i$'s (see above) induce isomorphisms between $\psi_i^{-1}(W)$ and $W$,  for $i=1,2$.

Then $ \psi_2^{-1} \circ \psi_1 \; :\;  \psi_1^{-1}\big(W\big) \longrightarrow  \psi^{-1}_2\big(W\big) $ 
 is the restriction of the projectivization of a linear map $g\in 
{\rm GL}_{k-1}(\mathbb{C})$ such that $g^*A_{\varphi_1,U_1} = A_{\varphi_2, U_2} $.
\end{prop}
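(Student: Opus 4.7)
My plan is to lift the projective charts $\psi_i = \mathbf{P}\varphi_i$ to their affine polygonal parametrisations $\varphi_i : U_i \to \mathcal{F}_\rho$ and to produce, at the level of the $\mathbb{C}^{k-1}$-valued polygon data, a $\mathbb{C}$-linear intertwiner whose projectivization realises $\psi_2^{-1}\circ \psi_1$. Projectivizing then gives the first claim, while the identification of the two Hermitian area forms will follow geometrically from the fact that both compute the same isometric invariant, namely the area of the flat surface.

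\textbf{Linearity of the change of chart.} Fix a flat surface $N\in W$ together with points $u^{(i)}\in U_i$ such that $\varphi_i(u^{(i)}) = N$ for $i=1,2$. Denote by $P_1, P_2$ the associated pseudo-polygonal models and by $z^{(1)}, z^{(2)} \in \mathbb{C}^{k-1}$ the corresponding coordinates. The edges of $\partial P_2$ project to a collection of piecewise-geodesic paths $\gamma_1, \ldots, \gamma_{2k}$ on $N$. Using the totally geodesic triangulation of $N$ underlying $P_1$ (see the proof of Proposition \ref{pp}), each $\gamma_j$ decomposes into finitely many subsegments, each contained in a single triangle. Developing $\gamma_j$ into $\mathbb{C}$ via the flat structure carried by $N$, the complex increment along each subsegment is, by elementary planar vector geometry, a $\mathbb{Z}[\mathrm{Im}(\rho)]$-linear combination of the side vectors of the ambient triangle; those side vectors are in turn $\mathbb{Z}[\mathrm{Im}(\rho)]$-linear combinations of $z^{(1)}_1, \ldots, z^{(1)}_{k-1}$ once the constraints \eqref{E:RelationPolygonalFRho} have been used to eliminate the dependent components. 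Summing the increments along $\gamma_j$ yields, for each $j\in\{1,\ldots,k-1\}$, an expression
$$ z^{(2)}_j \;=\; L_j\big(z^{(1)}_1, \ldots, z^{(1)}_{k-1}\big)$$
where $L_j$ is complex linear with fixed coefficients. Provided $W$ is chosen small enough that the combinatorial type of the decomposition of $\partial P_2$ by the $P_1$-triangulation remains constant along $\varphi_1^{-1}(W)$, these formulas persist on the whole open set and assemble into an element $g \in \mathrm{GL}_{k-1}(\mathbb{C})$ whose projectivization realises the transition map.

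\textbf{Preservation of the area form.} By the triangulation-based definition given just before the statement, $A_{\varphi_i, U_i}(u^{(i)})$ is equal to the total Euclidean area of the flat surface obtained by gluing the pseudo-polygon $P_{u^{(i)}}$. Since this area is an isometric invariant of the flat surface, it does not depend on the polygonal model used to compute it, so $g^* A_{\varphi_1, U_1}$ and $A_{\varphi_2, U_2}$ agree as Hermitian forms on the non-empty open subset $\varphi_2^{-1}(W) \subset U_2$, and therefore on the whole of $\mathbb{C}^{k-1}$.

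\textbf{Main obstacle.} The delicate step is ensuring that the combinatorial decomposition of the edges of $P_2$ by the $P_1$-triangulation is stable under small deformations; otherwise, an edge of $P_2$ could slide across a vertex of the triangulation and produce a merely piecewise-linear change of chart. This combinatorial stability is exactly what the smallness and connectedness hypothesis on $W$ is designed to provide. Once it is granted, the rest of the argument is bookkeeping with the developing map along the edges of $P_2$, and the linearity of $g$ together with the invariance of the area form drop out of the construction.
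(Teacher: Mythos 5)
Your strategy --- develop the edges of $P_2$ via the flat structure and read off a linear dependence on the $P_1$-data --- is the paper's, with the cosmetic variant that you unroll a geodesic triangulation of $N$ whereas the paper unrolls successive copies of the whole polygon $P_1$. Either bookkeeping works, but your justification of linearity contains a genuine error. You assert that "the complex increment along each subsegment is [\ldots] a $\mathbb{Z}[\mathrm{Im}(\rho)]$-linear combination of the side vectors of the ambient triangle." This is false for a generic subsegment: if $\gamma_j$ enters a triangle at a point $a = v_1 + t\,e_1$ on one edge and exits at $b = v_2 + s\,e_2$ on another, the crossing parameters $t,s$ are determined by the condition that the developed edge be the straight segment between its two cone-point endpoints; they are \emph{ratios} of the geometric data, not fixed scalars in $\mathbb{Z}[\mathrm{Im}(\rho)]$, so the per-triangle increment $b-a$ is not a linear function of $z^{(1)}$ with constant coefficients. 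What rescues the conclusion is that the sum over subsegments telescopes: the developed image of $\gamma_j$ equals (developed endpoint) minus (developed startpoint), and these two endpoints are cone points of $N$, hence vertices of the triangulation, whose developed positions relative to the base triangle genuinely are $\mathbb{Z}[\mathrm{Im}(\rho)]$-linear combinations of $z^{(1)}_1,\ldots,z^{(1)}_{k-1}$. The paper's proof does exactly this --- it develops $w_i$ through a chain $P_0, P_1, \ldots$ of copies of the source polygon and uses only the positions of the two boundary vertices, so the internal crossings never enter. Your sentence "summing the increments along $\gamma_j$ yields" quietly relies on this telescoping without noticing that it is what does all the work, and the preceding claim about individual increments is wrong as stated. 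The area-invariance paragraph is fine and agrees with the paper's one-line argument, and your remark on combinatorial stability of the crossing pattern is a fair explicit version of what the paper leaves implicit when it says the coefficients depend only on $\rho$ and the combinatorics.
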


\begin{proof} Let $P$ and $Q$ be two polygonal models for a flat surface $N$, and immerse $P$ in $\mathbb{C}$. Let $z_1, \ldots, z_{2k}$ be the complex numbers associated to the sides of $P$. Consider now a side $w_i$ of $Q$, and develop it in $\mathbb{C}$ starting from an initial copy of $P$ (say $P_0$) and gluing  a copy $P_{i+1}$ of $P$ to a side of $P_i$ every time it is necessary to keep track of $w_i$. Thus one can express  $w_i$ as a linear combination of the complex number associated to the sides of $P$ and find an expression of $w_i$ of the form 
$$ w_i = \sum_{j=1}^{k-1}{\alpha_{i,j} z_j} $$
where the $\alpha_{i,j}$ are constants depending only on $\rho$ and the combinatorics of the side $w_i$ relatively to $P$. Therefore the coordinates $(w_1, \ldots, w_{k-1})$ depend linearly on $(z_1, \ldots, z_{k-1})$. Swapping the roles of the two charts, one gets that the transition map actually lies in $\mathrm{GL}_{k-1}(\mathbb{C})$. The area  only depends on the underlying surface and therefore does not depend on the parametrisation. 
\end{proof}
\mk

The proposition above tells us that the polygonal charts endow $ \mathcal{F}_{\rho} $ with a complex projective structure (and with an additional structure coming from the preserved area form, which will be investigated later). The previous analysis invites us to define a more general class of parametrisations : 

\begin{defi}
A local holomorphic parametrisation $(z_1, \ldots, z_{k-1})$ 
of  $\mathcal{F}_{\rho}$ is called a \textbf{linear parametrisation} if it depends linearly on a polygonal parametrisation. 
\end{defi}

This class is much more convenient than the class of polygonal parametrisation because it is the larger class of holomorphic charts enjoying the property that the area form is Hermitian in the associated coordinates. We will also see in Section \ref{topgluing} that it is possible to build  other such linear parametrisations in a natural way  which will be extensively used throughout the article.

\subsection{Projection onto $\F$.}
We have built in this section projective charts on $\mathcal{F}_{\rho} 
\subset \mathcal T\!\!\!{\it eich}_{g,n}$. If $N \in \F$ is a regular point of $\F$ \textit{i.e.} if the projection 
$$ \pi : \mathcal{F}_{\rho} \longrightarrow \F$$
 is a local homeomorphism at $N$, any chart at $\widetilde{N} \in \pi^{-1}(N)$ can be pushed forward and gives a chart at $N$. The fact that $N$ is not regular is equivalent to the fact that $\mathrm{PIso}^+(\widetilde{N})$, the group of pure direct isometries of the flat surface $\widetilde{N}$ (see \S\ref{SS:IsometryGroup}),  is non-trivial. In that case any chart at $\tilde{N}$ gives a non-injective local parametrisation of a neighbourhood of $N$ in $\F$ whose transformation group is the stabilizer of $\widetilde{N}$ in ${\rm PMCG}_{1,n}$
which is isomorphic to  $\mathrm{PIso}^+(\widetilde{N})$.

\subsection{Parametrisations coming from topological gluing.}
\label{topgluing}
Here  we describe pa\-rametrisations which are generalisations of polygonal parametrisations: we are just going to relax the condition that the sides of the polygon we are gluing be geodesic.

Consider a (topological) triangulation $\mathscr T$  of $N_{g,n}$ such that the set of vertices is exactly  the set of cone points of $N$.  As explained in \cite{Thurston} in genus $0$  (see \cite{Schwartz} for details) and \cite[$\S$10]{Veech} in arbitrary genus, one can find a graph  in the $1$-skeleton of $\mathscr T$,   such that its complement $Q$ in $N_{g,n}$ is simply connected.  $Q$ is a topological disk endowed with a flat metric whose boundary corresponds to consecutive edges of triangulation.  Let $F : Q \longrightarrow \mathbb{C}$ be  a developing map of the flat metric on $Q$ and let $q_1, \ldots, q_{2k}, q_{2k+1} =q_1 $ be  the vertices of  the boundary $\partial Q$  of the metric completion $\overline{Q}$ of $Q$ for the length distance induced by the flat structure of $Q$. The map $F$ extends continuously to $\overline{Q}$ and one sets $\xi_i = F(q_{i+1}) - F(q_i) $ for $i=1,\ldots,2k$.   The following proposition  holds true: 
\begin{prop}
\label{top}
For an appropriate choice of pairwise distinct indices $i_1, \ldots, i_{k-1} $ in $ \{ 1, \ldots, 2k\}$, the parameters $(\xi_{i_1}, \ldots, \xi_{i_{k-1}})$ form a linear parametrisation of\;${\mathcal F}_\rho$.
\end{prop}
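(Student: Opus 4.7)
The strategy is to identify $(\xi_1,\ldots,\xi_{2k})$ as the restriction to $\mathcal F_\rho$ of linear functionals on a polygonal chart and then to extract a basis among them, so that Proposition \ref{linear} produces the desired linear parametrization. I will first describe the linear subspace of $\mathbb{C}^{2k}$ in which $(\xi_1,\ldots,\xi_{2k})$ takes values, then compare with a polygonal chart, and finally select $k-1$ independent coordinates.

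First I would write down the linear relations between the $\xi_i$'s. Since $\partial \overline{Q}$ is a closed piecewise linear loop when developed, one gets the closure equation $\sum_{i=1}^{2k}\xi_i = 0$. Moreover, each of the $k$ pairings between edges of $\partial Q$ produces (by developing a small neighborhood of the identified edge in $N$) a relation of the form $\xi_i + \rho_{ij}\xi_j = 0$ with $\rho_{ij}\in\mathrm{Im}(\rho)$, the sign reflecting the orientation-reversal built into the edge identification. Let $V\subset\mathbb{C}^{2k}$ denote the subspace cut out by these $k+1$ equations. Using that $\rho$ is non-trivial (so that the closure relation is independent of the pairing relations), a direct count gives $\dim_{\mathbb{C}} V = k-1$, which is reassuring since $\mathcal F_\rho$ itself has complex dimension $k-1=2g-2+n$.

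Second, I would fix a polygonal parametrization $(z_1,\ldots,z_{k-1})$ of $\mathcal F_\rho$ at the given flat surface $N$, obtained from a geodesic triangulation $\mathscr T'$ refining $\mathscr T$: each topological $2$-face of $\mathscr T$ can be subdivided into geodesic sub-triangles (possibly after adding interior vertices), the cutting graph producing $Q$ being kept inside the $1$-skeleton of the refinement. For each boundary vertex $q_i\in\partial Q$, the developed image $F(q_i)\in\mathbb C$ is expressible as a fixed $\mathbb{C}$-linear combination of $z_1,\ldots,z_{k-1}$, the coefficients depending only on $\rho$ and on the combinatorics of a chosen path in $\mathscr T'$ from a basepoint to $q_i$. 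Consequently, each $\xi_i = F(q_{i+1})-F(q_i)$ is a $\mathbb{C}$-linear form in $(z_1,\ldots,z_{k-1})$, defining a linear map $\Phi:\mathbb C^{k-1}\to V$. The map $\Phi$ has trivial kernel, since each $z_j$ can be recovered by summing suitable $\xi_i$'s (weighted by characters in $\mathrm{Im}(\rho)$) along a path in $\partial Q$ from one endpoint of the $j$-th edge of the polygonal model to the other; by equality of dimensions, $\Phi$ is an isomorphism.

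Finally, choosing $i_1,\ldots,i_{k-1}\in\{1,\ldots,2k\}$ such that $(\xi_{i_1},\ldots,\xi_{i_{k-1}})$ is a linear parametrization amounts to selecting a $(k-1)\times(k-1)$ invertible minor in the full-rank matrix of $\Phi$, which is elementary linear algebra. The composition $(z_1,\ldots,z_{k-1})\mapsto(\xi_{i_1},\ldots,\xi_{i_{k-1}})$ then lies in $\mathrm{GL}_{k-1}(\mathbb{C})$, and Proposition \ref{linear} applies to conclude that the $\xi_{i_j}$ form a linear parametrization of $\mathcal F_\rho$. The main obstacle I foresee is contained in the second step, namely the combinatorial task of refining $\mathscr T$ to a geodesic triangulation while preserving the cutting graph, so that the polygonal chart and the topological one sit inside a common pseudo-polygonal model; once this compatibility is in place, the remainder of the argument is routine linear algebra.
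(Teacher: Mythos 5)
Your overall plan—expressing the $\xi_i$'s as linear forms in a known polygonal parametrisation $(z_1,\ldots,z_{k-1})$, checking the converse, and extracting an invertible $(k-1)\times(k-1)$ minor—is exactly what the paper has in mind: it simply says ``arguments similar to those of the proof of Proposition~\ref{linear}.'' Your bookkeeping via the subspace $V$ is a slightly heavier version of the ``swap the two charts'' argument used there, but it points at the same conclusion.

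The step that actually fails is the one you flag yourself: the refinement to a geodesic triangulation $\mathscr{T}'$ containing the cutting graph $\Gamma$ in its $1$-skeleton. The edges of $\Gamma$ are arbitrary topological arcs between cone points, not geodesics, so they cannot be edges of a geodesic triangulation; and subdividing them into geodesic pieces forces you to add vertices at \emph{regular} points, which is incompatible with the definition of a polygonal/pseudo-polygonal model (the chart dimension $k-1 = 2g-2+n$ is tied to the vertices being exactly the cone points). As stated, this step cannot be carried out. Fortunately it is also unnecessary. What the proof of Proposition~\ref{linear} does, and what should be transplanted here verbatim, is a \emph{development-against-a-fixed-model} argument: take a geodesic pseudo-polygonal model $P$ (it exists by Proposition~\ref{pp}), immerse it in $\mathbb{C}$, and develop $\overline Q$ by laying down successive copies of $P$ and applying the edge-identification isometries each time a side of $P$ is crossed. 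The vertices $q_i$ of $\partial Q$ are cone points, hence vertices of copies of $P$, so each $F(q_i)$—and therefore each $\xi_i=F(q_{i+1})-F(q_i)$—is a $\mathbb{Z}[\mathrm{Im}(\rho)]$-linear combination of $z_1,\ldots,z_{k-1}$. Geodesity of $\partial Q$ plays no role; only the homotopy class of the chosen path from the basepoint to $q_i$ in $N^*$ enters the coefficients. Swapping the roles of $P$ and $Q$ (developing $P$ by crossing edges of $\Gamma$) expresses each $z_j$ as a $\mathbb{Z}[\mathrm{Im}(\rho)]$-linear combination of the $\xi_i$'s, giving the injectivity you need. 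Your final step—choosing $k-1$ of the $\xi_i$ realizing a full-rank minor—is then correct and closes the proof without ever invoking $V$ or a dimension count of the relation space.
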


Notice that if the triangulation $\mathscr T$ used to construct them was totally geodesic then these coordinates would form a polygonal parametrisation. The proof  
 uses arguments similar  to those of the proof of Proposition \ref{linear}.

\section{\bf GEOMETRIC PROPERTIES OF FLAT SURFACES \\ AND CHARACTERISTIC FUNCTIONS}
\label{geometricproperties}
In this section we develop material and prove several technical lemmas about the intrinsic geometry of flat surfaces which will be used in Sections \ref{completion}  and \ref{cusps} in order to understand the geometry of the moduli spaces $\F$. Most of the work done in this paper is about reinterpreting questions regarding the geometry of these moduli spaces, in terms of how  flat surfaces can degenerate. The material developed below goes some way to answering these questions.
\mk

We denote  by $h$ the flat metric on a given flat surface $N$ and  by $d_h$ (or just by $d$ for short) the induced distance (see Section \ref{properties}). We also denote by $S \subset N$ the set of conical points of $N$ (for the flat structure induced by $h$).

\subsection{Characteristic functions.}

We define four quantities associated to $N$:

\begin{enumerate}

\item[--] its \textbf{systole}\footnote{There in no systole when $g=0$.} : $$  \sigma(N) = \sigma(N,h)  = \inf_{}{ \Big\{L_h(\gamma)\ \big| \ \gamma \ 
 \text{simple}  \ \text{essential}  \ \text{closed}  \ \text{curve}  \Big\} }; $$

\item[--] its \textbf{relative systole}  : $$   \delta(N)= \delta(N,h) = \inf_{}{ \Big\{ L_h (\gamma) \ \big| \ \gamma \ 
 \text{joining}  \ \text{two} \ \text{distinct}  \ \text{singular}  \ \text{points}\Big\} }; $$

\item[--] its \textbf{diameter}: $$ D(N) = D(N,h) = \sup_{x,y \in N}{d_h(x,y)}; $$

\item[--] its \textbf{relative diameter} : $$  s(N) = s(N,h) = \sup_{x \in N}{d_h(x,S)}. $$
\end{enumerate}
(The terminology \textit{relative} is inspired by the terminology used for translation surfaces, where a \textit{relative period} of an abelian   form on a Riemann surface is the value of the integral of this $1$-form on a path linking two of its zeroes).   

Note that these four quantities all depend linearly on a rescaling of $h$. Most of the time, we will consider them under the supplementary assumption that the area of $N$ is 1. In this case, one gets geometric invariants attached to $N$.
\sk 

A classical fact from Riemannian geometry (see Section \ref{properties}) is that $\sigma, \delta$, $D$ and $s$ all are realised by piecewise geodesic paths, singular only at points where they cross singular points of $N$.

\begin{prop}
\label{compare}
The  following four  inequalities hold true:
\begin{align*}
& {\rm (1)}\; D(N) \geq \delta(N) ;  &&  {\rm (2)}\; D(N) \geq \sigma(N) /2;\\ 
& {\rm (3)}\;D(N) \geq s(N);  &&  {\rm (4)}\;  s(N)  \geq D(N)/(2n)\, .
\end{align*}
\end{prop}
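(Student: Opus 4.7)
My plan is to prove each of the four inequalities separately, in order of increasing difficulty. Inequalities (1) and (3) should follow almost immediately from the definitions and the triangle inequality, so I will dispatch them first. For (1), any two distinct singular points $p,q\in S$ satisfy $d(p,q)\leq D(N)$; since $\delta(N)$ is the minimum of such $d(p,q)$ (which is realised by a piecewise geodesic path per Section~\ref{properties}), we get $\delta(N)\leq D(N)$. For (3), the inequality $d(x,S)\leq \sup_{y\in N} d(x,y)\leq D(N)$ holds for every $x\in N$, and taking the supremum in $x$ gives $s(N)\leq D(N)$.

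The inequality (2) is a standard systolic argument but requires a small surgery. I take a simple essential closed geodesic $\gamma$ of length $\sigma(N)$ (existence is guaranteed by the results recalled in Section~\ref{properties}) and two antipodal points $p,q$ on $\gamma$, so the two subarcs $\gamma_1^0$ and $\gamma_2^0$ of $\gamma$ from $p$ to $q$ each have length $\sigma(N)/2$. Suppose for contradiction that $d(p,q)<\sigma(N)/2$ and let $\alpha$ be a piecewise geodesic from $p$ to $q$ realising this distance. The two concatenations $\gamma_i:=\gamma_i^0\ast\alpha^{-1}$ ($i=1,2$) are closed curves of length strictly less than $\sigma(N)$. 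In $\pi_1(N,p)$, the product $\gamma_1\gamma_2^{-1}$ is freely homotopic to $\gamma$, which is essential, so at least one of $\gamma_1,\gamma_2$ is non-trivial. A non-trivial closed curve contains a simple essential closed subcurve of smaller (or equal) length, contradicting the minimality of $\sigma(N)$. Hence $d(p,q)\geq \sigma(N)/2$, and a fortiori $D(N)\geq\sigma(N)/2$.

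For (4), which is really the main point of the proposition, I will use a covering argument on a minimising path. Let $\gamma:[0,D(N)]\to N$ be a unit-speed geodesic realising the diameter, so $d(\gamma(0),\gamma(D(N)))=D(N)$, and denote the cone points by $p_1,\ldots,p_n$. Define
\[
A_i:=\Bigl\{t\in[0,D(N)]\;\Big|\;d\bigl(\gamma(t),p_i\bigr)\leq s(N)\Bigr\}
\]
for $i=1,\ldots,n$. By definition of $s(N)$ every point of $N$ is within distance $s(N)$ of some $p_i$, so the $A_i$ cover $[0,D(N)]$. Now for $a,b\in A_i$ the triangle inequality yields $d(\gamma(a),\gamma(b))\leq 2s(N)$. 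Because $\gamma$ is a length-minimising geodesic, the restriction of $\gamma$ to any subinterval is still minimising, and the distance in $N$ coincides with the intrinsic distance along $\gamma$; hence $|a-b|=d(\gamma(a),\gamma(b))\leq 2s(N)$. Each $A_i$ is therefore a subset of $[0,D(N)]$ of Lebesgue measure at most $2s(N)$, and summing gives $D(N)\leq 2n\,s(N)$.

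The only non-routine point is (2), where care must be taken to produce a \emph{simple} essential curve shorter than the systole from the a priori non-simple curves $\gamma_1,\gamma_2$; this is why I insist on extracting a simple essential subcurve from a non-trivial one. The arguments for (1), (3), and (4) are otherwise direct applications of the metric facts recalled in Section~\ref{properties}.
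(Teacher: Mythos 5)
Your proof is correct. Items (1) and (3) are immediate, and you treat them exactly as the paper does. For (2), your argument is essentially the paper's: split the systolic curve $c$ into two arcs of equal length at a pair of points $p,q$, suppose a shortcut $\alpha$ of length $<\sigma(N)/2$ exists, and concatenate. You are, however, more careful on a point the paper glosses over: the systole is defined as an infimum over \emph{simple} essential closed curves, while the curves $\gamma_1,\gamma_2$ produced by the shortcut are only guaranteed to be essential (at least one of them) and may self-intersect. Your explicit extraction of a simple essential subcurve of no greater length closes that small gap.

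For (4) you take a genuinely different route. The paper joins each endpoint of a diameter-realising pair to a nearby cone point (cost $\leq s(N)$ each) and then argues that the graph on the $n$ cone points with an edge whenever two of them are within $2s(N)$ of one another is connected; a minimal-distance-between-components argument shows that otherwise some midpoint would be farther than $s(N)$ from $S$, and the resulting chain of at most $n-1$ edges yields $D(N)\leq 2s(N)+(n-1)\cdot 2s(N)=2ns(N)$. You instead take a unit-speed minimising geodesic $\gamma:[0,D(N)]\to N$ realising the diameter and cover $[0,D(N)]$ by the $n$ sets $A_i=\{t:d(\gamma(t),p_i)\leq s(N)\}$; since $\gamma$ is minimising, $|a-b|=d(\gamma(a),\gamma(b))\leq 2s(N)$ for $a,b\in A_i$, so each $A_i$ has Lebesgue measure at most $2s(N)$, and subadditivity gives $D(N)\leq 2ns(N)$ in one step. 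The covering argument avoids both the auxiliary graph and the connectivity argument, so it is somewhat more streamlined, while the paper's version has the small side benefit of producing an explicit chain of saddle connections between $S$-points; both yield the same constant $2n$.
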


\begin{proof} The first and third inequalities are obvious. We now prove the second one. Consider $c$ a curve realising $\sigma(N)$. Let $p$ and $q$ be two points on $c$ diametrically opposed (by this we mean that they cut $c$ into two parts of equal length). We claim that $d(p,q) = \sigma(N)/2$. Otherwise there would be a path of length strictly smaller than $\sigma(N)/2$ going from $p$ to $q$. This path completed with one of the parts of $c$ going from $p$ to $q$ would form an essential closed curve of length smaller than $\sigma(N)$. Since $d(p,q) = 
 \sigma(N)/2$, we have $D(N) \geq {\sigma(N)}/{2}$. 
\sk

Finally we prove (4). Let now $p$ and $q$ be two points realising  $D(N)$. The point $p$ can be joined to a  point $s_p\in S$ by a path of length at most $s(N)$, and $q$ to $s_{q} \in S$ by a path of length at most $s(N)$. Note that given $s'\in S$, there exists $s''\in S$ distinct from $s'$ which can be joined to the latter by a path of length at most $2s(N)$. 
 We now prove that one can join $s_p$ and $s_{q}$ by a path going from singular point to singular point with leaps of length less than $2s(N)$.
Let $\Gamma$ be the graph whose set of vertices is $S$
and for which there is an edge between two singular points if they are at distance less than $2s(N)$. We claim that  $\Gamma$ is connected. If not, 
let $(\Gamma_1, \Gamma_2)$ be a pair of two distinct connected components of $\Gamma$ with $d(\Gamma_1,\Gamma_2)$ minimal and pick 
two singular points $s_i\in \Gamma_i$ for $i=1,2$ such that $d(s_1,s_2)=d(\Gamma_1,\Gamma_2)$.
Since the distance between the two considered connected components has been chosen  minimal, the (piecewise) geodesic path realising the distance between $s_1 $ and $s_2$ cannot meet any other singular point on the way. If its length was more than $2s(N)$,  its middle point would be at a distance larger than $s(N)$ from the set of singular points which is impossible. This  means that we can build the announced path. Remark that we can find such 	a path which  visits each singular point only once. Such a path has length at most $(n-1)2s(N)$, hence $D(N) \leq 2s(N)+ (n-1)2s(N)=2ns(N)$.
 \end{proof}

\subsection{Voronoi decomposition and Delaunay triangulation.}
\label{S:VoronoiDelaunay}
We explain briefly a well-known but important construction in the realm of flat surfaces. 
We omit the proofs below and refer to \cite{MasurSmillie} for a careful and detailed treatment.  
\sk 

The \textbf{Voronoi decomposition} of $N$ is defined as follows:

\begin{itemize}

\item the $2$-cells are the connected components of the set of points $p\in N$ such that $d(p,S)$ is realised by a unique geodesic path; \sk

\item the $1$-cells are the connected components of the set of points $p\in N$ such that $d(p,S)$ is realised by exactly two distinct geodesic paths; \sk

\item  the $0$-cells are the connected components of the set of points $p\in N$ such that $d(p,S)$ is realised by at least three distinct geodesic paths.
\end{itemize}

\noindent It is checked in \cite{MasurSmillie} (see Proposition 4.1) that $0$-cells are points and $1$-cells are totally geodesic paths.
\sk 

The \textbf{Delaunay decomposition} is defined as the polygonal decomposition which is dual to the Voronoi decomposition in the following way. One checks that  $D_p$,  the Euclidean disk of radius $d(p,S)$,  injects at $p$ for any $p$ being a $0$-cell of the Voronoi decomposition. A Delaunay $2$-cell is defined 
 as  the convex hull of the elements of $S$ belonging to $\partial D_p$. A $1$-cell is a connected component of the boundary in $N\setminus S$ of such a convex hull and a $0$-cell is a element of $S$. \sk

 In \cite[Lemma 4.3 and Theorem 4.4]{MasurSmillie},  it is checked that :

\begin{itemize}

\item the set of $0$-cells is exactly $S$; \sk 

\item $1$-cells are saddle connections; \sk

\item for each $1$-cell $C_1$, there are two distinct $2$-cells $C_2$ and $C_2'$ such that $C_1 \sqcup C_2 \sqcup C_2'$ is a neighbourhood of $C_1$ in $N$; \sk

\item a Delaunay $2$-cell is isometric to a convex Euclidean polygon inscribed in a circle of radius less than $s(N)$; \sk

\item Delaunay $1$-cells have length smaller than or equal to $2s(N)$.

\end{itemize}

From the Delaunay decomposition (which is unique and only depends on the geometry of $N$) one can get a \textbf{Delaunay triangulation} by subdividing the $2$-cells into triangles. Notice that a Delaunay triangulation is not necessarily a simplicial triangulation since a triangle might not be determined by its vertices.  We have now as an immediate corollary of this construction and of Proposition \ref{compare}:

\begin{prop}
\label{triangulation}
The length of any 1-cell of any Delaunay triangulation of $N$ is always smaller than $2 D(N)$.
\end{prop}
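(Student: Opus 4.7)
The plan is to reduce the statement to the length bound on chords of the circumscribed circles of the Delaunay $2$-cells, which was recalled just before the proposition.

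First, I would classify the $1$-cells of a Delaunay triangulation according to whether they already appeared in the Delaunay decomposition. Every edge of the triangulation is either (a) a $1$-cell of the Delaunay decomposition, i.e.\ a saddle connection bounding some Delaunay $2$-cell, or (b) a diagonal added when refining a Delaunay $2$-cell into triangles. In case (b), the diagonal joins two vertices of a single $2$-cell, so in both cases the edge is a chord (with endpoints in $S$) of the convex Euclidean polygon underlying some Delaunay $2$-cell $C$.

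Next I would invoke the key property of $C$ recalled from \cite{MasurSmillie}: $C$ is inscribed in a circle of radius strictly less than $s(N)$ (the circle centred at the dual Voronoi $0$-cell, of radius equal to the distance from that $0$-cell to $S$, which is bounded by $s(N)$). Any chord of such a circle has length at most twice its radius, hence strictly less than $2s(N)$. This uniformly bounds the length of any $1$-cell of the Delaunay triangulation by $2s(N)$.

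Finally, I would combine this with inequality $(3)$ of Proposition \ref{compare}, namely $s(N)\leq D(N)$, to conclude that the length of any such $1$-cell is at most $2s(N)\leq 2D(N)$, yielding the claim. The argument is essentially a bookkeeping step gluing together the Masur--Smillie facts with Proposition \ref{compare}; I do not expect any genuine obstacle, the only point requiring a little care being to ensure that the added diagonals (which are a priori not saddle connections of $N$) still develop as chords of the circumscribed circle of the corresponding Delaunay $2$-cell, which is clear because such a $2$-cell is intrinsically isometric to a convex Euclidean polygon and its interior contains no cone point.
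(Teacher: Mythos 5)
Your proof is correct and follows exactly the route the paper intends: the paper states the proposition as an "immediate corollary" of the Masur--Smillie facts (in particular that every Delaunay $2$-cell is a convex Euclidean polygon inscribed in a circle of radius less than $s(N)$) together with Proposition \ref{compare}(3), and your argument simply spells out that every triangulation edge, whether an original $1$-cell or an added diagonal, is a chord of such a circumscribed circle, hence of length at most $2s(N)\leq 2D(N)$.
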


We also prove the following lemma:
\begin{lemma}
\label{realising}
The interior of any path in $N$ realising $\delta(N)$ is a $1$-cell of the Delaunay decomposition of $N$ (hence is a 1-cell  of any Delaunay triangulation of~$N$).
\end{lemma}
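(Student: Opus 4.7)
The plan is to argue in two stages: first extract the local Euclidean geometry around a $\delta$-minimising path, then recognise from this geometry that the path is an edge of a Delaunay $2$-cell.

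First I would show that any path $\gamma$ realising $\delta(N)$ is a saddle connection joining two distinct singular points $s_1,s_2$, and that for every interior point $p\in\gamma$ one has $d(p,s_1)+d(p,s_2)=\delta(N)$, with the two distances realised precisely by the sub-arcs of $\gamma$ on each side of $p$. This comes from a direct shortcut argument: any shorter path from $p$ to some $s_i$, or any singular point $s\in S$ closer to $p$ than $\min(d(p,s_1),d(p,s_2))$, could be concatenated with a half of $\gamma$ to produce a path between two distinct singular points of length strictly less than $\delta(N)$, contradicting minimality. Consequently $d(p,S)=\min(d(p,s_1),d(p,s_2))$, and at the midpoint $m$ of $\gamma$ one obtains $d(m,s_1)=d(m,s_2)=d(m,S)=\delta(N)/2$.

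The key geometric consequence is that the exponential map $i_m$ at the regular point $m$ embeds the open Euclidean disk $D$ of radius $\delta(N)/2$ into $N$, with $\gamma$ as a diameter, $s_1$ and $s_2$ as antipodal points of $\partial D$, and no singular point in the open interior of $D$. This is exactly the configuration characterising a Delaunay edge: $\gamma$ is a saddle connection between two singular points that appears as a chord of an embedded Euclidean disk whose interior avoids $S$.

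To match this with the definition in Section \ref{S:VoronoiDelaunay}, I would distinguish two cases according to the nature of $m$ in the Voronoi decomposition (noting that at least two geodesics, namely the two halves of $\gamma$, realise $d(m,S)$, so $m$ lies in a Voronoi cell of dimension $\leq 1$). If $m$ is a Voronoi $0$-cell, then $D=D_m$ and the chord $\gamma$ is by construction an edge of the Delaunay $2$-cell attached to $m$. Otherwise $m$ sits in the interior of a Voronoi $1$-cell $V$ separating the Voronoi $2$-cells of $s_1$ and $s_2$; picking any Voronoi $0$-cell $p_0\in\overline V$, both $s_1$ and $s_2$ belong to $\partial D_{p_0}$, and $\gamma$ coincides with the corresponding edge of the Delaunay $2$-cell at $p_0$. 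I expect the main subtlety to be showing that $s_1$ and $s_2$ are actually \emph{consecutive} vertices of this inscribed polygon, which I would handle by a continuity argument along $V$ close to $p_0$: on the interior of $V$ only $s_1$ and $s_2$ realise $d(\cdot,S)$, forbidding any intermediate singular point on the corresponding arc of $\partial D_{p_0}$.
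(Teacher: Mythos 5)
Your overall plan is the same as the paper's: use a shortcut argument to understand the local geometry around the midpoint $m$ of $\gamma$, then recognise $\gamma$ as a Delaunay $1$-cell from the definition via Voronoi cells. But there is a genuine gap in the first step. Your shortcut argument rules out a singular point, or a geodesic to $s_1$ or $s_2$, that is \emph{strictly} closer to $m$ than $\delta(N)/2$; it does not rule out a \emph{third} path from $m$ to $S$ of length exactly $\delta(N)/2$. That is precisely what is needed to conclude that $m$ lies in the interior of a Voronoi $1$-cell rather than being a $0$-cell, and it is where the paper invokes a shortening step that your proposal omits: any third realising path, concatenated with one of the two halves of $\gamma$, gives a path of total length $\delta(N)$ between two distinct singular points which is singular (has a corner) at $m$, and hence can be shortened to a path of length strictly less than $\delta(N)$, violating the minimality of $\delta(N)$.

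Without this, your Case 1 cannot be dismissed, and the argument you give there has its own gap: the claim that $\gamma$ is ``by construction'' an edge of the Delaunay $2$-cell at $m$ is unjustified. If $\partial D_m$ carried singular points on both of the open semicircles determined by the diameter $\gamma$, then $\gamma$ would be a diagonal of the inscribed polygon, not a side, hence not a Delaunay $1$-cell. Once the shortening argument is in place, this configuration (and indeed all of Case 1) is impossible, $m$ automatically sits in the interior of a Voronoi $1$-cell, and your Case 2 analysis then closes the proof along the lines the paper has in mind when it calls the final step a ``direct consequence of the construction.''
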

\begin{proof} Remark that if a saddle connection is such that the only paths realising the distance of its middle point to $S$ are the two paths connecting the middle point to the end points, then it is a $1$-cell of the Delaunay decomposition. This is a direct consequence of the construction of the latter. We now check that such a saddle connection $\gamma$ realising $\delta(N)$ must verify the above property.

Assume that there is a second  path $u$ going from $p \in S$ to the middle point of $\gamma$ whose length is less than ${\delta(N)}/{2}$. The point $p$ must be different from one of the two endpoints of $\gamma$, and if concatenating the half of $\gamma$ starting from this point and $u$,  one gets a path $v$ of length less than $\delta(N)$ going from two distinct elements of $S$. Being singular at the middle of $\gamma$, $v$ can be shortened in order to get a path whose length is strictly less than $\delta(N)$ which is impossible. Therefore $\gamma$ must be a $1$-cell of the Delaunay decomposition.
\end{proof}

\subsection{Surfaces with large diameter.}

The aim of this subsection is to prove that flat surfaces with large diameter and finite linear holonomy must necessarily contain long flat cylinders. If one dismisses the hypothesis that the linear holonomy is finite, one can build counterexamples by gluing cones of very small angle. This was already known for spheres (see \cite{Thurston}) or when the linear monodromy ranges in $\{ -1, 1\}$ (see \cite[Corollary 5.5]{MasurSmillie}). The proof of Proposition \ref{diameter} below is highly inspired by the techniques developed in \cite{MasurSmillie}.
\mk

\paragraph{\bf Elementary facts about cones.}
We remind the reader that  $C_{\theta}$ stands for the (Euclidean) cone of angle $\theta\in ]0, +\infty[$, namely the metric space obtained by gluing the sides of a plane sector of angle $\theta$.  Its vertex is denoted by $0$ and one sets 
$C^*_{\theta}=C_{\theta}\setminus \{0\}$.  {This cone with the apex removed, does not contain closed regular geodesic but,  when $\theta < \pi$, it contains piecewise geodesic paths with only one angular point. More precisely:}

\begin{prop}
\label{cone}
If  $\theta < \pi$ then for any point $p\in C^*_{\theta}$: 
\begin{itemize}
\item there exists a unique closed simple piecewise geodesic  path in $C^*_{\theta}$ singular only at $p$;
\sk
\item the interior angle of the latter at the angular point is $\pi - \theta$;
\sk
\item the length of this piecewise geodesic path is $2 \sin({\theta}/{2}) \cdot d(0,p) $.  
\end{itemize}
\end{prop}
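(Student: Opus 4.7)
The idea is to develop the cone $C_\theta$ into the Euclidean plane by cutting along a geodesic ray through $p$, reducing everything to elementary trigonometry in an isoceles triangle.

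Setting $r = d(0,p)$, I cut $C_\theta$ along the closed geodesic ray from the apex $0$ through $p$ and out to infinity; the complement unfolds isometrically onto a closed Euclidean sector $\Sigma \subset \mathbb{R}^2$ of angle $\theta$ based at $0$, with $p$ producing two boundary points $p_1$ and $p_2$ lying at distance $r$ from $0$ on the two bounding rays of $\Sigma$.

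Next, let $\gamma$ be any closed simple piecewise geodesic in $C^*_\theta$ singular only at $p$. Since $C^*_\theta$ is topologically an open cylinder, $\gamma$ is either null-homotopic or generates $\pi_1(C^*_\theta) \simeq \mathbb{Z}$. The null-homotopic case is ruled out by Gauss--Bonnet applied to the flat topological disk that $\gamma$ would bound: the only boundary contribution is the turning $\pi - \alpha$ at the single corner, which would have to equal $2\pi$, forcing the absurd value $\alpha = -\pi$ for the interior angle. So $\gamma$ wraps exactly once around the apex. Lifting $\gamma$ to the universal cover of $C^*_\theta$, identified via polar coordinates with $\{r>0,\ \phi\in\mathbb{R}\}$ equipped with the flat metric $dr^2 + r^2 d\phi^2$, it becomes a geodesic arc between two lifts of $p$ whose angular coordinates differ by $\theta$. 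Developing the corresponding strip into $\mathbb{R}^2$ identifies this arc with the straight segment joining $(r\cos\phi_0,r\sin\phi_0)$ to $(r\cos(\phi_0+\theta),r\sin(\phi_0+\theta))$, whose midpoint lies at distance $r\cos(\theta/2)$ from the origin, and hence stays in $\Sigma \setminus \{0\}$ exactly when $\theta < \pi$. In that range this yields both existence and uniqueness, the projected segment being precisely $[p_1,p_2]$.

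Finally, elementary trigonometry in the isoceles triangle $0 p_1 p_2$ with legs of length $r$ and apex angle $\theta$ gives $|p_1 p_2| = 2r \sin(\theta/2)$ and equal base angles $(\pi - \theta)/2$. Regluing $p_1$ to $p_2$ in $C_\theta$ identifies the two bounding rays of $\Sigma$ pointwise and sums the two base angles on the apex side of $\gamma$, producing the claimed interior angle $\pi - \theta$ at $p$. The one point requiring care is showing that the lift of $\gamma \setminus \{p\}$ has no interior break in $\Sigma$ --- which follows from $\gamma$ being geodesic outside $p$ --- and no higher winding, which is excluded by the simplicity of $\gamma$ together with the Gauss--Bonnet argument above.
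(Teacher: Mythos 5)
Your proof is correct and uses the same underlying idea as the paper — unfold the cone along the geodesic ray through $p$ into a Euclidean sector of angle $\theta$ and read off the chord $[p_1,p_2]$ in the resulting isoceles triangle; the paper simply points to Figure \ref{closedgeodesic} and calls the rest straightforward, whereas you supply the missing details (the Gauss--Bonnet exclusion of the null-homotopic case, the universal-cover/winding argument for uniqueness, and the condition $\theta<\pi$ guaranteeing the chord avoids the apex).
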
  
\begin{proof} The proof of the proposition is straightforward after noticing that such a cone is obtained after doing the gluing pictured in Figure 
\ref{closedgeodesic}.
 \end{proof}
\begin{figure}[!h]
\centering
\psfrag{t}[][][1]{$\theta $}
\psfrag{p}[][][1]{$p $}
\psfrag{q}[][][0.7]{$\quad \pi-\theta $}
\includegraphics[scale=0.9]{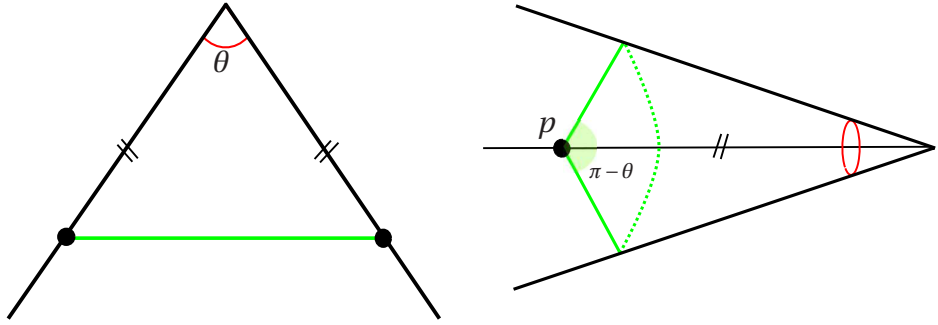}
\caption{The simple closed piecewise geodesic path 
with one angular point at $p$ on $C_{\theta}$ (in green).}
\label{closedgeodesic}
\end{figure}
%
%

\begin{lemma}
\label{distance}
Let $N$ be a flat surface and $\gamma$ be a piecewise geodesic path of length $L(\gamma)$ on $N$ with one angular point  which avoids conical points. 
Assume that in a small neighbourhood of its angular point, $\gamma$ cuts $N$  into two angular sectors of angles $\pi + \theta$ and $ \pi - \theta$ respectively, with $0 < \theta < \pi$.
  Then
\begin{enumerate}
\item  the linear holonomy along $\gamma$ is $e^{i\theta}$ or $e^{-i\theta}$;
\sk 
\item there is a cone point $q$ of $N$ such that $d(q, \gamma) \leq {L(\gamma)}/\big({2\tan ({\theta}/{2})}\big)$.
\end{enumerate}
\end{lemma}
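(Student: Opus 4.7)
Parallel transport along each smooth geodesic segment of $\gamma$ preserves the tangent direction, so the linear holonomy accumulates only at the angular point $p$. There, the velocity vector of $\gamma$ jumps by the exterior angle. Since the two interior angles at $p$ are $\pi - \theta$ and $\pi + \theta$, the corresponding exterior angles are $+\theta$ and $-\theta$, so that the linear holonomy along $\gamma$ equals $e^{\pm i\theta}$.

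\textbf{Part (2).} The plan is to close $\gamma$ into a loop in $N$ via a Euclidean straight segment in a developed picture, and then exploit the non-trivial holonomy provided by Part (1). Write $\gamma = \gamma_1 \cup \gamma_2$ with $\gamma_1 = [a, p]$, $\gamma_2 = [p, b]$ of lengths $L_1$ and $L_2$, so $L(\gamma) = L_1 + L_2$. Using the exponential map $i_p$ at the regular point $p$ on the concave side (where the interior angle is $\pi - \theta$), develop $\gamma$ into $\mathbb{C}$: in suitable Euclidean coordinates one may take $p = 0$, $a = (-L_1, 0)$ and $b = L_2(\cos\theta, \sin\theta)$. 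The straight segment $[a,b]$ together with $\gamma_1$ and $\gamma_2$ bounds a Euclidean triangle $\Delta$ having interior angle $\pi - \theta$ at $p$.

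The main claim is that the developed picture of $\Delta$ must contain a lift of some cone point of $N$. Suppose, for contradiction, that $i_p$ extends to an immersion $\Delta \to N \setminus S$, where $S$ denotes the set of cone points. Then the boundary of this immersion is a loop $\tilde{\alpha}$ in $N \setminus S$ bounding the immersed disk, hence null-homotopic in $N \setminus S$, so its linear holonomy must be trivial. Yet $\tilde{\alpha}$ is the concatenation of $\gamma$ (of holonomy $e^{\pm i\theta}$ by Part (1)) with a smooth geodesic (of trivial holonomy), yielding total holonomy $e^{\pm i\theta} \neq 1$: contradiction. Therefore $i_p$ must fail somewhere on $\Delta$, and a cone point $q \in S$ has its lift inside $\Delta$.

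To bound $d(q, \gamma)$ in $N$, observe that it is at most the Euclidean distance in the developed picture from the lift of $q$ to $\gamma_1 \cup \gamma_2$, and therefore at most $\max_{x \in \Delta} d(x, \gamma_1 \cup \gamma_2)$. Applying the angle bisector theorem at $p$, this maximum is attained where the bisector of $\widehat{apb}$ meets $[a, b]$, and equals $L_1 L_2 \sin\theta / (L_1 + L_2)$. The required estimate
\[
\frac{L_1 L_2 \sin\theta}{L_1 + L_2} \;\leq\; \frac{L(\gamma)}{2\tan(\theta/2)}
\]
reduces, after substituting $\sin\theta = 2\sin(\theta/2)\cos(\theta/2)$, to $4 L_1 L_2 \sin^2(\theta/2) \leq (L_1 + L_2)^2$, which follows from the AM-GM inequality together with $\sin^2(\theta/2) \leq 1$. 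The most delicate step of the plan is the topological holonomy argument that forces a cone point into $\Delta$: one has to treat carefully the possible failure modes of $i_p$ (non-injectivity of the immersion, cone points falling on the boundary of $\Delta$, and the interplay with the multiply connected structure of $N \setminus S$) and check that each produces a lift of a cone point inside $\Delta$.
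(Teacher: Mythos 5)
Your Part~(1) is fine: parallel transport of the velocity vector along the smooth geodesic arc returns it unchanged, so the holonomy of the closed loop $\gamma$ is entirely accounted for by the angular defect $\pm\theta$ at $p$. This agrees in spirit with the paper, which simply observes that a neighbourhood of $\gamma$ is isometric to a neighbourhood of the closed geodesic on $C_\theta$.

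Part~(2), however, has a genuine gap, and it sits precisely at the step you yourself flag as delicate. Since $\gamma$ is a \emph{closed} loop (this is what the Lemma requires---otherwise ``the linear holonomy along $\gamma$'' in Part~(1) would not even be defined), the two developed endpoints $a$ and $b$ are two distinct lifts, via $i_p$, of a single point $q_0 \in \gamma$. Consequently the straight segment $[a,b]$ does not close up $\gamma$ by an open geodesic arc with distinct endpoints: its image $\ell = i_p([a,b])$ is a geodesic \emph{loop} based at $q_0$. This loop is \emph{not} smooth at $q_0$. Indeed $a$ and $b$ are identified by a deck transformation of the developing map whose linear part is precisely $\mathrm{hol}(\gamma)$, so the velocities of $\ell$ arriving at and leaving $q_0$ differ by the rotation $e^{\pm i\theta}$. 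Hence $\mathrm{hol}(\ell) = \mathrm{hol}(\gamma)^{-1} \neq 1$, and the total holonomy of $\partial\Delta = \gamma_1 \cup [a,b] \cup \gamma_2^{-1}$ is trivial, exactly as it should be. No contradiction arises, and nothing forces a cone point into $\Delta$.

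The cone $C_\theta$ itself shows that the claim ``$\Delta$ contains a lift of a cone point'' is in fact false. Take $N = C_\theta$ and let $\gamma$ be the unique closed piecewise geodesic through a point $p$ at distance $r$ from the apex (Proposition~\ref{cone}), so $L(\gamma) = 2r\sin(\theta/2)$. Developing the two halves of $\gamma$ from $p$ on the concave side with $L_1 = L_2 = L(\gamma)/2$, the farthest point of $\Delta$ along the bisector lies at distance $r\sin^2(\theta/2)$ from $p$, while the only cone point (the apex) is at distance $r > r\sin^2(\theta/2)$ from $p$. The apex is outside $\Delta$, its distance to $\gamma$ is exactly $L(\gamma)/(2\tan(\theta/2))$, and the Lemma is sharp---but the immersion $i_p$ is perfectly well defined on all of $\Delta$. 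Your inequality $L_1 L_2 \sin\theta/(L_1+L_2) \le L(\gamma)/(2\tan(\theta/2))$ is true but moot, since the cone point need not lie in $\Delta$.

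The paper's argument takes a different and correct route: identify a tubular neighbourhood of $\gamma$ in $N$ with a tubular neighbourhood of the closed geodesic of length $L(\gamma)$ on the model cone $C_\theta$, and extend this local isometry \emph{towards the apex}. The extension must either proceed all the way (so the apex of $C_\theta$ is a cone point of $N$ of angle $\theta$, at distance exactly $L(\gamma)/(2\tan(\theta/2))$ from $\gamma$), or else it breaks down earlier by meeting a cone point of $N$, which is then even closer. Either way the bound follows. If you want to repair your approach, you would need to replace the triangle $\Delta$ by (the development of) the full region of $C_\theta$ between $\gamma$ and the apex, i.e.\ the isoceles triangle with apex angle $\theta$ and base $L(\gamma)$---that region does reach out far enough, and the argument via the model cone is exactly the one the paper uses.
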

\begin{proof} The point is that such a geodesic $\gamma$ has a neighbourhood that is isometric to a neighbourhood of the unique (up to isometry) closed geodesic of length $L(\gamma)$ of the cone of angle $\theta$. The only obstruction for this isometry to extend to the whole cone is that the boundary of its definition domain meets a singular point of $N$ (one can use the exponential map along $\gamma$). Otherwise $\gamma$ is on the cone of a cone point of $N$ whose associated conical angle is  $\theta$. In any case,  there is a singular point of $N$, 
whose distance to  $\gamma$ is less than the distance from the geodesic of length $L(\gamma)$ in $C_{\theta}$ to the cone point of $C_{\theta}$. This distance is exactly 
${L(\gamma)}/({2\tan ({\theta}/{2})})$.
\end{proof}

\begin{prop}
\label{diameter}
Let $\rho \in \mathrm{H}^1(N,  \mathbb{U}, \theta)$ be such that $\mathrm{Im}(\rho)$ is finite. There exist two positive constants $K_1(\rho)$ and $K_2(\rho) $ such that for every flat surface $N \in \mathcal{F}_{\rho}$ normalised such that its area is $1$, if \,$D(N) > K_1(\rho)$ then  $N$ contains an embedded flat cylinder of length at least $K_2(\rho) D(N) $.
\end{prop}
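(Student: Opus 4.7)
The plan is to exploit the finiteness of $\mathrm{Im}(\rho)$ together with the fact that $N$ has area $1$ in order to isolate a long thin Euclidean strip in $N$ that can be recognised as a flat cylinder. Set
$$
\theta_{0}=\theta_0(\rho)=\min\Bigl\{\,|\arg u|\;:\;u\in\mathrm{Im}(\rho)\setminus\{1\}\,\Bigr\},
$$
which is positive by assumption. By Proposition \ref{compare} (4), $s(N)\geq D(N)/(2n)$, so when $D(N)$ is large, $s(N)$ is also large while $\mathrm{area}(N)=1$ remains fixed; this mismatch is what forces the existence of a cylinder.

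First I would pick a piecewise geodesic path $\sigma$ in $N$ realising the diameter, with as few angular points as possible. Since each angular point must be a cone point, there are at most $n$ breaks, and hence $\sigma$ contains a straight subsegment $\sigma_0$ of length at least $D(N)/(n+1)$. The core of the argument is to fatten $\sigma_0$ into a flow box: using exponential maps along $\sigma_0$, one develops $\sigma_0$ to a line segment in $\mathbb C$ and thickens it into a rectangle $R_w$ of width $w$. Let $w^{*}$ be the supremum of those $w$ for which $R_w$ injects into $N$. Because $\mathrm{area}(N)=1$, the inequality $w^{*}\cdot L(\sigma_0)\leq 1$ gives $w^{*}\leq (n+1)/D(N)$, so $w^{*}\to 0$ as $D(N)\to\infty$.

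The failure of injectivity at $w=w^{*}$ happens in one of two ways: either (i) the interior of $R_{w^{*}}$ contains a cone point of $N$, or (ii) the two long sides of $R_{w^{*}}$ get isometrically identified in $N$. In case (ii) the identifying isometry has linear part $u\in\mathrm{Im}(\rho)$. If $u=1$, the identification is a pure translation, $R_{w^{*}}$ descends to an embedded flat cylinder of length $L(\sigma_0)\geq D(N)/(n+1)$, and we are done. If $u\neq 1$, then $|\arg u|\geq\theta_0$, but the two long sides being near-parallel in the developing picture (since $w^{*}$ is small compared to $L(\sigma_0)$) contradicts $|\arg u|\geq\theta_0$ as soon as $D(N)$ exceeds a threshold depending on $\theta_0$. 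Case (i) is handled similarly: an interior cone point $c$ together with Lemma \ref{distance} would produce a short piecewise geodesic loop through $c$ whose linear holonomy lies in $\mathrm{Im}(\rho)\setminus\{1\}$, violating the same near-parallelism estimate; alternatively, such an interior cone point allows us to bend $\sigma$ at $c$ and shorten it by one break, and after at most $n$ iterations only case (ii) with $u=1$ can survive.

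The main obstacle will be combining quantitatively the upper bound $w^{*}\lesssim 1/D(N)$ with the lower bound $|\arg u|\geq\theta_0$ to rule out every bad configuration, and to extract explicit constants $K_1(\rho),K_2(\rho)$ as functions of $n$, $g$ and $\theta_0(\rho)$. A possible alternative, closer in spirit to \cite{Thurston} in genus $0$ and to \cite{MasurSmillie} in the translation-surface case, is to work directly with the Delaunay triangulation of Section~\ref{S:VoronoiDelaunay}: its bounded combinatorial complexity forces the existence of strongly elongated triangles when $D(N)$ is large, and the finiteness of $\mathrm{Im}(\rho)$ glues adjacent such triangles into a flat cylinder via the same near-parallelism mechanism.
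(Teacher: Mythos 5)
Your case (i) has a real gap. If a cone point $c$ lands in the interior of $R_{w^*}$, you cannot produce the ``short piecewise geodesic loop through $c$ whose linear holonomy lies in $\mathrm{Im}(\rho)\setminus\{1\}$'' that you invoke: the proximity of $c$ to $\sigma_0$ bounds $d(c,\sigma_0)$ but says nothing about the length of any loop around $c$, and Lemma~\ref{distance} runs in the opposite direction (from a closed broken geodesic with one angular point to a nearby cone point, not the other way). Your fallback of ``bending $\sigma$ at $c$'' does not obviously terminate favorably either: rerouting through $c$ adds a break rather than removing one, the longest straight subsegment shrinks, and the fattening argument would have to be rerun from scratch with new constants after each iteration. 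Your case (ii) is also looser than it looks: when $R_{w^*}$ first fails to inject, only two boundary points are forced to coincide, and upgrading this local identification to ``the two long sides are isometrically identified by $u$'' requires continuing the germ of isometry along the straight edges, which could in principle be interrupted by a cone point --- again your case (i) problem.

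The paper sidesteps all of this by fattening a disk rather than a strip, and by centering it at the point $p$ realizing $s(N)=d(p,S)$ rather than on a subsegment of a diameter path. Since area $1$ forces the injectivity radius $r_p < 1/\sqrt{\pi}$ while $s(N)\geq D(N)/(2n)$ grows with $D(N)$, one has $s(N)>r_p$ as soon as $D(N)$ is large, so the closed disk of radius $r_p$ at $p$ contains no cone point \emph{by the choice of $p$}: your case (i) simply cannot occur. The self-overlap at radius $r_p$ then yields a closed piecewise geodesic $\gamma$ (the chord through the two identified boundary points) with a single angular point; Lemma~\ref{distance} together with the finiteness of $\mathrm{Im}(\rho)$ forces a cone point within distance $r_p\bigl(1+\tan(\pi/m)^{-1}\bigr)$ of $p$ unless the angle is exactly $\pi$, contradicting the choice of $p$ once $D(N)$ exceeds an explicit threshold. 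Hence $\gamma$ is a smooth closed geodesic bounding a flat cylinder, which extends until it hits a singularity, giving length at least $2\sqrt{s(N)^2-r_p^2}$. Your heuristic --- the tension between unit area and large diameter, combined with finiteness of $\mathrm{Im}(\rho)$ --- is exactly right, but the object to fatten is the disk at the point farthest from $S$, not a rectangle along a diameter path, precisely because the former enjoys an a priori separation from the singular locus that the latter does not.
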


\begin{proof} Let $N$ be an element of $\mathcal{F}_{\rho}$. Let $p \in N $ be a point maximizing the distance $s$ to $S$ the set of singularities, \textit{i.e.} such that $s = s(N) = d(p,S)$ where $S \subset N$ stands for the set of singular points of $N$. Throughout the proof, we will mainly work with $s = s(N)$ which,  as a function,  is of same order as $D(N)$
according to the last two  points of Proposition \ref{compare}).
 
\sk

Let $r_p$ be the injectivity radius at $p$. Then $r_p <{1}/{\sqrt{\pi}}$ since the area of $N$ is one. If  $ s  > r_p $  then $\overline{D(r_p)}$,  the closed Euclidean  disk of radius $r_p$,  can be immersed in $N$ at $p$ (since $s$ is realised at $p$). There are two distinct points $a$ and $b$ on the boundary of $\overline{D(r_p)}$ which project onto the same point in $N$ and the immersion $i :\overline{D(r_p)} \longrightarrow N$ is injective on  ${D(r_p)}$, by definition of $r_p$. Therefore the chord joining $a$ and $b$ maps to a piecewise closed geodesic $\gamma$ path in $N$, with one angular point  at $i(a) = i(b)$.
\sk
 
We claim that if $s$ is large enough, then the linear holonomy along $\gamma$ must be trivial. This is a corollary of Lemma \ref{distance}. More precisely, if $\mathrm{Im}(\rho) = \left\langle e^{2i\pi/m} \right\rangle $ and $ s > r_p(1 + {\tan({\pi}/{m}})^{-1}) $, $\gamma$ cuts $N$ at $i(a)$ into two angular sectors both of angles $\pi$.  Hence $\gamma$ is a closed regular geodesic which belongs to a flat cylinder $C$ and the holonomy along $\gamma$ is $1$.  Moreover, $a$ and $b$ must be diametrically opposed and $\gamma$ must have length $2r_p$. Otherwise one side of the cylinder $C$ would be covered by $D(r_p)$. But then $r_p$ would not be the injectivity radius at $p$. The closed geodesic $\gamma$ contains $p$ and the cylinder $C$ containing $\gamma$ has length at least $2 \sqrt{s^2 - r_p^2}$, because any cylinder on a flat surface can be extended until its boundary meets a singular point. 
\sk 

 If one assumes that $s = s(N) \geq 2 /{\sqrt{\pi}} \geq 2r_p$, then the cylinder we have found has length at least $s(N)\sqrt{3}$ hence at least $ D(N){\sqrt{3}}/{(2n)} $ according to Proposition \ref{compare}. Recall that to ensure that the linear holonomy along $\gamma$ is trivial and therefore that $\gamma$ is a closed geodesic belonging to a cylinder, we had to assume $ s(N) > r_p\big(1 + {\tan({\pi}/{m}})^{-1}\big) $. 
 Since $r_p \leq \frac{1}{\sqrt{\pi}}$,  the 
  statement of the lemma follows if one takes $K_1(\rho) = \frac{2n}{\sqrt{\pi}} \max\big\{{2, \big( 1 + {\tan({\pi}/{m})^{-1}}\big) \big\}}$ and $K_2(\rho) = {\sqrt{3}}/{(2n)}$.
\end{proof}

\subsection{Collisions.}
A very important feature of the description of the metric completion of the moduli spaces $\F$ is to characterize geometrically what happens when two singular points collide, \textit{i.e.} when $\delta(N)$ goes to zero. 
We prove below two results  describing situations when such a collision cannot occur, at least without the diameter going to infinity.

\begin{lemma}
\label{collisions}
Let $\theta_1$ and $\theta_2$ be two positive angles such that $\theta_1 + \theta_2 < 2 \pi $. 
There exists a constant $K(\theta_1, \theta_2) > 0 $ such that if  $\Sigma$ is any flat sphere with $n$ conical singularities satisfying the three following conditions:
\begin{itemize}
\item all the cone points $p_1, \ldots, p_n$ of\;\,$\Sigma$ have positive curvature; 
\sk
\item the cone angles of\;\,$\Sigma$ at $p_1$ and $p_2$ are  
$\theta_1$ and $\theta_2$ respectively; 
 \sk
\item the area of \,$\Sigma$ is $1$;
\end{itemize}
then the following holds true:  $ d(p_1,p_2) \geq K\big(\theta_1, \theta_2\big) $.
\end{lemma}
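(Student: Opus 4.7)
I argue by contradiction, combining an Alexandrov-compactness argument with the Cohn--Vossen inequality. Suppose the conclusion fails and pick a sequence $\Sigma_m$ of flat spheres satisfying the hypotheses with $\epsilon_m := d(p_1^m,p_2^m)\to 0$. Since every cone angle of $\Sigma_m$ lies in $]0,2\pi[$, each $\Sigma_m$ is non-negatively curved in the Alexandrov sense (a cone point with angle $<2\pi$ corresponds to strictly positive concentrated curvature). Rather than rescaling by $\epsilon_m$, which may cause uncontrolled mergers in the limit, I work with the \emph{minimizing} pair: set $\epsilon_m^\star := \min_{i\neq j} d(p_i^m,p_j^m)\leq \epsilon_m$ and, after passing to a subsequence, assume that the realizing pair $(p_a,p_b)$ is constant, with fixed angles $(\theta_a,\theta_b)$.

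Rescale by $1/\epsilon_m^\star$ to get $\tilde\Sigma_m$, still non-negatively curved, with $d(p_a,p_b)=1$ and diameter going to infinity (Bishop--Gromov applied to the area-$1$ original gives $D(\Sigma_m)\geq 1/\sqrt{\pi}$). By Gromov's compactness theorem for Alexandrov spaces with a uniform lower curvature bound, I extract a pointed Gromov--Hausdorff subsequential limit $(X,c_\infty)$ centered at the midpoint of $[p_a^m,p_b^m]$; the limit $X$ is a complete non-compact non-negatively curved $2$-Alexandrov space. The crucial consequence of the minimality $d(p_i^m,p_j^m)\geq \epsilon_m^\star$ is that no two distinct original cone points can merge in $X$: any such collapse would force $d(p_i^m,p_j^m) = o(\epsilon_m^\star)$, contradicting the definition of $\epsilon_m^\star$. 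Hence $p_a^\infty,p_b^\infty$ remain distinct cone points at distance $1$ in $X$ with their original angles $\theta_a,\theta_b$. Cohn--Vossen's theorem for orientable complete non-compact non-negatively curved $2$-Alexandrov spaces yields $\int_X K \leq 2\pi$. If $\theta_a+\theta_b < 2\pi$, already the contribution of $p_a^\infty,p_b^\infty$ forces $\int_X K \geq (2\pi-\theta_a) + (2\pi-\theta_b) = 4\pi-(\theta_a+\theta_b) > 2\pi$, a contradiction. If $\theta_a+\theta_b > 2\pi$, I instead take the \emph{unrescaled} pointed limit at $p_a^m$: since $d(p_a^m,p_b^m)\to 0$, the two cone points coalesce to a single point with merged cone angle $\theta_a+\theta_b > 2\pi$, i.e.\ a negative-curvature singularity, contradicting the non-negative Alexandrov curvature inherited by the limit.

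The main obstacle is the borderline case $\theta_a+\theta_b = 2\pi$, where the unrescaled merger produces a regular point (angle exactly $2\pi$) which is compatible with non-negative Alexandrov curvature. I would close this gap by appealing to Gauss--Bonnet: in the bounded-diameter regime the unrescaled limit is a compact Alexandrov sphere, and a direct computation shows that any merger strictly reduces the total curvature $\sum(2\pi-\theta_i^\infty)$ below the required value $4\pi = 2\pi\chi$, a contradiction; in the unbounded-diameter regime a careful bookkeeping of the cone-point clusters versus the cone points escaping to infinity, combined with Cohn--Vossen and the global identity $\sum_{i=1}^n(2\pi-\theta_i) = 4\pi$ on $\Sigma_m$, recovers the same conclusion. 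Once the non-existence of a collapsing sequence is established, setting $K(\theta_1,\theta_2):=\inf d(p_1,p_2)$, where the infimum runs over all flat spheres satisfying the hypotheses, the above contradictions give $K(\theta_1,\theta_2)>0$; it depends only on $\theta_1,\theta_2$ because the remaining angle data is varied in the infimum.
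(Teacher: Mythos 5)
Your approach (Gromov compactness, Alexandrov limits, Cohn--Vossen) is genuinely different from the paper's, which constructs an explicit local isometry developing the model three-cone-point sphere $\Sigma^0$ of angles $\theta_1$, $\theta_2$, $2\pi-\theta_1-\theta_2$ into $\Sigma$ and then compares areas. However, your argument contains a decisive error in the case analysis. When two cone points of angles $\theta_a$ and $\theta_b$ coalesce, it is the \emph{curvatures} that add, not the angles: the merged singularity has cone angle $\theta_a+\theta_b-2\pi$, not $\theta_a+\theta_b$. In your case $\theta_a+\theta_b>2\pi$ the merged angle is therefore $\theta_a+\theta_b-2\pi\in\,]0,2\pi[$, a perfectly admissible positive-curvature cone point; there is no contradiction, and indeed such collisions really do occur --- they are exactly the inverse of Thurston's surgery $\mathcal{S}_1$. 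The merged angle becomes negative precisely when $\theta_a+\theta_b<2\pi$, which is the hypothesis of the lemma; the off-by-$2\pi$ slip is what makes it look as if both regimes led to a contradiction.

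Compounding this, the reduction to the minimizing pair $(p_a,p_b)$ discards the hypothesis $\theta_1+\theta_2<2\pi$: nothing forces the closest pair to be $(p_1,p_2)$ or to satisfy the same angle bound, and the lemma explicitly allows other cone points to be much closer to one another than $p_1$ is to $p_2$. You need to track the given pair --- more precisely the cluster of cone points collapsing to the same limit point together with $p_1,p_2$; since every cone angle is $<2\pi$, adjoining further points to a cluster containing $\{p_1,p_2\}$ only drives the merged angle $\sum_i\alpha_i - 2\pi(k-1)$ further below zero, so the non-negative-curvature contradiction would survive once the limit is set up correctly. The borderline $\theta_a+\theta_b=2\pi$, which you acknowledge as a gap, is an artifact of the flawed case split and would disappear with these corrections. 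Even so, the compactness route is far heavier than the paper's: there, $\Sigma^0$ is normalized so its $p_1^0$-to-$p_2^0$ geodesic has the length $d(p_1,p_2)$, and the developing map $i:\Sigma^0\setminus A\to\Sigma$ is a local isometry whose image is open and dense \emph{because} every cone point of $\Sigma$ has positive curvature, whence $\mathrm{area}(\Sigma^0)\geq\mathrm{area}(\Sigma)=1$; since $\mathrm{area}(\Sigma^0)$ scales quadratically in $d(p_1^0,p_2^0)=d(p_1,p_2)$, the lower bound $K(\theta_1,\theta_2)$ follows at once with no compactness machinery.
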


This lemma  tells us that two  too positively curved singular points cannot collide.  We would like to draw  attention to the fact that, in the authors' opinion, this lemma (and in particular, its proof!) is  missing in \cite{Thurston}. \sk

\begin{proof}
The idea of the proof is to compare this situation to the case of the sphere $\Sigma^0$ with three cone points,  of respective angles $\theta_1$, $\theta_2$ and $2\pi - \theta_1 - \theta_2$. Such a sphere is unique up to dilatation of the flat metric and is built by gluing two isometric triangles of angles ${\theta_1}/{2}$, ${\theta_2}/{2}$ and $({2\pi - \theta_1 - \theta_2})/{2}$. \sk 

 Let $p_1^0$, $p_2^0$ and $p_3^0$ be the cone points on $\Sigma^0$ of respective angles $\theta_1, \theta_2$ and $2\pi - \theta_1 - \theta_2$. Normalise $\Sigma^0$ so  that the length of the unique geodesic $l^0$ from $p_1^0$ to $p_2^0$ has same the length as the one from $p_1$ to $p_2$ on $\Sigma$, denoted by $l$. Remark that $\Sigma^0$ is the disjoint union of geodesic paths going from $p_3^0$ to points of $l^0$.\sk

A neighbourhood of $l^0$ in $\Sigma^0$ is isometric to a neighbourhood of $l$ in $\Sigma$. We extend such an isometric identification using the remark above, developing the geodesics of the decomposition. 
The only obstruction to do so appears if such a geodesic meets a singular point, which can only happen for a finite number of such geodesics. 
We denote by $A$ the finite union of those parts of geodesics on which the isometry cannot be extended.  

We have thus defined a local isometry
$$i : \Sigma^0 \setminus A \longrightarrow \Sigma\, . $$

 Since all the singular points of $\Sigma$ have positive curvature, the closure of $i(\Sigma^0)$ must also be open and since $i$ is a local isometry, one gets 
$$ \mathrm{area}\big(\Sigma^0\big) \geq \mathrm{area}(\Sigma) = 1.$$
The uniform bound on the area of $\Sigma^0$ gives a uniform bound on $d(p_1^0,p_2^0)$ which equals $d(p_1,p_2)$ by construction. \sk
\end{proof}

\begin{lemma}
\label{second}
Let $M$ be a flat torus with two cone points $p_1$ and $p_2$. There exists a pseudo-hexagon $P$ such that $M$ is isometric to $P/ \sim$ where $\sim$ is one of the three gluing patterns of Figure \ref{patterns}. 
\end{lemma}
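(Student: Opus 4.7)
The plan is to apply the general construction underlying Proposition~\ref{pp}, specialised so that the pseudo-polygon one gets has exactly six sides, and then invoke the topological classification of hexagonal gluings that produce a torus with two marked vertices.

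First, I would choose a geodesic triangulation $\mathscr{T}$ of $M$ whose vertex set is exactly $\{p_1,p_2\}$, for instance the Delaunay triangulation of Section~\ref{S:VoronoiDelaunay} (triangulated by subdividing each non-triangular Delaunay cell from one of its vertices). Writing $V,E,F$ for the number of $0$-, $1$- and $2$-cells of $\mathscr{T}$, the torus satisfies $V-E+F=0$ and $2E=3F$, so with $V=2$ this forces $E=6$ and $F=4$. Cutting $M$ along a subgraph $\Gamma$ of the $1$-skeleton containing both cone points preserves the Euler characteristic, and requiring the complement to be a disk forces $\chi(\Gamma)=-1$; with $V=2$ this yields $|\Gamma|=3$ edges, so the boundary of the complement will consist of $2\times 3=6$ paired geodesic sides — exactly what is needed to obtain a pseudo-hexagon.

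Next, I would produce such a $\Gamma$ by the classical cut-system construction: pick a spanning tree $T^\star$ of the dual $1$-skeleton (which has $F=4$ vertices and $E=6$ edges, hence whose spanning trees have $F-1=3$ edges), and let $\Gamma$ be the union of the primal edges whose dual is \emph{not} in $T^\star$. Then $|\Gamma|=E-(F-1)=3$, and $M\setminus\Gamma$ deformation retracts onto $T^\star$, hence is contractible. By swapping one edge in $T^\star$ if necessary, one can always arrange that $\Gamma$ has $\{p_1,p_2\}$ as vertex set (rather than missing one of the cone points in its interior); if no choice of $T^\star$ has this property for the given $\mathscr{T}$, I would first refine $\mathscr{T}$ by inserting an extra geodesic segment through the offending cone point and repeat the argument.

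Following the proof of Proposition~\ref{pp}, the metric completion of $M\setminus\Gamma$ for the induced length metric is then a pseudo-polygon $P$; by the count above it is a pseudo-hexagon, and the identification $\sim$ on $\partial P$ recovering $M$ is a fixed-point-free involution pairing the two copies of each edge of $\Gamma$ by an orientation-reversing isometry. It only remains to check that such an involution on a hexagon with vertex set $\{p_1,p_2\}$, whose quotient is a torus with two distinguished vertices, must match one of the three combinatorial patterns of Figure~\ref{patterns}. This is the classical combinatorial classification recalled at \cite[p.89]{JacksonVisentin}: essentially, one enumerates the possible cyclic orders of labels on the hexagon's sides that produce the correct Euler characteristic and the correct vertex identifications, and the three patterns exhaust the possibilities up to the dihedral symmetries of the hexagon. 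The main obstacle is the second step — ensuring that $\Gamma$ can be chosen with both cone points on its boundary — which is why I allow a refinement of $\mathscr{T}$; once this is secured, both the hexagonal shape and the classification of the gluing are essentially bookkeeping.
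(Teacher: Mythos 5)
Your proof follows the paper's approach: cut $M$ along a connected subgraph $\Gamma$ with three edges in the $1$-skeleton of the Delaunay decomposition such that $M\setminus\Gamma$ is a disk, and classify the resulting hexagonal gluings; your dual spanning tree is simply a concrete realisation of such a $\Gamma$, which the paper takes for granted. The one misstep is the worry that $\Gamma$ might miss one of the two cone points, together with the edge-swap and refinement escape hatches: that worry is vacuous. Once $\Gamma$ has $e(\Gamma)=3$ edges and $M\setminus\Gamma$ is an open disk, the cell decomposition of $M$ that $\Gamma$ induces gives $\chi(M)=v(\Gamma)-e(\Gamma)+1=0$, hence $v(\Gamma)=2$; since the vertex set of $\Gamma$ is a subset of $\{p_1,p_2\}$, both cone points are automatically vertices of $\Gamma$. (Geometrically: if a cone point lay in the interior of $M\setminus\Gamma$, the metric completion of $M\setminus\Gamma$ would have a singular interior point and so could not be a pseudo-polygon.) Dropping that paragraph tightens the argument and removes the dependence on a refinement step that was never carefully controlled.
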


\begin{proof} 
Let $\Gamma$ be a connected graph in the $1$-skeleton of the Delaunay decomposition of $M$  such that $M \setminus \Gamma$ is connected and simply connected. $\Gamma$ has exactly for vertices the two cone points of $M$. By a Euler characteristic argument, its number of edges $e$ must satisfy $ 2 - e + 1 = \chi(M) = 0 $ and  therefore $e = 3$.\sk 

 One easily checks that the only connected graphs with two vertices and three edges that one can draw on a torus are the three  graphs  represented on Figure 
 \ref{Fig:GraphsOnATorus}. 
 Then cutting along $\Gamma$ gives the expected pseudo-polygonal model for $M$.
\end{proof}

\begin{prop}
\label{collisions2}
Let $(M_\ell)_{\ell \in \mathbb{N}}$ be a sequence of flat tori with two cone points belonging to a leaf\;$\F$ with ${\rm Im}(\rho)$ finite. Assume that for all $\ell \in \mathbb{N}$, $M_\ell$ has area $1$. If\,  $\lim_{\ell\rightarrow +\infty}\delta(M_\ell) = 0$ then $D(M_\ell) \rightarrow +\infty$ as $\ell$ goes to infinity.
\end{prop}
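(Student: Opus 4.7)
The plan is to argue by contradiction. Suppose, after passing to a subsequence, that $D(M_\ell) \leq D_0$ for all $\ell$. I will extract a limit flat surface via the pseudo-hexagonal model of Lemma~\ref{second}, show that this limit must be a smooth flat torus, and obtain a contradiction from the non-triviality of the cone-point holonomy.

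\textbf{Compactness via the pseudo-hexagonal model.} By Lemma~\ref{second}, each $M_\ell$ is isometric to a pseudo-hexagon $P_\ell$ glued according to one of the three patterns of Figure~\ref{patterns}. Lemma~\ref{realising} ensures that the short saddle connection realizing $\delta(M_\ell)\to 0$ is a $1$-cell of the Delaunay decomposition, so $P_\ell$ may be chosen with this connection as one of its sides. Proposition~\ref{triangulation} then bounds every side of $P_\ell$ by $2D_0$. Developing $P_\ell$ in $\mathbb{C}$ produces side vectors $z_1^\ell,\dots,z_6^\ell$ with $|z_1^\ell|\to 0$ and $|z_i^\ell|\leq 2D_0$ for all $i$, subject to identifications $z_{i+3}^\ell=\rho_i z_i^\ell$ with $\rho_i \in \mathrm{Im}(\rho)$. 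Since $\mathrm{Im}(\rho)$ is finite, we may pass to a subsequence on which the data $(\rho_1,\rho_2,\rho_3)$ and the combinatorial pattern are constant; by compactness a further extraction yields $(z_1^\ell,\dots,z_6^\ell)\to (0,z_2^\infty,\dots,z_6^\infty)$ in $\mathbb{C}^6$.

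\textbf{The limit and the contradiction.} Gluing the degenerate limit polygon with the fixed identifications produces a compact flat surface $M^\infty$ of area $1$ (by continuity of the Hermitian area form) on which the two cone points of the $M_\ell$ have collided into a single point. The Gauss--Bonnet formula on a torus then forces this residual singularity to have angle $2\pi$, so $M^\infty$ is a \emph{smooth} flat torus, whose linear holonomy is therefore trivial. Consequently, each of the three identification rotations satisfies $\rho_i = 1$. Since the three loops on $N_{1,2}$ associated to the identifications of the pseudo-hexagon form a generating set of $\mathrm{H}_1(N_{1,2},\mathbb{Z})\simeq\mathbb{Z}^3$, it follows that $\rho$ is trivial on the whole of $\mathrm{H}_1(N_{1,2},\mathbb{Z})$; in particular $\rho(\delta_1)=e^{i\theta_1}=1$, which contradicts hypothesis~\eqref{HYP} stipulating $\theta_1\notin 2\pi\mathbb{Z}$. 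This contradiction shows that $D(M_\ell)$ cannot remain bounded, completing the proof.

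\textbf{Main obstacle.} The delicate step is to make this passage to the limit rigorous: one must verify that the degenerate pseudo-hexagon actually glues into a well-defined compact flat surface (rather than undergoing a nodal or more drastic topological degeneration), that both the Hermitian area form and the linear holonomy pass continuously to the limit under the fixed identifications, and---most crucially---that the three identification loops of the pseudo-hexagonal model indeed generate $\mathrm{H}_1(N_{1,2},\mathbb{Z})$. This last point may necessitate a careful choice of the Delaunay cut used to construct $P_\ell$, exploiting the freedom afforded by Lemma~\ref{second}. The finiteness of $\mathrm{Im}(\rho)$ is used in an essential way to stabilize the gluing combinatorics along the sequence, so that the limit inherits a well-defined holonomy representation.
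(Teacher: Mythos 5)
Your proof follows the paper's overall strategy: bounded diameters give bounded Delaunay side lengths (Lemma \ref{second} and Proposition \ref{triangulation}), Lemma \ref{realising} lets you place the short saddle connection as a side of the pseudo-hexagon, and finiteness of $\mathrm{Im}(\rho)$ allows extraction of a subsequence with constant gluing combinatorics. Where you diverge is in the endgame. The paper never glues the degenerate limit polygon into a surface: it notes that the limit quadrilateral has opposite sides of equal length, hence is a parallelogram, hence the gluing angles of $\overline{Q_\ell}$ --- constant along the subsequence precisely because $\mathrm{Im}(\rho)$ is finite --- were those of a parallelogram all along, which forces each $M_\ell$ (for $\ell$ large) to be a regular torus, a direct contradiction with $M_\ell\in\F$. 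You instead glue the limit, invoke Gau\ss--Bonnet to conclude that the limit torus is regular, and deduce that $\rho$ is trivial. This is a legitimate alternative, and the concern you flag as ``most crucial'' resolves favourably: cutting $N_{1,2}$ along a three-edge spanning subgraph $\Gamma_\ell$ of the Delaunay $1$-skeleton exhibits $N_{1,2}\setminus S$ as a disc with three $1$-handles, so the cores of the handles form a free basis of $\pi_1$, hence a $\mathbb{Z}$-basis of $\mathrm{H}_1(N_{1,2},\mathbb{Z})\simeq\mathbb{Z}^3$.

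Two things need sharpening. First, $\rho_i = z_{i+3}^\ell/z_i^\ell$ is not the linear holonomy of the corresponding loop: the gluing isometry reverses the boundary orientation, so its linear part is $-\rho_i^{-1}$, and for a regular torus $\rho_i=-1$, not $1$; this is merely a conflation of notation, as what you really need is that $-\rho_i^{-1}=1$. Second --- and this is where your ``continuity of holonomy'' is doing real, unexamined work --- the glued limit only directly determines the holonomies of $\gamma_2$ and $\gamma_3$; the sides $z_1^\ell, z_4^\ell$ defining $\gamma_1$ collapse to a point, so its holonomy cannot simply be read off the limit surface. The missing ingredient is the closure relation $\sum_i z_i^\ell=0$: once $\rho_2=\rho_3=-1$ holds along the subsequence, it gives $(1+\rho_1^{-1})z_1^\ell=0$, hence $\rho_1=-1$ as well since $z_1^\ell\neq 0$. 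This computation is exactly what hides behind the paper's ``easy exercise left to the reader'' showing that the $M_\ell$ themselves are already smooth tori.
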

\begin{proof} Suppose that $(M_\ell)_{\ell \in \mathbb{N}}$ and $\rho$ are as in the statement and assume that  $D(M_\ell)$ does not go to infinity although $\delta(M_\ell)$ tends to zero when $\ell\rightarrow +\infty$. Then,  up to extracting an appropriate subsequence, we can assume that the $D(M_\ell)$'s  are bounded. 
 For any $\ell$, consider the Delaunay decomposition of $M_\ell$ and take in its $1$-skeleton a graph $\Gamma_{\!\!\ell}$ such that 

\begin{itemize}

\item $\Gamma_{\!\!\ell}$ contains a curve realising $\delta(M_\ell)$, as guaranteed by Lemma \ref{realising}; \sk

\item the set of vertices of $\Gamma_{\!\!\ell}$ is equal to $S$ the set of singular points;
\sk 

\item $Q_\ell=M_\ell \setminus \Gamma_{\!\!\ell}$ is simply connected.

\end{itemize}

 According to Lemma \ref{second}, for any $\ell \in \mathbb{N}$,  the metric completion $\overline{Q_\ell}$ of $Q_\ell $ is a pseudo-hexagon ({\it i.e.}\,a pseudo-polygon with six sides) whose lengths of the sides are uniformly bounded (according to Proposition \ref{triangulation}) and the gluing pattern to recover $M_\ell$ is one of the three patterns of Figure \ref{patterns}. 
\sk

Again up to extracting a subsequence,  we can assume that for any $\ell\in \mathbb N$: 
\begin{enumerate}

\item $M_\ell$ can be obtained from $\overline{Q_\ell}$ by using the same gluing pattern; \sk

\item the sides glued together always form the same angle ; 
\sk 

\item the length of each side converges.

\end{enumerate}
(To assume (2), one has to use that $\rho$ has finite image. 
That one can assume that  (3) holds true  as well follows from Proposition \ref{triangulation}.)\sk

Since the lengths of two sides go to zero (the ones which are identified by the gluing with the curve realising $\delta(M_\ell)$ in $M_\ell$), the sequence of pseudo-hexagons $(Q_\ell)_{\ell\in \mathbb N}$ converges to a quadrilateral whose opposite sides have the same length and therefore are parallel. Since ${\rm Im}(\rho)$ is finite,  this implies that  the corresponding sides in 
$\partial Q_\ell= \overline{Q_\ell}\setminus Q_\ell$ were parallel 
for all $\ell$ sufficiently large. This forces the gluing pattern to be Pattern $1$ or $2$ of Figure \ref{patterns}. But a hexagon glued with one of these pattern and having two pairs  of sides glued together parallel  must be a regular torus with no singular point (this is an easy exercise left to the reader). 
This would force $\F$ to contain regular tori, which is impossible since we have supposed that its elements have exactly two singular points. Therefore  the sequence of diameters $(D(M_\ell))_{n\in \mathbb N}$ must go to infinity as $\ell$ does. 
\end{proof}

\subsection{Closed curves realising the systole.}

As well as collisions, the ways in which simple closed curves can collapse are also very important to characterise.

\begin{lemma}
\label{L:RealisationOfTheSystol}
Let $N$ be a flat torus with $n \geq 2$ cone points and suppose that $p_1$ is the only cone point which has  negative curvature. The systole $\sigma(N)$ is realised by a  simple closed piecewise geodesic  which meets the set of cone-points only once at $p_1$. Moreover, the only point at which it might not be smooth is $p_1$.
\end{lemma}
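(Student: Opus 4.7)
The plan is to start from any piecewise geodesic closed curve $\gamma$ realising $\sigma(N)$, whose existence is guaranteed by the general length-space results recalled in Section \ref{properties}, and then to refine it step by step until it satisfies all the claimed properties. Such a $\gamma$ is automatically a local length-minimiser in its free homotopy class, and this translates into strong angular constraints at every point where $\gamma$ fails to be smooth.

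\textbf{Angle analysis at vertices.} At a regular point $q$ of $N$ lying on $\gamma$, no genuine bend is possible: if $\gamma$ made an angle strictly less than $\pi$ on one side at such a $q$, a triangle-inequality shortcut through a small disk centred at $q$ would produce a strictly shorter curve in the same homotopy class, contradicting the minimality of $L(\gamma)$. At a cone point $q$ lying on $\gamma$, the two angular sectors into which $\gamma$ divides a small neighbourhood of $q$ have respective angles $\alpha$ and $\theta_q - \alpha$, and exactly the same shortening procedure (push $\gamma$ into whichever side has angle $<\pi$) forces $\alpha \geq \pi$ \emph{and} $\theta_q - \alpha \geq \pi$, hence $\theta_q \geq 2\pi$. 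By the hypothesis, $p_1$ is the only cone point satisfying this inequality, so the only singular point of $N$ at which $\gamma$ may have a vertex is $p_1$. In particular, $\gamma$ cannot pass through any $p_i$ with $i \geq 2$, even smoothly, since a straight passage through such a $p_i$ would require $\theta_{p_i} = 2\pi$, contradicting the fact that it is a cone point.

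\textbf{The regular-geodesic case.} It remains to deal with the possibility that $\gamma$ avoids every cone point, \emph{i.e.}\ is a regular closed geodesic. In that case $\gamma$ is contained in a maximal flat cylinder $C \subset N$, and because $N$ has at least one cone point, $C \neq N$. Each boundary component $\gamma'$ of the closure of $C$ is then a simple piecewise geodesic closed curve, homotopic to $\gamma$, meeting at least one cone point, and of the \emph{same length} as $\gamma$ since $C$ is foliated by parallel closed geodesics all sharing the same length. Being an essential closed curve of length $\sigma(N)$, $\gamma'$ itself realises the systole; hence the angle analysis of the previous paragraph applies to $\gamma'$, and all its vertices lie at $p_1$. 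Replacing $\gamma$ by $\gamma'$, we may from now on assume that $\gamma$ meets $p_1$.

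\textbf{Single visit and simplicity.} Suppose finally that $\gamma$ visits $p_1$ at $k \geq 2$ distinct parameters. Then $\gamma$ decomposes as a concatenation $\gamma_1 \cdots \gamma_k$ of closed loops based at $p_1$, each of positive length. Since $[\gamma] \in \pi_1(N) = \mathbb{Z}^2$ is non-trivial and this group is abelian, at least one $\gamma_i$ is essential; but $L(\gamma_i) < L(\gamma) = \sigma(N)$, which contradicts the definition of the systole. Hence $k = 1$, so $\gamma$ meets the singular set only at the single point $p_1$. An entirely analogous rewiring at any hypothetical transverse self-intersection (splitting $\gamma$ at the crossing into two shorter loops whose classes sum to $[\gamma]$ in $\pi_1(N)$) shows that $\gamma$ must in fact be simple. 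The main obstacle in this plan lies in the regular-geodesic step: one needs to handle carefully the geometry of maximal flat cylinders and verify that their boundary components really do furnish length-preserving, homotopic replacements for $\gamma$.
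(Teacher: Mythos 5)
Your proof is correct and follows essentially the same route as the paper's: existence of a piecewise-geodesic minimiser, an angle analysis ruling out vertices at positively curved cone points, a maximal-cylinder replacement when the minimiser avoids all singularities, and a cut-and-reassemble argument to establish simplicity (and hence a single visit to $p_1$). The one cosmetic difference is that you invoke the abelianness of $\pi_1(N)\cong\mathbb{Z}^2$ in the single-visit/simplicity step, but this is unnecessary (the fact that a non-trivial element cannot be a product of trivial ones holds in any group), and you start from a minimiser of $\sigma(N)$ rather than, as the paper does, a global minimiser over all essential curves, which makes the later simplicity discussion slightly redundant but not wrong.
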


\begin{proof} Consider the set of non homotopically trivial closed curves. Classical Riemannian geometry (see Section \ref{properties}) ensures
there  exists 
 a minimiser of the length functional on this  set and that it is piecewise geodesic. \sk 

 We claim that such a minimiser is simple. Otherwise it could be decomposed into two closed curves of strictly shorter length with at least one of these two being essential. \sk

 A minimiser cannot pass through a point of positive curvature because otherwise one can deform  it in order that it avoids the cone point and that its length is shorter. Therefore the only cone point it might pass through is $p_1$, and one can always make sure that there is a minimising path passing through $p_1$ : otherwise the path is actually totally geodesic and a neighbourhood of this path is a flat cylinder which can be extended until meeting a cone point which must be $p_1$. Any boundary component of this extended flat cylinder would be a required path. 
\end{proof}\mk 

A path realising $\sigma(N)$ cuts the surface at $p_1$ in two angle sectors, whose angle must be bigger than $\pi$ (otherwise one can shorten the path by passing on the side where the angle is smaller than $\pi$). Two possibilities can occur : 

\begin{enumerate}

\item one of the angle equals $\pi$; in this case such a path bounds a flat cylinder; \sk

\vspace{-0.35cm}
\item both angles are strictly bigger than $\pi$.\sk
\end{enumerate}

For our purpose, it is important to distinguish these two situations.

 In the case we are mostly interested in (when $g=1$ and $\theta=(\theta_i)_{i=1}^n$ is such that only the point of cone angle $\theta_1$ carries negative curvature), they actually correspond to two geometric aspects of $\F$ : flat tori verifying $(1)$ are in a cusp while those verifying $(2)$ are close to a stratum corresponding to the Devil's surgery ${\mathcal{S}}_3$, see Section \ref{devil}. 
The proposition below proves that, in the very specific case when $g=1$ and $\rho$ is rational,  if the diameter remains bounded and the systole goes to zero, we are in situation $(2)$.
\begin{prop}
\label{systol}
Assume that $g=1$, $\theta=(\theta_i)_{i=1}^n$ is such that only \ $\theta_1$ is bigger than $2\pi$ and $\rho \in \mathrm{H}^1(N, \mathbb{U}, \theta)$ has finite image.
For all $K > 0$, there exists a constant $\epsilon(K) > 0$ such that for $N \in \F$ normalised such that its area equals 1 the following holds true.
If $D(N) \leq K$ and $ \sigma (N) \leq \epsilon(K)$, then any curve $c$ realising the systole and passing through $p_1$ the point of negative curvature of $N$  cuts $p$ into two angular sectors whose angles both  are strictly larger
than $\pi$.\end{prop}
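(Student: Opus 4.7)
The plan is to argue by contradiction via a compactness argument. Suppose the conclusion fails for some $K > 0$: there exists a sequence $(N_\ell)_{\ell \in \mathbb N}$ of unit-area flat tori in $\F$ with $D(N_\ell) \leq K$ and $\sigma_\ell := \sigma(N_\ell) \to 0$, each equipped with a systole-realising curve $c_\ell$ through $p_1$ falling in case $(1)$. By Lemma \ref{L:RealisationOfTheSystol}, $c_\ell$ is a simple geodesic loop at $p_1$ of length $\sigma_\ell$ whose angle at $p_1$ on the cylinder side equals $\pi$. Let $C_\ell$ be the flat cylinder $c_\ell$ bounds, extended maximally inside $N_\ell$, and let $c'_\ell$ be the opposite boundary component, a simple closed concatenation of saddle connections of total length $\sigma_\ell$.

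The compactness step uses Proposition \ref{triangulation}: every $N_\ell$ admits a Delaunay triangulation with edges of length at most $2K$ and combinatorial type controlled by $g=1$ and $n$. After extracting a subsequence, the combinatorial type stabilises, the edge lengths converge, and, since $\mathrm{Im}(\rho)$ is finite, so do the edge holonomies. Gauss-Bonnet applied on the flat annulus $C_\ell$ (Euler characteristic zero, vanishing bulk curvature, piecewise geodesic boundary) gives $\sum_{w \in \partial C_\ell}(\pi - \alpha_w) = 0$, where $\alpha_w$ denotes the inner angle on the cylinder side; the contribution of $p_1$ vanishes, so the identity restricts to the cone points on $c'_\ell$. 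Because $c'_\ell$ is a simple closed curve of length $\sigma_\ell \to 0$, only two scenarios can occur: $(i)$ $c'_\ell$ is a single geodesic loop at some cone point $w_\ell$ of $N_\ell$, with $\alpha_{w_\ell} = \pi$; or $(ii)$ $c'_\ell$ meets at least two distinct cone points of $N_\ell$, whence $\delta(N_\ell) \to 0$.

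The main obstacle is to exclude both branches. Branch $(ii)$ is ruled out by an argument in the spirit of Lemma \ref{collisions}: since all cone points other than $p_1$ have angles in $]0, 2\pi[$ and hence positive curvature, excising a small topological disk enclosing the colliding pair and comparing with the flat sphere carrying their angle data provides a uniform positive lower bound on their distance, contradicting $\delta(N_\ell) \to 0$ (when $p_1$ is in the colliding pair, $\theta_1 > 2\pi$ only creates additional local flat room for the comparison to go through). In branch $(i)$, $C_\ell$ is a rectangular flat cylinder of width $\sigma_\ell$ and length $L_\ell$ bordered by two geodesic loops at the cone points $p_1$ and $w_\ell$; its area $L_\ell \sigma_\ell$ is at most $1$ and its core $c_\ell$ has trivial linear holonomy. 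Passing to the limit of the stabilised Delaunay data, both boundary loops collapse and a Gauss-Bonnet computation on the resulting degenerate surface (using that the cylinder-side angles $\pi$ at $p_1$ and $w_\ell$ disappear, leaving residual angles $\theta_1 - \pi$ and $\theta_{w_\ell} - \pi$) shows that the prescribed data $\theta$ cannot be matched by the limit topology without producing inconsistencies that can be pushed back to contradict either the bounded diameter or the finiteness of $\mathrm{Im}(\rho)$. The delicate subcase is $L_\ell \to \infty$, in which the cylinder wraps unboundedly around the torus while the diameter remains bounded; this is ruled out by the rigidity provided by the finite image of $\rho$, which discretises the possible primitive periods of trivially-holonomous cylinders in $H_1(N_{1,n},\mathbb Z)$ and prevents an unbounded winding compatible with a fixed diameter bound.
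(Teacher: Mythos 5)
There is a genuine gap. Your trichotomy misses the key geometric fact that drives the paper's proof, and your treatment of the case that actually occurs is too vague to constitute an argument.

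First, the two boundary components of the maximal cylinder $C_\ell$ are \emph{both} curves of length $\sigma_\ell$ realising the systole (they are essential and of minimal length in their free homotopy class). By the argument in the proof of Lemma~\ref{L:RealisationOfTheSystol}, a curve realising the systole cannot pass through a cone point of positive curvature (one could shortcut the corner on the side of angle $<\pi$ and get something strictly shorter). Since maximality forces $c'_\ell$ to meet at least one cone point, and $p_1$ is the \emph{only} cone point of non-positive curvature, one is forced to have $w_\ell = p_1$: your branch $(ii)$ never occurs, and in branch $(i)$ the only possibility is $w_\ell=p_1$. You never establish this, and without it the surgery that produces the contradiction is unavailable. (Your Gauss--Bonnet step on $C_\ell$ doesn't help: for a flat cylinder every interior boundary angle equals $\pi$, so the identity is $0=0$ and says nothing about how many cone points lie on $c'_\ell$.)

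Second, once one knows both boundary loops pass through $p_1$, the paper's argument proceeds by an explicit cut-and-paste: remove $C_\ell$, cut the resulting surface at the touching point $p_1$, and glue the two geodesic boundary arcs (of equal length $\sigma_\ell$) to each other. The result is a flat \emph{sphere} $S_\ell$ whose two new cone points have angles $\alpha_\ell,\beta_\ell$ with $\alpha_\ell+\beta_\ell=\theta_1-2\pi<2\pi$; all cone points of $S_\ell$ have positive curvature; the two new cone points are at distance $\sigma_\ell\to 0$; and finiteness of $\mathrm{Im}(\rho)$ confines $(\alpha_\ell,\beta_\ell)$ to a finite set, so up to subsequence they are constant. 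Because $D(N_\ell)\le K$ forces $L_\ell\le 2K$, the area of $S_\ell$ is at least $1-2K\sigma_\ell>1/2$ for large $\ell$, and one contradicts Lemma~\ref{collisions}. Your sketch of ``passing to the limit and checking Gauss--Bonnet inconsistencies'' names no inequality, no contradiction, and no surgery; it is not a proof.

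Two further points. The parenthetical in your branch $(ii)$ — that Lemma~\ref{collisions} extends to the case where $p_1$ is in the colliding pair because ``$\theta_1>2\pi$ only creates additional local flat room'' — is backwards: Lemma~\ref{collisions} requires \emph{all} cone points to have positive curvature and the colliding pair to satisfy $\theta_i+\theta_j<2\pi$; both fail when $p_1$ is involved, and collisions involving $p_1$ are precisely what a $C$-stratum records and are \emph{not} forbidden. Finally the ``delicate subcase $L_\ell\to\infty$'' you worry about is already excluded by the bounded diameter hypothesis (the midpoint of an embedded cylinder of length $L$ lies at distance $\ge L/2$ from its boundary, hence $D(N_\ell)\ge L_\ell/2$), and the appeal to ``discretising primitive periods'' is not a coherent argument and is not needed.
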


\begin{proof} We argue by contradiction.
Assume that there exist a constant $K$ and a sequence $(N_m)_{m\in \mathbb N}$ of flat tori such that for all $m \in \mathbb{N}$, one has : 
\begin{itemize}

\item $N_m \in \F$  and its area is equal to 1; \sk

\item $\sigma(N_m) \leq \frac{1}{m}$; \sk

\item  $N_m$ contains a cylinder $C_m$ of width $\sigma(N_m)$ (\textit{i.e.} we are in situation (1) described above); \sk

\item $D(N_m) \leq K $.
\end{itemize}

For all $m \in \mathbb{N}$, $N_m \setminus C_m$ is a sphere whose boundary is the union of two piecewise geodesic closed curves of the same length $\sigma(N_m)$ touching at the only point where they both are singular, namely $p_1(m)$ the cone point of negative curvature of $N_m$. One can cut at the point where the two boundary circles touch, and glue together the two geodesic parts of the new boundary circles (which have the same length) to get a flat sphere $S_m$. Since the cone angle $\theta_1$ at $p_1(m)$  is supposed to belong to  $]2\pi,4\pi[$, the resulting sphere has only positively curved cone points. The angles 
$\theta'_1(m)$ and $\theta'_2(m)$ 
at the two `new' cone points of $S_n$ created by the previous cutting and pasting operation, must satisfy 
$ \theta'_1(m) + \theta'_2(m) + \pi + \pi = \theta_1 $. 
Since $\theta_1$ is strictly smaller than $4\pi$, we get: 
$$ \theta'_1(m) + \theta'_2(m)  < 2\pi \, .$$

For $k = 1,2$, the cone angle $\theta'_k(m)$  must be such that $e^{i\theta'_k(m)} \in \mathrm{Im}(\rho)$ because $e^{i\theta'_k(m)} $ is the linear holonomy of a curve in the free homotopy class of the curve realising the systole. Therefore these two angles  can take only a finite number of values.  So,  up to extracting a subsequence, one can assume that these two cone angles  are independent of $m$. 
\noindent The fact that the sequence of diameters $(D(M_m))_{m\in \mathbb \mathbb{N}}$ is bounded by $K$ implies  that the length of $C_m$ is bounded by $2K$. Therefore the area of $S_m$,  which is larger than $1 -2  \sigma(N_m)K$,   is bigger than ${1}/{2}$ provided that $m$ is large enough. Hence  $ \theta'_1(m) + \theta'_2(m)  < 2\pi $ and the distance between the associated cone points, which equals $\sigma(N_m)$ by construction, goes to zero. This contradicts Lemma \ref{collisions} and proves the proposition. \end{proof}

\section{\bf SURGERIES}
\label{surgeries}

 A {\bf surgery} is a procedure  through which a new flat surface with conical singularities is produced from another one by means of geometrical gluing and pasting relying on elementary Euclidean geometry. 
  This notion naturally appears  when studying moduli spaces of flat surfaces (implicitly in \cite{Thurston} but also more explicitly  in \cite{KontsevichZorich}).  
    In the previous section, we have studied different ways for flat surfaces to degenerate, namely sequences of surfaces containing very large embedded flat cylinders, essential curves which collapse or cone points colliding. 
     In the present section, we introduce several surgeries which are to be seen as the inverse processes of the aforementioned degenerations. \sk 

 We will distinguish five distinct types of surgeries: 
\sk 
\begin{itemize}

\item the first one, denoted by ${\mathcal{S}}_1$, was known and implicitly considered by Thurston in \cite{Thurston}. It consists in blowing up a singular point of positive curvature into two singular points of positive curvature; \sk

\item  the second surgery, denoted by ${\mathcal{S}}_2$,  is a straightforward generalisation of the first one, which allows to blow up points of negative curvature. 
We will therefore refer to both ${\mathcal{S}}_1$ and ${\mathcal{S}}_2$ as \textbf{Thurston's surgeries};  \sk

\item it seems to us that the third surgery ${\mathcal{S}}_3$ is new. We call it the \textbf{Devil's surgery}. 
It consists in creating a handle by removing the neighbourhoods of two singular points and gluing their boundaries together; 
\sk 

\item the fourth surgery ${\mathcal{S}}_4$ consists in blowing up a regular point into three singular points. We call it the \textbf{Kite surgery}; \sk

\item  the last surgery ${\mathcal{S}}_5$ consists in creating a handle by adding a long flat cylinder to any flat surface having two isometric totally geodesic boundary components.
\bk 
\end{itemize}

Surgeries  ${\mathcal{S}}_1$,  ${\mathcal{S}}_2$ and  ${\mathcal{S}}_4$ could have been seen as the same in a more general presentation but
we find more convenient to differentiate them for our purpose.  
At the end of this Section,  we compute the signature of the area form in the case we are interested by  using surgeries and we give a definition of the notion of \textit{geometric convergence} which will be central in the description of the metric completion of $\F$ carried on in Section \ref{completion}.

\subsection{ Thurston's surgery ${\mathcal{S}}_1$ for a cone angle smaller than $2\pi$.}
\label{Thsurgery}
Let $N$ be a flat surface of genus $g$ with cone angles $\theta = (\theta_1, \ldots, \theta_n)$ at $p_1, \ldots, p_n \in N$. Let $\F$ be the leaf of Veech's foliation to which $N$ belongs. Suppose that $ \theta_1 < 2\pi$ and let $\theta_1'$ and $\theta_1''$ be two angles smaller than $2\pi$ such that 
\begin{equation}
\label{E:AnglesSurgery1}
 2\pi - \theta_1 = \big(2\pi - \theta_1'\big) + \big(2\pi - \theta_1''\big). 
 \end{equation}

 In this subsection, we describe  a surgery building out  flat surfaces of genus $g$ with $n+1$ singular points of cone angle $ \theta' = (\theta_1', \theta_1'', \theta_2, \ldots, \theta_n)$ from $N$ (note that because we have assumed \eqref{E:AnglesSurgery1},  the new angle datum $\theta'$ still satisfies Gau\ss-Bonnet formula  \eqref{E:GBformula}).  
  The surgery is local on $N$, in the sense that it is performed on a small neighbourhood of $p_1$ without modifying the rest of the surface.  
\mk 

 Choose a point $p$ in a small neighbourhood $C$ of $p_1$ isomorphic to the  
 {portion of  cone} $C_{\theta_1}(\epsilon)$ for a certain $\epsilon>0$ (see \S\ref{S:Generalities}). As $\theta_1'$ is bigger than $\theta_1$, there are exactly two distinct segments of the same length issuing from $p$ which meet at their endpoints and form an interior angle equal to $2\pi - \theta_1'$ at $p$ (see Figure \ref{S1}).  
\begin{figure}[!h]
\centering
\psfrag{p}[][][1]{$p$}
\psfrag{s}[][][1]{$\textcolor{blue}{\frac{\theta_1''}{2}}$}
\psfrag{t}[][][1]{$\textcolor{blue}{\frac{\theta_1''}{2}}$}
\psfrag{t1}[][][1]{$\theta_1'$}
\psfrag{t1p}[][][1]{$\theta_1'$\,}
\psfrag{p}[][][1]{$p$}
\psfrag{B}[][][1]{$B$}
\psfrag{p1}[][][1]{${p_1}$}
\psfrag{1}[][][1]{$(1)$}
\psfrag{2}[][][1]{$(2)$}
\psfrag{2pim}[][][0.75]{$2\pi-\theta_1\,$}
\includegraphics[scale=1]{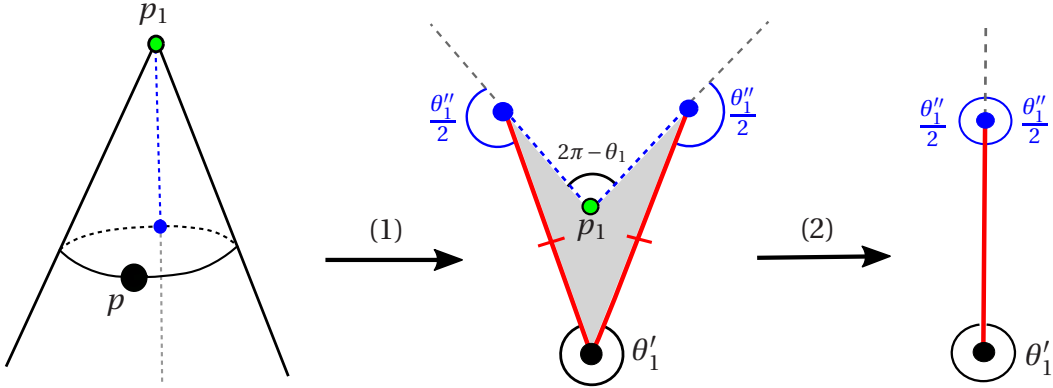}
\caption{Thurston's surgery ${\mathcal{S}}_1$ consists in (1) cutting the surface along the dashed blue segment; then (2) removing the grey piece of the surface and gluing the two red segments together.}
\label{S1}
\end{figure}
\sk 

 The surgery works the following way : delete the bigon on $C$ 
 which corresponds to the quadrilateral $B$ in grey on Figure \ref{S1}. Its sides are two geodesics which have the same length and the same endpoints $p_1'$ and $p_1''$. Removing the bigon and gluing these two segments together, one gets a new flat surface $N'$ having two cone points of angle $\theta_1'$ and $\theta_1''$ at $p_1'$ and $p_1''$.

 Recall that $\mathscr{F}_{[\rho]}$ is the leaf of Veech's foliation to which $N$ belongs. There exists a neighbourhood $U$ of $N$ in $\mathscr{F}_{[\rho]}$ and $\epsilon > 0$  sufficiently small so that for all flat surfaces in $U$, the previous surgery can be performed for all $p$ in a disc of radius $\epsilon$ centered at $p_1$,  the cone point of angle $\theta_1$. Remark that the class $[\rho']$  such that $N' \in \mathscr{F}_{[\rho']}$ does not depend on the choice of $N$ in $U$.  
 \sk 
 
 This allows us to define a map
\begin{align}
\label{E:S1OnAProduct}
{\mathcal{S}}_1\, : \;  C_{\theta_1}^*(\epsilon)  \times U  &\longrightarrow  \mathscr{F}_{[\rho']} \\
  \big(p,  N\big)  & \longmapsto N' \, , \nonumber
\end{align}
where $C_{\theta_1}^*(\epsilon)$ is $C_{\theta_1}(\epsilon)$ minus 
its apex (see Section \ref{flatsurfaces}). This definition requires an identification of $ C_{\theta_1}(\epsilon)$ {with a neighborhood of the cone point of angle $\theta_1$
in each flat surface} element of $U$. We do this by choosing a geodesic path $c$ joining $p_1$ and $p_2$. This path survives in a neighbourhood of $N$ in $U$. We decide that  $ C_{\theta_1}(\epsilon)$ is embedded in an element of $U$ in such a way that it always meets the previous geodesic path in the same locus - this latter requirement defining unambiguously such an embedding. Let $z_0 \in \mathbb{C}$ be a (germ of) linear parametrisation of $C_{\theta_1}^*(\epsilon)$ such that $z_0 \in \mathbb{R}_+$ if and only if the corresponding point $p$ belongs to the aforementioned geodesic path joining $p_1$ to $p_2$. \sk 

 There is a little ambiguity for the choice of the path $c$ whenever some elements of $U$ have non-trivial isometries; in that case the identification of $C_{\theta_1}^*(\epsilon)$ on elements of $U$ cannot be made continuous.  
 We chose to  ignore this difficulty for a moment and then we will address it in Remark \ref{onetoone} below.

\begin{prop}
We use the notations introduced just above. 
\label{param}
\begin{enumerate}
\item If $(z_1, \ldots, z_m)$ is a linear parametrisation of $U$ then  $(z_0, z_1, \ldots, z_m)$ is a linear parametrisation of $U' \subset \mathscr{F}_{[\rho']}$.
\sk 
\item The map ${\mathcal{S}}_1$ is a local biholomorphism.
\sk 
\item If all elements of $U$ have no non-trivial isometry then ${\mathcal{S}}_1$ is one-to-one.
\end{enumerate}
\end{prop}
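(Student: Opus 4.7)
The plan is to deduce parts (2) and (3) from (1), which is the key linearity statement. For (1), I will produce a pseudo-polygonal model of $N'$ that extends a pseudo-polygonal model of $N$ by exactly one boundary edge, carrying the complex parameter $z_0$.

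First I would choose a pseudo-polygonal model $\overline{Q}$ of $N$, provided by Proposition \ref{pp}, in which $p_1$ appears as an isolated vertex and such that the developing map identifies a small neighborhood of $p_1$ in $\overline{Q}$ with the sector of angle $\theta_1$ in $\mathbb{C}$, placing the reference geodesic path $c$ (used to identify $C_{\theta_1}(\epsilon)$ in every element of $U$) along a chosen ray of this sector. By Proposition \ref{top}, after relabeling, the coordinates $(z_1, \ldots, z_m)$ are the complex numbers associated to an appropriate subset of the edges of $\partial \overline{Q}$. Next I would modify $\overline{Q}$ as follows: in the developed picture near $p_1$, draw the geodesic segment of length $|z_0|$ from $p_1$ to the point $p$ specified by the surgery parameter; split the vertex $p_1$ into two vertices $p_1'$ and $p_1''$ corresponding to the two ``sides'' of this segment; and replace the vertex $p_1$ in $\partial \overline{Q}$ by a path running from $p_1'$ to $p$ and back from $p$ to $p_1''$. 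The resulting pseudo-polygon $\overline{Q'}$ has one more free boundary edge than $\overline{Q}$, whose associated complex number is exactly $z_0$, and under the gluing pattern inherited from $\overline{Q}$ together with the identification of the two new sides, it realises $N' = {\mathcal{S}}_1(p,N)$. Applying Proposition \ref{top} to $\overline{Q'}$ then shows that $(z_0, z_1, \ldots, z_m)$ is a linear parametrisation of a neighbourhood of $N'$ in $\mathscr{F}_{[\rho']}$, which is~(1).

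For (2), in the coordinates $(z_0, z_1, \ldots, z_m)$ the map ${\mathcal{S}}_1$ is the identity, so it is holomorphic. Since both the source $C^*_{\theta_1}(\epsilon) \times U$ and the target neighbourhood in $\mathscr{F}_{[\rho']}$ have complex dimension $m+1 = 2g - 3 + (n+1)$, the map is a local biholomorphism. For (3), the surgery ${\mathcal{S}}_1$ can be inverted: given $N' \in U'$ sufficiently close to ${\mathcal{S}}_1(p,N)$, the pair $(p_1', p_1'')$ is joined by a unique short geodesic segment, which bounds a canonical bigon whose collapse recovers the pair $(p, N)$. The ambiguity in the identification of $C_{\theta_1}(\epsilon)$ raised before the statement of the proposition is precisely the one coming from nontrivial isometries of elements of $U$ permuting the cone points; in their absence the inverse is globally well-defined on $U'$, hence ${\mathcal{S}}_1$ is globally injective.

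The main obstacle I expect is the opening construction in (1): producing a pseudo-polygonal model $\overline{Q}$ of $N$ in which $p_1$ is an isolated vertex whose neighbourhood develops into $\mathbb{C}$ in a way compatible with the reference geodesic $c$, and checking that such a model persists under the small deformations parametrising $U$. This does not follow immediately from the general existence statement of Proposition \ref{pp}; one should choose the graph $\Gamma$ whose complement defines the pseudo-polygon by starting from a Delaunay triangulation of $N$ and refining it near $p_1$ (so that $c$ appears as an edge), then arguing that the combinatorial type of $\Gamma$ and of the gluing pattern survives in a neighbourhood of $N$ in $\F$. Once this is accomplished, the rest of the argument is the bookkeeping sketched above.
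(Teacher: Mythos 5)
Your outline follows essentially the same route as the paper: produce a (pseudo-)polygonal model of $N'$ extending one of $N$, read off $(z_0,z_1,\ldots,z_m)$ as a linear parametrisation, and deduce (2) and (3) from (1). The difference is in how (1) is closed, and there is a genuine gap there that the paper handles more carefully.

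You conclude (1) by "applying Proposition \ref{top} to $\overline{Q'}$." But Proposition \ref{top} only guarantees that \emph{some} subset of the boundary edge-parameters of $\overline{Q'}$, of the right cardinality, gives a linear parametrisation; it does not guarantee that the particular subset $\{z_0,z_1,\ldots,z_m\}$ (with $z_1,\ldots,z_m$ coming from the old model and $z_0$ the new edge) is such a subset. You still have to check that these $m+1$ quantities are independent modulo the side-pairing relations \eqref{E:RelationPolygonalFRho}. The paper avoids this by a direct computation: it takes a polygonal parametrisation $(w_0,\ldots,w_m)$ of the extended model, develops near $p_1$, and exhibits the explicit invertible linear relations $w_0=\rho_0 z_0$, $w_1=z_1+z_0$, $w_2=z_2+\rho_1 z_0$ (with the remaining $w_i$ equal to some $z_j$). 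Since being a linear parametrisation is by definition "linearly related to a polygonal one," this closes (1) without appealing to Proposition \ref{top}. You should either reproduce this change-of-coordinates computation, or at least argue that $z_0$ cannot be a linear combination of $z_1,\ldots,z_m$ modulo the gluing relations (which is where the content of the explicit formulas lies).

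Two smaller points. First, the extra hypothesis you impose on $\overline{Q}$ — that $p_1$ be an "isolated vertex" and that the model come from a refined Delaunay triangulation — is stronger than what the argument requires and is the source of the obstacle you flag at the end. The paper only needs a topological polygonal model in which the reference path $c$ from $p_1$ to $p_2$ appears as an edge, and this is directly available from Section \ref{topgluing}; there is no need to control the triangulation near $p_1$. Second, your "split-the-vertex" surgery on $\overline{Q}$ is a plausible but unverified way to get a model of $N'$; the paper instead adds one vertex to the side representing $c$, which is easier to match with the developed picture near $p_1$. For (2) and (3) your reasoning is sound: (2) follows immediately once (1) is established, and your inversion-of-surgery argument for (3) is equivalent to the paper's appeal to the two geometric invariants ($|z_0|$ and the isometry class of $N$), both of which reduce injectivity to the absence of pure isometries of elements of $U$.
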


\begin{proof} Consider a topological polygonal model of $N \in U$ which is such that $c$ is an edge of this polygon. This model can be extended to a model of $N'$ by adding a point on the side representing the class of $c$, see Figure \ref{F:surgery3}.
\begin{figure}[!h]
\centering
\psfrag{C}[][][1]{$C$}
\psfrag{p2}[][][1]{$p_2$}
\psfrag{p1}[][][1]{$\;\;\; \textcolor{red}{p_1}$}
\psfrag{p1pp}[][][0.9]{$ \!\!\!\!\!\!\textcolor{blue}{p_1'}$}
\psfrag{p1p}[][][0.9]{$\!\! \textcolor{blue}{p_1''}$}
  \includegraphics[scale=0.66]{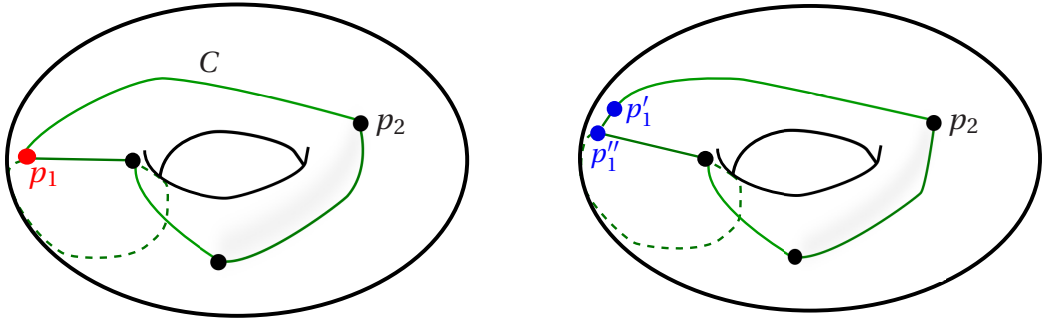}
 \caption{The surface $N$ before surgery ${\mathcal{S}}_1$ on the left and the  surface $N'$ obtained after surgery on the right.} 
 \label{F:surgery3}
\end{figure}
\sk

Let $(z_1, \ldots, z_m)$ be a linear parametrisation  of $U$ such that the geodesic path $C$ from $p_1$ to $p_2$ develops on $z_1$, and let $z_0$ be the complex number onto which the geodesic path from $p_1$ to $p$ (which is going to become $p'_1$ after the surgery) develops. Let $N'$ be a flat surface obtained after applying a ${\mathcal{S}}_1$ surgery to $N$. Let $(w_0, \ldots, w_m)$ be a linear parametrisation of a neighbourhood of $N'$ in $\mathscr{F}_{[\rho']}$ associated to the extended polygonal model of $N$ such that  $w_0$ represents the shortest geodesic path from $p_1'$ to $p_1''$ on $N'$ and $w_1$ represents a path joining $p_1'$ to $p_2$, $w_2$ a path joining $p_1''$ to $p_3$ (note that $p_3$ and $w_2$ do not appear explicitly on Figures \ref{F:surgery3} and \ref{F:surgery4}). 

\begin{figure}[!h]
\centering
\psfrag{p1}[][][1]{$p_1$}
\psfrag{p1p}[][][1]{$p_1'$}
\psfrag{p1pp}[][][1]{$\;p_1''\quad$}
\psfrag{q1pp}[][][1]{$p_1''$}
\psfrag{p2}[][][1]{$p_2$}
\psfrag{t1p}[][][1]{$\theta_1'$\;}
\psfrag{2pimt1}[][][1]{$2\pi-\theta_1$}
\psfrag{t1pp2}[][][1]{$\frac{\theta_1''}{2}$\;}
\psfrag{m}[][][1]{$\frac{\theta_1''}{2}$}
\psfrag{z0}[][][1]{$\textcolor{green}{z_0}\;\;\,$}
\psfrag{z1}[][][1]{$z_1\;$}
\psfrag{w0}[][][1]{$\;\;\textcolor{red}{w_0}$}
\psfrag{w1}[][][1]{$\textcolor{blue}{w_1}$\;\;}
 \includegraphics[scale=1.2]{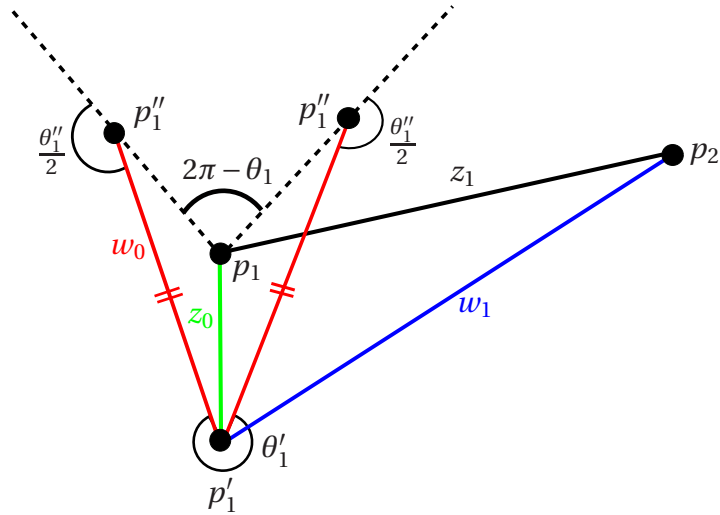}
 \caption{A superposition of the parametrisations $w$ and $z$ before and after surgery, near $p_1$.}
\label{F:surgery4}
\end{figure}

According to Figure \ref{F:surgery4}, which is a superposition of the developing maps of $N$ and $N'$ near the point $p_1$, where the parametrisations $w$ and $z$ correspond respectively to before (the surface $N$) and after (the surface $N'$) proceeding to the surgery, 
the following relations hold true 
$$
 w_{0}  = \rho_0 z_{0}, \quad   
 w_ 1 =z_1 + z_0 \quad  \mbox{ and } \quad
 w_2  = z_2 + \rho_1z_0 \, , $$ 
 where $\rho_0$ and $\rho_1$ are constants (which can be made explicit by means of elementary geometry of Euclidean triangles) depending only on $\theta_1, \theta_1'$ and $\theta_1''$. All the other $w_i$'s can be expressed in a similar fashion. Furthermore, if $w_i$ represents a path involving end points different from $p_1'$ and $p_1''$, it is equal to one of the $z_j$'s.

Therefore $(z_0, z_1, \ldots, z_m)$ is a linear parametrisation of a neighbourhood of $N'$ in $\mathscr{F}_{[\rho']}$. This implies directly the two first points of the proposition, in particular the fact that ${\mathcal{S}}_1$ is a local biholomorphism. It remains to prove the injectivity of ${\mathcal{S}}_1$ under the additional hypothesis that all the  elements of $U$ have a trivial isometry group. The length of the shortest path from $p_1'$ to $p_1''$ (which equals $|z_0|$ provided that the latter  is small enough in the area $1$ normalisation) is a geometric invariant. The surface $N$ from which $N'$ is obtained from also is  a geometric invariant. Assume that there exist two points $p$ and $p'$  on $C_{\theta_1}^*(\epsilon)$ such that the resulting surfaces from the surgery at $p$ and $p'$ are the same. This would imply that the initial surface has an isometry fixing $p_1$ and sending $p$ to $p'$. The (pure) isometry group of a surface being finite, ${\mathcal{S}}_1$ is a local biholomorphism which is one-to-one if all the elements of $U$ are all isometry free. 
\end{proof}

\begin{rem}
\label{onetoone}
\rm $(1)$ It is worth giving a more abstract and intrinsic definition of the surgery introduced above. Let $U\subset \F$ as above and assume that none of its elements 
admits a nontrivial  isometry. Then there exists a `{\it universal flat curve $\nu_U: \mathcal T_U\rightarrow U$ over $U$}' : it is  a map  such that
the fiber over  a flat surface $N$ viewed as a point of  $U$ is $N$ itself,  but this time viewed as a 2-dimensional flat surface with conical singularities.  This family of surfaces comes with $n$ sections $p_i: U\rightarrow \mathcal T_U$ which are such that $p_i(N)$ is the $i$-th cone point of $N$ for every $i=1,\ldots,n$.  One denotes by $P_i$ the image of $p_i$ for every $i$ and by 
$\mathcal T_U^*=\mathcal T_U\setminus \cup_{i=1}^n P_i$ the `{\it $n$-punctured universal flat curve over $U$}'
\sk

\rm


Within this formalism, one can verify that Thurston's surgery $\mathcal S_1$ admits an intrinsic definition  as the (germ of) map $(\mathcal T_U^*, P_1) 
\rightarrow  \mathscr F_{[\rho']}$ which, for any $p\in \mathcal T_U^*$ sufficiently close to $P_1$  associates the flat surface $N'$ obtained by performing the surgery described by Figure \ref{S1} above on the surface $N=\nu_U(p)$ with respect to $p$ and the cone point $p_1(N)$. 
Clearly, obtaining   the more explicit definition \eqref{E:S1OnAProduct} just amounts to trivializing $\mathcal T_U\rightarrow U$ along $P_1$. \sk

\rm $(2)$ The interest of the preceding, more conceptual, approach is that it 
points out the main issue  when some of the elements of $U$ admit nontrivial isometries and how to deal with it.  Indeed, in this case, there is no universal curve over $U$ but one
 will exist over a non-trivial orbifold cover $\widetilde{U}$ of $U$  and working with the latter,  one can define Thurston's surgery the same way than above. \sk 

\rm For instance and more concretely,  if  $N_0 \in U$ 
is such that ${\rm PIso}^+(N_0)$ is  non-trivial, then it is necessarily cyclic of finite order,  say $m$, according to \S\ref{SS:IsometryGroup}. In this case there exists $\tilde{U}\rightarrow U$ an orbifold cover of order $m$ of $U$,   whose deck transformation group  is isomorphic to the isometry group of $N_0$, on which the identification of $C_{\theta_1}^*(\epsilon)$ with some neighborhoods of the corresponding cone points 
in flat surfaces belonging to $U$ can be made in a continuous way. 

\rm Therefore the surgery  still defines a map 
\begin{align*}
{\mathcal{S}}_1 \; : \, C_{\theta_1}^*(\epsilon)  \times \widetilde{U} & \longrightarrow  \mathscr{F}_{[\rho']} \\
 \big(p,  N\big)  & \longmapsto   N'
\end{align*} 
which is equivariant under the action of the isometry group of $N_0$. 
\end{rem}

\subsection{Thurston's surgery ${\mathcal{S}}_2$ for a cone angle greater than $2\pi$.}
Assume now that $\theta_1 > 2\pi$. Let $\theta_1' > 2\pi$ and $\theta_1''< 2\pi$ be such that 
$$ 2\pi - \theta_1 = \big(2\pi - \theta_1'\big) + \big(2\pi - \theta_1''\big)\, . $$

Let  $V_1$ be a neighbourhood  of $p_1$  isometric to a portion of cone $C_{\theta_1}(\epsilon)$ for a certain $\epsilon > 0$.
  Define $\eta = \theta_1 - 2\pi$ and let $p$ be a point of $V_1$. If $p$ is close enough to the singular point $p_1$ there is a unique $4$-gon $P$ in $V_1$ having the following properties (see Figure \ref{surgery1}) :  

\begin{itemize}

\item $p$ and $p_1$ are opposite vertices of $P$ 
and the external angles of the latter at these two points are 
$\theta_1''$ and $\eta$ respectively; 
\sk
\item the external angles at  the two other vertices of $P$ both are ${\theta'_1}/{2}$;
\sk 

\item the sides of $P$ meeting at $p$ (resp.\;at $p_1$) have the same length. \mk
\end{itemize}


\begin{figure}[!h]
\centering
\psfrag{A}[][][1]{
\begin{tabular}{l}{\it A neighbourhood of}  \\ {\it the singular point $p_1$}
\end{tabular}}
\psfrag{B}[][][1]{
\begin{tabular}{l}  {\it Surgery creating two points}\\ {\it with cone angles $\theta_1'$ and $\theta_1''$}
\end{tabular}
}
\psfrag{tpp}[][][1]{$ \textcolor{red}{\theta_1''}$}
\psfrag{tp2}[][][1]{$ \textcolor{blue}{\frac{\theta_1'}{2}}\; $}
\psfrag{ttt}[][][1]{$\textcolor{blue}{\frac{\theta_1'}{2}}$}
\psfrag{p1}[][][1]{$p_1$}
\psfrag{p}[][][1]{$ \textcolor{red}{p}$}
\psfrag{p11}[][][1]{$p_1$}
\psfrag{P}[][][1]{$P$}
\psfrag{2p+e}[][][1]{$2\pi+\eta\; $}
\psfrag{e}[][][1]{$\eta$}
\includegraphics[scale=0.9]{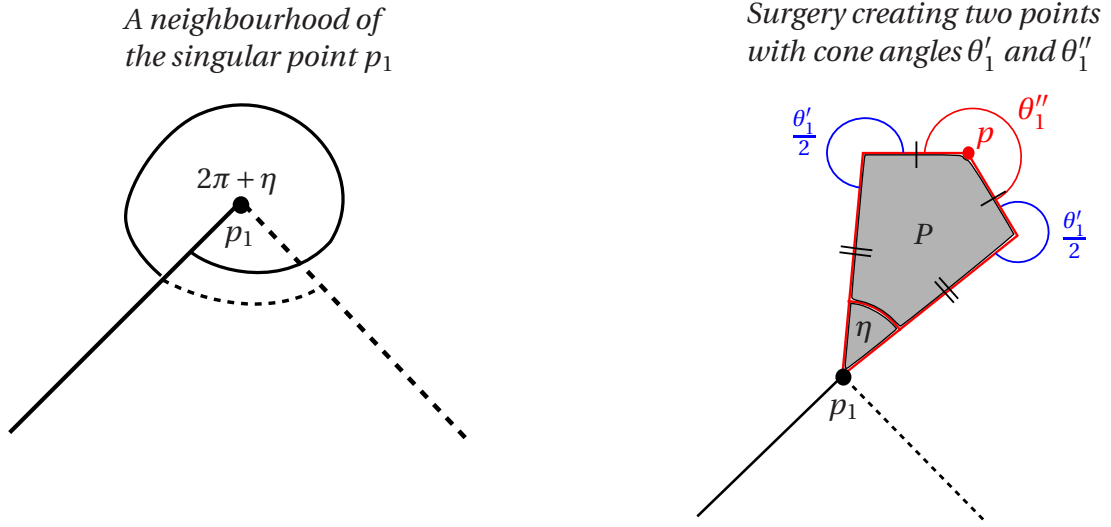}
\caption{Thurston's surgery ${\mathcal{S}}_2$. }
\label{surgery1}
\end{figure}

We build a new flat surface $N'$ in the following way : we remove the interior of $P$ and glue together the sides meeting at $p_1$ and $p$. We obtain a flat surface with a singularity of angle $\theta_1''$  and another singularity of angle $\theta_1'$. As when $\theta_1 < 2\pi$,  the class $[\rho']$ such that $N' \in \mathscr{F}_{[\rho']}$ does not depend on the choice of $N$. For a neighbourhood $U$ in $\F$, this allows us to define  a map 
\begin{align*}
{\mathcal{S}}_2 \;  : \,  C^*_{\theta_1}(\epsilon)  \times U & \longrightarrow  \mathscr{F}_{[\rho']} \\
  \big(p, N\big)  & \longmapsto  N'  \, . 
\end{align*}

Similarly to the case when $\theta_1 < 2\pi$, the map ${\mathcal{S}}_2$ is a local biholomorphism on its image, and for any linear parametrisation $(z_1, \ldots, z_m)$ of $U$, $(z_0, z_1, \ldots, z_m)$ is a linear parametrisation of the image of ${\mathcal{S}}_2$, where $z_0$ is the complex number on which the segment $[p_1,p]$ develops. The proof is exactly the same as in the $\theta_1 < 2\pi$ case and is left to the reader. 
 A similar remark to Remark \ref{onetoone} also holds true for ${\mathcal{S}}_2$ as well.
\sk 

 The following remark will play a crucial role in the proof of Proposition \ref{complet} which is one of the main results of the paper.
\begin{rem}
\label{formula}
{\rm 
If $l$ is the length of the segment between the points of angle $\theta'_1$ and $\theta''_1$ and $L$ the length  between $p_1$ and the point of angle $\theta'_1$ (after surgery), then  }
\begin{equation}
\label{E:m=Equation*l}
 L = \frac{ \sin \left(\frac{\theta_1''}{2}\right)}{\sin \left(\frac{\theta_1' + \theta_1''}{2}\right)} l \, .
 \end{equation}

{\rm Given a flat surface $N$ with two conical points $p_1'$ and $p_1''$ of angles $\theta'_1>2\pi$ and $\theta''_1<2\pi$ which are linked by a saddle connection $c$ of length $l$, one can wonder when it is possible to reverse Thurston's surgery ${\mathcal{S}}_2$. The only obstruction to doing so is that $c$ can be extended on the side of $p_1'$ (the point of negative curvature) on a distance  
equal to the right-hand side of \eqref{E:m=Equation*l},  
 while cutting $\theta_1'$ in half.  The proof of this claim is elementary and left to the reader.}
\end{rem}

\subsection{The Devil's surgery ${\mathcal{S}}_3$}
\label{devil}
Let $\theta_1', \theta_1'', \theta_2, \ldots, \theta_n$ be the respective cone angles at the cone points $p_1', p_1'', p_2, \ldots, p_n$ of a flat surface $N$. We make the simplifying assumption that $\theta_1'$ and $\theta_1''$ are (strictly) less than $\pi$, but we will see later on that the surgery we are going to describe can still be performed for 
 $\theta_1'$ and $\theta_1''$ less than $2\pi$. 
\mk 

Let $C_1'$ and $C_1''$ be two neighbourhoods of $p_1'$ and $p_1''$, isometric to 
$C_{\theta_1'}(\epsilon)$ and $C_{\theta_1''}(\epsilon)$ respectively, with $\epsilon>0$ sufficiently small. 
Consider two points {$q'$ and $q''$} in $C_1'$ and $C_1''$ respectively such that the unique closed geodesic paths $c'$ and $c''$ in $C_1'$ and $C_1''$ respectively,  singular only at  $q'$ and $q''$ have the same length. Removing the `upper  parts' of the two cones $C_1'$ and $C_1''$
 by cutting along $c'$ and $c''$ respectively, one can glue $c'$ and $c''$ isometrically in such a way that $q'$ and $q''$ are glued together. One gets a flat surface $\widehat N$ of genus $g+1$ with $n$ cone points of angle $\theta_1, \theta_2, \ldots, \theta_n$ with $\theta_1 = 2\pi + \theta'_1 + \theta_1'' $.
\mk


\begin{figure}[!h]
\centering
\psfrag{Beta}[][][1]{$\textcolor{red}{\beta} $}
\psfrag{H}[][][1]{$\textcolor{red}{\widehat \beta} $}
\psfrag{1}[][][0.9]{$\;\quad p_1'$}
\psfrag{2}[][][0.9]{$\;\;\; p_1''$}
\psfrag{c1}[][][0.8]{$\textcolor{green}{c'}$}
\psfrag{c2}[][][0.8]{$\textcolor{green}{c''}$}
\psfrag{w1}[][][0.8]{$\textcolor{black}{\pi+\theta_1'}\quad \,  \qquad $}
\psfrag{w2}[][][0.8]{$\quad \; \textcolor{black}{\pi+\theta_1''}$}
\psfrag{qp}[][][0.9]{$q'$}
\psfrag{qpp}[][][0.9]{$q''$}
\psfrag{t1}[][][0.6]{$\, \theta_1'$}
\psfrag{t2}[][][0.6]{$\; \theta_1''$}
\psfrag{p1}[][][0.7]{${p_1}$}
\psfrag{p2}[][][0.9]{$p_2$}
\psfrag{p3}[][][0.9]{$p_3\; $}
\includegraphics[scale=0.57]{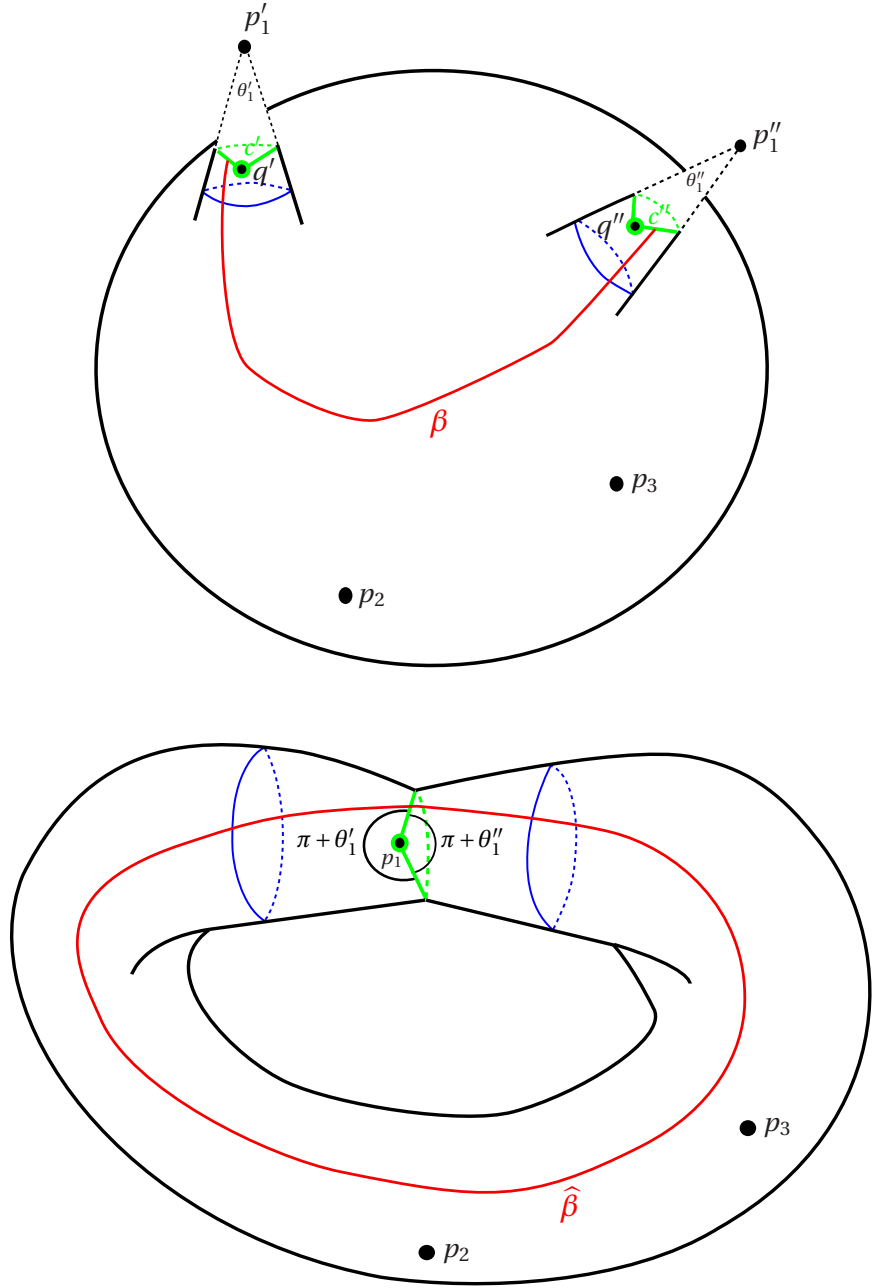}
\caption{The Devil's surgery from a flat sphere to a flat torus.}
\label{gluing}
\end{figure}


Let $\beta$ be a simple curve in $N$, avoiding its conical points, joining two regular points of $c'$ and $c''$ respectively which are glued together by the identification considered above. Denote by 
$\widehat \beta$  the simple loop in the regular part of $\widehat N$ obtained from $\beta$ by this gluing.
   We want to perform the surgery in such a way that the linear holonomy of the resulting flat 
  surface does not change when $q'$ and $q''$ move 
  on the portions of cones $C_1'$ and $C_1''$ respectively  and equals a certain (class under the action of the pure mapping class group of) $\widehat \rho$. It is equivalent to the fact that the holonomy along $\widehat \beta$ does not vary, because the holonomy of the resulting flat surface 
  $\widehat N$
  is totally determined by the holonomy of the original flat surface $N$ and the holonomy along 
  $\widehat \beta$.\sk


 When $q'$ moves on the circle it belongs to (namely, the set of points on 
 $C_1'$ whose distance to the apex $p_1'$ is precisely $d(p_1',q')$), 
(a lift to $\mathbb R=\widetilde{ \mathbb U}$ of)
  the linear holonomy  along 
$\widehat \beta$  increases exactly by the angle that $q'$ makes relatively to its initial position. Hence if we want to keep the linear holonomy  along $\widehat \beta$ constant, we have to move $q''$ by the same angle as $q'$. 

This allows us to build a map : 
 \begin{align*}
\widetilde{{\mathcal{S}}_3} \,  : \,  \widetilde{C^*_{\theta_1'}(\epsilon)} \times U & \longrightarrow  \mathscr{F}_{[\widehat \rho]} \\
\big(p,  N\big)  & \longmapsto   \widehat N
\end{align*}
where $U$ is an open subset of $\F$,  the moduli space to which the original surface belongs.

\begin{prop}

Suppose that every element of $U$ has no non-trivial isometry. Then $\widetilde{{\mathcal{S}}_3}$ is a covering map onto its image. If $\theta_1'$ and $\theta_1''$ are not commensurable (\textit{i.e.} if ${\theta_1'}/{\theta_1''} \notin \mathbb{Q}$) then it is a biholomorphism onto its image.\sk

  However, if  ${\theta_1'}/ {\theta_1''}=k/l$ for some coprime positive integers  $k$ and $l$, then the deck group is the group generated by the rotation of angle $l \theta_1' = k \theta_1''$. 
   In particular $\widetilde{{\mathcal{S}}_3}$ factors through 
 \begin{align*}
{\mathcal{S}}_3 \,  : \,  C^*_{l\theta_1'}(\epsilon) \times U & \longrightarrow  \mathscr{F}_{[\widehat \rho]} \\
 \big(p,  N\big)  & \longmapsto  \widehat N
\end{align*}
which is a biholomorphism onto its image.
\end{prop}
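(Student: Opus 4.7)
The plan is to establish in turn that $\widetilde{\mathcal{S}_3}$ is a local biholomorphism and then to compute its fibers explicitly, from which both the covering property and the two cases of the statement will follow.

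For the local biholomorphism property I would argue exactly as in the proofs of Proposition \ref{param} for $\mathcal{S}_1$ (and its analogue for $\mathcal{S}_2$). Choose a path $\beta$ joining the marked points of $c'$ and $c''$ which are to be identified, so that $\widehat\beta$ becomes the loop in $\widehat N$ traversing the new handle. Build a pseudo-polygonal model of $N$ containing the developing images of the two arcs $[p_1',q']$ and $[p_1'',q'']$, and extend it to a pseudo-polygonal model of $\widehat N$ by adding the segment realising the gluing $c'\sim c''$. Then if $(z_1,\ldots,z_m)$ is a linear parametrisation of $U$, one checks that $(z_0,z_1,\ldots,z_m)$ is a linear parametrisation of a neighbourhood of $\widehat N$ in $\mathscr{F}_{[\widehat\rho]}$, where $z_0$ parametrises the lifted segment from $p_1'$ to $q'$ in $\widetilde{C^*_{\theta_1'}(\epsilon)}$. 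The expressions of the remaining parameters of $\widehat N$ in the new chart are affine in $(z_0,z_1,\ldots,z_m)$, exactly as in the earlier surgeries.

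The fiber analysis rests on the following holonomy bookkeeping, which is the real content of the construction. The surgery datum is a pair $(q',q'')\in C^*_{\theta_1'}(\epsilon)\times C^*_{\theta_1''}(\epsilon)$ with $d(p_1',q')=d(p_1'',q'')$, subject to the constraint that the linear holonomy of $\widehat N$ along $\widehat\beta$ is the prescribed value fixing $[\widehat\rho]$. As explained in the body, rotating $q'$ around $p_1'$ by an angle $\alpha$ shifts the holonomy along $\widehat\beta$ by $\alpha$, so to preserve the constraint one must simultaneously rotate $q''$ around $p_1''$ by the same angle $\alpha$. Lifting to $\widetilde{C^*_{\theta_1'}(\epsilon)}$ records $\alpha\in\mathbb{R}$ unambiguously and turns $q''$ into a single-valued function of $p\in\widetilde{C^*_{\theta_1'}(\epsilon)}$, which is precisely why the universal cover intervenes in the definition of $\widetilde{\mathcal{S}_3}$. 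Two lifts $p_1,p_2$ of the same point of $C^*_{\theta_1'}(\epsilon)$ differ by $k\theta_1'$ for some $k\in\mathbb{Z}$, and they produce the same surgery surface $\widehat N$ if and only if the induced angular shift of $q''$ is a multiple of $\theta_1''$, i.e. $k\theta_1'\in\theta_1''\mathbb{Z}$. The standing assumption that every element of $U$ is isometry-free rules out any further identification, since a coincidence of outputs outside this angular list would force a nontrivial pure isometry of the source $N$.

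If $\theta_1'/\theta_1''\notin\mathbb{Q}$ the only solution of $k\theta_1'\in\theta_1''\mathbb{Z}$ is $k=0$, so $\widetilde{\mathcal{S}_3}$ is injective and, combined with the local biholomorphism property, is a biholomorphism onto its image. If $\theta_1'/\theta_1''=k/l$ in lowest terms, the minimal positive shift is $l\theta_1'=k\theta_1''$, and it generates an infinite cyclic deck group acting by rotation on $\widetilde{C^*_{\theta_1'}(\epsilon)}$; the quotient of $\widetilde{C^*_{\theta_1'}(\epsilon)}$ by this action is precisely $C^*_{l\theta_1'}(\epsilon)$, yielding the stated factorisation through a biholomorphism $\mathcal{S}_3\colon C^*_{l\theta_1'}(\epsilon)\times U\to\mathscr{F}_{[\widehat\rho]}$. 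The main obstacle throughout is the careful holonomy bookkeeping that forces the universal cover to appear and that guarantees the fiber is exhausted by purely angular shifts; the isometry-freeness hypothesis on $U$ is indispensable for this last point.
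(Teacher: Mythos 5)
Your proof is correct and takes essentially the same approach as the paper: local biholomorphism via linear parametrisations (adding the surgery parameter $z_0$ to a chart of $U$), then fiber analysis via the holonomy constraint coupling the angular positions of $q'$ and $q''$, with the isometry-free hypothesis on $U$ guaranteeing that the fiber is exactly the set of angular shifts lying in $\theta_1'\mathbb{Z}\cap\theta_1''\mathbb{Z}$. The only minor slip is that you should say $\mathcal{S}_3$ is a biholomorphism \emph{onto its image}, which is clearly what you mean.
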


\begin{proof} The fact that $\widetilde{{\mathcal{S}}_3}$ is a local biholomorphism just relies on the fact that one can get a linear parametrisation of $\mathscr{F}_{[\widehat \rho]}$ by adding the parameter $z_0$ to any linear parametrisation of $U$, with $z_0$ a linear parametrisation of $C^*_{l\theta_1'}(\epsilon)$. 

The key fact is that two surfaces resulting from the surgery under consideration are isometric if and only if the two points $q'$ and $q''$ are the same because we have supposed that the elements of $U$ do not have non-trivial isometries (see the proof of Proposition \ref{param}). When $q'$ varies in the universal covering of $
 {C_{\theta_1'}^*(\epsilon)}$, the point $q''$ eventually comes back to its initial position if and only if the two cone angles $\theta_1'$ and $\theta_1''$ are rationally related : the lack of injectivity appears when $q'$ and $q''$ come back for the first time to their initial position as $q'$ turns around $p_1'$ and $q''$ follows, which happens if and only if an equation of the form $k\theta_1' = l\theta_1''$ (for some non-trivial pair of coprime integers $(k,l)$) is satisfied. In this case, $l$ is the exact number of times $q''$ turns around $p_1''$ while $q'$ turns $k$ times around $p_1'$.
\end{proof}\mk

When $\theta_1'$ (resp. $\theta_1''$) is greater than $\pi$, we cannot cut the cone of angle $\theta_1'$ (resp. $\theta_1''$)  in the way it has been done previously. We let the reader verify that one only has to replace the truncated cone of angle $\theta_1'$ at $p_1'$ (resp.\;of angle $\theta_1''$ at $p_1''$) by the metric space obtained by gluing the sides $a$ and $b$ on Figure \ref{surgery5}  below. 
This metric space is the cone of angle $\theta_1'$ (resp. $\theta_1''$) to which one  has added the triangle $T$ appearing in grey on Figure \ref{surgery5} with the aforementioned identifications.
\mk

Regarding the lack of injectivity of the map $\widetilde{{\mathcal{S}}_3}$ (or ${\mathcal{S}}_3$) when some elements of $U$ have non-trivial isometries, one can make a statement similar to Remark \ref{onetoone} to address the question. 
\mk 

\begin{figure}[!h]
\centering
\psfrag{t}[][][1.2]{$T$}
\includegraphics[scale=0.5]{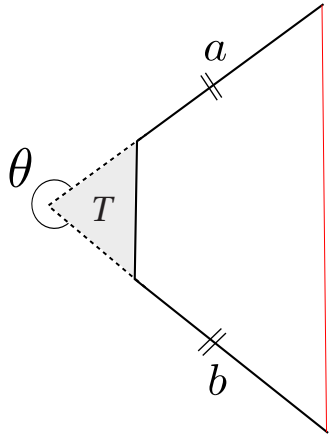}
\caption{The modified cone of angle $\theta > \pi$.}
\label{surgery5}
\end{figure}

\begin{rem} \label{different}
{\rm In contrast with Thurston's surgery, there might be \textbf{essentially different} ways to perform Devil's surgery when $\mathrm{Im}(\rho')$ is finite. What we mean by that is the following (where we continue to use the notations introduced above):   given a base surface
%
$N$, a target moduli space $\mathscr{F}_{[\widehat \rho]}$ in which one lands when performing Devil's surgery on $N$ relatively to  two cone points $p_1'$ and $p_1''$ on it,  different initial choices of pairs $(q',q'')$ might yield surgery maps $\mathcal{S}_3$ whose image are pairwise distinct open subsets of $\mathscr{F}_{[\widehat \rho]}$, a fact which might be a bit surprising at first glance.
\sk

This curiosity comes from the fact that the surfaces obtained via Devil's sur\-gery which belong to the same $\mathscr{F}_{[\widehat \rho]}$ are exactly the ones corresponding to admissible pairs $(q',q'') \in C_{\theta_1'}(\epsilon) \times C_{\theta_1''}(\epsilon)$ 
such that  $\widehat \rho(\widehat \beta) \in \mathrm{Im}(\rho)$.  The solutions to this equation is  a subsurface of $ C_{\theta_1'}(\epsilon) \times C_{\theta_1''}(\epsilon)$ which might have more than one connected component. These connected components \textbf{are} the different ways to perform Devil's surgery and yield different open subsets in $\mathscr{F}_{[\widehat \rho]}$. We carry an   explicit analysis of this phenomenon in the paragraph `Cone points' of  \S\ref{1dimension} which,  even if it concerns only a particular case, 
should allow the reader to understand in full generality  the `curiosity' we are talking about here.}

\end{rem}

\subsection{The kite surgery ${\mathcal{S}}_4$}
\label{SS:KiteSurgery}
Fix $\theta_1 \in  ]2\pi, 4\pi[$ and  $\theta_2, \theta_3 \in ]0, 2\pi[ $   such that 
$$\big(2\pi - \theta_1\big) + \big(2\pi - \theta_2\big) + \big(2\pi - \theta_3\big) = 0. $$
We describe in this section a local surgery building from a regular flat torus a new one with three conical points of respective angles $\theta_1, \theta_2$ and $\theta_3$.  
\sk 

 Let $T$  be a regular flat torus (\textit{i.e.}\;without singular points for the flat metric), and $p_2$ a point on $T$. If $p_3$ is a point close enough to $p_2$, there exists a unique kite contained in $T$ with opposite vertices  $p_2$ and $p_3$ and such that the external angle at these points are  $\theta_2$ and $\theta_3$ respectively. The external angles at the two other vertices of the kite are necessarily equal to the half of ${\theta_1}$, see Figure \ref{gluing}. 
\begin{figure}[!h]
\centering
\psfrag{C}[][][1]{\qquad \qquad Cutting and gluing}
\psfrag{t1}[][][1]{$\;\;\,  \textcolor{blue}{\theta_1}$}
\psfrag{t2}[][][1]{$\; \;  \textcolor{red}{\theta_2}$}
\psfrag{t3}[][][1]{$\textcolor{green}{\theta_3}$}
\psfrag{t12}[][][1]{$ \textcolor{blue}{\frac{\theta_1}{2}}$}
\includegraphics[scale=0.85]{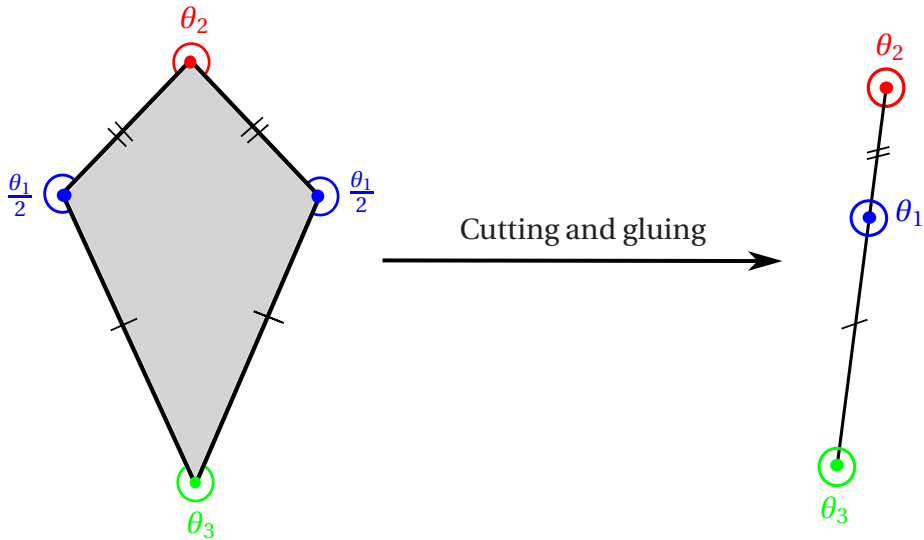}
\caption{Performing the kite surgery}
\label{gluing}
\end{figure}

The {\bf kite surgery} consists in removing the kite (in grey on the above picture) and gluing the adjacent sides in order to get three singular points of respective angles $\theta_1, \theta_2$ and $\theta_3$. Since the flat torus  $T$ is determined by a lattice in $\mathbb{C}$, \textit{i.e.} by two $\mathbb{R}$-linearly independent complex numbers $z_1, z_2$ such that $T = \mathbb{C} / (\mathbb{Z}z_1 + \mathbb{Z}z_2)$, one can perform the kite surgery on $T$ by placing $p_2$ at $0$ and $p_3$ at $z_0$ for any given $z_0$ sufficiently small. 

Up to renormalisation,  we can assume that $z_1 = 1$ and $z_2 = \tau \in \mathbb{H}$. Let $T'$ be the resulting torus (more precisely the class of tori up to renormalisation by an element of $\mathbb{C}$). Let $ \mathscr{F}_{[\rho']}$ be the leaf to which this surgery sends $T'$. Because $z_0$ and $-z_0$ are equivalent under the action of the hyperelliptic involution, the kite surgery performed at these two parameters gives isometric surfaces. 
Similarly to the previous cases, one can build a map
$$ \begin{array}{ccrcc}
{\mathcal{S}}_4\!\! & : \!\! & C^*_{\pi}(\epsilon) \times \mathbb{H} & \longrightarrow & \mathscr{F}_{[\rho']} \vspace{0.15cm}\\
 & & \big(z_0,  \tau\big)  & \longmapsto & T' 
\end{array} $$  
which is a local biholomorphism onto its image, except at points having exceptional symmetries where it is an orbifold covering onto its image. The proof is similar to the proof of Proposition \ref{param}, one just has to take a suitable topological model for $T$ that makes $(z_0, z_1, z_2)$ a linear parametrisation of $\mathcal{F}_{[\rho']}$.

\subsection{The surgery ${\mathcal{S}}_5$ : building flat surfaces with a Euclidean cylinder.}
\label{bswec}
In this section we do not make any assumptions on the cone angles $\theta_1,\ldots,\theta_n$. We explain a simple surgery (to which we shall refer as ${\mathcal{S}}_5$) building flat surfaces of genus $g+1$ and with $n-1$ cone points having an arbitrarily long Euclidean cylinder out of an initial flat surface $N$ of genus $g$ and with $n$ cone points, the new cone point being of angle larger than $2\pi$.\sk

Let $\gamma$ be a geodesic path joining $p_1$ and $p_2$, the two conical points of $N$ of respective angles $\theta_1$ and $\theta_2$. Cut along $\gamma$ to get a flat surface with one boundary component and then glue $p_1$ and $p_2$ together. The resulting surface has a boundary consisting of two simple closed geodesics touching at one point where they are singular. Then we glue a flat cylinder along these two boundary components to get a new flat surface $N'$ of genus $g+1$ with an embedded cylinder (see Figure \ref{F:S5}). Note that the cone angle of $N'$ at its new singular point (namely the one obtained after having identified $p_1$ and $p_2$) is easily seen to be $\theta_1+\theta_2+2\pi$.\sk

There are two real parameters for the aforementioned gluing of the flat cylinder: its length and a twisting parameter(starting from one given way to glue a given cylinder, one can compose the gluing function at one of its extremities by a rotation, the angle of this rotation being the twisting parameter). Both can be encoded by a single complex number $z_0$ whose imaginary part is positive,  such that the cylinder we glue identifies with the one of base $1$ and height $z_0$. This makes sense because there always exists a normalization of $N$ such that the geodesic joining $p_1$ and $p_2$ in the initial surface develops onto the segment $[0,1]$. \sk 

\begin{center}
\begin{figure}[!h]
\psfrag{p1}[][][0.73]{$\quad \;\;\;{p_1} $}
\psfrag{pp1}[][][0.73]{$\;\;\;\, p_1 $}
\psfrag{p2}[][][0.73]{$p_2 $}
\psfrag{pp2}[][][0.73]{$p_2 \, $}
\includegraphics[scale=0.35]{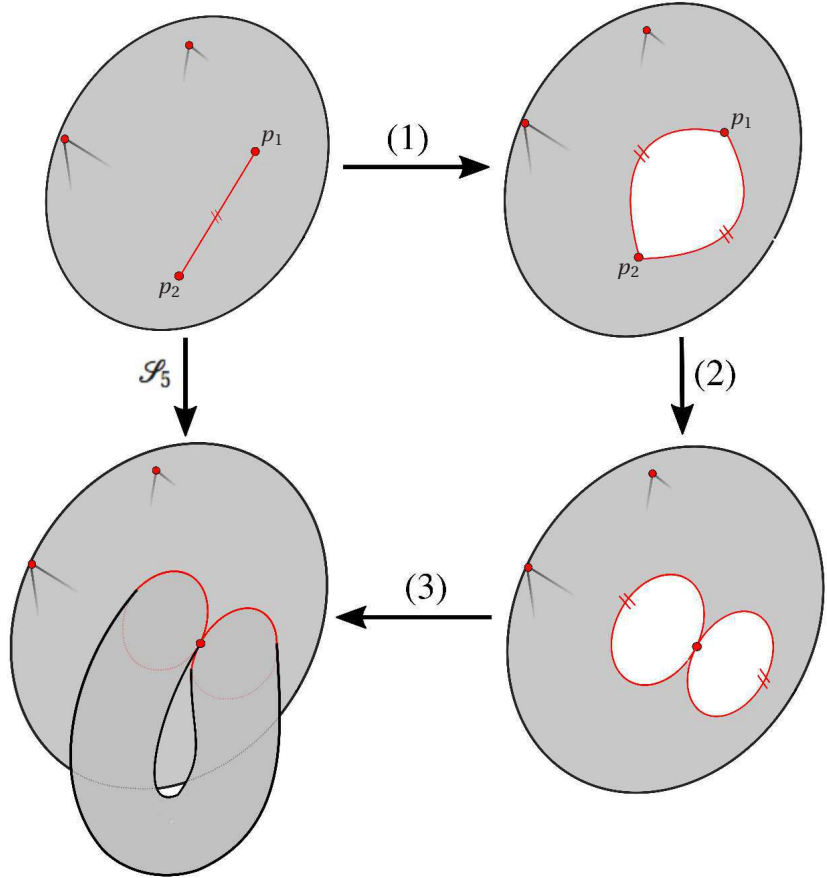}
\caption{The surgery $\mathcal S_5$ performed on a flat sphere with four cone points: (1) we cut along the geodesic segment between two 
of them;  (2) we identify the two corresponding points on the boundary; (3) then we glue a flat cylinder in order to  obtain a flat tori with three cone points.}
\label{F:S5}
\end{figure}
\end{center}

If $U\subset $ is a neighbourhood of the initial surface in $\F$, we can build a natural map
$$ \begin{array}{ccccc}
{\mathcal{S}}_5 & : & \mathbb{A} \times U & \longrightarrow & \mathscr{F}_{[\rho']} \\
 & & (z_0, N)  & \longmapsto & N'\; , 
\end{array} $$ 
 where $\mathbb{A}$ stands for the infinite cylinder $\mathbb{H}/ (z \sim z+1)$.  
 
 The surgery  ${\mathcal{S}}_5$ associates to $N$ and $z_0$ the surface obtained after gluing  the flat cylinder of height the parameter $z_0$ to (the good normalisation of) $N$.

\begin{prop}
The surgery ${\mathcal{S}}_5$ is a local biholomorphism onto its image.
\end{prop}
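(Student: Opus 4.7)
The plan is to follow the same template used to treat the surgeries $\mathcal{S}_1,\ldots,\mathcal{S}_4$: produce, from a linear parametrisation of $U$, an explicit linear parametrisation of a neighbourhood of $N'=\mathcal{S}_5(z_0,N)$ in $\mathscr{F}_{[\rho']}$ in which $z_0$ appears as one of the coordinates.

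First, choose a topological pseudo-polygonal model $P$ of $N$ whose $1$-skeleton contains the geodesic segment $\gamma$ from $p_1$ to $p_2$ as one of its edges, and denote by $(z_1,\ldots,z_{k-1})$ the corresponding linear parametrisation of $U\subset\F$. Up to rescaling we may assume that $\gamma$ develops onto the segment $[0,1]\subset\mathbb{C}$, so that the complex number attached to this edge equals $1$. Next, build a pseudo-polygonal model $P'$ of $N'$ by glueing to $P$, along the edge $\gamma$, a parallelogram $R$ whose sides are parallel to the vectors $1$ and $z_0$, and by identifying the two halves of the edge of $R$ opposite to $\gamma$ according to the recipe described in \S\ref{bswec}. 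By construction, $P'/\!\!\sim$ is isometric to $N'$ and the pairs of sides to be identified to recover $N'$ from $P'$ form a gluing pattern with linear holonomies fixed by $\rho'$.

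By Proposition \ref{top}, the complex numbers attached to the sides of $P'$ yield a linear parametrisation of $\mathscr{F}_{[\rho']}$ near $N'$, and these numbers are affine (in fact linear) combinations of $(z_0,z_1,\ldots,z_{k-1})$: only the ones involving a segment that crosses $R$ acquire a contribution from $z_0$, while the others coincide with the $z_i$'s. A dimension count ($\dim\mathscr{F}_{[\rho']}=2(g+1)+(n-1)-3=\dim U+1$) confirms that no parameter is missing, so $(z_0,z_1,\ldots,z_{k-1})$ is a system of linear coordinates on a neighbourhood of $N'$ in $\mathscr{F}_{[\rho']}$, and $\mathcal{S}_5$ reads as the identity in these coordinates. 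This establishes holomorphicity and that $\mathcal{S}_5$ is a local biholomorphism at every point where the pseudo-polygonal model survives, hence everywhere on $\mathbb{A}\times U$ up to shrinking $U$.

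The one point that requires care, and which I regard as the main obstacle, is to verify that the quotient $\mathbb{A}=\mathbb{H}/(z\sim z+1)$ is precisely the right domain of definition: translating $z_0$ by an integer amounts to pre-composing the glueing of one of the two boundary circles of the cylinder with a full rotation by $1$ (the systole of the cylinder), which results in the same marked flat surface in $\mathscr{F}_{[\rho']}$ because the identification along the boundary circles is the same. Conversely, two distinct values of $z_0$ in a fundamental domain for $\mathbb{Z}\cdot 1$ yield pairwise non-isometric marked surfaces, since the length and twist of the embedded cylinder are geometric invariants that can be read off from the marking. This shows that $\mathcal{S}_5$ descends to $\mathbb{A}\times U$ without further identification, concluding the proof.
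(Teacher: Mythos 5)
Your overall strategy---build a pseudo-polygonal model of $N'$ by adjoining a parallelogram to the edge $\gamma$ and appeal to Proposition \ref{top}---is the right one and is essentially what the paper does. But there is a real bookkeeping error at the crucial step, which is exactly the point the paper's short proof is careful about. You open by normalising $N$ so that $\gamma$ develops onto $[0,1]$, i.e.\ $z_1=1$, and then build the parallelogram with sides $1$ and $z_0$. That is fine for a single surface, but it cannot be maintained as $N$ ranges over $U$: in a linear parametrisation the value of $z_1$ is a free coordinate, and for an arbitrary $N\in U$ the cylinder of base $1$ and height $z_0$ must be \emph{rescaled} by $z_1$ before it can be glued along $\gamma$, so the new edge of $P'$ develops onto $z_0 z_1$, not $z_0$. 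Consequently your claim that ``these numbers are affine (in fact linear) combinations of $(z_0,z_1,\ldots,z_{k-1})$'' is false---the cylinder edge is \emph{quadratic} in $(z_0,z_1)$---and $(z_0,z_1,\ldots,z_{k-1})$ is not a linear parametrisation of $\mathscr{F}_{[\rho']}$. The correct linear parametrisation is $(z_0z_1, z_1,\ldots,z_{k-1})$, and $\mathcal S_5$ is the map $(z_0,z_1,\ldots,z_{k-1})\mapsto(z_0z_1,z_1,\ldots,z_{k-1})$, which is not the identity but is still a local biholomorphism away from $\{z_1=0\}$. So the conclusion is salvageable, but as written your proof is internally inconsistent: you fix $z_1=1$ and then treat $z_1$ as a free variable in the same breath.

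The closing paragraph about $\mathbb{A}=\mathbb{H}/(z\sim z+1)$ being ``the right domain of definition'' is correct but, contrary to what you say, it is not the main obstacle: local biholomorphicity is a statement about the derivative at a point, and the identification $z_0\sim z_0+1$ only concerns global injectivity of the surgery map, which is not part of the claim being proved.
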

\begin{proof} Let $(z_1, \ldots, z_m)$ be a linear parametrisation of $U$ such that $z_1$ parametrises the geodesic segment along which the surgery is performed. Then one verifies that $(z_0z_1, z_1, \ldots, z_m)$ is a linear parametrisation of the image of ${\mathcal{S}}_5$. Indeed, one needs to rescale the cylinder of base and height  $(1,z_0)$ to $(z_1, z_1z_0)$ so it fits the segment parametrised by $z_1$ onto which it is glued.  The rest of the proof works in the same way as in Proposition \ref{param}.\end{proof}

\subsection{Calculation of the signature of the area form in particular cases.}

A corollary of  the description of these surgeries is an easy inductive computation of the signature of the Veech form in the specific cases we are interested in. In order to perform this computation, we will need the following definition:
\begin{defi}
\label{D:Reduction}
A unitary character 
$\tilde{\rho} \in \mathrm{H}^1(M, \mathbb{U})$ is a {\bf reduction} of $\rho \in \mathrm{H}^1(N, \mathbb{U})$ if there exists an injective diffeomorphism $i :M \longrightarrow N$ such that $\tilde{\rho} = i^*\rho $.

\end{defi} In particular, the linear holonomy of the base surface of one the surgeries we have described is a reduction of the linear holonomy of the surface obtained by one of these surgeries. We have the following:

\begin{prop}
\label{signature}
\begin{enumerate}
\item  Suppose that $g=0$, $n \geq 3$ and that 
$ 0 < \theta_i < 2\pi $
for all $i=1,\ldots,n$. Then Veech's area form has signature $(1,n-3)$. 
\sk 
\item Suppose that $g=1$, $n\geq 2$,  $ 2\pi < \theta_1 < 4\pi$ and that 
$ 0 < \theta_i < 2\pi $ 
for all $ i$ such that $1 < i \leq n$. Then Veech's area form has signature $(1, n-1)$.  
\end{enumerate}
\end{prop}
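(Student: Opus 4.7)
The plan is to prove both parts by induction using the surgeries introduced in this section, relying on a common decomposition observation. The key step I would establish first is the following \emph{Decomposition Lemma}: if $(z_0, z_1, \ldots, z_m)$ is a linear parametrisation of the target leaf of $\mathcal{S}_1$ or $\mathcal{S}_3$ extending a linear parametrisation $(z_1, \ldots, z_m)$ of the source (as in Proposition \ref{param} and its counterpart for $\mathcal{S}_3$), then
\[
A^{\mathrm{new}}(z_0, z_1, \ldots, z_m) \;=\; A^{\mathrm{old}}(z_1, \ldots, z_m) \;-\; c\,|z_0|^2
\]
for some constant $c>0$ depending only on the cone angles involved. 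Indeed, the new area equals the old area minus the area of the region $R$ excised by the surgery---a bigon for $\mathcal{S}_1$, a pair of cone-caps for $\mathcal{S}_3$. Since $R$ is localised at the surgery site and scales by $|\lambda|$ in linear dimensions when $z_0$ is rescaled by $\lambda$, its area is a positive multiple of $|z_0|^2$; rotational symmetry of $R$ around the surgery apex eliminates phase dependence on $z_0$, and locality prevents any cross-term with the $z_j$'s. For $\mathcal{S}_3$ with $\theta_1', \theta_1'' < \pi$, Proposition \ref{cone} gives explicit positive formulas for the two cap areas, confirming $c>0$. By Sylvester's law of inertia, the signature of $A^{\mathrm{new}}$ is therefore that of $A^{\mathrm{old}}$ augmented by $(0,1)$.

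For (1), I would induct on $n \geq 3$. The base case $n=3$ is immediate: the moduli space has complex dimension zero, the parameter space is $\mathbb{C}$, and $A$ reduces to a positive scalar times $|z|^2$, hence signature $(1,0)$. For the inductive step $n \to n+1$, given angles in $(0,2\pi)^{n+1}$ summing to $2\pi(n-1)$, a pigeonhole argument shows that whenever $n+1 \geq 5$ some pair $(\theta_i, \theta_j)$ satisfies $\theta_i + \theta_j > 2\pi$---summing the pair-sums yields $n \sum_k \theta_k = 2\pi n(n-1)$, which exceeds the bound $\pi n(n+1)$ required for all pair-sums to be $\leq 2\pi$ precisely when $n \geq 4$. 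Inverting $\mathcal{S}_1$ on such a pair yields a sphere with $n$ cone points whose angle data still lies in $(0,2\pi)^n$, on which the induction hypothesis applies; the Decomposition Lemma then pushes the signature from $(1, n-3)$ to $(1, n-2)$. The only obstruction is the degenerate configuration $(\pi,\pi,\pi,\pi)$ when $n+1=4$, which I would handle by continuity: since $A$ is non-degenerate (Veech's Theorem), its signature is locally constant on the connected polytope of admissible angles, and the value on the dense generic locus determines it everywhere.

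For (2), I would reduce to (1) by Devil's surgery. Set $\theta_1' = \theta_1'' = (\theta_1 - 2\pi)/2 \in (0,\pi)$; a direct Gauss-Bonnet computation from the torus relation confirms that $(\theta_1', \theta_1'', \theta_2, \ldots, \theta_n)$ is an admissible angle datum for a sphere with $n+1$ cone points, all entries lying in $(0,2\pi)$. By (1), the sphere's area form has signature $(1, n-2)$. Applying $\mathcal{S}_3$ to merge $p_1'$ and $p_1''$ into a single cone point of angle $2\pi + \theta_1' + \theta_1'' = \theta_1$ produces the desired torus, and the Decomposition Lemma promotes the signature to $(1, n-1)$.

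The main technical obstacle is proving the Decomposition Lemma, and specifically the absence of cross-terms between $z_0$ and $z_1, \ldots, z_m$. This rests on a careful examination of how the excised region $R$ sits inside the polygonal (or pseudo-polygonal) model used to define the linear parametrisation: the scaling and rotational symmetries are geometrically transparent, but separating the contribution of $R$ from that of the rest of the surface in the additivity formula for the area requires book-keeping that parallels the computations of Proposition \ref{param}.
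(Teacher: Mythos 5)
Your proof is correct, and it rests on the same central mechanism as the paper's---the decomposition $A = A' - \mu|z_0|^2$ obtained by tracking how the area changes under a surgery parametrised by $z_0$---but you organize the two parts differently, and in a way that is in some respects tidier. For (1), the paper merely notes that the argument is ``basically the same but simpler and roughly sketched in \cite{Thurston}''; you supply a complete induction, and your pigeonhole inequality $2\pi n(n-1) > \pi n(n+1) \Leftrightarrow n \geq 4$, together with the observation that the only $4$-point exception is the isolated datum $(\pi,\pi,\pi,\pi)$, is exactly the right way to guarantee a collidable pair in the generic case; the continuity argument then handles the degenerate point since the form is non-degenerate on the connected angle polytope. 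For (2), the paper runs a nested induction on $n$, using Thurston's surgery for $n \geq 3$ and falling back on Devil's surgery only for the base case $n = 2$ (where a collision $\theta_1 + \theta_2 = 4\pi$ would force a forbidden $2\pi$ cone angle). You instead reduce every $n \geq 2$ to (1) in a single step by splitting $\theta_1$ symmetrically as $\theta_1' = \theta_1'' = (\theta_1-2\pi)/2 \in (0,\pi)$ and applying Devil's surgery to a sphere with $n+1$ cone points. Your Gauss--Bonnet check is right, the resulting sphere datum is admissible and nonempty by Troyanov's theorem, and since the signature depends only on $(g,\theta)$ by Veech's theorem, it suffices to realize the target angle datum by \emph{some} Devil's surgery, so the reduction is legitimate. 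The net effect is a single-pass deduction $(1,n-2) \to (1,n-1)$ that avoids the separate base case. One small imprecision: Proposition \ref{cone} gives the \emph{length} of the closed geodesic on the truncated cone, not the cap area directly; what you actually need (and what does hold) is that the cap area scales quadratically in the radial parameter, which is immediate from the scaling action on $C_\theta$. Apart from that, your justification of the absence of cross-terms---that the total area splits additively into the base-surface area (a function of $z_1,\ldots,z_m$ alone) and the excised area (a function of $|z_0|$ alone)---is precisely the argument the paper uses, if stated slightly more explicitly.
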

\begin{proof} The proof goes by induction in both cases. We explain only $(2)$ since the proof of $(1)$ is basically the same but simpler and  roughly sketched in \cite{Thurston}. 
\sk 

We suppose that $g=1$, $n\geq 3$ and $\rho \in \mathrm{H}^1(N, \mathbb{U},\theta)$. Let $\rho'$ be the reduction (see Section \ref{completion}) associated to a collision between two points of angles $\theta_i$ and $\theta_j$. Such a collision can actually happen if and only if $(2\pi - \theta_i) + (2\pi - \theta_j)  < 2\pi$, and two such points always exist since there is initially only one conical point of negative curvature which is also smaller than $4\pi$, provided that $n \geq 3$. Therefore the leaf associated to $\rho'$ is not empty and one can perform Thurston's surgery on elements of $\mathscr{F}_{[\rho']}$ to get elements of the $n$-dimensional leaf $\F$. Let $(z_0, z_1, \ldots, z_{n-1})$ be a linear parametrisation of $\mathscr{F}_{[\rho]}$ such that $(z_1, \ldots, z_{n-1})$ is a linear parametrisation of $\mathscr{F}_{[\rho']}$ and $z_0$ is such that $(z_0, z_1, \ldots, z_{n-1})$ represents the element that one gets after performing Thurston's surgery with parameter $z_0$ on the surface represented by $(z_1, \ldots, z_{n-1})$. If $A$ is the area form for the parametrisation $(z_0, z_1, \ldots, z_{n-1})$ and $A'$ the one  for $(z_1, \ldots, z_{n-1})$, then  one has
$$ A(z_0, \ldots, z_{n-1}) = A'(z_1, \ldots, z_{n-1}) - \mu |z_0|^2 $$
for a certain  positive constant $\mu$  which depends only on the value of the angle of the cone point on which the surgery is performed. Indeed, $\mu |z_0|^2$ is the area of the portion of cone removed when proceeding to the surgery which is an isosceles triangle with fixed centre angle and whose base has length linearly depending on $|z_0|$. The induction hypothesis ensures that  $A'$ has signature $(1,n-2)$ and therefore $A$ has signature $(1, n-1)$. 
\mk 

The case $n = 2$ remains to be handled.
With a similar argument, but using Devil's surgery instead of  Thurston's one, we find that in that case the signature is $(1,1)$. One starts with a flat sphere with three cone points. The set of such flat spheres can be parametrised by a complex number $z_1$ such that the area of the associated sphere is $|z_1|^2$ (and recovering that up to projectivising, there is only one such flat sphere). The Devil's surgery consists in removing two portions of cones in a sphere. If $z_0$ is the parameter of the surgery, the area of the resulting torus will be of the form 
 $$ |z_1|^2 - \mu|z_0|^2 $$ 
 for a certain $\mu>0$. This completes the proof of the proposition. 
 \end{proof}

\subsection{Cone angle around a codimension $1$ stratum.}
\label{coneangle}
As we will see in detail later, the surgery maps ${\mathcal{S}}_i$ (for $i=1, \ldots, 4$) describe the cone-manifold structure of the metric completion of $\F$ close to a codimension $1$ stratum, when  Veech's area form endows $\F$ with a complex hyperbolic structure. In particular, they allow the computation of the associated cone-manifold angles.

\begin{enumerate}

\item In case of both Thurston's surgeries 
$ \mathcal S_1$ of $\mathcal S_2$, the cone-manifold angle around the codimension $1$ stratum is the angle of the Euclidean cone angle on which the surgery is performed.\sk

\item In the case of Devil's surgery, when both angles are rational multiples of $2\pi$, say ${2\pi m'}/{M}$ and ${2\pi m''}/{M}$, the cone-manifold angle is $\frac{2\pi \mathrm{lcm}(m',m'')}{M}$.
\sk 

\vspace{-0.35cm}
\item In the case of the Kite surgery $\mathcal S_4$, the cone angle always equals $\pi$ since the parameter space for the surgery is the neighbourhood of a regular point of angle $2\pi$ on which the hyperelliptic involution acts.
\end{enumerate}

\subsection{Geometric convergence.}
We end this section dedicated to surgeries by a paragraph on a notion of geometric convergence for flat surfaces. Whether two Riemannian manifolds (in a moduli space) are close or not depends on an \textit{a priori} definition.

\begin{defi}
Let $N'$ be a flat surface of area $1$ obtained from a surgery ${\mathcal{S}}$ on a surface $N$. The {\bf width} of this surgery at $N$ is :  
\sk 

$\bullet$  the distance between the two new cone points
 in $N'$  if ${\mathcal{S}}
\in \{{\mathcal{S}}_1\, , \, {\mathcal{S}}_2
\}$;\sk

$\bullet$  the length of the short essential curve created on $N'$ if  
${\mathcal{S}}={\mathcal{S}}_3$; \sk

$\bullet$ 
the distance between $p_2$ and $p_3$ in $N'$ (see \S\ref{SS:KiteSurgery} for the notations) if 
 ${\mathcal{S}}={\mathcal{S}}_4$.
\end{defi}

The width of a surgery is a positive parameter 
whose square depends linearly on the Euclidean area of the removed  part of the initial surface on which one performs the surgery. When $g$ and $\theta$ are such that $\F$ has a complex hyperbolic structure, this width has a geometric interpretation in terms of the distance to the strata of the metric completion which will be made explicit in the next section (see Lemma \ref{proj}). 

\begin{defi}[{\bf Geometric convergence}]
\label{convergence}
A sequence of flat surfaces $(M_\ell)_{\ell \in \mathbb N}\in (\F)^{\mathbb N}$ is said to be {\bf geometrically converging} either if it converges in $\F$ or if there exist 
\begin{itemize}
\item  a flat surface $M_{\infty}$ belonging to a leaf\; $\mathscr{F}_{[\rho']}$  for a reduction $\rho'$ of $\rho$; \sk

\item  a small neighbourhood $U$ of $M_{\infty}$ in $\mathscr{F}_{[\rho']}$ on which a surgery map  ${\mathcal{S}}={\mathcal{S}}_i$ (for some $i = 1,2,3, 4$)  is well defined; \sk

\item $(\epsilon_\ell)_{\ell \in \mathbb N}$ a sequence of positive numbers going to $0$; \sk

\item  a truncated sequence $(M'_\ell)_{\ell>>1}$ of flat surfaces elements of  $U$ which converge to $M_{\infty}$ in $U\subset \mathscr{F}_{[\rho']}$, 
\end{itemize}

\noindent such that $M_n$ is obtained after a surgery ${\mathcal{S}}$ of width $\epsilon_\ell$ on $M'_\ell $ for $\ell >>1$. \sk 

 We say that the sequence $(M_\ell )_{\ell \in \mathbb N}$ {\bf geometrically converges} to the pair $(M_{\infty}, {\mathcal{S}})$, or just to $M_{\infty}$ if $(M_\ell )_{\ell \in \mathbb N}$ converges in $\F$.

\end{defi}

The true interest of this definition  is that it will allow us to make the difference between two sequences of surfaces in $\F$ whose limits are isometric metric spaces but lying at different places in (the metric completion of) $\F$. 



\section{\bf THE METRIC COMPLETION}
 \label{completion}

In this section, $N$ stands for  a surface of genus $g = 0$ with $n+3$ marked points or of genus $1$ with $n+1$ marked points (we write the number of marked points this way in order that $\F$ be of complex dimension $n$ in these two cases).
\sk 

  We also make the assumption that 
\begin{itemize}

\item if $g=0$  all the angles $\theta_1, \ldots, \theta_{n+3}$ belong to  $]0,2\pi[$; \sk

\item if $g=1$ then $\theta_1 \in ]2\pi,4\pi[$ and all the other cone angles $\theta_i$ are in $]0,2\pi[$.

\end{itemize}

\noindent  Therefore, for any $\rho \in  \mathrm{H}^1(N, \mathbb{U}, \theta) $ in the image of the linear holonomy map, the leaf $\mathscr{F}_{[\rho]}$ of Veech's foliation in the corresponding moduli space of marked curves is endowed with a complex hyperbolic structure of dimension $n$ (see \cite{Thurston} for the case $g=0$ and Proposition \ref{signature} or  \cite{Veech} for the case $g=1$).  \sk 
 
  In the present section, we are interested in the structure of the metric completion of $\F$ endowed with this complex hyperbolic structure. Therefore every mention of a geometric property of $\mathscr{F}_{[\rho]}$ will now be relative to this structure.  
Our goal here is to prove the following theorem: 
%

\begin{thm}
\label{mcompletion}

 Let $X$ be the metric completion of \,$\F$. If \,$\mathrm{Im}(\rho)$ is finite then:
\sk 

{\rm (1)} $X$ has a stratified structure $ X = X_0 \sqcup X_1 \sqcup  \ldots \sqcup X_n $ with $X_0 = \F$;
\sk 

{\rm (3)} the topological closure of $X_i$ in $X$ is $X_{i} \sqcup X_{i+1} \sqcup 
 \ldots \sqcup X_n$;
\sk 

{\rm (3)}  for $i=0,\ldots,n$, $X_i$ is a smooth complex hyperbolic manifold of complex 

\hspace{0.6cm}dimension $n-i$ which carries a natural $\mathbb C\mathbb H^{n-i}$-structure; 
\sk

{\rm (4)} each $X_i$ is a finite union of finite covers of $\mathscr{F}_{[\tilde{\rho}]}$ for some reductions $\tilde{\rho}$ of $\rho$. 
\end{thm}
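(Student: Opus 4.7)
The strategy is to analyse Cauchy sequences in $\F$ by dissecting their geometric behaviour: since $\mathrm{Im}(\rho)$ is finite, the tools of Section \ref{geometricproperties} rigidly constrain how a sequence of normalised (area one) flat surfaces can fail to converge, and each type of failure corresponds precisely to one of the surgeries ${\mathcal{S}}_1,\ldots,{\mathcal{S}}_4$ of Section \ref{surgeries}. The inverse surgery maps will then provide local coordinates around each stratum of $X\setminus\F$, automatically producing both the smooth structure and the $\mathbb{C}\mathbb{H}^{n-i}$-structure on the strata.

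First I would establish the following dichotomy for a Cauchy sequence $(M_\ell)_\ell$ in $\F$ that does not converge in $\F$. By Proposition \ref{diameter}, the finiteness of $\mathrm{Im}(\rho)$ forces the diameter $D(M_\ell)$ to stay bounded — otherwise a long flat cylinder would grow, producing divergence to a cusp rather than a Cauchy sequence at finite distance (this is the content of Section \ref{cusps} and does not contribute to the metric completion). With $D(M_\ell)$ bounded, either the relative systole $\delta(M_\ell)\to 0$ (two or more cone points collide), or the systole $\sigma(M_\ell)\to 0$ (an essential curve collapses), or a regular point accumulates three cone points into a very small region. Lemma \ref{collisions} forbids a collision whose total curvature would exceed $2\pi$, so for $g=1$ collisions involving the negatively curved point $p_1$ are constrained; Proposition \ref{collisions2} ensures that in genus $1$ a bounded-diameter collision does occur in the relevant regime; and Proposition \ref{systol} shows that when the systole collapses with bounded diameter, the short essential curve must bound a bigon at $p_1$ rather than a cylindrical neck. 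Translated into the language of surgeries: a positive-curvature collision is the inverse of ${\mathcal{S}}_1$; a collision involving $p_1$ in genus $1$ is the inverse of ${\mathcal{S}}_2$; a systolic collapse of the second type is the inverse of the Devil's surgery ${\mathcal{S}}_3$; and the triple collision producing a regular torus is the inverse of the Kite surgery ${\mathcal{S}}_4$.

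With this dictionary, define $X_i$ to be the set of geometric limit points of sequences in $\F$ that undergo exactly $i$ simultaneous elementary degenerations (in the sense of Definition \ref{convergence}), and $X_0=\F$. Each such limit is itself a flat surface of genus $0$ with at most $n+3$ marked points or genus $1$ with at most $n+1$ marked points, whose linear holonomy $\tilde\rho$ is a reduction of $\rho$ in the sense of Definition \ref{D:Reduction}: indeed, each surgery only modifies the flat metric locally and the holonomy of the limit is read off the unaffected homology. Thus $X_i$ is a union of (images of) leaves $\mathscr{F}_{[\tilde\rho]}$ for reductions $\tilde\rho$ obtained by $i$ successive allowable collapses. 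Finiteness of this collection of reductions follows from the fact that there are only finitely many ways to merge subsets of the angle data $\theta$ compatibly with the Gau\ss-Bonnet constraint and with $\mathrm{Im}(\rho)\subset\mathbb{U}$ being finite.

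Next, propositions \ref{param} and its analogues for ${\mathcal{S}}_2,{\mathcal{S}}_3,{\mathcal{S}}_4$ assert that each surgery map is a local biholomorphism, with an extra complex coordinate $z_0$ describing the width, and that if $(z_1,\ldots,z_{m})$ is a linear parametrisation of a neighbourhood $U\subset \mathscr{F}_{[\tilde\rho]}$ then $(z_0,z_1,\ldots,z_m)$ is a linear parametrisation of the target. The signature-of-area computation of Proposition \ref{signature}, performed inductively precisely with these surgeries, shows that adding the $z_0$-coordinate contributes a negative-definite term $-\mu|z_0|^2$ to the Hermitian area form; setting $z_0=0$ thus restricts the $\mathbb{C}\mathbb{H}^{n-i}$-structure on the open stratum to a $\mathbb{C}\mathbb{H}^{n-i-1}$-structure on the next stratum. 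In particular $X_i$ carries a natural $\mathbb{C}\mathbb{H}^{n-i}$-structure of complex dimension $n-i$, realised as the slice $\{z_0=0\}$ in any surgery chart at a point of $X_i\setminus X_{i+1}$. The closure relation in item (2) follows because every small ball around a point of $X_i$ in the surgery coordinates intersects $X_0,\ldots,X_i$ by construction, and no other stratum (the collapse of an independent feature produces a further reduction landing in $X_{i+1},\ldots,X_n$). The finite orbifold ambiguity flagged in Remark \ref{onetoone} (when $\mathrm{PIso}^+$ is non-trivial) only contributes finite covers, which is exactly item (4).

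The main technical obstacle is the classification step — specifically, arguing that \emph{every} Cauchy sequence in $\F$ (not just those manifestly arising from a single surgery) eventually enters a surgery coordinate chart around some point of $\bigsqcup_{i\geq 1} X_i$, and that the limit point is well-defined. This requires combining Proposition \ref{triangulation} on bounded Delaunay cells, the compactness afforded by bounded diameter and finite holonomy (which forces, up to subsequence, the combinatorial type of Delaunay decompositions to stabilise as in the proof of Proposition \ref{collisions2}), and a diagonal extraction to handle several simultaneous collapses; the identification of the limit with a reduction $\tilde\rho$ and the compatibility of the surgery parametrisation with the Cauchy property of the complex-hyperbolic distance is what actually glues the picture together.
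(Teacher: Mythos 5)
Your overall strategy — stratify the completion by the number of inverse surgeries, use surgery charts to read off the $\mathbb{C}\mathbb{H}^{n-i}$-structure, and control degenerations via bounded diameter plus finiteness of $\mathrm{Im}(\rho)$ — is the same as the paper's, and your reading of which preparatory lemma controls which degeneration mode is accurate. But the proposal has two genuine gaps, one of which you flag yourself but do not close.

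The first gap is the surjectivity of $X_0\sqcup\cdots\sqcup X_n\hookrightarrow X$: you correctly identify it as "the main technical obstacle" but then only describe the ingredients that "would be combined." What is actually needed is two things. First, a quantitative link between the width of a surgery and the complex hyperbolic distance to $X_1$ (Lemma~\ref{proj} in the paper), proved by writing out the area form $A = A' - \mu|z_0|^2$ in surgery coordinates and using the $\cosh$-distance formula; without this, "the sequence undergoes a collapse of shrinking width" does not imply it is Cauchy-approximated by points of $X_1$. Second, a complete case analysis of the collision regime $\delta(M_\ell)\to 0$ when the colliding pair involves the negatively curved point: it is \emph{not} automatic that this is the inverse of $\mathcal{S}_2$. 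The geodesic from the positively curved point through $p_1$ may fail to extend enough to reverse $\mathcal{S}_2$ because it hits a third cone point, and in that case one must invoke a trichotomy (the paper's Lemma~\ref{reverse}) showing that when neither of the two possible $\mathcal{S}_2$-inversions works, the three points necessarily sum to zero total curvature and a Kite surgery $\mathcal{S}_4$ can be inverted. Your phrase "a triple collision producing a regular torus" recognises this case exists but does not explain why it is the \emph{only} remaining alternative, which is exactly what the sine-inequality argument establishes.

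The second gap concerns the definition of the higher strata. You define $X_i$ directly as the set of limits of sequences undergoing $i$ simultaneous degenerations, but this is too naive to yield item (4): two essentially different chains of degenerations can converge to the same point of $\overline{\F}$, so the "set of pairs (surface, surgery data)" is generally larger than the actual stratum in the completion. The paper handles this by first building an auxiliary space $Y_{j+1}$ (the partial completions of the covers making up $X_j$), then proving an identification lemma (Proposition~\ref{identification}): if two points of $Y_2$ have the same image in $\overline{\F}$, their underlying flat surfaces must be isometric \emph{and} the identification extends to a whole finite cover, so $X_{j+1}$ is defined as the quotient. Without this step you do not obtain that each $X_i$ is a \emph{finite union of finite covers} of leaves — you only obtain a set-theoretic surjection from such a finite union onto $X_i$.
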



 From now on, we  assume that $\mathrm{Im}(\rho)$ is finite.

\subsection{Strata}
In Section \ref{surgeries}, we have introduced various surgeries, describing different ways flat surfaces can degenerate and how to parametrise these surgeries. 
The degenerate flat surfaces we see appearing in these ways belong to the metric completion of $X_0 = \F$. More precisely, they appear in copies of (finite coverings of) $\mathscr{F}_{\tilde{\rho}}$ of complex dimension $n-1$ where $\tilde{\rho}$ is a reduction of $\rho$ in the sense of Definition \ref{D:Reduction}.

We define inductively the strata $X_1, \ldots, X_n$ appearing in the description of $X$. The first stratum $X_1$ is the set of  pairs surface/surgery $(N, {\mathcal{S}})$ such that there exists a sequence of elements of $X_0 = \F$  
geometrically converging (in the sense of Definition \ref{convergence}) to a pair $(N, {\mathcal{S}})$ not already in  $X_0$.

\begin{lemma}
\label{finitecover} The stratum 
$X_1$ is a union of (finite covers of) $\mathscr{F}_{[\tilde{\rho}]}$s for some reductions $\tilde{\rho}$ of $\rho$ and such that the complex dimension of $\mathscr{F}_{[\tilde{\rho}]}$ is $n-1$.
\end{lemma}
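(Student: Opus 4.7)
By Definition~\ref{convergence}, every element of $X_1\setminus X_0$ is a pair $(M_\infty,\mathcal{S})$ with $M_\infty$ belonging to some leaf $\mathscr{F}_{[\tilde\rho]}$ for a reduction $\tilde\rho$ of $\rho$ and $\mathcal{S}\in\{\mathcal{S}_1,\mathcal{S}_2,\mathcal{S}_3,\mathcal{S}_4\}$. The plan is to enumerate surgery-type by surgery-type the admissible reductions, to verify that the associated leaves all have complex dimension $n-1$, and then to argue that the portion of $X_1$ lying above each such leaf is a finite unramified cover of an open subset of it. Only finitely many such pieces can appear because $\mathrm{Im}(\rho)$ is finite.

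For the enumeration I proceed surgery by surgery. Reversing $\mathcal{S}_1$ collides two positively-curved cone points $p_i,p_j$ (with $\theta_i+\theta_j>2\pi$) into a single cone point of angle $\theta_i+\theta_j-2\pi$; reversing $\mathcal{S}_2$ does the analogous collision involving the negatively-curved point $p_1$. In either case the reduced surface has the same genus as $N$ but one less cone point, so in the case $g=1$ with $n+1$ marked points one obtains $\dim_{\mathbb{C}}\mathscr{F}_{[\tilde\rho]}=2(1)-3+n=n-1$, and similarly in the $g=0$ case. Reversing $\mathcal{S}_3$ collapses a short essential loop and splits $p_1$ into two positively-curved cone points: one passes from $(g,n')=(1,n+1)$ to $(0,n+2)$, giving complex dimension $-3+(n+2)=n-1$. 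Finally $\mathcal{S}_4$ is relevant only when $n=2$: its reverse collapses $(p_1,p_2,p_3)$ to a regular marked point on a flat torus, and the resulting base---the moduli space of regular marked tori---has complex dimension $1=n-1$.

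It remains to establish the finite-cover description of each piece of $X_1$. For each pair (reduction $\tilde\rho$, surgery type $\mathcal{S}$), the local picture is given by the corresponding surgery map of Section~\ref{surgeries}, which is a local biholomorphism by Proposition~\ref{param} (and its analogues for $\mathcal{S}_2$, $\mathcal{S}_3$, $\mathcal{S}_4$). Collapsing the surgery-width parameter $z_0\to 0$ produces a map from (an open subset of) $\mathscr{F}_{[\tilde\rho]}$ onto a component of $X_1$; non-trivial covering multiplicity then comes from two distinct sources: (a) non-trivial pure isometries of $M_\infty$, which force passage to an orbifold cover, as in Remark~\ref{onetoone}, and (b) for $\mathcal{S}_3$, the several ``essentially different'' branches of Devil's surgery described in Remark~\ref{different}, each contributing one sheet parametrised by a connected component of the finite-holonomy locus inside $C_{\theta'_1}(\epsilon)\times C_{\theta''_1}(\epsilon)$. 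The main delicate step is to verify that these branches really contribute distinct sheets and that their number is finite; once this is done, the combinatorial count---finitely many surgery types, finitely many combinatorial choices of cone points involved, together with finiteness of $\mathrm{Im}(\rho)$---delivers the required decomposition of $X_1$ as a finite union of finite covers of leaves $\mathscr{F}_{[\tilde\rho]}$ of complex dimension $n-1$.
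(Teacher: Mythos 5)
Your proof is correct and follows essentially the same route as the paper: use finiteness of $\mathrm{Im}(\rho)$ to bound the number of reductions, identify the covering map with the "forget the surgery, keep the flat surface" projection, and invoke Remark~\ref{different} to bound the fiber. Your extra dimension-count per surgery type and your separation of the two sources of covering multiplicity (isometries of $M_\infty$ via Remark~\ref{onetoone} versus distinct Devil's-surgery branches via Remark~\ref{different}) are elaborations the paper leaves implicit, but the underlying argument is the same.
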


\begin{proof} 
The holonomy $\tilde{\rho}$ of the limit of a sequence geometrically converging is a reduction of $\rho$. Since $\mathrm{Im}(\rho)$ is finite, there are only finitely many such reductions and therefore only finitely $\mathscr{F}_{[\tilde{\rho}]}$ to which such a limit can belong.\sk

 Now let $\mathscr{F}_{{}[\tilde{\rho}]}$ be such that one of its elements $(N_0, {\mathcal{S}}_0)$ appears as the limit of a  geometrically converging sequence. We claim that $Z$,  the connected  component of $X_1$ to which $(N_0, {\mathcal{S}}_0)$ belongs,  is a finite cover of a connected component of  $\mathscr{F}_{[\tilde{\rho}]}$. One can define a local homeomorphism from a neighbourhood of $(N_0, {\mathcal{S}}_0)$ in $Z$ to the component of $\mathscr{F}_{{}[\tilde{\rho}]}$ containing $N_0$ associating to any pair $(N, {\mathcal{S}}_0)\in Z$ sufficiently close to $(N_0, {\mathcal{S}}_0)$
 the associated flat surface $N$. This map is a covering map which is finite since the fiber over a point is included in the set of different ways to perform the corresponding surgery (see Remark \ref{different}), which is finite. This fiber is actually trivial for surgeries different from Devil's surgery. \sk

More precisely let $\tilde{\rho}$,  be such that one element $N_0$ of $\mathscr{F}_{{}[\tilde{\rho}]}$  appears as the limit of a geometrically converging sequence  along a surgery ${\mathcal{S}}$. The set of elements of $X_1$ whose associated flat surface belongs to $\mathscr{F}_{{}[\tilde{\rho}]}$ is exactly the covering map over  $\mathscr{F}_{{}[\tilde{\rho}]}$ whose fiber over a point is the number of ways to perform the surgery ${\mathcal{S}}$ at this point.
\end{proof}
\mk

The next step is to prove that the disjoint union $X_0 \sqcup X_1$ embeds into the metric completion of $X_0 = \F$. This is a direct consequence of the following  proposition: 
\begin{prop}
\label{P:GeometricConvergence}
If $(M_\ell)_{\ell\in \mathbb N}\in (\F)^{\mathbb N}$  converges geometrically, then it is 
 a  Cauchy sequence
  for  the metric induced by the complex hyperbolic structure on $\F$. 
\end{prop}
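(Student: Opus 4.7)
The plan is to reduce to a single linear chart adapted to the surgery and to recognise the limit as a point of the totally geodesic stratum sitting inside $\mathbb{C}\mathbb{H}^n$. If $(M_\ell)$ already converges in $\F$, there is nothing to prove. Otherwise, by Definition \ref{convergence}, we have a reduction $\rho'$, an open set $U \subset \mathscr{F}_{[\rho']}$ on which some $\mathcal{S} \in \{\mathcal{S}_1, \mathcal{S}_2, \mathcal{S}_3, \mathcal{S}_4\}$ is defined, a sequence $M'_\ell \to M_\infty$ in $U$, and complex surgery parameters $z_0^{(\ell)}$ with $|z_0^{(\ell)}|$ proportional to the width $\epsilon_\ell \to 0$, such that $M_\ell = \mathcal{S}(z_0^{(\ell)}, M'_\ell)$ for $\ell$ large. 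Pick a linear parametrisation $(z_1, \ldots, z_m)$ of $U$ adapted to the surgery as in Section \ref{Charts}, and recall from Section \ref{surgeries} (Proposition \ref{param} for $\mathcal{S}_1$ and its analogues for $\mathcal{S}_2$, $\mathcal{S}_3$, $\mathcal{S}_4$) that $(z_0, z_1, \ldots, z_m)$ then forms a linear parametrisation of an open subset of $\F$ containing every $M_\ell$ for $\ell$ sufficiently large.

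The next step is to analyse the area form in these coordinates. Exactly as in the proof of Proposition \ref{signature}, triangulating so that the piece excised (or created, in the case of $\mathcal{S}_3$) by the surgery is an isoceles region with angles fixed by $\theta$ and whose base has length proportional to $|z_0|$, one gets
\begin{equation*}
A(z_0, z_1, \ldots, z_m) \,=\, A'(z_1, \ldots, z_m) - \mu \, |z_0|^2,
\end{equation*}
where $A'$ is the area form on $U \subset \mathscr{F}_{[\rho']}$ and $\mu > 0$ depends only on the cone angles involved in the surgery. In particular, the hyperplane $\{z_0 = 0\}$ is non-degenerate for $A$: it carries the Hermitian form $A'$ of signature $(1, n-1)$ and projectivises to a totally geodesic copy of $\mathbb{C}\mathbb{H}^{n-1}$ inside the $\mathbb{C}\mathbb{H}^n$-model defined by $A$.

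Now the developing map of the complex hyperbolic structure on $\F$ is, on the chart just built, the projectivisation $(z_0, z_1, \ldots, z_m) \mapsto [z_0 : z_1 : \cdots : z_m]$ from $\{A > 0\}$ into $\mathbb{C}\mathbb{H}^n$. Since $M'_\ell \to M_\infty$ in $U$ with its $\mathbb{C}\mathbb{H}^{n-1}$-structure, the coordinates $(z_1^{(\ell)}, \ldots, z_m^{(\ell)})$ converge to $(z_1^{\infty}, \ldots, z_m^{\infty})$ with $A'(z^{\infty}) > 0$; combined with $z_0^{(\ell)} \to 0$, this yields convergence of $[z_0^{(\ell)} : z_1^{(\ell)} : \cdots : z_m^{(\ell)}]$ to $[0 : z_1^{\infty} : \cdots : z_m^{\infty}]$, a genuine point of $\mathbb{C}\mathbb{H}^n$ lying on the hyperplane $\{z_0 = 0\}$. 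A convergent sequence in $\mathbb{C}\mathbb{H}^n$ is Cauchy for the Bergman metric, and the developing map is a local isometry for the complex hyperbolic metric on $\F$, so $(M_\ell)$ is Cauchy in $\F$.

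The main obstacle is the uniform area decomposition $A = A' - \mu |z_0|^2$, which must be verified surgery by surgery. For $\mathcal{S}_1$ and $\mathcal{S}_2$ it follows directly from the triangulation argument of Proposition \ref{signature}; for $\mathcal{S}_4$ the removed kite has area quadratic in $|z_0|$ by the same token; and for $\mathcal{S}_3$ the two cone caps which are cut off and reglued to form the handle each contribute an area proportional to $|z_0|^2$, with the sign still negative because the reglued region has smaller Euclidean area than the two original caps. Once this decomposition is in place, the rest of the argument is a direct application of the definition of the $\mathbb{C}\mathbb{H}^n$-structure via the period map together with elementary projective continuity.
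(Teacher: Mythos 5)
Your proposal is correct and takes essentially the same route as the paper's proof: pick the linear chart $(z_0, z_1, \ldots, z_m)$ adapted to the surgery, observe that the homogeneous coordinates converge (with $z_0^{(\ell)} \to 0$ and the remaining coordinates converging to those of $M_\infty$), and use the area-$1$ normalisation to conclude the limit lies strictly inside the projective model, hence the sequence is Cauchy. Your elaboration on the decomposition $A = A' - \mu |z_0|^2$ and the totally geodesic hyperplane $\{z_0 = 0\}$ is not strictly needed here (the paper simply invokes the area normalisation to stay away from the boundary), but it is accurate and consistent with how the same formula is used later in the proofs of Proposition \ref{signature} and Lemma \ref{proj}.
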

\begin{proof}
The statement is clear if $(M_\ell)_{\ell \in \mathbb N}$ converges in $\F$ so we assume that it is not the case. 
We consider a linear parametrisation $(z_0, z_1, \ldots, z_n)$ such that  
\begin{itemize}
\item $z_0$ is the surgery parameter (\textit{i.e.} the small segment linking the two new cone points after Thurston's surgery, or the small closed broken geodesic segment appearing after a Devil's surgery which is such that the width of the surgery is $|z_0|$ in a normalisation of area $1$, etc.); and 
\sk 
\item $(z_1, \ldots, z_n)$ is a linear parametrisation of the leaf to which the surface on which the surgery is done belongs.
\end{itemize}

For any $\ell \in \mathbb N$, let $z(\ell)=(z_i(\ell))_{i=0}^n$ stand for the coordinates of $M_\ell$ in the considered linear parametrisation, normalised so that the 
 corresponding area of $M_\ell $  is 1. From the very definition of 
metric convergence, we have  that $(z_0(\ell ))_{\ell \in \mathbb N}$ goes to 0 
 and  $((z_i(\ell ) )_{i=1}^n)_{\ell \in \mathbb N}$ converges in $\mathbb C^{n}$ since the associated sequence of flat surfaces in the corresponding stratum converges.    It follows that $z(\ell )$ converges in $\mathbb C^{n+1}$ as $\ell $ tends to infinity. 
 
On the other hand,  the normalisation of the areas of the $M_\ell$'s ensures
 that the $z(\ell )$'s stay away from the boundary of the model of the complex hyperbolic space associated with the considered linear parametrisation 
 $(z_0,  \ldots, z_n)$.  The proposition follows.  
  \end{proof}\bk 

Thanks to the preceding result, one has a map: 
 $$i_1 : X_0 \sqcup X_1 \longrightarrow \overline{\F}.$$
\begin{prop}
The map $i_1$  defined just above 
is injective.
\end{prop}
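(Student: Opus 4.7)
The plan is to exploit the fact that each of the surgery maps $\mathcal{S}_i$ of Section \ref{surgeries} extends to a local homeomorphism near the corresponding stratum: in the linear parametrisation $(z_0,z_1,\ldots,z_n)$ attached to a surgery $\mathcal{S}$, the chart extends continuously to $\{z_0=0\}$ and this boundary piece is mapped bijectively onto an open subset of the stratum of $\overline{\F}$ indexed by $\mathcal{S}$. With this in hand, the verification splits into three steps.

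First, $i_1|_{X_0}$ is the tautological inclusion $\F\hookrightarrow\overline{\F}$, hence trivially injective. Second, I would check that $i_1(X_1)\cap \F = \emptyset$: for $(N,\mathcal{S})\in X_1$, a geometrically converging sequence $(M_\ell)$ towards $(N,\mathcal{S})$ has, in the linear parametrisation above, coordinates $(z_0^\ell,z_1^\ell,\ldots,z_n^\ell)$ with $z_0^\ell\to 0$ and $(z_1^\ell,\ldots,z_n^\ell)\to (z_1^N,\ldots,z_n^N)$; exactly as in the proof of Proposition \ref{P:GeometricConvergence}, they converge to a well-defined point of $\overline{\F}$, which however corresponds to a degenerate surface (vanishing surgery region) and so does not lie in $\F$. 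Third, for injectivity on $X_1$, assume $i_1(N_1,\mathcal{S}_1)=i_1(N_2,\mathcal{S}_2)=p$ and pick geometrically converging sequences $(M_\ell^j)\to (N_j,\mathcal{S}_j)$ for $j=1,2$. Both are Cauchy with the same limit $p$, so the complex hyperbolic distance $d(M_\ell^1,M_\ell^2)\to 0$. Using the extension of the surgery chart $\mathcal{S}_1$ near $(N_1,\mathcal{S}_1)$ to $\{z_0=0\}$, for $\ell$ large enough both $M_\ell^1$ and $M_\ell^2$ lie in the image of this chart with coordinates converging to $(0,z^{N_1})$; in particular $(M_\ell^2)$ is eventually obtained by performing a surgery of type $\mathcal{S}_1$ of shrinking width on surfaces converging to $N_1$. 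By uniqueness of the geometric limit of a sequence (the shape of the collapsing region identifies both the surgery type and the limit surface), this forces $(N_2,\mathcal{S}_2)=(N_1,\mathcal{S}_1)$.

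The main obstacle is justifying that the extended surgery chart is genuinely a homeomorphism onto a neighborhood of $p$ in $\overline{\F}$, and that the geometric data $(N,\mathcal{S})$ is recovered unambiguously from the Cauchy class of an approaching sequence. For the former, one uses the fact (evidenced in Section \ref{surgeries} and in the proof of Proposition \ref{signature}) that the area form in the linear coordinates has the shape $A'(z_1,\ldots,z_n)\pm\mu|z_0|^2$ with $|\mu|>0$ and $A'$ positive on the stratum coordinates; this places $\{z_0=0\}$ in the \emph{interior} of the $\mathbb{C}\mathbb{H}^n$-model and not on its absolute, so that the complex hyperbolic distance extends continuously across the stratum and the projectivised limit is a well-defined point of $\overline{\F}$. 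For the latter, one observes that among the finitely many admissible surgery types each one leaves a distinctive geometric signature on nearby surfaces (a shrinking saddle connection, a shrinking essential curve, or a collapsing bigon around a regular point), and these signatures are stable under small perturbations in the complex hyperbolic metric, thanks to the geometric estimates of Section \ref{geometricproperties}.
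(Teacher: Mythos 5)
Your proof is correct and follows essentially the same route as the paper's: both hinge on the fact that the extended surgery charts $\overline{\mathcal{S}_i}$ are homeomorphisms onto neighbourhoods of $i_1(A_i)$ in $\overline{\F}$, together with the observation that for distinct points of $X_1$ the images of the (shrunk) surgery charts in $\F$ can be made disjoint. The paper invokes these two facts directly to separate $i_1(A_1)$ and $i_1(A_2)$, while you phrase the same separation in terms of geometrically converging Cauchy sequences; this is a difference of presentation rather than of substance.
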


\begin{proof} 
Clearly, the restriction of $i_1$ to $X_0=\F $ is the identity. Since  
 $i_1(X_1)\cap \F=\emptyset$, it suffices to show that  $i_1\lvert_{X_1}$ is injective to get the proposition.\sk 
 
Let  $(A_1, {\mathcal{S}}_1)$ and $(A_2, {\mathcal{S}}_2)$ be two distinct points in $X_1$.  The surgery maps
$$\begin{array}{ccccc}
{\mathcal{S}}_i & : & C_{\theta_1}^*(\epsilon_i)  \times U_i & \longrightarrow & \mathscr{F}_{[\rho]} \\
& & (z_0,  N)  & \longmapsto & N'\, , 
  \end{array} $$ 
 defined in Section \ref{surgeries}, where $U_i$ is a neighbourhood of $A_i$
for  $i =1,2$, extend continously to $\overline{{\mathcal{S}}_i}  :   C_{\theta_1}(\epsilon_i)  \times U_i  \longrightarrow  \overline{\mathscr{F}_{[\rho]}}$ which are homeomorphisms onto their images and whose respective images are neighbourhoods of $i_1(A_1)$ and $i_1(A_2)$ in $X$. If $U_i$ and $\epsilon_i$ are chosen small enough, the images of ${\mathcal{S}}_1$ and ${\mathcal{S}}_2$ do not overlap which implies that $i(A_1)$ and $i(A_2)$ are separated and therefore different. \end{proof}\mk

The distance induced on $X_1$ by this embedding is nothing else but the one induced by its natural complex hyperbolic structure: a neighbourhood of $X_1$ in $X=\overline{\F} $ can be described by a finite number of linear parametrisations of the form $(z_0, \ldots, z_n)$ in which $X_1$ corresponds to the locus $\{ z_0 = 0 \}$.
\begin{rem}
\label{R:C'est Subtil}
\rm We would like to stress that in the preceding assertion, 
one has to be aware that $(z_0, \ldots, z_n)$ does not induce a local system of coordinates on a neighbourhood  in $X$ 
 of a small open subset of $X_1$. Actually,  what must  be understood is that 
the equation $z_0=0$ cuts out something (a piece of  $X_1$ as it happens) in the boundary of the definition domain  of the chart induced by the linear parametrisation $(z_0, \ldots, z_n)$.   We will not dwell again on this subtlety in what follows but will only make reference to the present remark.
\end{rem}
 \mk 

The definition of $X_2$ is slightly more subtle because it is possible that two essentially different geometrically converging sequences in $X_1$ converge to the same point in $\overline{\F}$: consider for instance a case when $g = 1$ and $n=3$. We can distinguish two types of components in $X_1$ : the one which are moduli spaces of tori with two cone points, and those which are Thurston-Deligne-Mostow's moduli spaces of flat spheres with four cone points. Both can degenerate on flat spheres with three cone points. It can happen that these a priori different limits are identified in $\overline{\F}$. One must think of such points as parts of the intersection locus of the closures of two connected components of $X_1$ on which two different surgeries can be performed, each leading to a different component of $X_1$.  
\sk 

 In order to define correctly $X_2$, we proceed in two steps : we first define in an analogous way $Y_2$, that one shall think to be roughly the set of pairs flat surface/surgery $(M, {\mathcal{S}})$ such that there exists a sequence in $X_1$ geometrically converging to $(M, {\mathcal{S}})$. However, the fact that we are dealing with finite covers of leaves prevents us from giving such a straightforward definition. Bypassing this difficulty is rather easy: $X_1$ is a finite union of finite covers  of some leaves $\mathscr{F}_{[\tilde{\rho}]}$, for some reductions  $\tilde{\rho}$  of $\rho$. Such a finite cover $Z_0$ can be partially metrically completed by adjoining a codimension $1$ stratum $Z_1$ (possibly with several connected components) in order that the covering map $ \pi : Z_0 \longrightarrow \mathscr{F}_{[\tilde{\rho}]}$ extends to a map 
 $$\widetilde{\pi} : Z_0 \sqcup Z_1\longrightarrow \mathscr{F}_{[\tilde{\rho}]} \sqcup X'_1$$
  which is a covering map, possibly ramified along $X'_1$, where $X'_1$ is analogous to $X_1$ associated to $\mathscr{F}_{[\tilde{\rho}]}$ and such that $\widetilde{\pi}^{-1}(X'_1) = Z_1$: $X'_1$ is a finite union of some unramified finite covers of some leaves $\mathscr{F}_{[\tilde{\rho}']}$ for some reduction $\tilde{\rho}'$ of $\tilde{\rho}$. Then one defines  $Y_2$ as the (finite) union of all such $Z_1$'s associated to all the finite covers appearing in $X_1$ and is itself a finite union of finite covers of some leaves  $\mathscr{F}_{[\widehat{\rho}]}$  for some reduction $ \widehat{\rho}$ of $\rho$ (a reduction of a reduction of $\rho$ is again a reduction of $\rho$ as it follows immediately from Definition \ref{D:Reduction}).\sk
 
 Defined this way, $X_1 \sqcup Y_2$ maps into the metric completion of $X_1$ as a complex hyperbolic manifold with each of its connected components endowed with the induced complex hyperbolic distance. But note that this construction does not take into account how close these connected components can be  in $\overline{\F}$. 
 
 As we did for $X_1$, we define a map : 
 \begin{equation}
\label{E:i2}
i_2 : X_1 \sqcup Y_2 \longrightarrow \overline{\F} 
\end{equation}
whose restriction to $X_1$ coincides with the one of $i_1$. 
The main difference between the maps  $i_1$ and  $i_2$ is that the latter is not injective : some components of $Y_2$ are identified. We define $X_2$ as the image of $Y_2$ under that map or equivalently, $Y_2$ with the aforementioned components identified. The crucial point is that $X_0$ has a distance only defined on  each of its connected components by the complex hyperbolic metric, while the distance on $X_1$ takes into account the way in which $X_1$ is embedded in $X$.\sk

The following property allows to identify the irreducible components of $X_2$ with some unramified coverings of some reductions of $\F$.
\begin{prop}
\label{identification}
If $A$ and $B$ are distinct points of $Y_2$ such that $i_2(A) = i_2(B)$ then
\begin{enumerate}
\item the flat surfaces associated to $A$ and $B$ are isometric; \sk
\item there exists $\widehat{\rho}$,  a reduction of $\rho$,  and a connected component $Z$ of $\mathscr{F}_{[\widehat{\rho}]}$ such that the images by $i_2$ of the components of $Y_2$ containing $A$ and $B$ are both equal to the same finite cover of $Z$.
\end{enumerate}
\end{prop}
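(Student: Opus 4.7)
The strategy mirrors the proof of the injectivity of $i_1$: both $A$ and $B$ admit neighbourhoods in $Y_2$ whose images through $i_2$ cover a common neighbourhood of $i_2(A)=i_2(B)$ in $\overline{\F}$, and one exploits the linear nature of the transition maps between the corresponding charts, combined with the control of linear holonomies.

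\textbf{Step 1 (set-up).} Let $N_A$ and $N_B$ denote the flat surfaces underlying $A$ and $B$, living respectively in leaves $\mathscr{F}_{[\widehat{\rho}_A]}$ and $\mathscr{F}_{[\widehat{\rho}_B]}$, where $\widehat{\rho}_A$ and $\widehat{\rho}_B$ are reductions of $\rho$. By the inductive construction of $Y_2$, reaching a small neighbourhood of $i_2(A)$ in $\F$ amounts to successively performing two surgeries on $N_A$: first one of type $\mathcal{S}^A$ from $\mathscr{F}_{[\widehat{\rho}_A]}$ into a component of $X_1$, then one of type $\mathcal{T}^A$ from this component into $\F=X_0$. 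A symmetric description holds for $B$.

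\textbf{Step 2 (linear charts).} Combining small surgery parameters $(z_0, z_0')$ with a linear parametrisation $(z_1,\ldots,z_{n-1})$ of a neighbourhood $U_A$ of $N_A$ in $\mathscr{F}_{[\widehat{\rho}_A]}$, one obtains, as in Proposition \ref{param} and its analogues, a linear parametrisation $(z_0, z_0', z_1, \ldots, z_{n-1})$ of an open subset of $\F$ accumulating on $i_2(A)$, in the sense of Remark \ref{R:C'est Subtil}. The point $A$ corresponds to $\{z_0 = z_0'= 0\}$. An analogous chart $(w_0, w_0', w_1, \ldots, w_{n-1})$, with $(w_1,\ldots,w_{n-1})$ a linear parametrisation of a neighbourhood $U_B$ of $N_B$, describes the same region near $i_2(B)=i_2(A)$.

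\textbf{Step 3 (proof of (1)).} Both charts describe the same open set of $\F$, so by Proposition \ref{linear} the transition between them is (the restriction of) a projective linear map preserving Veech's area form. The boundary loci $\{z_0 = z_0'= 0\}$ and $\{w_0 = w_0'=0\}$ are both limits of the same sequences from $\F$, hence this linear map sends one locus to the other. Letting the surgery parameters tend to zero, the transition induces an invertible linear map between $U_A$ and $U_B$ that preserves their area forms. Unwinding the (pseudo-)polygonal models underlying $(z_1,\ldots,z_{n-1})$ and $(w_1,\ldots,w_{n-1})$, this linear map translates into a matching between the Euclidean pieces out of which $N_A$ and $N_B$ are built: this precisely provides an isometry of (marked) flat surfaces $N_A \simeq N_B$, proving (1).

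\textbf{Step 4 (proof of (2)).} The isometry of Step 3 intertwines the linear holonomies $\widehat{\rho}_A$ and $\widehat{\rho}_B$ up to the action of $\mathrm{PMCG}_{g,n}$, hence $[\widehat{\rho}_A]=[\widehat{\rho}_B]=:[\widehat{\rho}]$. By Lemma \ref{finitecover} (or rather its obvious analogue one step deeper), the components of $Y_2$ containing $A$ and $B$ are both finite unramified covers of a component of $\mathscr{F}_{[\widehat{\rho}]}$. The local biholomorphism between their neighbourhoods constructed in Step 3 analytically continues along $Y_2$, identifying these two components over the same connected component $Z$ of $\mathscr{F}_{[\widehat{\rho}]}$ (the degree over $Z$ is locally, hence globally, constant on a connected finite cover). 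This is exactly the conclusion of (2).

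\textbf{Main obstacle.} The delicate point is Step 3: one must manipulate the linear parametrisations with care, keeping in mind (compare Remark \ref{R:C'est Subtil}) that the locus $\{z_0=z_0'=0\}$ does not lie inside the chart of $\F$ itself but on its boundary. Accordingly, the identification between $U_A$ and $U_B$ must be extracted from a boundary-value statement about an area-preserving linear transition map, and then correctly interpreted on the level of the polygonal models in order to produce a genuine isometry of flat surfaces.
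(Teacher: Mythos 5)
Your route to part (1) is genuinely different from the paper's and, as written, has a gap in the decisive step. The paper proves (1) by a clean metric-convergence argument: if $N_A\not\simeq N_B$, approximate the two geometrically converging sequences by Cauchy sequences in $\F$; both tend to the single point $i_2(A)=i_2(B)$ of $\overline{\F}$, so they are asymptotic, yet their underlying flat surfaces would converge to two non-isometric metric spaces, which is absurd. You instead try to extract an isometry from the transition map between the two surgery charts. The problem is in the ``unwinding'' sentence of Step~3: knowing that the transition between $(z_1,\ldots,z_{n-1})$ and $(w_1,\ldots,w_{n-1})$ is linear and preserves Veech's area form does \emph{not} by itself force $N_A\simeq N_B$. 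Any element of the unitary group of the area form gives such a linear, area-preserving map between polygonal parameters, while moving a flat surface to a non-isometric one in the same leaf. Proposition \ref{linear} produces an isometry only because there one knows \emph{a priori} that the two polygonal models represent the same surface; here that is exactly what you must prove. To close the gap you would have to use that, for small surgery parameters, the two charts cut the same surface $M\in\F$ into a large common part plus two shrinking surgery regions, and then pass to the limit on the common part --- which is precisely the metric-convergence argument in disguise, so there is little gained over the paper's shorter proof.

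Your Step~4 for part (2) is essentially the paper's argument: a local identification of the two components of $Y_2$ around $A$ and $B$, obtained from the two surgeries on $N=N_A=N_B$, is analytically continued to an identification of finite covers over the same component $Z$ of $\mathscr F_{[\widehat\rho]}$; the paper justifies finiteness by Lemma \ref{finitecover} just as you do. Your added remark about constancy of the covering degree is a correct small clarification, but the substance is the same.
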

\begin{proof} 
Let $N_A$ and $N_B$ the two flat surfaces associated to $A$ and $B$ respectively. \sk 

Assume that  $N_A$ and $N_B$ are not isometric. Consider two sequences in $X_1$ geometrically converging to $A$ and $B$ respectively. They can be approximated by two Cauchy sequences in $\F$ converging to the same point in $\overline{\F}$.  But their associated flat surfaces converge towards two different metric spaces, which is impossible.   This proves (1).
\sk

Let $Z$ be the component of  $\mathscr{F}_{[\tilde{\rho}]}$ to which $N= N_A = N_B$ belongs. We first remark that there are neighbourhoods of $A$ and $B$ in $Y_2$ which are identified under $i_2$: there are two surgeries ${\mathcal{S}}_A$ and ${\mathcal{S}}_B$ on $N$ which produce images under $i_2$ of neighbourhoods of $A$ and $B$ in $Y_2$. This identification can be extended to a cover of $Z$. This cover must be finite since it is covered by a component of $Y_2$ which is finite according to Lemma \ref{finitecover}.
\mk
\end{proof}

From the map \eqref{E:i2} and by the very definition of $X_2$, one deduces an injective map $j_2: X_1\sqcup X_2\rightarrow \overline{\F}$. 
Since the restrictions of $i_1$ and $j_2$ to $X_1$ coincide, one can consider their fiber product over $X_1$ in order to get an injective  map: 
$$
X_0 \sqcup X_1 \sqcup X_2 \longrightarrow \overline{\F}.  
$$

Inductively,  one defines $Y_{j+1}$ from $X_j$ in exactly the same way  we defined $Y_2$ from $X_1$. Then one defines $X_{j+1}$ by identifying 
some components  of $Y_{j+1}$ using the natural map $ i_{j+1} : X_j\sqcup Y_{j+1} \longrightarrow \overline{\F}$. 
 Note at this point that the analog of Proposition \ref{identification}  for $Y_{j+1}$ holds true, the proof being completely similar. Since a reduction $\widehat{\rho}$ of a reduction $\tilde{\rho}$ of $\rho$ is still a reduction of $\rho$, we get that $X_i$ is a complex hyperbolic manifold of dimension $n-i$ whose connected components are some coverings of some leaves  $\mathscr{F}_{[\widetilde{\rho}]}$ for some reduction $\widetilde{\rho}$ of $\rho$.
\sk 

Putting all pieces together we get that
\begin{itemize}
\item $X_0 \sqcup X_1 \sqcup \cdots \sqcup X_n$ embeds into  the metric completion $X$ of $X_0=\F$;
\sk 
\item $\forall i$, $X_i$ is a finite union of finite covers of $\mathscr{F}_{\widetilde{\rho}}$ for some reductions $\widetilde{\rho}$ of $\rho$;
\sk 
\item $\forall i$, the distance of $X$ induces on $X_i$ its natural structure of complex hyperbolic manifold of dimension $\mathrm{dim}(X_0) - i$.
\end{itemize}

\subsection{Proof of the surjectivity}
\begin{prop}
\label{complet}
Assume that\;$\mathrm{Im}(\rho)$ is finite.  Then the embedding 
$$ 
X_0 \sqcup X_1 \sqcup \ldots \sqcup X_n \longrightarrow X=\overline{\F} 
$$
is onto.
\end{prop}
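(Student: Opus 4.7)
The plan is to show that any Cauchy sequence $(M_\ell)_{\ell\in\mathbb N}$ in $\F$, normalised to have area $1$, admits a subsequence converging geometrically (Definition \ref{convergence}) to a point of $X_0\sqcup X_1\sqcup\cdots\sqcup X_n$, then to iterate the argument inside each stratum. The geometric tools are exactly those developed in Section \ref{geometricproperties}.

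\textbf{Step 1: the diameters are bounded.} If, up to extraction, $D(M_\ell)\to+\infty$, Proposition \ref{diameter} (which uses that $\mathrm{Im}(\rho)$ is finite) produces in each $M_\ell$ an embedded flat cylinder of length $\geq K_2(\rho)\, D(M_\ell)$. Such a long cylinder identifies the $M_\ell$'s with surfaces obtained by the surgery $\mathcal S_5$ from a lower-complexity surface, with gluing parameter $z_0$ of increasingly large imaginary part. In the associated linear parametrisation, the complex hyperbolic pseudo-distance from $z_0$ to any fixed reference grows like $\log|\mathrm{Im}(z_0)|$, so $(M_\ell)$ would be unbounded for the CH metric, contradicting the Cauchy condition. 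A completely analogous argument excludes the case $\sigma(M_\ell)\to 0$ with a shortest curve of \emph{cylindrical} type (situation $(1)$ after Lemma \ref{L:RealisationOfTheSystol}); combined with Proposition \ref{systol}, we may therefore assume that any short essential curve through $p_1$ is of genuine bigon type, i.e.\ corresponds to $\mathcal S_3$.

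\textbf{Step 2: combinatorial compactness via Delaunay.} With $D(M_\ell)$ bounded by some $K$, Proposition \ref{triangulation} bounds each edge of any Delaunay triangulation of $M_\ell$ by $2K$, and an Euler characteristic computation bounds the number of triangles. Up to extraction, the combinatorial type of the Delaunay triangulation, the lengths of its edges, and the angles between consecutive edges all converge; the finiteness of $\mathrm{Im}(\rho)$ is what allows the extraction on angles. We obtain a limit triangulated flat datum $M_\infty$.

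\textbf{Step 3: identification with a stratum.} Either no edge collapses and $M_\ell\to M_\infty\in\F=X_0$; or a non-empty set of edges collapses, and the collapsing pattern selects one of the surgeries of Section \ref{surgeries}:
\begin{itemize}
\item A single saddle connection between two positively-curved points shrinks: this is $\mathcal S_1$ in reverse.
\item A single saddle connection from $p_1$ to a positively-curved point shrinks: this is $\mathcal S_2$ in reverse; Lemma \ref{collisions} and Proposition \ref{collisions2} rule out simultaneous collisions of two positively-curved points with $p_1$ nearby, while Remark \ref{formula} identifies the correct reverse surgery.
\item A non-cylindrical bigon through $p_1$ shrinks: by Step 1 this is $\mathcal S_3$ in reverse.
\item In the case $g=1$, $n=2$, a small triangle containing $p_1$ collapses to a regular point of a torus: this is $\mathcal S_4$ in reverse (which is precisely why $\mathcal S_4$ was introduced).
\end{itemize}
In each case, the pair $(M_\infty,\mathcal S)$ represents a point of $Y_1$ over $M_\infty\in\mathscr F_{[\tilde\rho]}$, where $\tilde\rho$ is the reduction of $\rho$ associated to the collapsed configuration. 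By construction of the embedding \eqref{E:i2} this point defines an element of $X_1$, and $(M_\ell)$ geometrically converges to it.

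\textbf{Step 4: induction.} The same argument applies verbatim inside the metric completion of each $X_i$ (thought of, through Lemma \ref{finitecover}, as a finite union of finite covers of leaves $\mathscr F_{[\tilde\rho]}$ of strictly smaller complex dimension), because Proposition \ref{diameter}, Proposition \ref{systol} and Lemma \ref{collisions} only depend on the angle data and the finiteness of $\mathrm{Im}(\tilde\rho)$, which holds since $\tilde\rho$ is a reduction of $\rho$. Induction on $n=\dim_{\mathbb C}\F$ therefore exhausts $\overline{\F}$.

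\textbf{Main obstacle.} The delicate step is \textbf{Step 1}: one must verify that in the linear parametrisation attached to $\mathcal S_5$, the long-cylinder direction really is at infinite CH distance, and similarly that a shrinking cylindrical systole yields a cusp rather than a finite-distance boundary point. This requires relating the Hermitian area form directly to the cylinder length (resp.\ the modulus) and showing the resulting pseudo-metric goes to $-\infty$ on the null cone. Once this is settled, Steps 2--4 are a relatively routine compactness and case-analysis argument, using in an essential way the distinction between situations $(1)$ and $(2)$ of Proposition \ref{systol} to separate genuine limits at finite distance from cusps.
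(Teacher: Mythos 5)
Your proposal follows essentially the same route as the paper: bound the diameters along the Cauchy sequence (deferring the cylindrical CH-distance estimate, which is exactly Lemma \ref{bounded}(1), proved later via Lemma \ref{cylindricalcalculation}), extract a convergent Delaunay combinatorial type, read off the collapsing pattern as a reverse surgery, and induct on dimension. Step 1, Step 2 and Step 4 align closely with what the paper does, and you correctly single out the CH-distance estimate on the cylinder direction as the real technical hurdle.

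There is, however, a genuine gap in Step 3, second bullet. You claim that Lemma \ref{collisions} and Proposition \ref{collisions2} ``rule out simultaneous collisions of two positively-curved points with $p_1$ nearby.'' They do not: Lemma \ref{collisions} is a statement about flat spheres with all cone points positively curved, and Proposition \ref{collisions2} is only used (in the paper) to show that the relative systole cannot go to zero when $\dim X_0 = 1$, so that case (2) is confined to $\dim X_0 \geq 2$. The scenario you wish to dismiss --- the geodesic $[q_\ell,p_\ell]$ cannot be extended past $p_\ell$ because the extension meets another cone point $r_\ell$ before acquiring the length prescribed by Remark \ref{formula} --- is not a pathology to be excluded but a real configuration that has to be resolved. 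The paper handles it via an explicit trichotomy (reverse $\mathcal S_2$ on $(p_\ell,q_\ell)$, reverse $\mathcal S_2$ on $(p_\ell,r_\ell)$, or reverse $\mathcal S_4$), and the decisive tool is Lemma \ref{reverse}: when neither $\mathcal S_2$-reversal is available, the two length inequalities force $(2\pi-\theta_1)+(2\pi-\theta_2)+(2\pi-\theta_3)=0$, so the torus has exactly three cone points and the kite surgery $\mathcal S_4$ can be reversed. Without this trigonometric lemma your case analysis is not exhaustive, since a priori one could be stuck in a configuration where no surgery appears reversible. You should also correct the parenthetical ``$g=1$, $n=2$'' in your $\mathcal S_4$ bullet: the kite collapse concerns a torus with exactly three cone points degenerating to a regular torus.
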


This proposition says in substance that the metric space obtained by adding to $\F$ the degenerate surfaces that ones sees when reversing the surgeries studied in Section \ref{surgeries} is complete. Before giving the proof, we have to state two technical lemmas relating the complex hyperbolic geometry of $\F$ to  the geometry of the underlying flat surfaces  parametrized by this leaf. 

\begin{lemma} 
\label{bounded} 
If\;\,${\rm Im}(\rho)$ is finite then the two following assertions hold true for any  sequence $(M_\ell)_{\ell\in \mathbb N}$ of flat surfaces in $\F$ normalised so that their area is 1:\sk 
\begin{enumerate}
\item if $(M_\ell)_{\ell\in \mathbb N}$ is a Cauchy sequence 
  then $\big({D}(M_\ell)\big)_{\ell\in \mathbb N}$ is bounded;
\mk 
\item  if  $\big({D}(M_\ell)\big)_{\ell\in \mathbb N}$ is bounded then  $(M_\ell)_{\ell  \in \mathbb N}$  is  a Cauchy sequence (up to passing to a subsequence).
\end{enumerate}
\end{lemma}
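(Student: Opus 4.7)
My plan is to prove the two parts separately, both by reducing to the local surgery-based description of $\mathscr{F}_{[\rho]}$ developed in the previous section and by exploiting the normalization to area $1$ to control the behavior of linear parameters.

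For part (1), I would argue by contrapositive: assume $(D(M_\ell))_\ell$ is unbounded and extract a subsequence along which $D(M_\ell) \to \infty$. The hypothesis that $\mathrm{Im}(\rho)$ is finite allows me to invoke Proposition \ref{diameter}: for $\ell$ large enough, each $M_\ell$ contains an embedded flat cylinder $C_\ell$ of length at least $K_2(\rho) D(M_\ell)$, hence going to infinity. The area being normalized to $1$ forces the width of $C_\ell$ to vanish. Using the surgery $\mathcal{S}_5$ of Section \ref{bswec}, I can express $M_\ell = \mathcal{S}_5(z_0^{(\ell)}, N_\ell)$ with associated linear parametrization $(z_0 z_1, z_1, z_2, \ldots, z_n)$, so that the existence of an arbitrarily long cylinder amounts to $|z_1^{(\ell)}| \to 0$ and $z_0^{(\ell)}$ escaping to the cusp at $i\infty$ of $\mathbb{H}/\mathbb{Z}$. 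The key step is then to check that this degeneration corresponds to approaching the boundary of the local $\mathbb{CH}^{n-1}$-chart, which lies at infinite complex hyperbolic distance; hence the distance from $M_\ell$ to any fixed base point diverges, contradicting the Cauchy property.

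For part (2), I would use the Delaunay triangulation $\mathcal{T}_\ell$ of each $M_\ell$. By Proposition \ref{triangulation}, every Delaunay edge has length at most $2 D(M_\ell)$, which is uniformly bounded by hypothesis. Since the underlying topological surface is fixed and the number of triangles in such a triangulation is determined by the Euler characteristic and the number of cone points, the combinatorial type of $\mathcal{T}_\ell$ takes only finitely many values along the sequence. Passing to a subsequence, I may assume this type is constant. The associated linear parameters then form a bounded sequence in $\mathbb{C}^n$, so, by Bolzano--Weierstrass and a further extraction, they converge to some $z^\infty \in \mathbb{C}^n$. By continuity of the Hermitian form representing the area, $h(z^\infty) = 1 > 0$, so the limit lies in the interior of the admissible parameter space, possibly with some edge lengths equal to $0$ corresponding to a lower-dimensional stratum of the metric completion.

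If $z^\infty$ parametrizes a non-degenerate flat surface in $\mathscr{F}_{[\rho]}$, then the sequence converges inside $\mathscr{F}_{[\rho]}$ and is \emph{a fortiori} Cauchy. If some coordinates of $z^\infty$ vanish, the degeneration corresponds precisely to the inverse of one of the surgeries $\mathcal{S}_1, \ldots, \mathcal{S}_5$; this is exactly the situation of geometric convergence (Definition \ref{convergence}), and Proposition \ref{P:GeometricConvergence} then ensures that $(M_\ell)$ is Cauchy. The main technical obstacle lies in part (1): one must justify that a long cylinder really corresponds to approaching a genuine cusp of the $\mathbb{CH}^{n-1}$-structure. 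This demands an explicit description of the induced Hermitian form in the coordinates supplied by $\mathcal{S}_5$, together with a comparison between the resulting complex hyperbolic metric and the standard half-space model near the cusp direction; this computation is essentially the content of the cusp analysis to be carried out in Section \ref{cusps}.
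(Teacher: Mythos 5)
Your plan for part (1) follows the same route as the paper: invoke Proposition \ref{diameter} to obtain a long flat cylinder when $D(M_\ell)\to\infty$, express $M_\ell$ via the $\mathcal{S}_5$-cylindrical coordinates, and verify that the cylinder length governs a cusp direction of the $\mathbb{CH}^{n-1}$-metric. You correctly anticipate that the actual estimate (Lemma \ref{cylindricalcalculation} on pseudo-horospherical coordinates, yielding $L(\gamma)\geq|\log c(\gamma(1))-\log c(\gamma(0))|$) is the substance deferred to Section \ref{cusps}; the paper postpones (1) to \S\ref{cusps} for precisely this reason, so on this part you match the paper's strategy.

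For part (2) your first three steps (Delaunay edges bounded by $2D(M_\ell)$ via Proposition \ref{triangulation}, passing to a subsequence with fixed combinatorics, Bolzano--Weierstrass to extract a convergent $z^\infty$ with $h(z^\infty)=1$) are the same as the paper's, with one small omission: finiteness of the combinatorial type of the triangulation is not by itself enough to fix a chart. One must also fix the holonomy constants $\rho_i\in\mathrm{Im}(\rho)$ attached to the gluing pattern; this is where the hypothesis that $\mathrm{Im}(\rho)$ is finite is used (the paper flags this as ``the sides glued together always form the same angle''), and you should state it explicitly rather than fold it into ``combinatorial type.'' More substantively, the final case split is an unnecessary detour that introduces a genuine gap. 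Once you have $z(\ell)\to z^\infty$ in $\mathbb{C}^n$ with $h(z^\infty)=1>0$, the sequence is automatically Cauchy for the complex hyperbolic metric, because that metric is a Riemannian metric on the open domain $\{h>0\}$ and Euclidean convergence to an interior point implies Cauchy-ness for any continuous Riemannian metric; this is exactly how the paper concludes (by arguing ``as at the end of the proof of Proposition \ref{P:GeometricConvergence}''). Your alternative route---asserting that vanishing coordinates of $z^\infty$ correspond to the inverse of one of the surgeries $\mathcal{S}_1,\dots,\mathcal{S}_5$ and then invoking geometric convergence and Proposition \ref{P:GeometricConvergence}---relies on the surgery classification of limits, which is precisely the content of Proposition \ref{complet}. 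That proposition comes \emph{after} Lemma \ref{bounded} and its proof uses Lemma \ref{bounded}(1), so invoking it here risks circularity. Drop the case split and conclude directly from $h(z^\infty)>0$.
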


\begin{proof} We postpone the proof of $(1)$  to Section \ref{cusps} in which we provide a description of the parts of $\F$ on which the diameter function $D$ is large.
\sk 

 The proof of $(2)$ consists in remarking that using the Delaunay 
 decomposition
 of $M_\ell$, we can assume that,  up to passing to a subsequence:  
\begin{itemize}

\item all the $M_\ell$'s are recovered by gluing the sides of a pseudo-polygon through the same gluing pattern;
\sk 
\item the side glued together always form the same angle (that this can be assumed follows from the fact that $\mathrm{Im}(\rho)$ is finite by assumption);
\sk 
\item 
the lengths of each side converge (since the lengths of the edges of the Delaunay triangulation are smaller than $2D(M_\ell )$ by 
Proposition  \ref{triangulation}).\sk 
\end{itemize}

In the chart defined by the gluing pattern, the coordinates of the $M_\ell$'s form a Cauchy sequence. 
Then using the fact that their areas  all have been assumed to be 1, one can argue in the same way as at the end of the proof  of Proposition \ref{P:GeometricConvergence} and get that  $(M_\ell )_{\ell \in \mathbb N}$ is a Cauchy sequence for the metric  on $\F$ induced by the complex hyperbolic structure it carries.
\end{proof}
\sk

\begin{lemma}
\label{proj}
There exists a positive constant $K=K_{[\rho]}$ such that if $M \in X_0= \F$ (which is supposed to be normalised such that its area equals $1$) is obtained from a surgery ${\mathcal{S}}$ of width $\epsilon$ from an element of $X_1$ then
$$   {d}(M,X_1) \leq K\, \epsilon\, ,  $$ 
 where $d$ denotes the extension of the complex hyperbolic distance on $X_0$ to $X$.
 \end{lemma}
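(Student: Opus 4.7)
The plan is to work in a local linear parametrisation adapted to the surgery and appeal to the closed-form expression of the complex hyperbolic distance from a point to a totally geodesic hyperplane.

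Let $M \in \F$ be obtained from $M' \in X_1$ by a surgery $\mathcal{S}$ of width $\epsilon$. By the constructions of Section \ref{surgeries}, there is a linear parametrisation $(z_0, z_1, \ldots, z_n)$ of a neighbourhood of $M'$ in $\F$ in which $z_0$ is the surgery parameter and the locus $\{z_0=0\}$ corresponds to $X_1$, in the sense of Remark \ref{R:C'est Subtil}. Unpacking the description of each surgery in the area $1$ normalisation, the width $\epsilon$ equals $c_{\mathcal{S}}|z_0|$ for a positive constant $c_{\mathcal{S}}$ depending only on the cone angles modified by $\mathcal{S}$: for instance $c_{\mathcal{S}}=1$ for $\mathcal{S}_1,\mathcal{S}_2,\mathcal{S}_4$, and $c_{\mathcal{S}}=2\sin(\theta'/2)$ for $\mathcal{S}_3$ by Proposition \ref{cone}.

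By the computation underlying the proof of Proposition \ref{signature}, Veech's Hermitian area form reads in this chart as
$$ A(z_0, z_1, \ldots, z_n) = A'(z_1, \ldots, z_n) - \mu|z_0|^2, $$
where $A'$ is Veech's area form on $X_1$ (of signature $(1, n-1)$) and $\mu>0$ depends only on the angles of the cone(s) involved in $\mathcal{S}$ (namely $\mu|z_0|^2$ is the area of the Euclidean region removed by $\mathcal{S}$). Consequently the local piece of $X_1$ corresponds to a totally geodesic complex hyperbolic hyperplane in $\mathbb{C}\mathbb{H}^n$, polar to the line $\mathbb{C}e_0$ on which the underlying Hermitian form satisfies $h(e_0, e_0) = -\mu$. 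The standard distance formula from a point $[v]\in \mathbb{C}\mathbb{H}^n$ to such a hyperplane yields, for $v = M$ normalised so that $A(M)=1$,
$$ \sinh^2 d(M, X_1) = \frac{|h(v, e_0)|^2}{A(M)\cdot(-h(e_0,e_0))} = \mu\,|z_0|^2. $$
Since $\sinh(x)\geq x$ for $x\geq 0$, one concludes $d(M, X_1) \leq \sqrt{\mu}\,|z_0| = (\sqrt{\mu}/c_{\mathcal{S}})\,\epsilon$.

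It remains to extract a single constant $K=K_{[\rho]}$ valid for every choice of surgery and base point in $X_1$. The hypothesis that $\mathrm{Im}(\rho)$ is finite plays a crucial role here: combined with Lemma \ref{finitecover} and the finiteness of the set of reductions of $\rho$, it forces $X_1$ to decompose into finitely many strata, supporting only finitely many angle configurations at their cone points; moreover only four surgery types $\mathcal{S}_1,\ldots,\mathcal{S}_4$ can produce codimension $1$ strata (cf.\ Definition \ref{convergence}). The pair $(\mu, c_{\mathcal{S}})$ thus takes only finitely many values, and setting $K_{[\rho]}$ equal to the maximum of $\sqrt{\mu}/c_{\mathcal{S}}$ over this finite set completes the proof. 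The main technical step is the case-by-case computation of $\mu$ and $c_{\mathcal{S}}$ as explicit functions of the cone angles for each of the surgeries of Section \ref{surgeries}, a geometrically elementary but somewhat tedious bookkeeping task.
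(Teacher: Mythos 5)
Your proof is correct and takes essentially the same approach as the paper: both work in a linear chart adapted to the surgery, in which the area form splits as $A = A' - \mu|z_0|^2$, apply the closed-form complex hyperbolic distance formula, and invoke the finiteness of $\mathrm{Im}(\rho)$ to bound $\mu$ (hence the constant) uniformly. The paper uses the $\cosh(d/2)$ formula for the distance to the explicit point $(0,z_1,\dots,z_n)$ rather than the $\sinh$-to-hyperplane formula, but since that point is exactly the orthogonal projection onto $\{z_0=0\}$ the two computations coincide --- up to a harmless normalisation in the argument of $\sinh$: in the paper's convention the formula reads $\sinh^2(d/2)=\mu|z_0|^2$, not $\sinh^2(d)$.
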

 \begin{proof} Let $(z_0, z_1, \ldots, z_n)$ be a linear parametrisation compatible with the surgery ${\mathcal{S}}$ (see Section \ref{surgeries}) which is such that 

\begin{itemize}
\item the parameter $z_0$ is the surgery parameter, in particular $|z_0| = \epsilon$ is the width of the surgery; 
\sk 
\item $(z_1, \ldots, z_n)$ is a linear parametrisation of $U \subset X_1$; 
\sk 
\item in the coordinates $z_0,\ldots,z_n$, the area form $A$ writes down 
$$A(z_0, z_1, \ldots, z_n) = A'(z_1, \ldots, z_n) - \mu |z_0|^2 $$ 
where 
\begin{itemize}
\item $\mu=\mu_{{\mathcal{S}}}$ is a positive real constant depending on the surgery ${\mathcal{S}}$ (it is the constant such that $\mu |z_0|^2$ is the area of the part of the surface removed while processing the surgery); 
\sk 
\item $A'$ is the area form on $U$ expressed in the coordinates $z_1, \ldots, z_n$.\sk 
\end{itemize} (Note that since  the image of $\rho$ is assumed  to be finite,  the set of such $\mu_{{\mathcal{S}}}$'s is finite and thus $\mu_{{\mathcal{S}}}$ is 
uniformly bounded from above).\sk 
\end{itemize}

One can compute the complex hyperbolic distance between two points in the complex hyperbolic space using formulas involving $A$ (see \cite[\!p.77]{Goldman} for instance). If $a : \mathbb{C}^{n+1} \times \mathbb{C}^{n+1} \longrightarrow \mathbb{C}$ stands for  the polarisation of $A$, namely the Hermitian form such that $A(X) = a(X,X)$ for every $X\in \mathbb{C}^{n+1}$,  the complex hyperbolic distance $d(X,Y)$ between two points $[X],[Y] \in {\mathbb C}{\mathbb{H} }^n \subset \mathbb{CP}^n $ satisfies 
$$ \cosh^2 \left(\frac{d(X,Y)}{2}\right)  = \frac{a(X,Y)a(Y,X)}{a(X,X)a(Y,Y)}.$$

 If $X =(x_0, X') \in \mathbb{C}^{n+1}$ and $Y =(y_0,Y')\in \mathbb{C}^{n+1}$ with 
$X'=(x_1,  \ldots, x_n)$ and  $Y' =(y_1, \ldots, y_n)$,  the formula for $a(X,Y)$ is 
$$ a(X,Y) = a'\big(X', Y'\big) - \mu \, x_0\overline{y_0} $$
where $a'$ stands for  the polarisation of $A'$.
\sk 

We want to estimate $\alpha = d\big((z_0, z_1, \ldots, z_n), (0, z_1, \ldots, z_n)\big)$. Since $M$ is supposed to have area $1$, it folows from the discussion above that  we have 
$$\cosh^2 \big({\alpha}/{2}\big) = \frac{\big(1 + \mu\, \epsilon^2\big)^2}{1 \cdot (1 + \mu\, \epsilon^2)} = 1+ \mu\, \epsilon^2\, .  $$ 

 Since for all $a > 0$, one has $1 +{a^2}/{2} \leq \cosh(a)$, it comes 
$$ 1 + \frac{\alpha^2}{8} \, \leq \, \sqrt{1 +\mu\, \epsilon^2}\,  \leq \, 1 + \frac{\mu}{2}\epsilon^2 $$ 
 from which we deduce that $ \alpha \leq 2 \sqrt{\mu} \epsilon$. 
 Since $\mu$ is bounded from above by a constant only depending on the image of $\rho$, the proposition is proved.
\end{proof}
\mk

We end this section with the proof of Proposition \ref{complet}. We still suppose that $\mathrm{Im}(\rho)$ is finite, which is the crucial hypothesis on which everything done in this paper relies on. We just say a word on the general strategy. In order to show that any Cauchy sequence accumulates to one  point in a stratum, we first prove that, since the diameter along a Cauchy sequence is bounded, if such a Cauchy sequence does not converge in $\F$, it implies that it degenerates in the sense that either  its systole or  its relative systole goes to zero. If the latter occurs, we show that such a surface having a sufficiently short systole or relative systole is obtained from one of the four surgeries described in Section \ref{surgeries} and therefore is  very close to $X_1$. Then we conclude with an inductive argument.
\mk

\noindent{\bf Proof  of Proposition \ref{complet}.}
Let ${M}_\bullet= (M_\ell)_{\ell\in \mathbb{N}}$ be a Cauchy sequence in $X_0=\F$ for the complex hyperbolic metric. For the remainder of the proof, we set  
$$D_\ell = D\big(M_\ell\big)\, , \qquad \sigma_\ell = \sigma\big(M_\ell\big)\qquad \mbox{ and }\qquad \delta_\ell= \delta\big(M_\ell\big)$$
 for any $\ell \in \mathbb{N}$. We aim at proving that ${M}_\bullet$ converges in $\overline{\F}$ to a point belonging to  the image of the embedding $ X_0 \sqcup 
 \ldots \sqcup X_n \longrightarrow \overline{\F}$.\sk 
 
  The proof goes by induction on $\mathrm{dim}(X_0)$. Recall that the following inequalities hold true for any $\ell$ (see Proposition \ref{compare}):
  $$ \delta_\ell \leq D_\ell \qquad  \text{ and } \qquad  \sigma_\ell \leq 2 D_\ell\, . $$ 
  
  We distinguish three cases: 
\mk

\noindent (1).  \textbf{The two sequences  $\boldsymbol{(\delta_\ell)_{\ell\in \mathbb N}}$ and $\boldsymbol{(\sigma_\ell)_{\ell \in \mathbb N}}$ both do not converge to zero.}\\ 
According to Lemma \ref{bounded}, the sequence of diameters $(D_\ell)_{\ell\in \mathbb N}$ is bounded. The Delaunay decomposition provides polygonal models of $M_\ell$ such that the length of each side is bounded (see Proposition \ref{triangulation}). One can extract a subsequence such that all polygonal models have the same gluing pattern, and therefore, since $\mathrm{Im}(\rho)$ is finite, extract a subsequence whose polygonal model converges towards a non degenerate pseudo-polygon whose associated surface in $X_0$ is the limit of the Cauchy sequence ${M}_\bullet$.\sk

\noindent (2).  \textbf{The sequence $\boldsymbol{(\delta_\ell)_{\ell \in \mathbb N}}$ converges to zero while $\boldsymbol{(\sigma_\ell)_{\ell \in \mathbb N}}$ does not.}\\
 In that case, one proves that $({d}(M_\ell, X_1))_{\ell \in \mathbb N}$ converges to zero. First remark that  necessarily $\mathrm{dim}(X_0) \geq 2$ in that case. Indeed, according to Proposition \ref{collisions2}, if  $\mathrm{dim}(X_0) = 1$ we have that $\delta_\ell$ converging to zero implies that $D_\ell$ goes to infinity which would contradict Lemma \ref{bounded}. For every $\ell$, consider two singular points $p_\ell, q_\ell$ of $M_\ell$   of respective cone angles $\theta_\ell$ and $\theta'_\ell$,  such that $ d(p_\ell, q_\ell) = \delta_\ell $. Three subcases (2.i), (2.ii) and (2.iii) are to  be distinguished in this situation :
\sk
\begin{itemize}
\item[(2.i).] both curvatures $(2\pi - \theta_\ell)$ and $ (2\pi - \theta'_\ell)$ are positive (\textit{i.e.} $p_\ell$ and $q_\ell$  carry positive curvature). In that case one can always reverse Thurston's sur\-gery $\mathcal S_1$, with width of order $\delta_\ell$ (see Section \ref{Thsurgery}). \sk
\end{itemize}

\noindent  In the next two subcases (2.ii) and (2.iii), only one of the two curvatures $(2\pi - \theta_\ell)$ or $ (2\pi - \theta'_\ell)$, say the former,  is assumed to be negative: one has $\theta_\ell > 2\pi$ while $\theta'_\ell<2\pi$.  Note that in this case $(2\pi - \theta_\ell) + (2\pi - \theta'_\ell)$ is negative since there is at least one other cone point which must have positive curvature.\sk 

\begin{itemize}
 \item[(2.ii).]  In this subcase, we assume that there is `enough room to reverse Thurs\-ton's surgery $\mathcal S_2$', {\it i.e.} one can extend the geodesic line from $q_\ell$ to $p_\ell$ after the point of negative curvature on a distance of
  \begin{equation}
\label{E:deltan-}
\frac{ \sin \left(\frac{\theta'_\ell}{2}\right)}{\sin \left(\frac{\theta_\ell + \theta_\ell'}{2}\right)} \cdot \delta_\ell\end{equation}
so that the extended line cuts the cone angle at $p_\ell$ into two equal angles (see Remark \ref{formula}). If this is possible, one can cut along the extended line and fill with an appropriate Euclidean kite, and therefore reverse Thurston's surgery ${\mathcal{S}}_2$ with small width of order $\delta_\ell$. \sk

\item[(2.iii).]  we now prove that if the two subcases (2.i) and (2.ii) do not occur, we are in a situation where the kite surgery $\mathcal S_4$ can be reversed. If we cannot extend  the geodesic line from $q_\ell$ to $p_\ell$, it must be either because it meets another cone point or that the line self-intersects. The latter case cannot happen if $\ell$ is large enough otherwise the systole would be smaller than 
\eqref{E:deltan-}.  
 The fact that the line self-intersecting gives rise to a non-essential curve is not totally obvious. Actually this very curve could turn around a singular point $r_\ell$ of cone angle smaller than $\pi$. But since there can be only one such point\footnote{Indeed, since we have supposed that the cone point of negative curvature has cone angle $< 4 \pi$, there cannot be two cone points of angles $<\pi$, for otherwise the total curvature would exceed $0$ and violate the Gau\ss-Bonnet equality.} whose cone angle is smaller than $\pi$, we can play the same game with $r_\ell$ and $p_\ell$ being sure that this situation will not occur. Since $\theta_\ell$ and $\theta'_\ell$ range in a finite set, that would imply that $\sigma_\ell$ goes to zero. So the extended line meets a singular point $r_\ell$. One can try to reverse Thurston's surgery with $p_\ell$ and $r_\ell$. If $[p_\ell, q_\ell]$ is long enough we are brought back to the previous case. In the case when it is not long enough, we are going to prove that 
$$ \big(2\pi - \theta_\ell\big) + \big(2\pi - \theta'_\ell\big) + \big(2\pi - \theta''_\ell\big) =  0\, .  $$
In that case $\mathrm{dim}(X_0) = 2$ and one can reverse the kite surgery with very small width.
\sk 

Assume that Thurston's surgery $\mathcal S_2$ cannot be reversed neither with $p_\ell$ and $q_\ell$ nor with $p_\ell$ and $r_\ell$. Let $l_\ell = \delta_\ell$ be the length of the geodesic segment from $q_\ell$ to $p_\ell$ and $l'_\ell$ be the length of the one from $r_\ell$ to $p_\ell$ (see Figure \ref{config}).

\begin{figure}[!h]
\centering
\psfrag{l}[][][1]{$\quad \;\, l_\ell$}
\psfrag{lp}[][][1]{$\;\; l_\ell'$}
\psfrag{q}[][][1]{$\textcolor{red}{q_\ell}$}
\psfrag{p}[][][1]{$\textcolor{blue}{p_\ell}$}
\psfrag{r}[][][1]{$\textcolor{green}{r_\ell}$}
\includegraphics[scale=0.7]{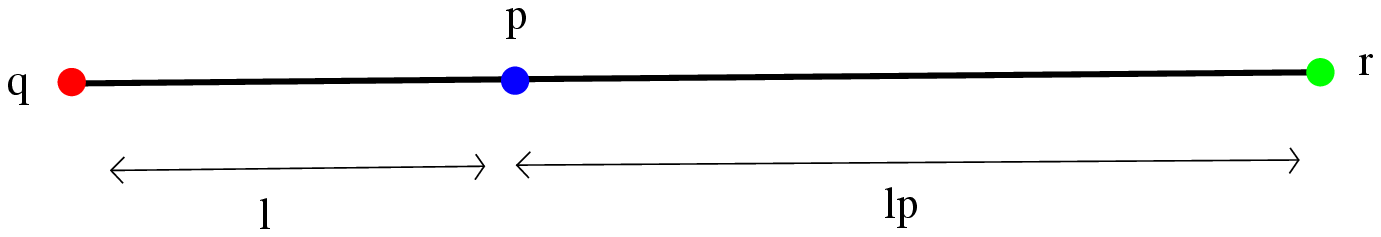}
\caption{}
\label{config}
\end{figure}
\mk 

As a consequence of Lemma \ref{reverse}, we have the following trichotomy:

\begin{enumerate}

\item if $$ l_\ell' > \frac{ \sin \left(\frac{\theta'_\ell}{2}\right)}{\sin \left(\frac{\theta_\ell + \theta'_\ell}{2}\right) }  \, 
 l_\ell$$ then perform the surgery $\mathcal{S}_2$ on the pair of points $p_\ell$, $q_\ell$;
\sk 
\item if $$ l_\ell > \frac{ \sin \left(\frac{\theta''_\ell}{2}\right)}{\sin \left(\frac{\theta_\ell + \theta''_\ell}{2}\right) } \, 
 l'_\ell $$ then perform the surgery $\mathcal{S}_2$ on the pair of points $p_\ell$, $r_\ell$;
\sk 
\item if $$ l_\ell' \leq \frac{ \sin \left(\frac{\theta'_\ell}{2}\right)}{\sin \left(\frac{\theta_\ell + \theta'_\ell}{2}\right) } \, 
 l_\ell \qquad  \text{ and } \qquad  l_\ell \leq \frac{ \sin \left(\frac{\theta_\ell''}{2}\right)}{\sin \left(\frac{\theta_\ell + \theta_\ell''}{2}\right)}\,  
l'_\ell $$ then perform the kite surgery on the points $p_\ell$, $q_\ell$ and $r_\ell$.
\end{enumerate}
\end{itemize}\mk

\noindent (3). \textbf{The sequence $\boldsymbol{(\sigma_\ell)_{\ell\in \mathbb N}}$ converges to zero.}\\
 This case is the easiest. Since $(D_\ell)_{\ell \in \mathbb N}$ is bounded and $\sigma_\ell \rightarrow 0$ as $\ell$ goes to infinity,  Proposition \ref{systol} applies for $\ell$ large enough. This implies that a Devil's surgery $\mathcal S_3$  of small width can be reversed. 
\mk 

We have proven so far that either $(M_\ell)_{\ell \in \mathbb N}$ converges to a point in $X_0$ or that for $\ell$ large enough $M_\ell $ can be recovered from a point of $X_1$ by a surgery of width going to zero as $\ell$ goes to infinity. In that latter case, Proposition \ref{proj} ensures that $d(M_\ell, X_1)$ converges to zero. Applying the induction hypothesis to a sequence $(M'_\ell)_{\ell \in \mathbb N}$ of flat surfaces $M_\ell '\in X_1$ which are such that $ \mathrm{d}(M'_\ell, M_\ell) \leq \mathrm{d}(M_\ell, X_1) \leq  {1}/{\ell}$ for any $\ell>>1$, one gets that the limit of the sequence $(M_\ell)_{\ell \in \mathbb N}$ in $\overline{\F}$ belongs to $ X_0 \sqcup X_1 \sqcup \ldots \sqcup X_n \subset \overline{\F} $.\mk 

The proof of Proposition \ref{complet} is over.
\qed
\mk

\begin{lemma}
\label{reverse}
Let $T$ be a flat torus and $p_1, p_2$ and $p_3$ three distinct singular points on it, of respective cone angles $\theta_1,\theta_2$ and $\theta_3$. Assume that  $p_1$ is the only point of negative curvature among all the cone points of $T$ and  that $p_2, p_1$ and $p_3$ sit,  in this order,  on a geodesic line broken at $p_1$, cutting the cone angle $\theta_1$ into two equal angles. Denote by $l$ the length of the part of the line from $p_1$ to $p_2$, and $l'$ the length of the part of the line from $p_1$ to $p_3$. Assume also that 
\begin{equation}
\label{E:letl'}
 l' \leq \frac{ \sin \left(\frac{\theta_2}{2}\right)}{\sin \left(\frac{\theta_1 + \theta_2}{2}\right) } 
\,  l  \qquad  \text{and} \qquad  l \leq \frac{ \sin \left(\frac{\theta_3}{2}\right)}{\sin \left(\frac{\theta_1 + \theta_3}{2}\right) } 
\,  l' .
\end{equation}

 Then the four following assertions hold true: 
\begin{enumerate}
\item $(2\pi - \theta_1) + (2\pi - \theta_2) + (2\pi - \theta_3) = 0$;  
\sk 

\item $T$ has no other cone point than $p_1$, $p_2$ and $p_3$; \sk

\item both inequalities in \eqref{E:letl'} actually are equalities; 
\sk 
 
 \item $T$ can be recovered by a kite surgery from a regular flat torus. 
\end{enumerate}
\end{lemma}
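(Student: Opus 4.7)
The plan is to establish (1), (2), and (3) as a coupled package via a trigonometric identity played against Gauss--Bonnet, and then to reconstruct the kite surgery directly in order to obtain (4).

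First I would multiply the two hypothesized inequalities, both sides being positive, to obtain $\lambda\mu\geq 1$, where $\lambda=\sin(\theta_2/2)/\sin((\theta_1+\theta_2)/2)$ and $\mu=\sin(\theta_3/2)/\sin((\theta_1+\theta_3)/2)$. Two applications of the product-to-sum formula yield the identity
\[
\sin\tfrac{\theta_2}{2}\sin\tfrac{\theta_3}{2}-\sin\tfrac{\theta_1+\theta_2}{2}\sin\tfrac{\theta_1+\theta_3}{2}=-\sin\tfrac{\theta_1+\theta_2+\theta_3}{2}\cdot\sin\tfrac{\theta_1}{2}.
\]
Since $\theta_1\in(2\pi,4\pi)$ makes $\sin(\theta_1/2)<0$, the condition $\lambda\mu\geq 1$ is equivalent to $\sin((\theta_1+\theta_2+\theta_3)/2)\geq 0$, which in the feasible range $(\pi,4\pi)$ forces $\theta_1+\theta_2+\theta_3\leq 6\pi$. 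On the other hand, Gauss--Bonnet applied to $T$ combined with the hypothesis that $p_1$ is the only cone point of negative curvature yields $\theta_1+\theta_2+\theta_3\geq 6\pi$. Equality gives (1), and simultaneously forces the positive curvature contributed by any further cone point to vanish, so by the positivity hypothesis no such points exist, proving (2). At this equality $\lambda\mu=1$, so the chain $l'\leq\lambda l$ and $l\leq\mu l'=l'/\lambda$ squeezes both inequalities into equalities, which is (3).

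For (4), I would cut $T$ along the broken geodesic $\gamma=[p_2,p_1]\cup[p_1,p_3]$. A direct Euler characteristic computation shows the resulting flat surface-with-boundary $T'$ is connected with $\chi(T')=-1$, and its single piecewise-geodesic boundary circle has four corners, located at $p_2$, two copies $p_1^{(1)},p_1^{(2)}$ of $p_1$, and $p_3$, with consecutive edge-lengths $l,l',l',l$ and interior angles $\theta_2,\theta_1/2,\theta_3,\theta_1/2$ (summing to $6\pi$ by (1)). In parallel I would build in the Euclidean plane a kite $K$ with the same cyclic side-length pattern $l,l',l',l$ and interior angles $2\pi-\theta_2,\,2\pi-\theta_1/2,\,2\pi-\theta_3,\,2\pi-\theta_1/2$ (summing to $2\pi$ by (1)); existence and embeddedness of such a kite are guaranteed by the equalities in (3) through the law of sines applied to the half-triangle $p_2q_1p_3$. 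Gluing $K$ onto $T'$ by the natural corner-matching isometry of boundaries produces a closed flat surface $T_0$ whose four identified corners each have total angle $2\pi$, so $T_0$ is regular everywhere; combined with $\chi(T_0)=\chi(T')+\chi(K)=0$, this makes $T_0$ a regular flat torus. By construction $T$ is what one recovers from $T_0$ by the kite surgery $\mathcal S_4$ along $K$, yielding (4).

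The hard part will be the coupling of the trigonometric identity with Gauss--Bonnet that pins down (1): one must notice that the multiplicative inequality $\lambda\mu\geq 1$ linearises into an additive constraint on $\theta_1+\theta_2+\theta_3$, and then track the sign reversals coming from $\theta_1/2\in(\pi,2\pi)$ carefully. Once (1) is in hand, (2) and (3) come almost for free, and the surgical reconstruction in (4) is essentially dictated: the boundary combinatorics of $T'$ and of the kite $K$ match exactly because of the angle and length relations already established.
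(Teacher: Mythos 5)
Your proof is correct and follows essentially the same route as the paper: multiply the two inequalities, apply a product-to-sum identity to reduce to $\sin\bigl(\tfrac{\theta_1+\theta_2+\theta_3}{2}\bigr)\sin\bigl(\tfrac{\theta_1}{2}\bigr)\le 0$, play this against Gau\ss-Bonnet to force equality, and then use the law of sines in the half-kite to close item (4). Your treatment of (4) is more explicit than the paper's one-line argument (you verify $\chi(T')=-1$, the corner angles, and the regularity of $T_0$ after gluing), but it is the same reconstruction the paper intends.
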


\begin{proof} The two inequalities of \eqref{E:letl'}  together yield to 
$$ \sin \left(\frac{\theta_1 + \theta_2}{2}\right)  \sin \left(\frac{\theta_1 + \theta_3}{2}\right) \leq \sin \left(\frac{\theta_2}{2}\right) \sin \left(\frac{\theta_3}{2}\right) $$
or equivalently $ \cos \big(\theta_1 + \frac{\theta_2 + \theta_3}{2} \big) - \cos \big(\frac{\theta_2 + \theta_3}{2} \big) \geq  0 $ which in its turn is equivalent to
\begin{equation}
 \label{eqn:1}
\sin\left(\frac{\theta_1 + \theta_2 + \theta_3}{2}\right)  \sin\left(\frac{\theta_1 }{2}\right) \leq 0.
\end{equation}

On the one hand, we have $-2\pi < (2\pi - \theta_1) + (2\pi - \theta_2) + (2\pi - \theta_3) \leq 0 $ because of the Gau\ss-Bonnet formula. This implies that $ 4\pi > ({\theta_1 + \theta_2 + \theta_3})/{2} \geq 3\pi $ and therefore that $\sin( (\theta_1 + \theta_2 + \theta_3)/{2}) \leq 0$, with equality if and only if $\theta_1, \theta_2 $ and $\theta_3$ sum up to $6\pi$. 
 But on the other hand, $2\pi < \theta_1 < 4\pi $ according to our hypothesis  hence $\pi < {\theta_1}/{2} < 2\pi$ and $\sin({\theta_1 }/{2}) < 0$. Inequality \eqref{eqn:1} forces $\sin(({\theta_1 + \theta_2 + \theta_3})/{2}) $ to vanish. Therefore $\theta_1 + \theta_2 + \theta_3 = 6\pi$ or equivalently 
 $$ \big(2\pi - \theta_1\big) + \big(2\pi - \theta_2\big) + \big(2\pi - \theta_3\big) =  0\, .  $$

This implies in particular that 
$$\frac{ \sin \left(\frac{\theta_2}{2}\right)}{\sin \left(\frac{\theta_1 + \theta_2}{2}\right) }  = \left(\frac{ \sin \left(\frac{\theta_3}{2}\right)}{\sin \left(\frac{\theta_1 + \theta_3}{2}\right) }\right)^{-1} $$
and therefore  one obtains that the inequalities in \eqref{E:letl'} actually are equalities.
 \sk 
 
  Note that the lengths of two consecutive sides of a kite of external angles ${\theta_1}/{2}, \theta_2, \theta_3$ satisfy the above equalities	and therefore one can cut along the aforementioned geodesic line and fill with the appropriate kite to reverse the kite surgery. 
\end{proof}

\section{\bf FINITENESS OF THE VOLUME OF $\F$}
\label{cusps}
In this section, we continue to use the notations  of the preceding one : 
$\F$ stands for  a $n$-dimensional moduli space of flat surfaces of genus 0 or $1$.
\sk 

Below, we prove  that the volume of $\F$ is finite under the hypo\-thesis that $\rho$ has finite image. Without the latter assumption (that we shall assume to hold true for the remainder of the section), it is possible to prove that the volume of $\F$ must be infinite. We will only be interested in the genus $1$ case, the genus $0$ case having already been dealt with by Thurston in \cite{Thurston}. 

Proposition \ref{diameter} essentially tells us that the lack of compactness of the metric completion of $\mathscr{F}_{[\rho]}$ is characterised by the property of having large embedded cylinders. Surfaces satisfying this property can be recovered by performing a surgery on a flat sphere along a distinguished geodesic segment between two conical points, see Section \ref{bswec}.

\subsection{Cylindrical coordinates.}

Let $T_0 \in \F$ be a torus containing a flat embedded systolic cylinder. It is built up from a flat sphere $S_0$ on which   the surgery ${\mathcal{S}}_5$ described in Section \ref{surgeries} 
has been performed 
along a geodesic segment between two conical points of $S_0$. Let $\widetilde{\rho}$ be such that $S_0 \in \mathscr{F}_{[\widetilde{\rho}]}$ and let $(z_0, \ldots , z_{n-1}) $ be a local linear parametrisation of $\mathscr{F}_{[\tilde{\rho}]}$ at $S_0$ such that $z_0$ represent the geodesic path along which the surgery is performed, and let $z_n$ be the complex number such that the inserted cylinder has sides $z_0, z_n$. Then $(z_0, \ldots, z_{n})$ is a linear parametrisation of $\mathscr{F}_{[\rho]}$. We call any such parametrisation a \textbf{cylindrical parametrisation} whose existence is guaranteed by Proposition \ref{top}.

 Let $A$ be the area form of the flat tori in $\F$ close to $T_0$ 
 expressed in the coordinates $z_0, \ldots, z_{n}$  
 and denote by $B$ the area form of the associated flat spheres in $\mathscr{F}_{[\tilde{\rho}]}$, expressed in the coordinates $z_0, \ldots , z_{n-1}$.

  The (signed) area of the aforementioned embedded flat cylinder is $\mathrm{Im}(z_n\overline{z_0}) $ therefore the  two area forms 
 $A$ and $B$  are  linked by the following relation : 
$$ A\big(z_0, \ldots, z_{n}\big) = B\big(z_0, \ldots, z_{n-1}\big) + \mathrm{Im}\big(z_n\overline{z_0}\big).$$ 

 Normalising with $ z_0 = 1$, we get a genuine parametrisation of $\mathscr{F}_{[\rho]}$ (resp. $\mathscr{F}_{[\tilde{\rho}]}$)  $(z_1, \ldots z_{n})$ (resp. $(z_1, \ldots z_{n-1})$) and the preceding relation becomes 
$$ A\big(1,z_1, \ldots, z_{n}\big) = B\big(1,z_1, \ldots, z_{n-1}\big) + \mathrm{Im}\big({z_n}\big).$$ 

\subsection{Finiteness of the volume.}
\label{SS:Finiteness}
The strategy to estimate the complex hyperbolic volume of $\F$ is to restrict ourselves to parts of $\F$ where the diameter is large (\textit{i.e.} where corresponding flat tori have large embedded flat cylinders, see Proposition \ref{diameter}) and use the cylindrical coordinates defined above to perform some quasi-explicit estimations.\mk

In what follows,  all the flat  cylinders which we will consider will be assumed to be `{\it maximal}' in the sense that none of them is a  proper subcylinder of an embedded flat cylinder of the same width but of strictly higher length.\sk

For every positive $\epsilon $, one sets
\begin{align*}
A_{\epsilon} = & \, \big\{ \, T \in \F  \ \big| \ \sigma(T) = \epsilon \ \text{and} \ T \ 
\mbox{contains a flat cylinder of width }  \epsilon\,  \big\} \vspace{0.15cm} \\
\mbox{ and }\; 
B_{\epsilon} = & \,  \big\{\,  T \in \F  \ \big|   \ \sigma(T) \leq \epsilon \ \text{and}\ T \ \mbox{contains a flat cylinder of width } \sigma(T)\,  \big\}.
\end{align*}
As  it is often implicitly assumed in a large part of the paper, the points of $\F$ are flat surfaces which are supposed to be normalised in order that their area is $1$. In particular we assume this hypothesis in the definitions above. 
\sk

Both $A_{\epsilon}$ and $B_{\epsilon}$ are closed subsets of $\F$. 
Moreover, from Section \ref{bswec},  it comes that  when non-empty, 
$A_{\epsilon}$ is a smooth real-analytic hypersurface in $\F$.

For a given $\epsilon>0$, the elements of $A_{\epsilon}$ can be modified by thickening of a length $t$ the embedded flat cylinder of width $\epsilon$ (by thickening, we mean  replacing the cylinder of length $l$ by a cylinder of length $l + t$). When renormalising in order
for the area to be $1$,  
  the width of the cylinder becomes smaller than $\epsilon$. This defines a map 
\begin{equation}
\label{E:zeudilla}
  A_{\epsilon} \times \mathbb{R}_{\geq 0}  \longrightarrow B_{\epsilon}
  \, ,  
  \end{equation}
  which is a local diffeomorphism (see Section \ref{bswec}). 
The fact that this map is well-defined relies on the uniqueness of the maximal cylinder of width $\epsilon=\sigma(T)$ (the `{\it systolic cylinder}') in any $T\in A_\epsilon$ for $\epsilon$ small enough. This fact follows 
easily from 
Lemma \ref{unique} which is proved  
in section \S\ref{S:Lemma8.4}. 
Note that from this Lemma, it also can be deduced  
that $A_\epsilon$ is precisely the boundary of $B_\epsilon$.

  \begin{prop}
\label{littlecylinders}  
For any $\epsilon$ sufficiently small,  
the map $   A_{\epsilon} \times \mathbb{R}_{\geq 0}  \longrightarrow B_{\epsilon} $ is onto. 
\end{prop}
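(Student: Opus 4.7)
The plan is to invert the thickening operation geometrically. Given $T' \in B_\epsilon$ with systole $\sigma' = \sigma(T') \leq \epsilon$, I will shorten its (unique, by Lemma \ref{unique}) systolic flat cylinder $C' \subset T'$ by a specific length $s_*$, renormalise the result to area $1$, and show that the resulting surface $T$ lies in $A_\epsilon$ and that $T'$ is the image of the pair $(T,t)$ under the map of the proposition, for the corresponding thickening parameter $t \geq 0$. If $L'$ denotes the length of $C'$, then for any $s \in [0, L' - \sigma']$, shortening $C'$ by $s$ and renormalising (i.e.\ scaling every length by $\lambda_s = (1 - \sigma' s)^{-1/2}$) produces a surface whose systole is still realised by the cylinder core: indeed any essential curve crossing $C'$ has length at least $L'$ in $T'$, hence at least $L' - s \geq \sigma'$ after the shortening. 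The renormalised systole is therefore $\sigma' \lambda_s$, and demanding that it equal $\epsilon$ forces the explicit choice $s_* = 1/\sigma' - \sigma'/\epsilon^2$.

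The only thing left is to prove that this $s_*$ is admissible, namely that $s_* \leq L' - \sigma'$, which rearranges into the geometric inequality
\[
\sigma' L' \;\geq\; 1 + \sigma'^2\!\left(1 - \frac{1}{\epsilon^2}\right).
\]
This is the main obstacle, and I will overcome it by exploiting the inverse of the surgery $\mathcal{S}_5$ of Section \ref{bswec}. Cutting $T'$ open along $C'$ and filling in the resulting figure-eight boundary yields a flat sphere $S$ of area $1 - \sigma' L'$ carrying $n+2$ cone points. The two cone points $p_1', p_1''$ created by the reverse surgery have angles $\theta_1', \theta_1''$ with $\theta_1' + \theta_1'' = \theta_1 - 2\pi \in\;]0,2\pi[$, so each of them is less than $2\pi$; together with the standing hypothesis on the remaining $\theta_i$'s, this means every cone angle of $S$ lies in $]0,2\pi[$, i.e.\ $S$ has positive curvature at every one of its cone points. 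By construction, the geodesic distance from $p_1'$ to $p_1''$ in $S$ is exactly $\sigma'$.

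Lemma \ref{collisions} then applies. Renormalising $S$ to area $1$ multiplies the distance $\sigma'$ by $(1 - \sigma' L')^{-1/2}$, and the lemma produces a positive constant $K(\theta_1', \theta_1'')$ such that $\sigma'/\sqrt{1 - \sigma' L'} \geq K(\theta_1', \theta_1'')$, equivalently $\sigma' L' \geq 1 - \sigma'^2/K(\theta_1', \theta_1'')^2$. Because $\mathrm{Im}(\rho)$ is finite, only finitely many pairs $(\theta_1', \theta_1'')$ can arise, which yields a uniform constant $K_{[\rho]} = \min K(\theta_1', \theta_1'') > 0$. Comparing this estimate with the target inequality, a short elementary computation shows that the admissibility $s_* \leq L' - \sigma'$ holds as soon as $\epsilon^2 \leq K_{[\rho]}^2/(1 + K_{[\rho]}^2)$, which is achieved for any $\epsilon$ sufficiently small, concluding the plan.
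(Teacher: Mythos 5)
Your proof is correct and rests on exactly the same two ingredients as the paper's: uniqueness of the systolic cylinder (Lemma \ref{unique}) and the no-collision estimate of Lemma \ref{collisions}, applied to the flat sphere obtained by reversing the surgery $\mathcal S_5$ along that cylinder. The paper reduces the statement to the surjectivity of $A_\epsilon \times \{t_\eta\} \to A_\eta$ (Lemma \ref{L:Technical}) and argues this by contradiction, explicitly leaving the computational details to the reader; you instead make the inversion of the thickening map constructive, deriving the admissibility condition $s_* \leq L' - \sigma'$ and the explicit threshold $\epsilon^2 \leq K_{[\rho]}^2/(1 + K_{[\rho]}^2)$. This is a cosmetic rather than substantive difference, and your version has the merit of filling in the bound the paper omits. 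One small point worth pinning down in your write-up: the claim that the systole of the shortened surface is still realised by the cylinder core requires a word about essential curves that enter and leave the cylinder through the same boundary component (not only those crossing it); one handles these by straightening them onto the geodesic boundary, which does not increase length and lands them in $\overline{T' \setminus C'}$, where Lemma \ref{L:RealisationOfTheSystol} and the original systole bound apply. With that caveat noted, the argument is complete.
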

 \begin{proof} First remark that it is sufficient to prove the proposition for a fixed $\epsilon_0 >0$, because the statement  will then hold true for every smaller $\epsilon$.
 \sk 
 
Second, since $B_\epsilon$ is the disjoint union of the $A_\eta$'s for $\eta\in ]0,\epsilon]$, 
 the proposition  follows  from the fact that,  for every $t> 0$,
 the  image of  $A_\epsilon\times \{t\}$ by \eqref{E:zeudilla} is the whole 
hypersurface  $A_{\epsilon/\sqrt{1+\epsilon t}}$, as soon as $\epsilon$ is taken sufficiently small. This technical assertion is proved 
in Subsection \ref{SS:AeAeta} below.
\end{proof} 
\mk 

For the remainder of this section, we fix  $\epsilon$ such that  
\eqref{E:zeudilla} is surjective. Now remark that the closure of $\F \setminus B_{\epsilon}$ in $\overline{\F}$ is compact. Indeed, a sequence in $\F \setminus B_{\epsilon}$ must have bounded diameter according to Proposition \ref{diameter}. But then, up to passing to a subsequence, it is a Cauchy sequence by Lemma \ref{bounded} and therefore converges in $\overline{\F}$. Since $A_{\epsilon} = \partial  B_{\epsilon}$, its closure in $\overline{\F}$ must be compact as well. 
\mk 

We are now able to prove the 
\begin{prop}
The volume of $B_{\epsilon}$ is finite.
\end{prop}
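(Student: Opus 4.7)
My plan is to use the cylindrical parametrizations introduced earlier in this section to integrate the complex hyperbolic volume form on $B_\epsilon$ more or less explicitly. By Proposition \ref{littlecylinders}, every element of $B_\epsilon$ is obtained by thickening the systolic cylinder of some $T \in A_\epsilon$; since $\mathrm{Im}(\rho)$ is finite, only finitely many reductions $\widetilde\rho$ of $\rho$ arise in this way, so $B_\epsilon$ is covered by finitely many cylindrical charts $(z_0, z_1, \ldots, z_n)$. Normalising $z_0 = 1$ in such a chart yields affine coordinates $(z_1, \ldots, z_n)$ on an open subset of $\F \subset \mathbf{P}(\mathbb{C}^{n+1})$, in which the complex hyperbolic volume form reads, up to a positive constant,
$$\omega \;=\; \frac{\prod_{i=1}^n \tfrac{i}{2}\, dz_i \wedge d\bar z_i}{A(1, z)^{n+1}}, \qquad A(1, z) \,=\, B(1, z_1, \ldots, z_{n-1}) + \mathrm{Im}(z_n),$$
where $B$ denotes Veech's area form on the base moduli space $\mathscr{F}_{[\widetilde\rho]}$ of flat spheres. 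By the construction of $\mathcal{S}_5$ recalled in Section \ref{bswec}, $\mathrm{Re}(z_n)$ is a twist parameter of period $1$, while $\mathrm{Im}(z_n)$ is essentially the length of the inserted cylinder.

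The systole of the normalised torus, realised by the waist of the systolic cylinder, equals $|z_0|/\sqrt{A(1,z)} = A(1,z)^{-1/2}$, so that $T \in B_\epsilon$ translates to $y := \mathrm{Im}(z_n) \geq \max\{0,\,\epsilon^{-2} - B\}$. I will then integrate $\omega$ slice-wise: the inner integral in $y$ is elementary and yields $\tfrac{1}{n}\max\{B, \epsilon^{-2}\}^{-n} \leq \tfrac{1}{n}B^{-n}$, the integral in $\mathrm{Re}(z_n)$ over $[0, 1)$ contributes a factor at most $1$, and one is left with
$$\int_{B_\epsilon\,\cap\,\mathrm{chart}} \omega \;\leq\; \frac{C}{n}\int_{\mathscr{F}_{[\widetilde\rho]}} \frac{\prod_{i=1}^{n-1} \tfrac{i}{2}\, dz_i \wedge d\bar z_i}{B(1, z_1, \ldots, z_{n-1})^n}.$$
The right-hand side is, up to a positive constant, the total complex hyperbolic volume of the genus~$0$ moduli space $\mathscr{F}_{[\widetilde\rho]}$. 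Since the two cone points of the base sphere obtained by severing the cylinder have angles $\theta_1', \theta_1''$ satisfying $\theta_1' + \theta_1'' = \theta_1 - 2\pi \in (0, 2\pi)$, both lie in $(0, 2\pi)$, so Thurston's theorem~\cite{Thurston} ensures this volume is finite. Summing over the finite cover of $B_\epsilon$ yields the claim.

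The main subtlety in this plan lies in Proposition \ref{littlecylinders}: it is its surjectivity statement that guarantees the cylindrical charts exhaust $B_\epsilon$ up to a null set, and without it the computation above would estimate only a neighbourhood of the cusp at infinity. The reduction to Thurston's genus~$0$ theorem is by contrast essentially formal and makes the argument inductive in spirit on the dimension $n$.
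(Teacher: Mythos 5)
Your plan of integrating out the cylinder parameter and reducing to Thurston's genus-$0$ finiteness theorem is a legitimate alternative route, and the slice-wise calculation itself is correctly set up: with $z_0 = 1$ the volume form is proportional to $A(1,z)^{-(n+1)}\prod \tfrac{i}{2}dz_i\wedge d\bar z_i$, the constraint $\sigma(T)\le\epsilon$ becomes $\mathrm{Im}(z_n) \ge \max\{0, \epsilon^{-2} - B\}$, and the inner integral in $\mathrm{Im}(z_n)$ gives $\tfrac{1}{n}\max\{B,\epsilon^{-2}\}^{-n} \le \tfrac{1}{n}B^{-n}$, which leaves a $\mathbb{CH}^{n-1}$-volume of the base sphere moduli space.

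The gap is in the sentence ``since $\mathrm{Im}(\rho)$ is finite, only finitely many reductions $\widetilde\rho$ of $\rho$ arise $\ldots$, so $B_\epsilon$ is covered by finitely many cylindrical charts.'' Finiteness of $\mathrm{Im}(\rho)$ gives finitely many target leaves $\mathscr{F}_{[\widetilde\rho]}$, but each of those is non-compact, and a cylindrical parametrisation is a \emph{local} chart around a chosen $T_0 \in A_\epsilon$ (it depends on a choice of saddle connection on the base sphere that persists only over a small open set). A priori infinitely many such charts are required, and since your estimate bounds each chart's contribution by the \emph{total} volume of $\mathscr{F}_{[\widetilde\rho]}$, the sum over charts is not controlled. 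The paper closes exactly this gap in the paragraph preceding the proposition: a sequence in $A_\epsilon$ has bounded diameter by Proposition \ref{diameter}, hence is Cauchy up to a subsequence by Lemma \ref{bounded}, so $\overline{A_\epsilon}$ is compact in $\overline{\F}$ and finitely many charts of \emph{finite diameter} suffice. Once that compactness is in hand, Thurston's theorem is no longer needed: the image of each chart under the thickening map lands inside a pseudo-horospherical domain $U_{K,\lambda}$, whose finite volume is the elementary Lemma \ref{cylindricalcalculation}. Your reduction to the genus-$0$ result would give the argument a pleasantly inductive flavour, but without the compactness of $\overline{A_\epsilon}$ the estimate does not close, and with it the appeal to Thurston becomes superfluous.
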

\begin{proof} Since the closure of $A_{\epsilon}$ in 
$\overline{\F}$ 
is compact, $A_{\epsilon}$ can be recovered by a finite union of simply-connected open sets $(U_i)_{i\in I}$ such that for each $i\in I$: 
\begin{enumerate}
\item the diameter of  $U_i$ for the complex hyperbolic metric is finite; \sk
\item there are cylindrical coordinates defined on $U_i$. \sk
\end{enumerate}

More precisely, each element in $U_i$ can be recovered from surgery ${\mathcal{S}}_5$ on a sphere of a certain leaf  $\mathscr{F}_{[\widetilde{\rho}]}$ along a geodesic joining two singular points and we have a linear parametrisation $(z_0, \ldots, z_n)$ of $U_i$ such that 

\begin{itemize}

\item $z_0$ parametrises the geodesic along which the surgery is performed; \sk

\item $(z_0, z_n)$ parametrises the added cylinder; \sk

\item $(z_0, \ldots, z_{n-1})$ is a linear parametrisation of $\mathscr{F}_{[\widetilde{\rho}]}$.

\end{itemize}

The area form therefore writes down the following way
\begin{equation}
\label{E:pooli}
 A\big(z_0, \ldots, z_{n}\big) = B\big(z_0, \ldots, z_{n-1}\big) + \mathrm{Im}\big(z_n\overline{z_0}\big) \, .
\end{equation}
 Normalising with $ z_0 = 1$, we get a parametrisation of $\mathscr{F}_{[\rho]}$ (resp.\;of $\mathscr{F}_{[\widetilde{\rho}]}$) by $(z_1, \ldots z_{n})$ (resp.\;by $(z_1, \ldots z_{n-1})$) and the preceding relation becomes 
$$ A\big(1, z_1,\ldots, z_{n}\big) = B\big(1, z_1,\ldots, z_{n-1}\big) + \mathrm{Im}\big(z_n\big) \,.$$

 In this chart the local diffeomorphism $  A_{\epsilon} \times \mathbb{R}_{\geq 0}   \longrightarrow B_{\epsilon}$ is given by
$$\big ( (z_1, \ldots, z_n, \theta), t \big) \longmapsto \big(z_1, \ldots, z_{n-1}, z_n + it + \theta\big) \, .
$$
\noindent where $\theta$ is the twist parameter of the cylinder of width $\epsilon$ in $A_{\epsilon}$ (see Figure \ref{volume} below). At this point, we would like to stress that $(z_1, \ldots, z_n, \theta)$ is not a system of coordinates on $A_{\epsilon}$, but the latter written in these coordinates is a real-analytic submanifold of  codimension 2. 


\begin{figure}
\centering
\psfrag{Ext}[][][0.95]{\qquad \qquad \qquad \begin{tabular}{c}
\vspace{0.2cm} \\ Extension of the \vspace{-0.08cm}\\ original cylinder
\end{tabular}}
\psfrag{F}[][][1]{Former gluing point \qquad \qquad \qquad  \; \; }
\psfrag{N}[][][1]{\qquad \qquad \quad \; New gluing point}
\psfrag{z2}[][][1]{$\;\;z_2$}
\psfrag{t}[][][1]{$t$}
\psfrag{ZZ}[][][1]{$Z_Z$}
\psfrag{theta}[][][0.8]{$\theta\quad \; \; $}
\psfrag{zns}[][][1]{$z_n$}
\psfrag{z1}[][][1]{$\;\, z_1$}
\psfrag{z0=1}[][][1]{$1=z_0\qquad \;\; $}
\includegraphics[scale=0.60]{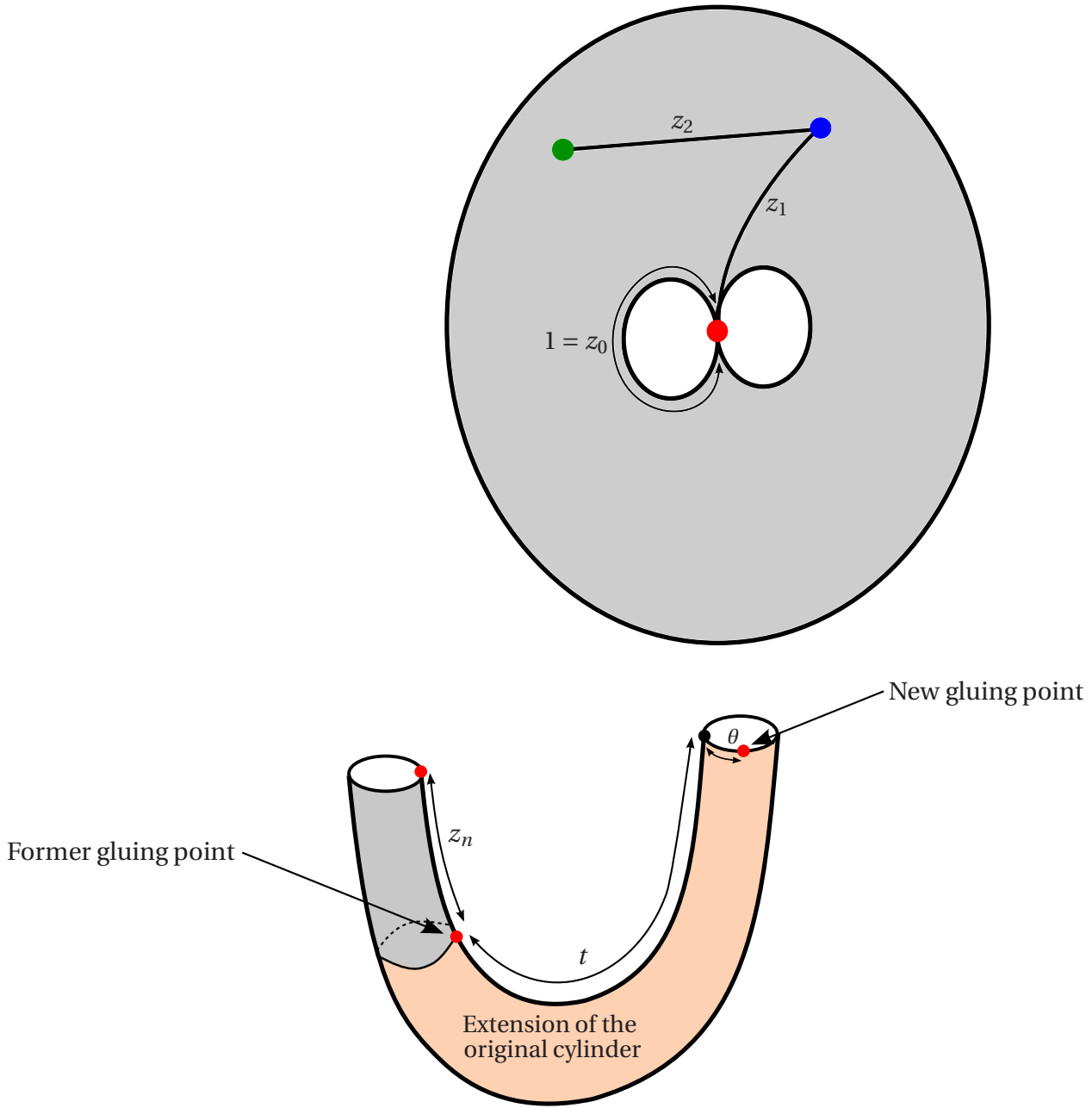}
\caption{Parametrisation of the extension of the cylinder.}
\label{volume}
\end{figure}


In view of \eqref{E:pooli}, $(z_1, \ldots, z_{n-1}, z_n + it + \theta\big)$ is a system of pseudo-horospherical coordinates on $B_\epsilon$ (see Appendix \ref{CHG} where this notion is introduced and discussed).
 Since the diameter of each $U_i$ is finite,  the image of such a map restricted to $B_i =(A_{\epsilon}\cap U_i) \times  \mathbb{R}_{\geq 0}  $ is included into a domain $U_{K_i,\lambda_i}$ introduced in Appendix \ref{CHG}, for some $K_i,\lambda_i>0$.  It follows (from Lemma A.3) that the complex hyperbolic volume ${ \mathrm{vol}(B_i) } $ of $B_i$  is finite for any $i\in I$. There are only a finite number of $U_i$'s covering $A_{\epsilon}$ and since $\epsilon$ 
 has be taken such that   the map  
 \eqref{E:zeudilla}
  is onto, one gets
$$\mathrm{vol}\big(B_{\epsilon}\big) \leq \sum_{i\in I}{ \mathrm{vol}(B_i) }\, ,  $$
which implies that the complex hyperbolic  volume of $B_{\epsilon}$ is finite. 
\bk  
\end{proof}

As mentioned above, the finiteness of the volume of $\F$ follows from the preceding proposition, hence we have proved the following theorem:

\begin{thm}
Assume $g=1$ and $\theta$ is such that $2\pi<\theta_1<4\pi$ and $\theta_i<2\pi$ for $i \geq 2$. If $\rho$ has finite image, then the volume of $\F$ for its complex hyperbolic structure is finite.
\end{thm}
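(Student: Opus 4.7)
The strategy is essentially to assemble what has already been established. The plan is to decompose
\[
\F \;=\; B_{\epsilon} \,\cup\, \big(\F\setminus B_{\epsilon}\big)
\]
for $\epsilon$ sufficiently small, and to argue that each piece has finite complex hyperbolic volume. The finiteness of $\mathrm{vol}(B_{\epsilon})$ is the content of the proposition just proved, so the whole task reduces to controlling the complement $\F\setminus B_{\epsilon}$.

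For this complement, I would first invoke Proposition \ref{diameter}: any $T\in\F$ normalised to have area $1$ and with $D(T)>K_{1}(\rho)$ contains an embedded flat cylinder of length at least $K_{2}(\rho)\,D(T)$. Shrinking $\epsilon$ if necessary, I claim this forces such a $T$ to lie in $B_{\epsilon}$ (the systolic cylinder it produces has width $\leq\sigma(T)\leq\epsilon$, and by Lemma \ref{unique} together with the discussion preceding Proposition \ref{littlecylinders} the maximal such cylinder is the systolic one, so the defining condition of $B_{\epsilon}$ is met). Consequently the diameter is bounded on $\F\setminus B_{\epsilon}$. By Lemma \ref{bounded}(2), every sequence there admits a Cauchy subsequence, and by Proposition \ref{complet} this subsequence converges in $\overline{\F}$. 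Thus $\F\setminus B_{\epsilon}$ has compact closure in $\overline{\F}$.

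It remains to deduce finite volume from relative compactness. I would cover $\overline{\F\setminus B_{\epsilon}}\subset\overline{\F}$ by finitely many open sets of the form $\psi(V)$, where $\psi$ is a local linear parametrisation as built in Section \ref{Charts} (extended, where necessary, by the surgery charts of Section \ref{surgeries} near the strata $X_{i}$, $i\geq 1$, as used throughout Sections \ref{completion}--\ref{cusps}). In each such chart the Hermitian area form $A$ is a non-degenerate polynomial expression in the linear coordinates, and the complex hyperbolic volume form associated to $A$ on $\{A>0\}$ extends continuously across the hypersurfaces $\{z_{0}=0\}$ that cut out the strata (cf.\ Remark \ref{R:C'est Subtil}): the potential singularities of the $\mathbb{CH}^{n}$-volume form would occur only along $\{A=0\}$, which is disjoint from the closure of our bounded region. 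Hence the volume of $\F$ intersected with each such chart is finite, and summing the finitely many contributions yields $\mathrm{vol}(\F\setminus B_{\epsilon})<\infty$. Combined with the finiteness of $\mathrm{vol}(B_{\epsilon})$, this proves the theorem.

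The main obstacle is the verification that the volume form remains integrable near the adjoined strata $X_{1},\ldots,X_{n}$; this is where the hypothesis that $\mathrm{Im}(\rho)$ is finite is indispensable, because it guarantees that only finitely many reductions $\tilde\rho$ contribute strata and that surgery charts near them are uniformly controlled. Without this finiteness a degenerating family of strata with vanishing cone-angle parameters could in principle produce an infinite accumulation of volume, and the argument would break down.
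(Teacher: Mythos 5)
Your proof follows essentially the same route as the paper: split $\F$ into $B_\epsilon$ and its complement, invoke the just-proved finiteness of $\mathrm{vol}(B_\epsilon)$, and use Proposition \ref{diameter} together with Lemma \ref{bounded} and Proposition \ref{complet} to get compactness of $\overline{\F\setminus B_\epsilon}$ in $\overline{\F}$; in fact you spell out the final step (compact closure in the cone-manifold $\overline{\F}$ implies finite volume) that the paper dispatches with a single phrase. The one phrase that is not quite right is that the $\mathbb{CH}^n$-volume form ``extends continuously'' across the strata $\{z_0=0\}$ --- it does not, since the cone angle there is generally different from $2\pi$ --- but it is still locally integrable near each cone stratum, which is all that is needed.
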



\subsection{A uniqueness result for cylinders of small width.}
\label{S:Lemma8.4}
We now prove the following lemma which implies 
 the result announced in Subsection \ref{SS:Finiteness}  (namely, 
 for $\epsilon$ sufficiently small, the unicity of the systolic cylinder in any element of $A_\epsilon$):


\begin{lemma}
\label{unique} 
There exists $\epsilon_\rho >0$ (depending only on $\rho$) such that for any positive  $\epsilon <\epsilon _\rho$ and any flat torus $T\in \F$, 
the following holds true: if $T$ belongs to $B_\epsilon$, i.e. if $T$ contains an embedded  flat cylinder of width $\sigma(T)\leq \epsilon$, then the latter is unique  among the flat cylinders embedded in $T$ of width strictly less than $\epsilon_\rho$.
\end{lemma}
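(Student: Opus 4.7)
The plan is to argue by contradiction, using Gauss--Bonnet in the spirit of Lemma \ref{collisions2} and Proposition \ref{systol}, together with Lemma \ref{collisions}. Suppose the lemma fails. Then there exist a sequence $\epsilon_\ell \to 0$, area-one flat tori $T_\ell \in \F$, and for each $\ell$ two distinct maximal embedded flat cylinders $C_1^{(\ell)}, C_2^{(\ell)}$ in $T_\ell$ of widths $w_i^{(\ell)} < \epsilon_\ell$. Their core curves $c_i^{(\ell)}$ are simple closed geodesics of length $w_i^{(\ell)} \to 0$ with trivial linear holonomy (by definition of a flat cylinder).

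The first step is to show that, for $\ell$ large, $c_1^{(\ell)}$ and $c_2^{(\ell)}$ are disjoint simple closed curves, both essential in the closed torus $T_\ell$. A transverse intersection of $c_1^{(\ell)}$ and $c_2^{(\ell)}$ would produce, by cutting along shortest arcs between consecutive crossings, an immersed flat bigon of perimeter at most $w_1^{(\ell)} + w_2^{(\ell)}$; Gauss--Bonnet on such a bigon would force interior cone points of prescribed positive angles (drawn from the finite admissible list by finiteness of $\mathrm{Im}(\rho)$) lying at mutually vanishing distances, contradicting Lemma \ref{collisions} after doubling the bigon into a flat sphere. Similarly, if one of the $c_i^{(\ell)}$ were contractible in $T_\ell$, it would bound a disk whose cone points would have total curvature $2\pi$---necessarily including the negatively-curved point $p_1$---and whose small perimeter $w_i^{(\ell)}$ would again contradict Lemma \ref{collisions}. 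So both core curves are essential and disjoint on the closed genus-one surface $T_\ell$, hence freely homotopic.

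The curves $c_1^{(\ell)} \cup c_2^{(\ell)}$ thus cobound two topological annuli in $T_\ell$. Each maximal cylinder $C_i^{(\ell)}$ extends on both sides of its core until its boundary meets a cone point. Since $C_1^{(\ell)} \neq C_2^{(\ell)}$, one of the two complementary regions, call it $A_\ell$, must contain a cone point of $T_\ell$ in its interior---otherwise $C_1^{(\ell)} \cup A_\ell \cup C_2^{(\ell)}$ would form a single larger flat cylinder, violating maximality. The Gauss--Bonnet formula applied to $A_\ell$---whose boundary consists of two broken closed geodesics of total lengths $w_1^{(\ell)}$ and $w_2^{(\ell)}$ with interior angle $\theta_p - \pi$ at each broken cone point $p$---yields
$$
\sum_{p \in \overline{A_\ell}} \bigl(2\pi - \theta_p\bigr) \;=\; 0.
$$
Since $p_1$ is the only cone point of negative curvature, necessarily $p_1 \in \overline{A_\ell}$, and $p_1$ is accompanied by positively-curved cone points of total curvature $\theta_1 - 2\pi \in \,]0, 2\pi[$.

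The main obstacle is to convert this identity into a quantitative contradiction as $w_1^{(\ell)} + w_2^{(\ell)} \to 0$. Using the finiteness of $\mathrm{Im}(\rho)$, I would extract a subsequence along which the combinatorial datum of $A_\ell$---the set of cone points in $\overline{A_\ell}$, their cyclic arrangement on $\partial A_\ell$, and the linear holonomy along each boundary arc---is constant. Closing up $A_\ell$ by a suitable surgery (collapsing each short boundary component to a point, which creates new cone points of positive angle drawn from a finite admissible list and cancels the $p_1$-contribution in accordance with the Gauss--Bonnet identity above) produces a flat sphere $\Sigma_\ell$ all of whose cone angles lie in $]0, 2\pi[$. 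An area-one renormalisation of $\Sigma_\ell$ and Lemma \ref{collisions} applied to well-chosen pairs of its cone points---including those arising from the boundary collapses---yield uniform positive lower bounds on the distances between cone points of $\Sigma_\ell$. These bounds pull back to distances in $A_\ell$ that are incompatible with the vanishing of $w_1^{(\ell)} + w_2^{(\ell)}$, giving the desired contradiction.
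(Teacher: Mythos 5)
Your setup runs parallel to the paper's for a while: argue by contradiction, show the two cylinders are disjoint, and then decompose the torus. But you stop just short of the clean finish and instead chase a much harder and vaguer argument.

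After you establish that the complementary region $A_\ell$ satisfies $\sum_{p\in\overline{A_\ell}}(2\pi-\theta_p)=0$ and hence contains the negatively-curved point $p_1$, the proof is essentially over --- you just have to look at the \emph{other} complementary region. Because both cylinders are maximal, each of their boundary components touches a cone point, so the other region also contains a cone point. Since $p_1$ lies in $\overline{A_\ell}$, the other region contains only points of positive curvature, yet Gauss--Bonnet forces its total curvature to vanish. That is already the contradiction; no subsequence extraction, no surgery that closes up boundary circles, no renormalisation, and no second invocation of Lemma~\ref{collisions} is needed. Your proposed ``closing up'' step --- collapsing a short broken boundary circle to a cone point and claiming the resulting angle lies in $]0,2\pi[$ --- is not obviously well-defined and is certainly not quantified, so as written there is a genuine gap at the end of your argument.

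There is also a technical wrinkle in your choice of decomposition. You cut along the \emph{boundaries} of the maximal cylinders, so your boundary circles are broken geodesics passing through cone points. This allows a single cone point to sit on the boundary of both complementary regions, in which case its curvature would not cleanly partition between the two Gauss--Bonnet identities. The paper sidesteps this entirely by cutting along the two smooth \emph{core} geodesics $c_1,c_2$ inside the cylinders, so that all cone points lie strictly in the interior of one of the two resulting annuli and Gauss--Bonnet applies without boundary terms. Finally, your bigon argument for disjointness is more delicate than the paper's: the paper simply notes that if the cylinders' interiors met, a boundary geodesic of one would cross the other, bounding the latter's length by a small quantity, which after inverting $\mathcal{S}_5$ contradicts Lemma~\ref{collisions} directly. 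I would recommend switching both to the core-geodesic decomposition and to the simple Gauss--Bonnet-on-the-other-region punchline.
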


\newpage

\begin{proof}  
 Since ${\rm Im}(\rho)$ is finite, the number of genus 0 moduli spaces $\mathscr M_{0,\theta'}$ that can be obtained  from elements of $\F$ by reversing the surgery $\mathcal S_5$ is finite.  
  Consequently, the minimum $\kappa$ of the set of positive constants $K(\theta_1',\theta_2')$ given by Lemma \ref{collisions}  for the corresponding angle data $\theta'$ (of course each time with respect to  the two cone points involved in the surgery),   is positive as well.  \sk

Let  $T\in \F$ be an element of $B_\epsilon$, for a fixed $\epsilon>0$ supposed to be strictly less than $\kappa$. By definition of $B_\epsilon$, $T$ contains a flat cylinder $C_1$ of width 
$\epsilon_1=\sigma(T)$ with $\epsilon_1\leq \epsilon$.  To prove the lemma  (with $\epsilon_\rho=\kappa$), we argue by contradiction  by assuming that $T$ 
contains another flat cylinder $C_2$, of width $\epsilon_2<\kappa$.

First assume that the interiors of these  two cylinders intersect.  Then there exists a  boundary component $\partial_2$ of $C_2$ which intersects $C_1$. Since $\partial_2$ is totally geodesic, it 
 must enter $C_1$ through one of its boundary component and exit by the other. But $\partial_2$ has length $\epsilon_2$ so $C_1$ has length at most $\epsilon_2$. 
  On the other hand, since the width of $C_1$ coincides with the systole of $T$, it follows from 
 Lemma \ref{L:RealisationOfTheSystol} that $C_1$ is a `systolic cylinder', that is a flat embedded cylinder as  in Figure \ref{F:S5} with respect to which the surgery $\mathcal S_5$ can be reversed. 
But removing $C_1$ from $T$ and  
inverting the surgery $\mathcal S_5$ would give a flat sphere with a 
short geodesic between two of its cone points. 
After renormalization of  the area, this sphere and this geodesic together would contradict Lemma \ref{collisions} since $\epsilon_2<\kappa$ (the computational details are left to the reader). This shows that $C_1$ and $C_2$ have disjoint interiors. 
\sk

 Let $c_1$ (resp.\;$c_2$) be a closed geodesic of length $\epsilon_1$ (resp.\;$\epsilon_2$)  contained in the interior of $C_1$ (resp.\;of $C_2$).  Cutting $T$ along $c_1$ and $c_2$ gives us two connected components $\Sigma$ and $\Sigma'$, each of them being a flat sphere with a  boundary formed by two disjoint totally geodesic circles (in other terms : both $\Sigma$ and $\Sigma'$ are cylinders too).  Note that since $C_1$ and $C_2$ are of maximal length, any components of their boundary contains a conical point 
 from which it follows that both $\Sigma$ and $\Sigma'$ contain conical points. 
 Now assume  that the unique  cone point  of negative curvature of $T$ belongs to $\Sigma'$.  
 Then $\Sigma$ contains only cone points of positive curvature. But  
this is impossible since, 
according to Gau\ss-Bonnet formula (applied to the flat surface with geodesic boundary $\Sigma$), the total curvature of $\Sigma$ is equal to  $2\pi\cdot \chi(\Sigma)=0$.  The lemma follows from this contradiction.
\end{proof}



\subsection{A technical lemma}
 \label{SS:AeAeta} 
 We fix $\epsilon>0$. For any positive $\eta<\epsilon$, it is easily seen that the preimage of $A_\eta\subset B_\epsilon$ by \eqref{E:zeudilla} is $A_\epsilon\times \{ t_{\eta}\}$ with $t_\eta={(\epsilon^2-\eta^2)}/{(\epsilon\eta^2)}>0$. 
 \begin{lemma}
 \label{L:Technical}
 For $\epsilon$ sufficiently small, any map $A_\epsilon\times \{ t_{\eta}\}\rightarrow A_\eta$ is surjective.
 \end{lemma}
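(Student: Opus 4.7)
The plan is to construct, for each $T'\in A_\eta$ (with $\eta<\epsilon<\epsilon_\rho$), an explicit preimage $T\in A_\epsilon$ under the thickening-and-renormalisation map. By Lemma \ref{unique}, the systolic cylinder $C'$ of width $\eta$ in $T'$ is the unique embedded flat cylinder of width less than $\epsilon_\rho$; let $L'$ denote its length. I define $T$ by rescaling $T'$ by the factor $\epsilon/\eta$ (so $C'$ becomes a cylinder of width $\epsilon$) and then shortening the rescaled cylinder by $t_\eta$. This yields a unit-area surface with a flat cylinder of width $\epsilon$ and length $L_T=(\epsilon/\eta)L'-t_\eta$, and a direct verification shows that renormalised thickening of $T$ by $t_\eta$ recovers $T'$.

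The key input is a lower bound on $L'$. Reversing the surgery $\mathcal{S}_5$ along a core circle of $C'$ produces a flat sphere $\hat{S}$ of area $1-\eta L'$ with two positively curved cone points (of angles summing to $\theta_1-2\pi<2\pi$) at mutual distance $\eta$. Applying Lemma \ref{collisions} to $\hat{S}$ renormalised to unit area yields $\eta/\sqrt{1-\eta L'}\geq \kappa$, where $\kappa>0$ is the infimum of the finitely many constants $K(\theta_1',\theta_1'')$ that can arise this way (the finiteness being a consequence of $\mathrm{Im}(\rho)$ being finite). Solving gives $L'\geq 1/\eta-\eta/\kappa^2$, and an elementary manipulation shows that, for $\epsilon\leq \kappa/\sqrt{1+\kappa^2}$, this implies both $(\epsilon/\eta)L'\geq t_\eta$ (so the shortening is well-defined) and the stronger bound $L_T\geq \epsilon$.

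To conclude that $T\in A_\epsilon$ one checks that $\sigma(T)=\epsilon$; the inequality $\sigma(T)\leq\epsilon$ is automatic from the cylinder's core. For the reverse direction, let $\gamma$ be any simple essential closed curve in $T$. If the homological intersection of $[\gamma]$ with the class of the core of the cylinder is non-zero, then $\gamma$ must contain at least one arc crossing the cylinder from one boundary to the other, contributing length at least $L_T\geq\epsilon$. Otherwise $[\gamma]$ is $\pm$ the core class, and a shortest simple representative is either the core itself (length $\epsilon$) or lies in the complementary annular region $S_T\subset T$. But the latter is, by construction, isometric to a rescaling by $\epsilon/\eta$ of the analogous region $S_{T'}$ in $T'$ (the reverse construction only modifies the cylinder), so any essential loop in $S_T$ has length at least $(\epsilon/\eta)\sigma(T')=\epsilon$. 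In every case $|\gamma|\geq\epsilon$, whence $\sigma(T)=\epsilon$ and $T\in A_\epsilon$.

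The step I expect to be the main obstacle is securing the uniform lower bound on $L'$. It rests crucially on the finiteness of $\mathrm{Im}(\rho)$, which ensures that only finitely many angle-pairs arise via the reverse surgery $\mathcal{S}_5$ so that $\kappa>0$, and on a quantitative choice of $\epsilon$ in terms of $\kappa$; without this control, a non-cylindrical short closed curve in $T$ of the kind described just before Proposition \ref{systol} could \emph{a priori} appear and prevent $\sigma(T)$ from equalling $\epsilon$.
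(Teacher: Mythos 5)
Your proof is correct and follows essentially the same path as the paper's: both hinge on reversing the surgery $\mathcal{S}_5$, applying Lemma~\ref{collisions} to the resulting flat sphere, and exploiting the finiteness of $\mathrm{Im}(\rho)$ to get a uniform constant $\kappa$ and hence a lower bound on the length of the systolic cylinder. The paper phrases it contrapositively (assuming $T\in A_\eta$ is not in the image and deriving that its cylinder length is $\leq \eta t_\eta/\epsilon$), whereas you argue directly by constructing the explicit preimage $T_0$. Your version is slightly more careful on one point the paper leaves as "computational details to the reader": it is not enough for the shortened cylinder to be non-empty — one must also verify that the systole of $T_0$ is realised by its core (equivalently $L_{T_0}\geq\epsilon$), and your homological-intersection argument plus the fact that $S_{T_0}$ is a rescaling of $S_{T'}$ fills this gap cleanly, at the cost of the marginally stricter threshold $\epsilon\leq\kappa/\sqrt{1+\kappa^2}$ instead of $\epsilon<\kappa$.
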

 \begin{proof}
We claim that the statement of the lemma holds true for any  $\epsilon<\kappa$, where 
$\kappa$ stands for the positive constant introduced in the first paragraph of 
 the  proof of Lemma  \ref{unique}. 
Indeed, if it were not the case, there would exist $T\in A_\eta$ for some $\eta<\epsilon$,  which was not in the image of $A_\epsilon\times \{ t_{\eta}\}\rightarrow A_\eta$.  For such a $T$, one verifies  that the length of the systolic flat cylinder  of $T$  (of width $\eta$) is necessarily less than or equal to 
$\eta t_\eta/\epsilon$.  Then removing this cylinder from $T$ and  
inverting the surgery $\mathcal S_5$ would give a flat sphere with a 
short geodesic between two of its cone points. 
After renormalization of  the area, this sphere and this geodesic together would contradict Lemma \ref{collisions} since $\epsilon<\kappa$ (the computational details are left to the reader).
 \end{proof}

\subsection{Proof of Lemma \ref{bounded}}

We finally  explain  how the above description of the parts of $\F$ consisting of tori with long embedded cylinders gives a proof of the first point of Lemma \ref{bounded}, namely that  the diameters of the elements of any Cauchy sequence in $\F$ are uniformly bounded.\sk

 Consider a path $\gamma : [0,1] \longrightarrow B_{\epsilon}$. We have the following estimate 
$$ L(\gamma) \geq \Big| \log\big(c\big(\gamma(1)\big)\big) - \log
\big(c\big(\gamma(0)\big)\big) \Big| . $$ 
where $c(T)$ is the length of the cylinder of width at most $\epsilon$ (with $T \in B_{\epsilon}$). This is a direct consequence of Lemma \ref{cylindricalcalculation} of Appendix A. \sk 

Assume that we have a Cauchy sequence whose diameter goes to infinity. 
We can assume that all its elements  belong to a subset $B_{\epsilon}$  of $\F$ considered above (this follows from Proposition \ref{diameter}).  Applying the above estimate to paths linking elements of the sequence leads to a contradiction.


\section{\bf THE METRIC COMPLETION IS A CONE-MANIFOLD}
\label{cone-manifoldness}

 In this section, we prove a theorem describing the structure of the metric completion of $\F$. We refer to Appendix \ref{conemanifolds} for further precisions and references on the notion of cone-manifold. \sk 

We turn back to the notations used before the two preceding sections : we are dealing with flat surfaces of genus $g=0,1$ with $n$ cone points. More precisely, we assume that  either
  \begin{itemize}
\item $g=0$ and $\theta_i \in ]0,2\pi[$ for all $i=1,\ldots,n$; or
\sk 
\item $g=1$, $\theta_1 \in ]2\pi, 4\pi[$ and $\theta_i  \in ]0,2\pi[$ for $i=2,\ldots,n$, 
\end{itemize}
so that Veech's geometric structure on $\F$ is complex hyperbolic.

\begin{thm}
\label{cone-manifold}

Let $\rho \in \mathrm{H}^1(N, \mathbb{U}, \theta)$ be such that $\mathrm{Im}(\rho)$ is finite. The metric completion of $\mathscr{F}_{[\rho]}$ is a complex hyperbolic cone-manifold.
\end{thm}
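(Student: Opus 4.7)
The plan is to argue inductively on the codimension $k$ of the strata $X_k$ in the stratification of $\overline{\mathscr{F}_{[\rho]}}$ provided by Theorem \ref{mcompletion}. On the open stratum $X_0 = \mathscr{F}_{[\rho]}$ the metric is already complex hyperbolic by Veech's theorem, so the cone-manifold condition is vacuous. For $p \in X_k$ with $k \geq 1$, the construction of $X_k$ via iterations of the four surgeries $\mathcal{S}_1, \mathcal{S}_2, \mathcal{S}_3, \mathcal{S}_4$ carried out in Section \ref{completion} shows that any such $p$ is obtained by applying $k$ surgeries to a flat surface in the stratum $X_k$. Reversing this, a neighborhood $V$ of $p$ in $\overline{\mathscr{F}_{[\rho]}}$ admits a parametrization $(z_1, \ldots, z_k, w_1, \ldots, w_{n-k})$ in which $(w_1, \ldots, w_{n-k})$ is a linear parametrization of a neighborhood of $p$ in the stratum $X_k$, each $z_i$ is the surgery parameter for one of the iterated surgeries used to reach $p$, and $V \cap X_k$ corresponds to $\{z_1 = \cdots = z_k = 0\}$ (up to the subtlety pointed out in Remark \ref{R:C'est Subtil}).

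The next step is to extract the local model from this parametrization. By Proposition \ref{linear} and the descriptions of the surgery maps in Section \ref{surgeries}, the tuple $(z_1, \ldots, z_k, w_1, \ldots, w_{n-k})$ is a linear parametrization of $V \cap X_0$ in the sense of Section \ref{Charts}, hence the complex hyperbolic metric coming from the Hermitian area form extends analytically across the loci $\{z_i = 0\}$ as a complex hyperbolic metric on an open subset $\widetilde{V}$ of $\mathbb{C}\mathbb{H}^n$. When some of the surfaces near $p$ admit non-trivial isometries, or when the Devil's surgery $\mathcal{S}_3$ carries a non-trivial deck group (Remark \ref{onetoone} and the proposition on $\widetilde{\mathcal{S}_3}$ in \S\ref{devil}), the induced map $\widetilde{V} \to V$ is only a finite covering branched over the codimension one strata. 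Its deck group $G_p$ is finite and generated by cyclic rotations in the $z_i$-directions, together with the pure isometry group of the limiting flat surface at $p$ (\S\ref{SS:IsometryGroup}). Since $G_p$ preserves the Hermitian area form, it acts by complex hyperbolic isometries whose generators are complex reflections fixing the codimension one strata through $p$, and $V$ is identified with $\widetilde{V} / G_p$. This is precisely the local model of a complex hyperbolic cone-manifold, the cone angles around codimension one strata being exactly those computed in \S\ref{coneangle}.

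The main obstacle will be the compatibility check at deep strata: when $k \geq 2$ and several codimension one strata meet at $p$, one must verify that the local cone-manifold structure does not depend on the order or on the choice of the $k$ surgeries used to reach $p$, and that distinct iterations leading to points of $Y_k$ identified to a single point of $X_k$ (per Proposition \ref{identification}) yield equivalent local charts at $p$. This reduces to two observations already available: first, any two such parametrizations restrict to linear parametrizations of $V \cap X_0$, and Proposition \ref{linear} ensures they differ by an element of $\mathrm{GL}_n(\mathbb{C})$ preserving the Hermitian area form, hence by an element of $\mathrm{PU}(1, n-1)$ extending $G_p$; second, the covering description of Proposition \ref{identification}(2) identifies the different local models as finite covers of the same neighborhood in the common reduction $\mathscr{F}_{[\widehat{\rho}]}$. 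Together, these show that the local quotient structure $\widetilde{V}/G_p$ is intrinsically attached to $p$, completing the induction.
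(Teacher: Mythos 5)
Your central claim --- that near a point $p \in X_k$ a neighborhood $V$ in $\overline{\mathscr{F}_{[\rho]}}$ is obtained as $\widetilde{V}/G_p$ with $\widetilde{V}$ an open subset of $\mathbb{C}\mathbb{H}^n$ and $G_p$ a finite isometry group --- would make the metric completion a complex hyperbolic \emph{orbifold}, which is a strictly stronger conclusion than the theorem asserts and is false in general. The cone angles computed in \S\ref{coneangle} around codimension-one strata are, for Thurston's surgeries $\mathcal{S}_1, \mathcal{S}_2$, equal to the Euclidean cone angle $\theta_1$, and for $\mathcal{S}_3$ equal to $2\pi\,\mathrm{lcm}(r',r'')/(mM)$; these are typically \emph{not} of the form $2\pi/k$ with $k \in \mathbb{N}$ (indeed the Introduction stresses that for $n \geq 3$ one of them exceeds $2\pi$). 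A $\mathbb{C}\mathbb{H}^1$-cone of angle $\alpha$ is a finite quotient of a disc only when $\alpha = 2\pi/k$, and a finite branched cover of a disc only when $\alpha = 2\pi m$; for intermediate values such as $\alpha = 3\pi$ it is neither, and the map $\widetilde{V} \to V$ you invoke has no meaning as a finite branched covering. The root of the problem is that the surgery parameter $z_0$ in the linear parametrizations of \S\ref{Charts} does \emph{not} range over an open disc in $\mathbb{C}$: it is a developing map of $C^*_{\theta_1}(\epsilon)$ (resp. of $C^*_{l\theta'_1}(\epsilon)$ for $\mathcal{S}_3$), i.e.\ a cone, so the domain where $(z_0, z_1, \ldots)$ is a genuine chart into $\mathbb{C}^n$ is only a sector-times-ball, and the ``analytic extension across $\{z_0 = 0\}$'' you describe produces a complex hyperbolic \emph{cone}, not an open subset of $\mathbb{C}\mathbb{H}^n$. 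The finite deck groups you mention (non-trivial ${\rm PIso}^+$ and the $\mathcal S_3$ deck group) are a separate, secondary phenomenon and do not account for the cone angles.

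The paper's proof is built precisely to sidestep this. Rather than realizing $V$ as a quotient, it fibers $V$ by the ``orthogonal projection'' $\pi : V_p \to U_p$ of Thurston and shows that each fiber $V(r) = \pi^{-1}(r)$ is a metric cone (in the sense of Appendix \ref{conemanifolds}) over the unit-distance sphere $S(\epsilon)$, which lies entirely inside the lower strata $X_0 \sqcup \cdots \sqcup X_{i-1}$ --- already a complex hyperbolic cone-manifold by the inductive hypothesis. Lemma \ref{stab} then identifies $S(\epsilon)$ as a $(S^{2i-1}, \mathrm{U}(i))$-cone-manifold, $B(\epsilon)$ as a cone over it, and Proposition \ref{highercones} supplies the product structure $U_p \times B(\epsilon)$. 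It is this iterated-cone machinery --- not any quotient picture --- that accommodates cone angles outside $\{2\pi/k\}$, and that correctly handles the compatibility at deep strata where several codimension-one walls meet (which your appeal to $\mathrm{GL}_n(\mathbb{C})$ transitions does not address, since those matrices do not encode the angular identifications). If you want a local-chart-plus-group picture to survive, the object to build is a cone over a spherical cone-manifold, and you would essentially be forced to reconstruct Appendix B.
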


The proof goes by induction on $m = \mathrm{dim}(\mathscr{F}_{[\rho]})$. Assume that it has been proven that the theorem holds true for all $\F$ carrying a complex hyperbolic structure such that $\mathrm{dim}(\mathscr{F}_{[\rho]}) \leq m-1$. The case $g=0$ has been dealt with  by Thurston in \cite{Thurston}. The base case of the induction is when $m=0$ that is $\mathscr{F}_{[\rho]}$ is a point in which case the theorem holds. Note that $m=0$ can only happen if $g=0$ and $n=3$.\sk

Consider $\rho \in \mathrm{H}^1(N, \mathbb{U}, \theta)$ such that $ \mathrm{dim}(\mathscr{F}_{[\rho]}) = m$. We have proven in Section \ref{completion} (see Theorem \ref{mcompletion}) that $X=\overline{\mathscr{F}_{[\rho]}} $ is a disjoint union of $X_0, \ldots, X_m$ such that 

\begin{itemize}
\item $X_0 = \mathscr{F}_{[\rho]}$; \sk
\item $X_i$ is a complex hyperbolic manifold of dimension $m - i$; \sk
\item the metric completion of $X_i$ in $X$ is $X_{i+1} \sqcup \ldots \sqcup X_m$. 
\end{itemize}
 
We prove by induction on $i$ that any point in $X_i$ has a neighbourhood in $X$   isometric to a complex hyperbolic cone-manifold. Let $p$ be a point of $X_i$. It has a neighbourhood $U_p$ in $X_i$ which is isometric to an open subset of ${\mathbb{CH}}^{m-i}$. Following Thurston in \cite{Thurston}, we define an \textit{`orthogonal  projection'} $\pi : V_p \longrightarrow U_p $ from a neighbourhood $V_p$ of $p$ in $X$ onto $U_p$ the following way : we have seen in Section \ref{completion} that there exists a neighbourhood $V_p$ of $p$ in $X$ such that for any $q \in V_p$ there exists a unique $r \in U_p $ such that $q$ can be recovered from $r$ by performing a finite number of the four surgeries 
$\mathcal S_1, \mathcal S_2,  \mathcal S_3 $ and $  \mathcal S_4$  
described in Section \ref{surgeries}, and one has $\pi(q) = r$. 

Thurston calls this map '{\it orthogonal projection}' because,  in a sense which is made precise in Appendix \ref{conemanifolds}, the fibers of $\pi$ are orthogonal to its image $U_p$.

\begin{lemma}
\label{foliation}
Let $p$, $U_p$, $V_p$ and $\pi$ be  defined as above. 

\begin{enumerate}

\item For all $r \in U_p$, $V(r) = \pi^{-1}(r) \setminus \{r\}$ is foliated by geodesics ending at $r$.
\sk 
\item For all $r \in  U_p$, the intersection $V(r) \cap (X_0 \sqcup \ldots \sqcup X_{i-1})$ is a totally geodesic sub-cone-manifold of $X_0 \sqcup \ldots \sqcup X_{i-1}$.
\sk 
\item For all $r\in U_p$, $V(r)$ is orthogonal to $U_p$.
\end{enumerate}
\end{lemma}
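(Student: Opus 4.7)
The strategy is to produce, in a neighbourhood of $p\in X_i$, a preferred linear parametrisation in which the projection $\pi$ is a coordinate projection and the area form splits as an orthogonal direct sum. Since any point $q\in V_p$ is obtained from $\pi(q)\in U_p$ by applying an ordered sequence of surgeries ${\mathcal S}_{k_1},\dots,{\mathcal S}_{k_i}$ among $\{{\mathcal S}_1,\dots,{\mathcal S}_4\}$, each of them contributing a new complex parameter $z_0^{(j)}$, and since the discussion in Section \ref{surgeries} (see in particular Proposition \ref{param} and the analogous statements for ${\mathcal S}_2,{\mathcal S}_3,{\mathcal S}_4$) shows that each such parameter extends a linear parametrisation of the preceding stratum to a linear parametrisation of the larger one, we can choose coordinates $(z_0^{(1)},\dots,z_0^{(i)},z_1,\dots,z_{m-i})$ on a neighbourhood $\widetilde V_p$ of $p$ in $\mathcal T\!\!\!{\it eich}_{g,n}$ such that: the $z_0^{(j)}$'s are the successive surgery parameters, the remaining coordinates $(z_1,\dots,z_{m-i})$ form a linear parametrisation of $U_p\subset X_i$, and $V_p$ is the image of $\widetilde V_p$ in $X$. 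In these coordinates, the locus $\{z_0^{(1)}=\cdots=z_0^{(i)}=0\}$ corresponds exactly to $U_p$, and the projection $\pi$ is the map sending $(z_0^{(1)},\dots,z_0^{(i)},z_1,\dots,z_{m-i})$ to $(0,\dots,0,z_1,\dots,z_{m-i})$.

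The second step is to exploit the shape of the area form. Applying inductively the argument used in the proof of Proposition \ref{signature} (each surgery contributes a $-\mu_j|z_0^{(j)}|^2$ term, with $\mu_j>0$ depending only on the local combinatorics of the surgery), one obtains
\[
A\bigl(z_0^{(1)},\dots,z_0^{(i)},z_1,\dots,z_{m-i}\bigr)
=B\bigl(z_1,\dots,z_{m-i}\bigr)-\sum_{j=1}^{i}\mu_j\bigl|z_0^{(j)}\bigr|^2,
\]
where $B$ is the area form on the stratum $X_i$, of signature $(1,m-i-1)$ when viewed on the projectivised $\mathbb C^{m-i+1}$. Consequently, in the ambient $\mathbb C^{m+1}$ whose projectivisation provides the $\mathbb C\mathbb H^m$-model of a neighbourhood of $p$, the coordinate decomposition is Hermitian orthogonal: the subspace $W_0$ spanned by the first $i$ basis vectors (the surgery directions) is $A$-negative and $A$-orthogonal to the subspace $W_1$ spanned by the last $m-i+1$ basis vectors (on which $A$ restricts to~$B$).

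The third step is the geometric reading in the $\mathbb C\mathbb H^m$ model. Given $r\in U_p$, lift it to a vector $\tilde r\in W_1$ with $A(\tilde r)>0$. The fibre $V(r)\cup\{r\}$ is then the projectivisation of the $(i+1)$-dimensional linear subspace $L_r=W_0\oplus\mathbb C\cdot\tilde r\subset\mathbb C^{m+1}$, intersected with the positive cone of $A$. A direct computation (using the orthogonal splitting above) gives
\[
A\Bigl(\textstyle\sum_j\alpha_j e_j+\lambda\tilde r\Bigr)
=|\lambda|^2A(\tilde r)-\sum_{j=1}^{i}\mu_j|\alpha_j|^2,
\]
so $A|_{L_r}$ has signature $(1,i)$. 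Hence the projective image of $L_r\cap A^{-1}(\mathbb R_{>0})$ is an open subset of a totally geodesic complex hyperbolic subspace $\mathbb C\mathbb H^i\subset\mathbb C\mathbb H^m$ containing $r$; this proves that $V(r)\cup\{r\}$ is, up to the cone-manifold locus encountered along lower strata, an open subset of such a totally geodesic copy.

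From this all three assertions follow. For (1), any point of a totally geodesic $\mathbb C\mathbb H^i$ can be joined to $r$ by a (real) geodesic of the submanifold, and varying the endpoint of each such geodesic through the unit sphere of the tangent space at $r$ to $L_r$ produces the claimed foliation. For (3), the complex hyperbolic orthogonality between $V(r)$ and $U_p$ at $r$ is precisely the Hermitian orthogonality between $W_0$ and $W_1$ in the chart; the orthogonality is intrinsic once stated at $r$ because $L_r$ is totally geodesic and $U_p$ is so as well (being a stratum of the inductive description of $X$). For (2), the intersection $V(r)\cap(X_0\sqcup\cdots\sqcup X_{i-1})$ consists of those points of the totally geodesic copy $\mathbb C\mathbb H^i$ that lie strictly outside $X_i$, i.e.\,the complement of $\{r\}$ in this copy; the induction hypothesis applied to each stratum $X_j$ (for $j<i$), together with the fact that a totally geodesic complex submanifold of a complex hyperbolic cone-manifold is again a complex hyperbolic cone-manifold, yields the statement. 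The main delicate point of the argument is the verification that the inductively constructed linear chart genuinely realises $\pi$ as the coordinate projection; this relies on the fact, established in the proof of Proposition \ref{complet}, that the surgery reconstructing $q$ from $\pi(q)$ is essentially unique for $q$ sufficiently close to $U_p$.
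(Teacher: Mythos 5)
Your proposal is correct and follows essentially the same route as the paper's own proof: both build a linear parametrisation (using Section~\ref{surgeries} and Proposition~\ref{top}) in which $\pi$ becomes a coordinate projection, observe that the area form splits as an $A$-orthogonal sum of a signature-$(1,m-i)$ form on the stratum directions and a positive-definite form on the surgery directions, and then read off the geodesic fibration, the totally geodesic $\mathbb{C}\mathbb{H}^i$ fibres, and the orthogonality directly from the complex hyperbolic geometry of that chart. The only stylistic differences are that you write the surgery block in explicit diagonal form $\sum_j \mu_j|z_0^{(j)}|^2$ (the paper only needs, and only asserts, positive-definiteness of $A_2$), and that the paper writes the geodesic joining $q$ to $\pi(q)$ explicitly as the projectivisation of the linear path $t\mapsto(\xi_0,\dots,\xi_{m-i},t\xi_{m-i+1},\dots,t\xi_m)$ rather than deducing its existence from the signature computation.
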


\begin{proof}  We fix $r\in U_p$ and consider an  
 element $q$ of $V(r) = \pi^{-1}(r)$. By using an appropriate topological 
polygonation (cf. Proposition \ref{top}), one can find a linear parametrisation 
$(z_1,\ldots,z_m)$  at $q$ such that 
\begin{itemize}
\item if  $(\xi_0, \ldots, \xi_{m-i}, \ldots, \xi_m)$ are the coordinates of $q$ in this parametrisation, then $\pi(q)$ has coordinates $(\xi_0, \ldots, \xi_{m-i}, 0, \ldots, 0)$; \sk
\item  the area form $A$ in the $z_i$'s can be written out 
\begin{equation}
\label{E:SplittingOfA}
A(z_0, \ldots, z_m) = A_1(z_0, \ldots, z_{m-i}) - A_2(z_{m-i+1}, \ldots, z_m)
\end{equation}
 where $A_1$ has signature $(1,m-i)$ and $A_2$ is positive-definite ($A_2$ is the total area removed by the successive surgeries).
 \sk 
 \item  in the local coordinates $z_1,\ldots,z_m$,  the stratum $X_J$ for $j>i$ is cut out by the equations $ z_{m - (j+1)} = z_{m -{j+2}} = \cdots = z_m = 0$ (there is a subtlety here about  the precise location of  the locus cut out by these equations with respect to the domain 
 of definition 
 of the  linear parametrisation we consider. 
We let the reader state a remark analogous to Remark \ref{R:C'est Subtil} for the case under scrutiny).


 \end{itemize}
\sk

 The image of 
 $ [0,1]\ni t \longrightarrow (\xi_0, \ldots, \xi_{m-i},   t\xi_{m-i-1},\ldots, t\xi_m)\in \mathbb C^{m+1}$ projects onto a geodesic path in $V_p$ joining $q$ to $r=\pi(q)$ (see Lemma A.1.(2)).
 Remark that this geodesic does not depend on the choice of the $z_i$'s: it is the one  pointing in the direction of $r$ hence it is intrinsic (because there is a unique geodesic segment linking to distinct points in a complex hyperbolic space). The collection of those geodesics for all $q \in V(r)$ gives the announced foliation of $V(r)$. Taking $V_p$ small enough, one can ensures that the foliation is globally well defined by using for instance a finite number of linear parametrisation whose pairwise intersections are $1$-connected.   The first point of the lemma is proved. \sk

 A neighbourhood of $q $ in $V(r) \cap X_0$ consists of the submanifold parametrised by $z_1', \ldots, z_m'$ such that $z_1' = \xi_1, \ldots, z_{m-i}' = \xi_{m-i}$ and therefore projects onto a totally geodesic subspace ${\mathbb{CH}}^i$ (we use the fact that a complex affine submanifold of the complex hyperbolic space is totally geodesic).
\sk

 Finally, the   splitting \eqref{E:SplittingOfA} of $A$ gives us that $V(r)$ and $U_p$ are orthogonal.\mk
\end{proof}

We continue to use the notations of the previous lemma. 
\begin{prop}
 WIth the same notations as before, $V(r)$ is a\;$\mathbb C\mathbb H^i$-cone-manifold with  $r$ as its unique cone point.
\end{prop}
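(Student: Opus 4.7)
The plan is to combine three ingredients: the splitting of the area form displayed in \eqref{E:SplittingOfA}, the radial geodesic foliation from Lemma \ref{foliation}(1), and the induction hypothesis applied to the strata of codimension smaller than $i$.

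First, I would exploit the linear parametrization $(z_0, \ldots, z_m)$ adapted to $r$ used in the proof of Lemma \ref{foliation}. Fixing $(z_0, \ldots, z_{m-i}) = (\xi_0, \ldots, \xi_{m-i})$ cuts out precisely the fiber $\pi^{-1}(r)$ in this chart, and the area form restricts to $A_1(\xi) - A_2(z')$ where $z' = (z_{m-i+1}, \ldots, z_m)$. Since $A_2$ is positive-definite, after the standard area-one normalization, the slice $\pi^{-1}(r) \cap X_0$ is biholomorphic to the ball $\{z' \in \mathbb{C}^i : A_2(z') < A_1(\xi)\}$ endowed with its natural $\mathbb C\mathbb H^i$-structure, with $r$ at the center. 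This equips the open part of the fiber with a genuine $\mathbb C\mathbb H^i$-manifold structure.

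Second, I would extend this structure across $V(r) \setminus \{r\}$ stratum by stratum. Any $q \in V(r) \setminus \{r\}$ lies in a unique stratum $X_j$ with $0 \leq j < i$, and the description of the strata in the local coordinates $z_1, \ldots, z_m$ shows that $V(r) \cap X_j$ is a complex submanifold of $V(r)$ of codimension $j$. Using Lemma \ref{foliation}(2)-(3) (totally geodesic and orthogonal intersection) together with the induction hypothesis, which provides a $\mathbb C\mathbb H^m$-cone-manifold structure on a neighbourhood of $q$ in $X$, I would deduce that this structure restricts cleanly to a $\mathbb C\mathbb H^i$-cone-manifold structure on $V(r)$ near $q$. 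The cone angles along codimension-$1$ strata of $V(r)$ are inherited from the cone angles of the ambient cone-manifold $X$.

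Third, and this is the key step, I would handle the apex $r$ itself. By Lemma \ref{foliation}(1), $V(r) \setminus \{r\}$ carries an intrinsic foliation by geodesics all of which end at $r$. This exhibits $\pi^{-1}(r)$ as the geodesic cone (in the sense of Appendix \ref{conemanifolds}) with apex $r$ over its link $L_r$, a small metric sphere about $r$. By the second step, $L_r$ inherits a $(2i-1)$-dimensional spherical CR cone-manifold structure, and the standard fact that a geodesic cone in $\mathbb C\mathbb H^i$ over such a link is a $\mathbb C\mathbb H^i$-cone-manifold with unique apex yields the statement of the proposition.

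The main obstacle lies in this last step: one must check that the cone-manifold structure obtained on $V(r) \setminus \{r\}$ in the second step is compatible with the radial cone structure coming from the geodesic foliation. Concretely, this means verifying that every codimension-$j$ stratum $V(r) \cap X_j$ is preserved by the radial flow issuing from $r$ (so that these strata are themselves sub-cones with apex $r$) and that the cone angles along codimension-$1$ strata of $V(r)$ are constant along these radial directions. Both facts follow from the explicit form of the surgeries $\mathcal{S}_1, \ldots, \mathcal{S}_4$ near $r$ and from the linear parametrization, but the compatibility must be checked uniformly on the link $L_r$ in order to glue local cone models into a global $\mathbb C\mathbb H^i$-cone-manifold structure on $V(r)$.
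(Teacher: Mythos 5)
Your proposal is correct and follows essentially the same route as the paper: use the radial geodesic foliation from Lemma~\ref{foliation}(1) to exhibit $V(r)$ as a cone over a link, show the link carries a $(S^{2i-1},\mathrm U(i))$-cone-manifold structure via the induction hypothesis and the totally-geodesic-restriction machinery (Lemma~\ref{stab} and the subsequent proposition in Appendix~\ref{conemanifolds}), and conclude by the cone-over-link construction. The paper is more economical: rather than first equipping all of $V(r)\setminus\{r\}$ with a $\mathbb{CH}^i$-cone-manifold structure and then restricting it to the link (your steps~2--3), it works directly with the metric sphere $S(\epsilon)$, observing it is the intersection of the totally geodesic sub-cone-manifold $V(r)\cap(X_0\sqcup\cdots\sqcup X_{i-1})$ with an ambient metric sphere, hence a spherical cone-manifold; $B(\epsilon)$ is then the metric cone over $S(\epsilon)$ and one is done. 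The ``main obstacle'' you flag --- compatibility between the stratum-by-stratum structure and the radial cone structure --- is not really an extra step once one argues this way: once the link is established as a spherical cone-manifold, Lemma~\ref{foliation}(1) together with the cone construction of Appendix~\ref{conemanifolds} automatically yields the global $\mathbb{CH}^i$-cone-manifold structure on $B(\epsilon)$, and the radial-flow invariance of the strata is visible in the linear parametrization (the strata are cut out by linear equations in the last $i$ coordinates, which the scaling $z'\mapsto tz'$ preserves). So your worry is well-founded in spirit but resolves itself inside the cone-over-link formalism you already invoke.
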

\begin{proof}
Define 
\begin{align*}
B(\epsilon) = & \, \big\{ q \in V(r) \ | \ \exists \ 
\mbox{ a geodesic of length } \leq \epsilon \mbox{ joining } q \mbox{ to } r
 \big\}
\vspace{0.25cm}
\\ 
\mbox{ and }\;\; S(\epsilon) =  &\, \partial B(\epsilon) = \big\{ q \in V(r) \ | \ \exists 
\mbox{ a geodesic of length } \epsilon
\mbox{ joining } q \mbox{ to } r
\big\}
\end{align*}

For $\epsilon$ small enough, $S(\epsilon)$ does not meet $X_j$ for $j \geq i $ then it lives in 
$$X \setminus \cup_{j\geq i }X_j = X_0\sqcup X_1\sqcup\ldots\sqcup X_{i-1}$$ which is a complex hyperbolic cone-manifold
 according to the induction hypothesis. 
 
  In particular $S(\epsilon)$ is locally a totally geodesic sub-cone-manifold intersected with a piece of a complex hyperbolic sphere whose centre belongs to $V(r)$. It is therefore, according to Lemma \ref{stab}, a $(S^{2i-1}, \mathrm{U}(i))$-cone-manifold. According to the previous lemma, $B(\epsilon)$  is a cone over this cone-manifold and the proposition is proved. \end{proof}

\begin{prop}
For any $r \in U_p$, there exists a neighbourhood of $r$ in $X$ which is a complex hyperbolic cone-manifold. 
\end{prop}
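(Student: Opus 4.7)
The plan is to combine the fiberwise cone-manifold structure just established on each $V(r')$ with the smooth $\mathbb{CH}^{m-i}$-structure on $U_p$, using the orthogonality provided by Lemma \ref{foliation}(3) to glue them into a $\mathbb{CH}^m$-cone-manifold chart around $r$. The guiding geometric picture is that inside $\mathbb{CH}^m$, a totally geodesic $\mathbb{CH}^{m-i}$ admits a tubular neighborhood foliated by totally geodesic orthogonal $\mathbb{CH}^i$-leaves, realising $\mathbb{CH}^m$ locally as the total space of such a normal exponential; the assertion to prove is that $X$ near $r$ is modelled on this picture with the smooth $\mathbb{CH}^i$-leaves replaced by the cone-manifolds $V(r')$.

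First, I would pick a linear parametrisation $(z_0,\dots,z_m)$ as in the proof of Lemma \ref{foliation}, centered so that $r$ corresponds to $z_0=\dots=z_m=0$ with $(z_0,\dots,z_{m-i})$ parametrising $U_p$ and $(z_{m-i+1},\dots,z_m)$ parametrising the transverse directions produced by the relevant chain of surgeries $\mathcal{S}_1,\ldots,\mathcal{S}_4$. Because the area form splits as $A = A_1(z_0,\dots,z_{m-i}) - A_2(z_{m-i+1},\dots,z_m)$ with $A_1$ of signature $(1,m-i)$ and $A_2$ positive-definite, the formula $\pi(z_0,\dots,z_m)=(z_0,\dots,z_{m-i},0,\dots,0)$ intrinsically extends the orthogonal projection on $V_p$, and the slices $z_0,\dots,z_{m-i}=\mathrm{const}$ exhaust the fibers $V(r')$ for $r'$ close to $r$ in $U_p$. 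In particular this gives a local trivialisation $V_p \simeq U_p \times V(r)$ that identifies each $V(r')$ with $V(r)$ as a $\mathbb{CH}^i$-cone-manifold.

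Next, I would build a cone-manifold chart at $r$ by composing the smooth complex hyperbolic charts on $U_p$ with the cone-manifold charts on $V(r)$ constructed in the previous proposition. For a point $q\in V_p\setminus\bigsqcup_{j\geq i}X_j$, a chart is automatic since there $X$ is already complex hyperbolic; for $q\in V(r')\cap X_j$ with $j\geq i$, one uses the chart on $V(r')$ coming from its cone-manifold structure, translated along $U_p$ via the trivialisation above. The splitting of $A$ and Lemma \ref{foliation}(3) imply that the complex hyperbolic metric in these coordinates is locally the standard $\mathbb{CH}^m$ metric on the normal bundle of a totally geodesic $\mathbb{CH}^{m-i}$, with the fibers replaced by cones over spherical cone-manifolds; this is exactly the local model of a complex hyperbolic cone-manifold recalled in Appendix \ref{conemanifolds}.

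The main obstacle is verifying that these charts really fit together to satisfy the cone-manifold axioms uniformly near $r$, rather than fiberwise only. Concretely, one has to check: (i) that the trivialisation $V_p \simeq U_p \times V(r)$ is actually an isometry for the induced complex hyperbolic metric on the regular part, which follows from the fact that, in the adapted linear parametrisation, the Hermitian form defining the metric is literally independent of the base variables $(z_0,\dots,z_{m-i})$ in the fiber directions, by the splitting of $A$; and (ii) that the singular stratum $V_p \cap \bigsqcup_{j\geq i}X_j$ is of the expected product type $U_p \times \big(V(r)\cap \bigsqcup_{j\geq i}X_j\big)$, which is ensured by the fact that each surgery producing $X_j$ inside $V(r')$ is performed independently of the coordinates along $U_p$. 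Once these two points are in place, the axioms of Appendix \ref{conemanifolds} for a $\mathbb{CH}^m$-cone-manifold are satisfied at $r$, and letting $r$ range over $U_p$ then completes the proof of Theorem \ref{cone-manifold}.
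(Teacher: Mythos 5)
Your proposal is correct and takes essentially the same route as the paper: the key move in both is to exhibit a local product structure around $U_p$ (a tubular neighbourhood fibered by the orthogonal cones $V(r')$) and then invoke the material on higher-dimensional complex hyperbolic cones from Appendix B — the paper cites Proposition \ref{highercones} directly, whereas you unpack why its hypotheses are met. The paper's own proof is a three-line argument that simply asserts the existence of a uniform $\epsilon$ and of a product structure $U \times B(\epsilon)$; you add genuine value by deriving that product structure from the splitting $A = A_1(z_0,\dots,z_{m-i}) - A_2(z_{m-i+1},\dots,z_m)$ of the area form in an adapted linear parametrisation, and by spelling out the two compatibility checks (isometry of the trivialisation on the regular part, product structure of the singular locus) needed for the appendix to apply — details the published version leaves implicit.

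One small point of precision: when you say the trivialisation $V_p \simeq U_p \times V(r)$ "is an isometry," the product on the right has no a priori metric — the content of Proposition \ref{highercones} is precisely that there is a distinguished complex hyperbolic structure on ${\mathbb{CH}}^{m-i}\times V(r)_0$ with totally geodesic, mutually orthogonal fibers and base, whose completion is a cone-manifold. What the splitting of $A$ actually gives you is that the structure induced on $V_p$ by the ambient charts \emph{coincides} with this distinguished one; phrasing it that way ties your verification cleanly back to the statement being cited and removes any ambiguity about which metric on the product is meant.
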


\begin{proof} There exists an $\epsilon$ such that for all $r \in U$, the ball of radius $\epsilon$ at $r$ in $V_r$ is an embedded cone. There is a neighbourhood of $U$ in $X$ which has the product structure $ U \times B(\epsilon)$ satisfying the hypothesis of Proposition \ref{highercones}. Hence $ U \times B(\epsilon)$ is a complex hyperbolic cone-manifold and this proves the proposition.\mk 
\end{proof}

The induction process can be carried on which proves that $X = \overline{\mathscr{F}_{[\rho]}}$ is a complete complex hyperbolic cone-manifold.
\bk

\section{\bf LISTING THE $\boldsymbol{\F}$'s AND THEIR CODIMENSION $1$ STRATA}

\label{listing}

We have given so far a rather abstract analysis of the geometric structure of a leaf $\F$ when $\mathrm{Im}(\rho)$ is finite. We now give a list of all such $\F$ associated to a rational angle datum $\theta$ when $g=1$. Let $G_{\theta} \subset \mathbb{U}$ be the subgroup generated by $e^{i\theta_1}, \ldots, e^{i\theta_n}$ and $\omega_{\rho}$ a root of unity such that $\mathrm{Im}(\rho) = \langle \omega_{\rho} \rangle$. 
\sk 

Let $M$ be the smallest  positive  integer such that  $G_{\theta} = \langle \omega_{\rho}^M \rangle$.

\subsection{Listing the $\boldsymbol{\F}$'s associated to $\boldsymbol{\theta}$.}

The starting point of our description is the following lemma :

\begin{lemma}
\label{MCG}
Consider $\rho$ and $\rho'$ two elements of\, $\mathrm{H}^1(N, \mathbb{U}, \theta)$ such that: 
\begin{enumerate}
\item $\mathrm{Im}(\rho) = \mathrm{Im}(\rho')$; 
\sk
\item $\mathrm{Im}(\rho)$ {\rm \big(}and therefore $\mathrm{Im}(\rho')${\rm \big)} is finite.
\end{enumerate} Then $\rho$ and $\rho'$ are equivalent under the action of the pure mapping class group.
\end{lemma}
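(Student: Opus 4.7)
I would start by disposing of the genus zero case: $H_1(\bar N_{0,n})=0$ forces $\mathrm{H}^1(N_{0,n},\mathbb U,\theta)$ to be a singleton, so the claim is vacuous there. For $g=1$, the plan is to use the following parametrization. Fix a symplectic basis $\alpha,\beta$ of $H_1(\bar N_{1,n},\mathbb Z)$ and lift it to $H_1(N_{1,n}^*,\mathbb Z)$, so that the latter is generated by $\alpha,\beta,\delta_1,\ldots,\delta_n$ modulo $\sum_k\delta_k=0$. A character $\rho\in\mathrm H^1(N_{1,n},\mathbb U,\theta)$ is then determined by $(\rho(\alpha),\rho(\beta))\in\mathbb U^2$. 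Writing $\rho(\alpha)=\omega_\rho^a$, $\rho(\beta)=\omega_\rho^b$ and $e^{i\theta_k}=\omega_\rho^{Mt_k}$ with $a,b,t_k\in\mathbb Z$ and $N=|\omega_\rho|$, the assumption $\mathrm{Im}(\rho)=\langle\omega_\rho\rangle$ translates into $\gcd(a,b,M)=1$; likewise $\gcd(a',b',M)=1$. The goal is thus to find a pure mapping class sending $(a,b)$ to $(a',b')$ in $(\mathbb Z/N)^2$.

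Next I would exhibit two families of Dehn twists in $\mathrm{PMCG}$ whose combined action on $(a,b)$ produces everything needed. For any coprime pair $(p,q)$ and any $(r_k)\in\mathbb Z^n$, I would realise a non-separating simple closed curve $\gamma\subset N_{1,n}^*$ of homology class $p\alpha+q\beta+\sum_k r_k\delta_k$ by taking a primitive simple curve of class $p\alpha+q\beta$ on the closed torus avoiding the punctures and then adjusting the $\delta_k$-coefficients via puncture slides. The Dehn twist $T_\gamma$ lies in $\mathrm{PMCG}$ and acts on $H_1$ by the transvection $x\mapsto x+\langle x,\gamma\rangle\gamma$. Since $\langle\alpha,\delta_k\rangle=\langle\beta,\delta_k\rangle=0$, one obtains the formula
$$(a,b)\,\longmapsto\,\bigl(a+q\,c_\gamma,\,b-p\,c_\gamma\bigr),\qquad c_\gamma:=ap+bq+M\sum_k t_k r_k.$$
Setting $r_k=0$ gives the transvection in direction $(p,q)$, and as $(p,q)$ varies these transvections generate the $SL_2(\mathbb Z)$-action on $(a,b)\in(\mathbb Z/N)^2$. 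Varying the $r_k$'s for fixed $(p,q)$ provides additional translations $(a,b)\mapsto(a+Mqs,b-Mps)$ for every $s\in\mathbb Z$; here I use that the $t_k$ generate $\mathbb Z/(N/M)$, precisely because the angles $e^{i\theta_k}$ generate $G_\theta=\langle\omega_\rho^M\rangle$. Letting $(p,q)$ range over coprime pairs, the vectors $(q,-p)$ span $\mathbb Z^2$, so these translations generate the full sublattice $M\mathbb Z^2\subset\mathbb Z^2$.

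The conclusion then rests on a purely arithmetic fact: the orbits on $(\mathbb Z/N)^2$ of the group generated by $SL_2(\mathbb Z)$ together with translations by $M\mathbb Z^2$ are classified by $\gcd(a,b,M)$. Invariance under both operations is immediate. Conversely, $SL_2(\mathbb Z)$ reduces $(a,b)$ to $(\gcd(a,b,N),0)$; translating by $(0,M)$ and reducing once more gives $(\gcd(\gcd(a,b,N),M),0)=(\gcd(a,b,M),0)$, using $M\mid N$. Under the hypothesis $\gcd(a,b,M)=\gcd(a',b',M)=1$, both $(a,b)$ and $(a',b')$ collapse to the normal form $(1,0)$, so $\rho$ and $\rho'$ lie in the same $\mathrm{PMCG}$-orbit.

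The hardest step will be the geometric construction in the second paragraph: producing simple closed curves of every prescribed homology class $p\alpha+q\beta+\sum r_k\delta_k$ (with $\gcd(p,q)=1$) and checking that the associated Dehn twists live in the \emph{pure} mapping class group rather than just in the full $\mathrm{MCG}$. Once this is in place, the transvection formula for $T_\gamma$ on $H_1$ is standard and the orbit count of the last paragraph follows from the elementary reduction outlined above.
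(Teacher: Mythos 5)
Your proof is correct and takes essentially the same approach as the paper's, which likewise reduces $(\rho(\alpha),\rho(\beta))$ to a canonical form by alternating $\mathrm{SL}_2(\mathbb{Z})$-moves with corrections in the $\delta_k$-directions; you merely repackage those moves as an orbit computation for $\mathrm{SL}_2(\mathbb{Z})\ltimes M(\mathbb{Z}/N)^2$ and extract the invariant $\gcd(a,b,M)$, which is the same reduction stated a bit more structurally. The realizability worry you flag at the end is not a genuine gap: a Dehn twist about a simple closed curve disjoint from the punctures is automatically pure, point-pushing a puncture $p_k$ along a loop dual to $\gamma$ changes the class of $\gamma$ by $\pm\delta_k$ so every class $p\alpha+q\beta+\sum_k r_k\delta_k$ with $\gcd(p,q)=1$ is represented, and the paper's own proof tacitly invokes the same realizability facts when it replaces $b$ by $ab^{\nu}c_1^{\nu_1}\cdots c_n^{\nu_n}$.
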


\begin{proof} 

Let $\rho \in \mathrm{H}^1(N, \mathbb{U}, \theta)$ be such that ${\rm Im}(\rho)$ is finite, equals to $\langle \exp({2i\pi}/{q})\rangle$ for some positive integer $q$.
\sk 

We consider be two simple closed curves  $a,b$  avoiding the marked points of $N$ which form a symplectic basis of the homology of the unmarked torus $N$, and $c_1, \ldots, c_n$  curves that circle the marked points, 
chosen in such a way that  $\rho_k=\rho(c_k) = \exp(i\theta_k)$ for $k=1,\ldots,n$.
 Up to the action of an element of the pure mapping class group, we can replace 
$a$ and $b$ by $a^\alpha b^\beta$ and $a^\gamma b^\delta$ with 
 ${\tiny{\begin{bmatrix}
\alpha \!\! &\!\!\!\! \beta \\
\gamma  \!\!& \!\! \!\!\delta \end{bmatrix}}}
\in \mathrm{SL}_2(\mathbb{Z}) $.  We can this way arrange that $\rho_a=\rho(a) = 1$ with $\rho_b=\rho(b)$ such that $\mathrm{Im}(\rho)=\big\langle G_{\theta} \, , \, \rho_b
\big\rangle$. From this equality and thanks to our assumption, there exist integers $\nu_1,\ldots,\nu_n$ and $\nu\neq 0$ such that $(\rho_b)^\nu \cdot  \rho_1^{\nu_1}\cdots
 \rho_n^{\nu_n}=e^{2i\pi/q}$ hence   
 generates $\mathrm{Im}(\rho)$.   Then replacing $a$ and $b$ by $ab^{\nu-1}$ and $ab^{\nu}c_1^{\nu_1}\cdot c_n^{\nu_n}$ respectively, one can assume that $\mathrm{Im}(\rho)=\langle \rho_b\rangle $ with $\rho_b = e^{2i\pi/q}$. Then $\rho_a=(\rho_b)^\mu$ for a certain integer $p$. Then considering $ab^{q-p}$ instead of $a$, one eventually obtains
that the element of $\mathrm{H}^1(N, \mathbb{U}, \theta)$ uniquely characterized by 
assignating the values 1 and $e^{2i\pi/q}$ to $a$ and $b$ respectively, 
 is a representative of the orbit of the initial character $\rho$ under the action of the pure mapping class group.\sk 
 As an immediate consequence of the preceding fact,  we get that the class of  an element $\rho\in  H^1(N, \mathbb{U}, \theta)$ under the action of the pure mapping class group only depends on its image, if the latter is finite.  This gives the lemma.  \end{proof}
\sk

\vspace{2mm}
A leaf $\F$ is therefore only determined by its associated angle data $\theta$ and the smallest integer $M\geq 1$ which is such that $G_{\theta}$ is generated by $\omega_{\rho}^M$. From now on, we refer to such a leaf/moduli space as $\mathscr{F}_{\theta}(M)$. We are now going to give a description of its codimension $1$ strata.

\vspace{2mm}

We distinguish three types of such strata :

\begin{itemize}

\item the $\boldsymbol{P}$\textbf{-strata} which are obtained from Devil's surgery. Here \textit{`P'} stands for \textit{pinching};
\sk
\item the  $\boldsymbol{C}$\textbf{-strata} which are obtained from a Thurston's type surgery. Here  \textit{`C'} stands for \textit{colliding};
\sk 
\item the $\boldsymbol{K}$\textbf{-strata} which are obtained from a kite surgery. Here \textit{`K'} stands for \textit{kite}. 
%
\sk 
\end{itemize}
(Note that since a $K$-stratum  appears when three cone points collide together, it can be considered as a particular kind of $C$-stratum).
\mk 

At this point we must mention an aspect of the description that we have so far ignored: we have been using throughout the article the terminology \textit{`leaf'} in a non-standard way. While in foliation theory, a leaf is automatically supposed to be connected, the definition we use allows $\mathscr{F}_{\theta}(M)$ not to be. 
Such a non connectedness phenomena
 can indeed happen when $N$ has genus $1$ as the explicit description of the case $g=1$, $n=2$ carried on in \cite{GhazouaniPirio1} reveals (see \cite[\S4.2.5]{GhazouaniPirio1} for an explicit example). The determination of the connected components in the general case is an open problem that seems interesting to the authors for the reasons explained in Section \ref{holonomy}. 
\mk

In what follows, we fix a leaf $\F$ with ${\rm Im}(\rho)$ finite  and we  explain below  several algorithms to determine the strata of complex codimension 1 appearing in the completion $\overline{\F}$.

\subsection{Finding the $\boldsymbol{P}$-strata.}

Let $m$ be the positive integer such that $G_{\theta}$ is the subgroup of $\mathbb{U}$ generated by $e^{\frac{2i\pi}{m}}$.  
 With these notations, one has
$$
\mathrm{Im}\big(\rho\big)=
\left\langle e^{\frac{2i\pi}{mM}}
\right\rangle .
$$
 
 Since $\theta_1 > 2\pi$ there exists $p$ such that 
 $$ \theta_1 = 2\pi\left(1 + \frac{p}{m}\right)\, .$$ 
 
 A $P$-stratum of codimension 1 is a (finite cover) of a moduli space of flat spheres whose angles datum is $(\theta_1', \theta_1'', \theta_2, \ldots, \theta_n)$ with $\theta_1'$ and $\theta_1''$  such that 
\sk
\begin{enumerate}
\item $\theta_1' + \theta_1'' = \theta_1 - 2\pi$; 
 \sk
\item both $e^{i\theta_1'}$ and $e^{i\theta_1''}$ belong to $\mathrm{Im}(\rho) = \big\langle e^{\frac{2i\pi}{mM}} \big\rangle$.
\end{enumerate}

 This condition is sufficient for such a moduli space of flat spheres to appear as a  stratum of the metric completion of $\F$. There is therefore a $P$-stratum for each way of decomposing the integer $pM$ as a sum of two positive integers, this number being $\big\lfloor \frac{pM}{2}\big\rfloor$.
 
 The stratum associated to a decomposition 
\begin{equation}
\label{decomposition}
\tag{$\mathscr{D}$}  pM = r' + r''  
\end{equation}
with $r',r''>0$ is a finite cover of the moduli space of flat spheres whose angles datum is 
$$\left(e^{\frac{2i\pi r'}{mM}}, e^{\frac{2i\pi r''}{mM}}, \theta_2, \ldots, \theta_n\right).$$

According to Section \ref{coneangle}, the {cone angle} around the stratum associated to the decomposition \eqref{decomposition} is 
$2\pi \cdot {\mathrm{lcm}(r',r'')}/{(mM)}$.

\subsection{Finding the $\boldsymbol{C}$-strata.}

A $C$-stratum of codimension $1$ is a moduli space of flat tori with $n-1$ cone points corresponding to the collision of two cone points $p_k$ and $p_l$ of respective angles 
$\theta_k$ and $\theta_l$. 
The new angle datum $\theta'=(\theta_i')_{i=1}^{n-1}$ is such that $\theta_k$ and $\theta_l$ have been replaced by $\theta_k + \theta_l - 2\pi$, the other $\theta_i$'s staying unchanged. \sk 

A holonomy character $\rho' \in \mathrm{H}^1(N, \mathbb{U}, \theta') $ is such that a finite cover of $\mathscr{F}_{[\rho']}$ is a stratum of $\F$ if and only if $\mathrm{Im}(\rho)$ is generated by $\mathrm{Im}(\rho')$, $e^{i\theta_k}$ and $e^{i\theta_l}$ (see Subsection \ref{Thsurgery}) . We describe now the positive integers $M'$ which are such that $\mathscr{F}_{\theta'}(M')$ appears as a $C$-stratum of the metric completion of $\F=\mathscr{F}_{\theta}(M)$. \sk 

 Let $m'$ be the positive integer such that $G_{\theta'}$ is generated by $e^{\frac{2i\pi}{m'}}$. Remark that $m'$ divides $m$. We are trying to find the integers $M'$ such that $e^{\frac{2i\pi}{m'M'}}$ and $e^{\frac{2i\pi}{m}}$ generate $\mathrm{Im}(\rho) = \langle e^{\frac{2i\pi}{mM}} \rangle$. This is equivalent to find the integers $M'$ such that 
 $ \mathrm{lcm}\big(m, m'M'\big) = mM$. 
 Since $m'\lvert m$,  this is equivalent to determine the positive  integers $M'$
verifying
$$ \mathrm{lcm}\left(M', \frac{m}{m'}\right) = M \frac{m}{m'}\, .  $$

The list of solutions to the preceding relation viewed as an equation in $M'$,  provides the list of leaves  $\mathscr{F}_{\theta'}(M')$, associated to the collision between 
$p_k$ and $p_l$  which appear as $C$-strata of $\F$. 
 For any such $C$-strata, it comes from Section \ref{coneangle} that the {conifold angle} around it is  $\theta'=\theta_k-\theta_l-2\pi$.
 \sk 
 
Remark that,  as  in the genus 0 case considered by Thurston, 
the complex hyperbolic conifold angle $\theta'$  coincides with the new  cone angle of  the flat surfaces  whose isomorphism classes belong to the considered stratum.

\subsection{\bf Finding the $\boldsymbol{K}$-strata.} 
A $K$-stratum  appears in codimension 1 in the metric completion of a leaf $\F$ if and only if $n=3$   and $G_{\theta} = \mathrm{Im}(\rho)$.  In this case, the conifold angle around such a stratum is $\pi$ according to the third point of  Section \ref{coneangle}.

\subsection{The $1$-dimensional case.}
\label{1dimension}

We now consider the case when $n=2$. 
We  assume $\theta = ( \theta_1, \theta_2)\in 2\pi \mathbb{Q}^2$ with  $\theta_1 + \theta_2 = 4\pi$. A leaf $\mathscr{F}_{\theta}(M)$ is a $1$-dimensional complex hyperbolic manifold or equivalently, a real hyperbolic surface.

 According to Theorem \ref{cone-manifold}, the metric completion  of $\mathscr{F}_{\theta}(M)$ is a  hyperbolic surface of finite volume, with a finite number of cone points and a finite number of cusps. We give in this section a refinement of the description of the $P$-strata appearing in 
 $\overline{\mathscr{F}_{\theta}(M)}$ 
 (there is actually no $C$-stratum in the metric completion of $\mathscr{F}_{\theta}(M)$  when $n=2$ according to Proposition \ref{collisions2}) and give a list of the cusps by geometric means. We finally give explicit details in the case when $\theta = (3\pi, \pi)$.  \sk
 
  A leaf $\mathscr{F}_{\theta}(M)$ when $n=2$ shall be thought of as a generalisation of the modular surface $\mathbb{H}/\mathrm{PSL}(2,\mathbb{Z})$ which is the moduli space of regular flat tori. Veech's hyperbolic structure matches its standard one. It is not very surprising that the analytical analysis carried on in \cite
 {GhazouaniPirio1} shows that the connected components of $\mathscr{F}_{\theta}(M)$ are conformally equivalent to modular curves of the form 
 $Y_1(N)=\mathbb{H}/ \Gamma_1(N)$   for certain  integers  $N\geq 2$, see \cite[\S4.2.4]{GhazouaniPirio1} for more details.
\mk

\paragraph*{\bf Cone points.}
Strata of $\mathscr{F}_{\theta}(M)$ correspond to flat spheres  $S$ with 3 cone points
whose associated angle datum 
 $(\theta_1', \theta''_1, \theta_2)$ is such that $\theta_1' + \theta''_1 = \theta_1 - 2\pi$. Listing such flat spheres has been done in the preceding section. We also saw that $P$-strata are finite covers of moduli spaces of flat spheres, in this specific case such a finite cover is a union of points. There are as many copies  of $S$ appearing in the  metric completion $\overline{\mathscr{F}_{\theta}(M)}$ as ways of performing Devil's surgery on $S$.  \sk 

 Let $T \in \mathscr{F}_{\theta}(M)$ be a torus built by Devil's surgery on $S$. Let $\widehat \beta$ be a simple curve on $T$ avoiding the singular points that intersects the systole only once (see Section \ref{devil} for more details and pictures). We remark that $\rho(\widehat \beta)$ has to be such that $\big\langle e^{i\theta_1'}, e^{i\theta_1''}, e^{i\theta_2}, \rho(\widehat \beta) \big\rangle =  \mathrm{Im}(\rho)$.\footnote{Note that here and in what follows, the unitary holonomy character $\rho$ corresponds to the one which was denoted by $\widehat \rho$ in \S\ref{devil}.}
 \sk 
 
  With the notation of the previous section, if 
   $$\theta_1' =  \frac{2\pi r'}{mM}
   \qquad \mbox{ and } \qquad 
   \theta_1'' =  \frac{2\pi r''}{mM},$$  then there are exactly $\gcd(r',r'')$ different ways to perform the Devil's surgery in such a way that that the holonomy along $\widehat \beta$ belongs to $\mathrm{Im}(\rho)$. Amongst these ways, only $\varphi\big(\gcd(r',r'',mM)\big)$ (where $\varphi$ is Euler's totient function) are such that  $\langle e^{i\theta_1'}, e^{i\theta_1''}, e^{i\theta_2} \rangle =  \mathrm{Im}(\rho)$ and this number is the exact number of times that $S$ appears in the metric completion of $\mathscr{F}_{\theta}(M)= \F$. \sk
   
We now explain how to perform this counting. As explained in Subsection \ref{devil} (to which we refer for the notations used below), the parameters for Devil's surgery are  pairs of points $(q',q'')$ on two circles (respectively identified with) $\mathbb{R}/\theta_1'\mathbb{Z}$ and $\mathbb{R}/\theta_1''\mathbb{Z}$,  around the two singular points of the flat sphere involved in the surgery and a positive parameter $r$ equal to the radius of the aforementioned circle (up to constants depending on $\theta_1'$ and $\theta_1''$). These points $q'$ and $q''$ are the points which are going to be identified together  to create  a new point of negative curvature.  
Remark that all such parameters do not give rise to surfaces necessary belonging to $\mathscr F_{[\rho]}$: for this to happen, $q'$ and $q''$ have to move along $\mathbb{R}/\theta_1'\mathbb{Z}$ and $\mathbb{R}/\theta_1''\mathbb{Z}$ by the same amount. This way the holonomy along  $\widehat \beta$ (of the flat structure of the surfaces obtained after surgery) remains constant. The number of ways of performing Devil's surgery is the number of connected components of such parameters 
$(q',q'')$
 giving rise to flat surfaces belonging to the same  $\mathscr F_{[\rho]}$. Remark that any point 
 $q'$ in $\mathbb{R}/\theta_1'\mathbb{Z}$ belongs to a pair 
 $(q',q'')$ for which the associated surface belongs to such a chosen connected component of parameters (if this set is non-empty in the first place). In order to count the number of ways to invert Devil's surgery, it suffices then to count, given a point 
 $q'\in \mathbb{R}/\theta_1'\mathbb{Z}$, the number of points $
q'' \in  \mathbb{R}/\theta_1''\mathbb{Z}$ such that the couple 
 $(q',q'')$ gives rise to an element of $\mathscr F_{[\rho]}$ and distinguish those belonging to different components of parameters. \sk	
 
 Chose any point $ q''_0$ such that 
 $ (q',q''_0)$ 
 gives rise to an element of $\mathscr{F}_{\theta}(M)$. 
    Denote by $\widehat\beta(q',q'')$ the curve $\widehat\beta{\,}$\footnote{`The' curve $\widehat \beta$ is a priori not well-defined. One can make an arbitrary choice for $(q',q''_0)$ and then define $\widehat \beta$ for every nearby parameter using the Gauss-Manin connection. The initial choice does not really matter because the computation of its holonomy would give results well determined up to multiplication by an element of $\big\langle e^{i\theta_1'}, e^{i\theta_1''}\big\rangle$.} on the torus associated to the pair $(q',q'')$. 
    Up to multiplication by a power of $e^{i\theta_1''}$, we have
$$\rho\big(\widehat\beta(q', q''_0 + \vartheta)\big) = \rho\big(\widehat \beta(q', q''_0) \big) \cdot e^{i\vartheta}$$
 for any $\vartheta \in  \mathbb{R}/\theta_1''\mathbb{Z}$ 
(the addition refering to the standard group law of $\mathbb{R}/\theta_1''\mathbb{Z}$).
\sk

 We find this way that a pair 
  $(q',q'')$ gives rise to a surface such that $\rho(\widehat \beta) \in \langle e^{\frac{2i\pi}{mM}}\rangle$ (such a pair of parameters is said to be {\bf admissible}) if and only if $q'' - q''_0 \in \frac{2i\pi}{mM}\mathbb{Z}$ which gives exactly $r''$  possibilities 
for a point $q''$ to pair with $q'$ and giving rise to an element of $\F$  (recall that $\theta''_1 = \frac{2\pi r''}{mM}$).  Amongst these possibilities, are in the same components of the set of admissible parameters  those differing by a multiple of $r'$ modulo $r''$. This gives at most $\gcd(r',r'')$ different components of admissible parameters (see Remark \ref{different}). \sk

 The last thing that we have to ensure to make sure that the parameters we are considering truly correspond to elements of $\mathscr{F}_{\theta}(M)= \F$ is that  
 $$\big\langle e^{i\theta_1'}, e^{i\theta_1''}, e^{i\theta_2}, \rho(\widehat \beta) \big\rangle =  \big\langle e^{\frac{2i\pi}{mM}}\big\rangle\; .$$
 
  If $\beta$ is such that 
 $\rho( \widehat \beta) = e^{\frac{2ik\pi}{mM'}}$ for a certain positive integer $k$,   then
 $$\left\langle e^{i\theta_1'}, e^{i\theta_1''}, e^{i\theta_2}, \rho\big(\widehat \beta\big) \right\rangle =
   \left\langle e^{\frac{2i\pi \gcd(r',r,k,mM)}{mM}}\right\rangle$$
  which leads to $\varphi\big(\gcd(r',r,k,mM)\big)$ essentially different ways to perform the surgery.
  Finally, the $\mathbb C\mathbb H^1$-cone angle around such a point is $2\pi{\mathrm{lcm}(r',r'')}/({mM})$ (it is a direct application of the description of Section \ref{devil}).
\bk

\paragraph*{\bf Cusps}
\label{Cusps1}

According to Subsection \ref{bswec}, the  cusps of $\mathscr{F}_{\theta}(M)$ are in one-to-one correspondence with pairs $(S,\gamma)$ where

\begin{enumerate}

\item $S$ is a flat sphere with three cone points of angles $\theta_1'$,  $\theta_1''$ and $\theta_2$,  such that $\theta_1' + \theta_1'' = \theta_1 - 2\pi$ and $\mathrm{Im}(\rho) = \big\langle e^{i\theta_1'},  e^{i\theta_1''},  e^{i\theta_2} \big\rangle$; \sk

\item $\gamma$ is a regular geodesic in $S$  between the cone point of angle $\theta_1'$ and the one of angle $\theta_1''$.

\end{enumerate}

Such a geodesic always exists and is unique, we are therefore reduced to count the number of flat spheres with three cone points such that $\mathrm{Im}(\rho) = \big\langle e^{i\theta_1'},  e^{i\theta_1''},  e^{i\theta_2} \big\rangle$. This reduces to counting the number of pairs of positive integers  $(r',r'')$ such that $ r' + r'' = pM$ and $\gcd(r',r'') = 1$.  

\subsection{An example : $\theta = (3\pi, \pi)$}

We are now going to compute the number of conical points and cusps of $\mathscr{F}_{\theta}(M)$ in the special case when $\theta = (3\pi, \pi)$. In this case $p=1$ and $m=2$.

\begin{itemize}

\item Each $\mathbb C\mathbb H^1$-cone point of $\mathscr{F}_{\theta}(M)=\mathscr{F}_{ (3\pi, \pi)}(M)$ corresponds to a partition $r + s = M$ with $r, s > 0$. To 
such a partition are associated 
$\varphi\big(\gcd(r,s) = \gcd(r,M)\big)$ cone points, all of the same cone angle $2\pi {\mathrm{lcm}(r, M-r)}/{M}$.   
\sk

A particular case is when $M=2M'$ is even. In this case the partition $M = M' + M'$ only gives rise to half of the cone points predicted by the above paragraph, namely $\frac{1}{2}\varphi(M')$. Indeed the underlying sphere on which the surgery is performed has angles $(\pi,{\pi}/{2},{\pi}/{2}) $ and has a symmetry of order two. This symmetry permutes the different inversions of the surgery, except for the case $M=4$ where there is only $\varphi(2) = 1$ way to perform the surgery and in this particular case it only halves the cone angle at the underlying $\mathbb C\mathbb H^1$-cone point.
\sk
\item  In particular the number of cone points is  
$$ \frac{1}{2}\sum_{r=1}^{2M-1}{\varphi\big(\gcd(r,M)\big)} + \frac{1}{2}\varphi(M) $$ for $M > 4$ and is $2$ when $M=3,4$.  
\sk 
\item There are as many cusps as proper partitions $ M=r+s$  such that $r>0$ and $M$ are coprime. Thus the number of cusps is exactly 
$$\frac{1}{2}\varphi(M)\, $$ 
for $M \geq 3$ and is equal to $1$ for $M=2$.  
\end{itemize}\sk

The number of cusps and cone points put together gives us the number of punctures of $\mathscr{F}_{(3\pi, \pi)}(M)$ which is  equal to 
\begin{equation}
\label{E:sum}
 \frac{1}{2}\sum_{r=1}^{M}{\varphi\big(\gcd(r,M)\big)}  
 \end{equation}
for $M > 4$,  to $2$ for $M=2$ and to $3$ for $M=3,4$. When $M > 4$, a reordering of the sum 
 \eqref{E:sum}
 gives us  that the total number of punctures of $\mathscr{F}_{(3\pi, \pi)}(M)$ is actually
$$\frac{1}{2}\sum_{d | M}{\varphi(d)\varphi\big({M}/{d}\big)}\, . $$  
This number is equal to the number of cusps of the modular curve $Y_1(M) = \mathbb{H}/ \Gamma_1(M)$ (see \cite{DiamondShurman}). This is not a coincidence: it is proved in \cite[\S4.2.5.3]{GhazouaniPirio1} that the conformal type of $\mathscr{F}_{(3\pi, \pi)}(M)$ is actually the same as the one of $Y_1(M)$.



\section{\bf HOLONOMY OF THE $\mathbb C\mathbb H^{n-1}$-STRUCTURE: DISCRETENESS}

\label{holonomy}
\subsection{Previous results in the genus 0 case.}

Thurston proves in \cite{Thurston} (recovering by geometric methods results of Deligne and Mostow from \cite{DeligneMostow}) that when $g=0$ and $n\geq 4$, if the angle datum $\theta = (\theta_1, \ldots, \theta_n) \in ]0,2\pi[^n $ verifies 
\begin{equation}
\label{E:11(INT)}
\tag{\rm INT}
\forall i,j =1,\ldots, 
\,  i \neq j,   \quad 
2\pi< \theta_i + \theta_j  \;\;
\Longrightarrow \;\; 
\theta_i + \theta_j - 2\pi \ \mbox{ divides } \ 2\pi \,, 
\end{equation}
then the metric completion of $\mathscr{F}_{\!\theta} \simeq \mathscr{M}_{0,n}$ (the unique leaf of Veech's foliation in this case) is a connected complex hyperbolic orbifold of finite volume and therefore a quotient ${\mathbb{CH}}^{n-3}/ \Gamma_{\!\theta}$ where $\Gamma_{\!\theta}$ is a lattice in $\mathrm{PU}(1,n-3)$. This lattice $\Gamma_{\!\theta}$ is exactly the image of holonomy morphism 
$$ \mathrm{hol} : \pi_1 \big(\mathscr{M}_{0,n}\big) \longrightarrow  \mathrm{PU}(1,n-3) $$ 
of the $({\mathbb{CH}}^{n-3}, \mathrm{PU}(1,n-3))$-structure of $\mathscr{F}_{\!\theta}$. Sometimes it happens that the image $\Gamma_{\!\theta}=\mathrm{hol}(\pi_1( \mathscr{F}_{\!\theta}) )$ of the holonomy   is a lattice in $\mathrm{PU}(1,n-3) $ even if the metric completion of $\mathscr{F}_{\!\theta}$ is not an orbifold. The combined works of Picard, LeVavasseur, Terada, Deligne-Mostow, Mostow, Thurston and Sauter (see \cite{LeVavasseur,Terada,Thurston,DeligneMostow,Mostow,Sauter}) lead to the following results:
\sk 
\begin{enumerate}
\item there exist 94 angles data $\theta$ for which $\mathscr{F}_{\!\theta}$ is an orbifold, and therefore $\Gamma_{\!\theta}=\mathrm{hol}(\pi_1 (\mathscr{F}_{\!\theta}))$ is a lattice; 
\mk 
\item this builds lattices in $\mathrm{PU}(1,N) $ for all $N=n- 3  = 1, \ldots, 9$, some of them being non-arithmetic for $N =1, 2$ and $3$ (these have been for a long time the only known examples of non-arithmetic complex hyperbolic lattices until the recent work of Deraux,  Parker and Paupert \cite{DerauxParkerPaupert});
\mk 
\item if $N \geq 3$,  $\Gamma_{\!\theta}$ is a lattice if and only if $\theta$ verifies $(\Sigma\mathrm{INT})$, apart from one exception (we recall that $(\Sigma\mathrm{INT})$ is the  refinement of  $\mathrm{(INT)}$  stated in the Introduction above; see also 
 \cite[\S1]{MostowIHES} or  \cite[Theorem 0.2]{Thurston});
\mk 
\item when $N=2$,  there exist $9$ angles data failing $(\Sigma\mathrm{INT})$ for which  $\Gamma_{\!\theta}=\mathrm{hol}(\pi_1 (\mathscr{F}_{\!\theta}))$ is a lattice.

\end{enumerate}

\subsection{Genus $\boldsymbol{1}$ and $\boldsymbol{n = 3}$.}

We now  address the following question, which must seem natural at this point:

\begin{quest}
Let $T$ be a torus with three marked points, and $\theta$ an admissible rational angle datum. 
Does there exist $\rho \in \mathrm{H}^1(T, \mathbb{U}, \theta) $ such that $\mathrm{hol}(\pi_1( \mathscr{F}_{[\rho]})) $ is a lattice in $\mathrm{PU}(1,2)$ ? 
\end{quest}

As we have seen in Section \ref{listing}, such a leaf $\F$ is only determined by $\theta$ and an integer $M$. We denote such a leaf by $\mathscr{F}_{\theta}(M)$. The first difficulty to address is the question of the connectedness of the leaves $\F$: $\mathscr{F}_{\theta}(M)$ may have several connected components (see Subsection \ref{invariance} for a short discussion of this matter) and it is possible that the holonomy of one of these is a lattice and that it is not the case for the others. \sk 

 The following lemma, whose proof in the genus $0$ case can be found in \cite{Mostow}, outlines a strategy to search for connected components of $\mathscr{F}_{\theta}(M)$ whose holonomy is a lattice: 
\begin{lemma}
\label{criterion1}
Let\, $\mathscr{F}$ be a connected component of \, $\mathscr{F}_{\theta}(M)$ whose complex hyperbolic holonomy is a lattice in $\mathrm{PU}(1,2)$. Then the complex hyperbolic holonomy of every codimension $1$ stratum is a lattice in $\mathrm{PU}(1,1)$. 
\end{lemma}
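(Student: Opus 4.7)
The plan is to leverage the fact that codimension $1$ strata in $\overline{\mathscr{F}}$ are locally and globally totally geodesic $\mathbb{C}\mathbb{H}^1$-submanifolds of $\overline{\mathscr{F}}$, and to exploit the discreteness of the ambient holonomy to deduce that of the stratum holonomy.

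First I would fix a codimension $1$ stratum $S \subset \overline{\mathscr{F}}$. By Theorem \ref{mcompletion} applied in dimension $2$, $S$ is a finite cover of (a component of) a one-dimensional leaf $\mathscr{F}_{[\widetilde{\rho}]}$ associated to a reduction $\widetilde{\rho}$ of $\rho$, and by Theorem \ref{cone-manifold} it inherits a complex hyperbolic cone-manifold structure. Near any regular point of $S$, one can choose a linear parametrisation $(z_0,z_1,z_2)$ for which $S$ is cut out by $\{z_0=0\}$ and for which the area form splits as $A(z_0,z_1,z_2)=A'(z_1,z_2)-\mu|z_0|^2$ with $\mu>0$ and $A'$ of signature $(1,1)$ (this is implicit in the proofs of Lemma \ref{foliation} and Proposition \ref{signature}). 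This shows that, locally, the developing map of $S$ identifies it with a totally geodesic $\mathbb{C}\mathbb{H}^1$ sitting inside the local $\mathbb{C}\mathbb{H}^2$-chart of $\overline{\mathscr{F}}$. In particular the holonomy representation $\mathrm{hol}_S : \pi_1(S^{\mathrm{reg}}) \rightarrow \mathrm{PU}(1,1)$ is well-defined, and naturally identifies, once a basepoint is chosen, with the composition of inclusion in the stabiliser of this $\mathbb{C}\mathbb{H}^1$ inside $\mathrm{PU}(1,2)$ and the corresponding restriction map.

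Next I would establish discreteness. Let $\Gamma = \mathrm{hol}(\pi_1(\mathscr{F})) \subset \mathrm{PU}(1,2)$ be the ambient lattice. Given a loop $\gamma$ based at a regular point $p\in S$, I would use the orthogonal projection $\pi$ of Section \ref{cone-manifoldness} (whose fibres, by Lemma \ref{foliation}, are orthogonal to $S$ and foliated by geodesics) to push $\gamma$ into $\mathscr{F}$: for every small enough $\varepsilon>0$ one produces a free loop $\gamma_\varepsilon\subset \mathscr{F}$ sitting in a tubular $\varepsilon$-neighbourhood of $\gamma$ and freely homotopic to $\gamma$ inside a simply connected chart of the local cone structure transverse to $S$. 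Continuity and equivariance of the developing map imply that, as $\varepsilon\to 0$, $\mathrm{hol}(\gamma_\varepsilon)\in\Gamma$ converges in $\mathrm{PU}(1,2)$ to the element obtained by embedding $\mathrm{hol}_S(\gamma)$ via $\mathrm{PU}(1,1)\hookrightarrow \mathrm{PU}(1,2)$. Since $\Gamma$ is discrete, the sequence is eventually constant, whence $\mathrm{hol}_S(\gamma) \in \Gamma\cap \mathrm{Stab}(\mathbb{C}\mathbb{H}^1)$, which is a discrete subgroup of $\mathrm{PU}(1,1)$.

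Finally, it remains to show that $\mathrm{hol}_S(\pi_1(S^{\mathrm{reg}}))$ has finite covolume. The finite-volume theorem of Section \ref{cusps} applies to $\mathscr{F}_{[\widetilde{\rho}]}$ (the image of $\widetilde{\rho}$ is finite, being a reduction of the finite-image character $\rho$), and $S$ is a finite cover of a component of it; hence $S$ is a one-dimensional complex hyperbolic cone-manifold of finite volume, with only finitely many cusps and cone points. Combined with the discreteness just established, this forces $\overline{S}$ to be isometric to the orbifold quotient $\mathbb{C}\mathbb{H}^1/\mathrm{hol}_S(\pi_1(S^{\mathrm{reg}}))$ of finite volume, so that $\mathrm{hol}_S(\pi_1(S^{\mathrm{reg}}))$ is a lattice in $\mathrm{PU}(1,1)$. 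The main obstacle will be making the pushing argument in the second paragraph fully rigorous in the presence of the non-trivial transverse cone angles of $\overline{\mathscr{F}}$ along $S$: one needs to ensure that the lifts $\gamma_\varepsilon$ do not pick up additional winding around the cone locus, which should be achieved by covering $\gamma$ by finitely many product charts coming from the orthogonal projection $\pi$ and performing the parallel lift chart by chart.
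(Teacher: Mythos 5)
The paper does not actually supply a proof of Lemma~\ref{criterion1}: it only remarks that the genus~$0$ case is proved in \cite{Mostow}. So there is nothing to compare your argument against directly, and the fact that you attempt a geometric proof is welcome. Your first two paragraphs are essentially sound: the existence of a linear parametrisation $(z_0,z_1,z_2)$ in which $A=A'-\mu|z_0|^2$ does show that $S$ develops onto a totally geodesic $\mathbb{C}\mathbb{H}^1$, and the holonomy of a loop $\gamma_\varepsilon$ in $\mathscr{F}$ obtained by pushing $\gamma\subset S^{\mathrm{reg}}$ into the fibres of $\pi$ lies in $\Gamma\cap\mathrm{Stab}(\mathbb{C}\mathbb{H}^1)$ and restricts on $\mathbb{C}\mathbb{H}^1$ to $\mathrm{hol}_S(\gamma)$. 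In fact the $\varepsilon\to0$ limit is unnecessary: this holds exactly for any fixed small $\varepsilon$, because the developing map on the tubular neighbourhood $\mathbb{C}\mathbb{H}^1\times C_\alpha$ is compatible with the orthogonal splitting, so the holonomy of $\gamma_\varepsilon$ is an element of $\mathrm{Stab}(\mathbb{C}\mathbb{H}^1)$ projecting to $\mathrm{hol}_S(\gamma)$. Since $\mathrm{Stab}(\mathbb{C}\mathbb{H}^1)\to\mathrm{PU}(1,1)$ has compact kernel, discreteness of $\Gamma$ does give discreteness of $\mathrm{hol}_S(\pi_1(S^{\mathrm{reg}}))$.

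The last paragraph, however, contains a genuine gap. You assert that discreteness plus finite cone-volume ``forces $\overline{S}$ to be isometric to the orbifold quotient $\mathbb{C}\mathbb{H}^1/\mathrm{hol}_S(\pi_1(S^{\mathrm{reg}}))$.'' This is false: $\overline{S}$ is a complex hyperbolic cone-manifold whose cone angles are computed in Section~\ref{listing} and are generically not integer submultiples of $2\pi$, so $\overline{S}$ is generically not an orbifold and cannot be isometric to any $\mathbb{C}\mathbb{H}^1/\Lambda$. The paper itself flags exactly this distinction in the Proposition at the end of Section~\ref{holonomy}: $\overline{\mathscr{F}}$ is a lattice quotient if and only if the boundary cone angles are integer parts of $2\pi$, which is a strictly stronger condition than the holonomy being a lattice. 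To salvage the finite-covolume conclusion you should instead work with the local isometry $f:S^{\mathrm{reg}}\to\mathbb{C}\mathbb{H}^1/\Lambda$ that the developing map induces once $\Lambda$ is known to be discrete: argue that $\mathrm{vol}(f(S^{\mathrm{reg}}))\leq\mathrm{vol}(S^{\mathrm{reg}})<\infty$ because $f$ is a local isometry, and then show that $f$ has full-measure image (using that the metric completion $\overline{S}$ is compact away from finitely many cusps, that $f$ extends continuously over the cone points, and that cuspidal ends of $S$ map to cuspidal ends of $\mathbb{C}\mathbb{H}^1/\Lambda$). Only after that step can one conclude that $\mathbb{C}\mathbb{H}^1/\Lambda$ has finite volume, i.e.\ that $\Lambda$ is a lattice. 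As written, your argument silently identifies a cone-manifold with an orbifold and thus does not prove the lemma.
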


This lemma provides necessary conditions on such a connected component $\mathscr{F}$ to have discrete holonomy in $\mathrm{PU}(1,2)$, conditions which hold true in several cases.  To find candidates to have holonomy a lattice in $\mathrm{PU}(1,2)$, an optimistic  strategy goes as follows:

\begin{enumerate}

\item identify the different connected components of $\mathscr{F}_{\theta}(M)$; \sk

\item verify if the criterion given by Lemma \ref{criterion1} is verified, using the list  of Deligne-Mostow and Thurston for genus $0$ type codimension $1$ strata or the strategy suggested in the next paragraph for genus $1$ codimension $1$ strata; \sk


\item amongst the isolated candidates, compute the complex hyperbolic holonomy and verify that it is discrete.
\mk 
\end{enumerate}
(We call  `{\bf genus $\boldsymbol{g}$ strata}'   a strata whose elements are flat surfaces of genus $g$). 
\mk

The last step seems to be the most difficult to achieve so far, since the methods used in the genus $0$ case (see for example \cite{Parker})  tends to become algorithmically too complicated in our case and strongly rely on the knowledge of simple generators of the fundamental group of $\mathscr{M}_{0,n}$ (a finite family of distinguished Dehn twists). 


\subsection{Some cases when the holonomy is an arithmetic lattice.}
We remark  that for a certain number of connected components $\mathscr{F}$ of leaves of Veech's foliation, the holonomy is an arithmetic lattice in $\mathrm{PU}(1,2)$. 

This follows from the following lemma: 

\begin{lemma}
Let $\mathscr{F}$ be a connected component of a leaf $\F$. Then, up to a suitable conjugation, the coefficients of the matrices of\;$\mathrm{hol}(\pi_1 (\mathscr{F}))$ lie in $\mathbb{Z}[\mathrm{Im}(\rho)]$.
\end{lemma}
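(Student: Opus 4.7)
The plan is to read off the integrality directly from the transition maps between the polygonal (more generally, linear) parametrisations constructed in Section \ref{Charts}. Fix a basepoint $N_0 \in \mathscr{F}$ and a polygonal parametrisation $(\varphi_0, U_0)$ at $N_0$ with associated coordinates $(z_1^{(0)}, \ldots, z_{k-1}^{(0)}) \in \mathbb{C}^{k-1}$. This chart, together with the Hermitian form $A_{\varphi_0, U_0}$, identifies a neighbourhood of $N_0$ in $\mathscr{F}$ with an open set in the projectivised positive cone of a model $(\mathbb{C}^{k-1}, h)$ of $\mathbb{CH}^{n-1}$. The complex hyperbolic holonomy $\mathrm{hol}(\pi_1(\mathscr{F}, N_0))$ is then obtained, as usual for $(G,X)$-structures, by analytic continuation of this chart along loops: given $[\gamma] \in \pi_1(\mathscr{F}, N_0)$, one covers $\gamma$ by a finite chain of linear charts $(\varphi_0, U_0), (\varphi_1, U_1), \ldots, (\varphi_L, U_L) = (\varphi_0, U_0)$ and writes $\mathrm{hol}([\gamma])$ as the projectivization of the composition of the transition linear maps $g_j \in \mathrm{GL}_{k-1}(\mathbb{C})$ provided by Proposition \ref{linear}.

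The key step is to control the matrix entries of each $g_j$. By Proposition \ref{pp} we may assume that every $U_j$ is parametrised by a \emph{polygonal} chart (or at worst by a linear parametrisation obtained from a topological gluing as in Proposition \ref{top}, which depends linearly on a polygonal one, with coefficients in $\mathbb{Z}[\mathrm{Im}(\rho)]$ for the same reason as below). The explicit formula at the heart of the proof of Proposition \ref{linear} expresses each new side $w_i$ as $w_i = \sum_j \alpha_{i,j} z_j$, where $\alpha_{i,j}$ is obtained by developing the path associated to $w_i$ through successive copies of the initial polygon $P$, each gluing multiplying by one of the unit complex numbers $\rho_\ell = z_\ell/z_{k+\ell} \in \mathrm{Im}(\rho)$. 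Consequently each $\alpha_{i,j}$ is a $\mathbb{Z}$-linear combination of products of elements of $\mathrm{Im}(\rho)$, hence an element of $\mathbb{Z}[\mathrm{Im}(\rho)]$. Thus each $g_j$ lies in $\mathrm{GL}_{k-1}(\mathbb{Z}[\mathrm{Im}(\rho)])$.

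Since $\mathbb{Z}[\mathrm{Im}(\rho)] \subset \mathbb{C}$ is a ring, any composition of such matrices still has entries in $\mathbb{Z}[\mathrm{Im}(\rho)]$. It follows that the unprojectivised lift of the holonomy takes values in $\mathrm{GL}_{k-1}(\mathbb{Z}[\mathrm{Im}(\rho)])$, and its image preserves the Hermitian form $A_{\varphi_0, U_0}$ (the area being a geometric invariant, as noted in Proposition \ref{linear} and \cite[Theorem 0.7]{Veech}). Projectivising, $\mathrm{hol}(\pi_1(\mathscr{F}))$ is contained in the image in $\mathrm{PU}(1,n-1)$ of $\mathrm{GL}_{k-1}(\mathbb{Z}[\mathrm{Im}(\rho)]) \cap \mathrm{U}(A_{\varphi_0, U_0})$, which is the desired statement, the implicit conjugation being precisely the one identifying the model at $N_0$ with the standard $\mathbb{CH}^{n-1}$.

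The main (and essentially only) obstacle is the verification that the transition coefficients $\alpha_{i,j}$ really do lie in $\mathbb{Z}[\mathrm{Im}(\rho)]$ for \emph{every} admissible pair of linear charts, including those obtained via Proposition \ref{top} from merely topological triangulations; this is handled by observing that such a linear parametrisation is by definition a $\mathbb{Z}[\mathrm{Im}(\rho)]$-linear change of a polygonal one (the coefficients being again given by developing edges in the universal cover and recording the linear holonomies encountered), so the integrality propagates through the whole atlas used to compute $\mathrm{hol}$.
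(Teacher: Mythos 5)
Your proof takes exactly the same route as the paper's: the paper's entire justification is a single sentence referring to the integrality of the transition-map coefficients for the atlas of linear parametrisations coming from topological polygonations (Section~\ref{Charts} and the proof of Proposition~\ref{linear}), which is precisely the observation you make explicit by covering a loop with a chain of polygonal charts, tracking the coefficients $\alpha_{i,j}$ through the developing argument, and using that $\mathbb{Z}[\mathrm{Im}(\rho)]$ is a ring closed under matrix composition. Your extra detail (including the remark that the ``suitable conjugation'' is just the identification of the model at $N_0$ with the standard $\mathbb{C}\mathbb{H}^{n-1}$, and the caveat about charts from Proposition~\ref{top}) faithfully fleshes out what the paper leaves implicit.
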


This lemma is an easy consequence of the fact that the matrices of the transitions maps of an atlas of linear parametrisations coming from topological polygonations must have coefficients in $\mathbb{Z}[\mathrm{Im}(\rho)]$ (see Section \ref{Charts} and the proof of Proposition \ref{linear}).   In particular if $\mathbb{Z}[\mathrm{Im}(\rho)]$ is discrete in $\mathbb{C}$, then for any connected component $\mathscr{F}$ of $\F$, the image of its holonomy is discrete in $\mathrm{PU}(1,2)$. \sk 

\vspace{-0.5cm}
The 
  developing map  $\widetilde{\mathscr{F}} \rightarrow \mathbb{CH}^2$ 
 of such a $\mathscr F$  factors through a local isometry $\mathscr{F} \rightarrow \mathbb{CH}^2 / \mathrm{hol}(\pi_1(\mathscr{F}))$. Since $\mathscr{F}$ has finite volume ({\it cf.} Section 8), $\mathrm{hol}(\pi_1(\mathscr{F}))$ is necessarily a lattice which must be arithmetic since it belongs to $\mathrm{PU}(1,2) \cap \mathrm{SL}_3(\mathbb{Z}[\mathrm{Im}(\rho)])$.
This situation actually happens: if $\mathrm{Im}(\rho) = \langle \exp({{2i\pi}/{m}}) \rangle$ for $m = 3, 4$ or $6$ then $\mathbb{Z}[\mathrm{Im}(\rho)]$ is discrete. Note that the argument does apply to higher dimensions as well. 
\sk 

 By straightforward computations, these cases can be completely determined: 
\begin{prop} \begin{enumerate}
\item  The data $\theta$ and $M$ such that $\mathrm{Im}(\rho) = \langle \exp({{2i\pi}/{m}}) \rangle$ with  $m \in \{ 3, 4,6\}$ are exactly those given  in {\sc Table \ref{Tata}}  below.  \sk
\item  For any such  $\theta$ and $M$, the (image of the)  complex hyperbolic  holonomy of 
any connected component  of\;\,$\mathscr F_\theta(M)$ is an arithmetic lattice.
 \end{enumerate}
\end{prop}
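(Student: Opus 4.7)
The plan for part~(1) is to reduce the statement to a finite combinatorial enumeration. By Lemma~\ref{MCG}, a leaf $\mathscr{F}_\theta(M)$ is determined, up to the pure mapping class group, by $\theta$ together with the order of $\mathrm{Im}(\rho)$; under the hypothesis $\mathrm{Im}(\rho)=\langle e^{2i\pi/m}\rangle$ with $m\in\{3,4,6\}$, the definition of $M$ at the beginning of Section~\ref{listing} forces $|G_\theta|=m/M$, so I would take $M$ to range over the proper divisors of $m$ (properness because no $\theta_j$ is a multiple of $2\pi$). Setting $k:=m/M\in\{2,3,4,6\}$ and writing $e^{i\theta_j}=e^{2i\pi a_j/k}$ with $a_j\in\{1,\ldots,k-1\}$, I would then rephrase the three defining constraints as $a_1+a_2+a_3=3k$ from Gau\ss-Bonnet, $a_1\in\{k+1,\ldots,2k-1\}$ together with $a_2,a_3\in\{1,\ldots,k-1\}$ from the angle bounds, and $\gcd(a_1,a_2,a_3,k)=1$ from the equality $G_\theta=\mu_k$.

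A case-by-case analysis in $k$ then produces the complete list. The case $k=2$ would be ruled out immediately, since the angle bounds imply $a_1+a_2+a_3\leq (2k-1)+2(k-1)=3k-3<3k$, eliminating both $(m,M)=(4,2)$ and $(6,3)$. For $k=3$ the only solution is $(a_1,a_2,a_3)=(5,2,2)$ up to swapping $a_2$ and $a_3$, giving $\theta=(\tfrac{10\pi}{3},\tfrac{4\pi}{3},\tfrac{4\pi}{3})$; this single $\theta$ then gives two genuinely distinct leaves, $\mathscr{F}_\theta(1)$ from $m=3$ and $\mathscr{F}_\theta(2)$ from $m=6$. The remaining cases $k=4$ and $k=6$ are treated the same way, and I would carry out these finite enumerations by hand to verify that what comes out matches Table~\ref{Tata}.

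Part~(2) I would deduce directly from the preceding lemma and the remark which follows it. For $m\in\{3,4,6\}$, the ring $\mathbb{Z}[\mathrm{Im}(\rho)]=\mathbb{Z}[e^{2i\pi/m}]$ is the ring of integers of an imaginary quadratic field (the Gaussian integers for $m=4$, the Eisenstein integers for $m=3$ and $6$), hence discrete in $\mathbb{C}$. By that lemma, a conjugate of $\mathrm{hol}(\pi_1(\mathscr{F}))$ therefore lies inside the discrete subgroup $\mathrm{PU}(1,2)\cap \mathrm{SL}_3(\mathbb{Z}[e^{2i\pi/m}])$ of $\mathrm{PU}(1,2)$. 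The developing map then descends to a local isometry $\mathscr{F}\to \mathbb{CH}^2/\mathrm{hol}(\pi_1(\mathscr{F}))$; since $\mathscr{F}$ has finite complex hyperbolic volume by Section~\ref{cusps}, so does this quotient, and $\mathrm{hol}(\pi_1(\mathscr{F}))$ is a lattice. Arithmeticity is then immediate, as it embeds into the integer points of the $\mathbb{Q}$-algebraic group obtained by restriction of scalars from $\mathbb{Q}(e^{2i\pi/m})$ to $\mathbb{Q}$ of the unitary group preserving the signature-$(1,2)$ Hermitian form underlying the complex hyperbolic structure.

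The main obstacle will be the bookkeeping in part~(1): I must be careful that the same angle datum $\theta$ can give several leaves $\mathscr{F}_\theta(M)$ (as in the $k=3$ case above), while at the same time permutations of $(a_2,a_3)$ yield isomorphic leaves that should be listed only once in the table. Beyond this, no new geometric input is required, since part~(2) is essentially a corollary of the material immediately preceding the proposition.
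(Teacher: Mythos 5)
Your argument for part~(2) is exactly the one the paper gives just before the statement: the transition maps lie in $\mathrm{SL}_n(\mathbb{Z}[\mathrm{Im}(\rho)])$, and for $m\in\{3,4,6\}$ the ring $\mathbb{Z}[e^{2i\pi/m}]$ is the ring of Eisenstein or Gaussian integers and hence discrete, so a conjugate of $\mathrm{hol}(\pi_1(\mathscr{F}))$ lands in the discrete group $\mathrm{PU}(1,n-1)\cap\mathrm{SL}_n(\mathbb{Z}[e^{2i\pi/m}])$; finiteness of the volume then promotes this discrete image to a lattice, which is arithmetic by restriction of scalars. One slip: you wrote $\mathrm{PU}(1,2)$ and $\mathrm{SL}_3$, but Table~\ref{Tata} also contains entries with $n=4,5,6$, so you must say $\mathrm{PU}(1,n-1)$ and $\mathrm{SL}_n$, as the paper itself remarks.

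For part~(1), your reduction via Lemma~\ref{MCG} to a finite enumeration over tuples $(a_1,\ldots,a_n)$ with $\sum a_j=nk$, $a_1\in\{k+1,\ldots,2k-1\}$, $a_j\in\{1,\ldots,k-1\}$ for $j\geq 2$, and $\gcd(a_1,\ldots,a_n,k)=1$, where $k=m/M$ and $M$ runs over proper divisors of $m$, is the right (and essentially the only) approach, and the paper gives no more detail than ``straightforward computations''. But two things need correcting. First, a minor arithmetic slip: $(2k-1)+2(k-1)=4k-3$, not $3k-3$; the conclusion for $k=2$, $n=3$ still holds because $4k-3=5<6$, and the general bound $(2k-1)+(n-1)(k-1)=nk+k-n\geq nk$ yields the useful necessary condition $k\geq n$, which you will need to control the cases $n>3$. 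Second, and more substantively, your stated endpoint ``verify that what comes out matches Table~\ref{Tata}'' is not a proof of the claim that Table~\ref{Tata} is complete. Indeed, if you actually carry out the case $n=4$, $k=6$, you will find the quadruple $(a_1,a_2,a_3,a_4)=(11,5,5,3)$, i.e.\ $\theta=(\tfrac{11\pi}{3},\tfrac{5\pi}{3},\tfrac{5\pi}{3},\pi)$ with $M=1$: the sum is $11+5+5+3=24=4\cdot 6$, all the angle bounds are satisfied, and $\gcd(11,5,5,3,6)=1$, yet this tuple does not appear among the four $n=4$ rows of the table. You must either exhibit a constraint that rules it out (I do not see one) or conclude that the table as printed has an omission; you cannot resolve this by deferring to the table whose completeness you are trying to prove.
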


\begin{table}[h!]
\begin{tabular}{|c|c|c|c|c|}
\hline 
  $\boldsymbol{n}$  & 
\begin{tabular}{c}  \vspace{-0.25cm}\\
 $\boldsymbol{m \, \theta/2\pi=\big(m \, \theta_i/2\pi\big)_{i=1}^n}$ \vspace{0.15cm}
 \end{tabular}
  & $\boldsymbol{m}$ 
&  $\boldsymbol{M}$  & {\bf label}
  \\ \hline \hline 
      \multirow{9}{*}{3}        & 
 \begin{tabular}{c}  \vspace{-0.35cm}\\
   $\big(  5 \, , \,  2  \, , \,   2 \big)$    \vspace{0.1cm}
 \end{tabular}
  &     3  &    1   &  $a$       \\ \cline{2-5}
     &     \begin{tabular}{c}  \vspace{-0.35cm}\\
   $\big(   6  \, , \, 3  \, , \,  3\big)$    \vspace{0.1cm}
 \end{tabular}   &     4  &     1  &    $b$    \\ \cline{2-5}
       &    \begin{tabular}{c}  \vspace{-0.35cm}\\
   $\big(   {8}  \, , \,  {5}  \, , \,   5\big)$    \vspace{0.1cm}
 \end{tabular}    &     6     &   1    &     $c$   \\ \cline{2-5}
          &   
            \begin{tabular}{c}  \vspace{-0.35cm}\\
 $\big(   {7}  \, , \,  {3}  \, , \,   2 \big)$    \vspace{0.1cm}
 \end{tabular}
              &    4    &  1      &      $d$    \\ \cline{2-5}
                      &    \begin{tabular}{c}  \vspace{-0.35cm}\\
 $\big(   {9}  \, , \,  {5}  \, , \,   4 \big)$    \vspace{0.1cm}
 \end{tabular}     &     6   &     1   &  $e$      \\ \cline{2-5}
                           &    \begin{tabular}{c}  \vspace{-0.35cm}\\
 $\big(   {10}  \, , \,  {5}  \, , \,   3 \big)$    \vspace{0.1cm}
 \end{tabular}    &     6   &  1      &  $f$      \\ \cline{2-5}
         &       \begin{tabular}{c}  \vspace{-0.35cm}\\
 $\big(   {11}  \, , \,  {5}  \, , \,   2 \big)$    \vspace{0.1cm}
 \end{tabular}     &     6   & 1 &  $g$      \\ \cline{2-5}
         &  \begin{tabular}{c}  \vspace{-0.35cm}\\
 $\big(   {11}  \, , \,  {4}  \, , \,   3 \big)$    \vspace{0.1cm}
 \end{tabular}       &  6      &  1     &   $h$      \\  \cline{2-5}
              &     \begin{tabular}{c}  \vspace{-0.35cm}\\
 $\big(   {5}  \, , \,  {2}  \, , \,   2 \big)$    \vspace{0.1cm}
 \end{tabular}    &     3   &     2  &    $i$    \\ \hline  \hline 
   \multirow{4}{*}{4}  
     & 
    \begin{tabular}{c}  \vspace{-0.35cm}\\
 $\big(   {7}  \, , \,  {3}  \, , \,   3   \, , \,   3  \big)$    \vspace{0.1cm}
 \end{tabular}     &   4     &     1   &      $j$     \\ \cline{2-5}
       &      \begin{tabular}{c}  \vspace{-0.35cm}\\
 $\big(   {9}  \, , \,  {5}  \, , \,   5   \, , \,   5  \big)$    \vspace{0.1cm}
 \end{tabular}             &6 &   1      &      $k$   \\ \cline{2-5}
                 &     \begin{tabular}{c}  \vspace{-0.35cm}\\
 $\big(   {10}  \, , \,  {5}  \, , \,   5   \, , \,   4  \big)$    \vspace{0.1cm}
 \end{tabular}     &  6      &  1     &    $l$    \\ \cline{2-5} 
 &   
 \begin{tabular}{c}  \vspace{-0.35cm}\\
 $\big(   {11}  \, , \,  {5}  \, , \,   4   \, , \,   4  \big)$    \vspace{0.1cm}
 \end{tabular}       &    6    &    1   &    $m$    \\ \hline  \hline 
 \multirow{2}{*}{5} 
                           &   
    \begin{tabular}{c}  \vspace{-0.35cm}\\ 
  $\big(   10  \, , \,    5  \, , \,   5  \, , \,  5 \, , \,  5 \,     \big)$    \vspace{0.1cm}
 \end{tabular}    &    6    &  1     &     $n$   \\ \cline{2-5} 
         &\begin{tabular}{c}  \vspace{-0.35cm}\\ 
         $\big(   11  \, , \,  
  5  \, , \,  
  5  \, , \,  
    5 \, , \,  
      4 \,     \big)$  \vspace{0.1cm}
 \end{tabular}       &   6     &  1       &   $o$     \\ \hline  \hline
6                   &   \begin{tabular}{c}  \vspace{-0.35cm}\\
   $\big(   11  \, , \,  
  5  \, , \,  
  5  \, , \,  
    5 \, , \,  
      5 \, , \,  
   5 \,     \big)$    \vspace{0.1cm}
 \end{tabular}      &    6         & 1      &     $p$   \\ \hline 
\end{tabular}\bk
\caption{Data $\theta$ and $M$ such that $\mathrm{Im}(\rho) = \langle \exp({{2i\pi}/{m}}) \rangle$ with  $m \in \{ 3, 4,6\}$ ($n$ stands for the number of cone points and coincides with the dimension of the leaf  $\mathscr F_\theta(M)$ plus one).}
\label{Tata}
\vspace{-0.5cm}
\end{table}

 We think it is relevant  to make the following remarks  regarding {\sc Table \ref{Tata}} : 
\begin{rem}
{\rm 
\begin{enumerate}
\item  using the same approach as the one considered in  \cite[\S 4.3.1]{GhazouaniPirio1}, it is not difficult to establish that 
$\mathscr F_\theta(M)$  is  connected  for any one of the elements $(\theta,M)$ of  
 {\sc Table \ref{Tata}}. 
\sk  
\item From the preceding remark, it follows that to any label  $\ell$ among the sixteen of {\sc Table \ref{Tata}}  corresponds precisely one arithmetic complex hyperbolic lattice in ${\rm PU}(1,n-1)$,  which will be denoted by $\Gamma_{\!\ell}$.  
\sk
  \item  The lattices $\Gamma_{\!\ell}$  associated to the first three labels $a,b$ and $c$ 
  were previously known. 
   Indeed, for such a label, the associated angle datum $\theta(\ell)=(\theta_i(\ell))_{i=1}^3$ is such that $\theta_2(\ell)=\theta_3(\ell)$. From this and because  $M=1$, one deduces that 
the flat structure of a flat tori $T$ whose class belongs to    $\mathscr F_{\theta(\ell)}(1)$ 
    is invariant by the elliptic involution $i \circlearrowright T$. 
   Consequently, the flat structure of $T$ comes from a flat structure on $T/i\simeq \mathbb P^1$ with five cone points. It follows  that there exists a (possibly orbifold) covering $\mathscr F_{\theta(\ell)}(1)
   \rightarrow \mathscr M_{0,\tilde \theta(\ell)}$ with 
   $ \tilde \theta(\ell)=\big(\theta_1(\ell)/2, \theta_2(\ell), \pi , \pi,\pi\big)$ (see \cite[\S4.2.5]{GhazouaniPirio1} for more details).\sk

  This eventually gives us that $\Gamma_{\!\ell}$ coincides with a  Picard/Deligne-Mos\-tow lattice $\Gamma_{\mu(\ell)}$ associated to a $5$-tuple $\mu(\ell)$ (we use here the notations of \cite{DeligneMostow} and \cite{Mostow}) which depends only on $\ell$, see 
   {\sc Table {\ref{tato}}} below. Note that it shows  that $\Gamma_{\!a}$ and $\Gamma_{\!c}$ coincide (up to conjugacy).
    \sk
      \begin{table}[h!]
\begin{tabular}{|c|c|c|c|}
\hline 
  $\boldsymbol{\ell}$  & 
\begin{tabular}{c}  \vspace{-0.25cm}\\
 $\boldsymbol{\theta(\ell)}$ \vspace{0.15cm}
 \end{tabular}
  & $\boldsymbol{\mu(\ell)}$ 
&  \begin{tabular}{c}  \vspace{-0.35cm}\\ 
{\bf label in [DM86, {p.\,86}]} \end{tabular}
  \\ \hline \hline 
$ a$                   &   \begin{tabular}{c}  \vspace{-0.35cm}\\
   $\big(  \frac{10\pi}{3}\, , \,  \frac{4\pi}{3}\, , \,   \frac{4\pi}{3} \big)$    \vspace{0.1cm}
 \end{tabular}      &    $ \big( \frac{1}{6}\, , \, \frac{1}{3}\, , \, \frac{1}{2}\, , \, \frac{1}{2}\, , \, \frac{1}{2}  \big)$         & 6  
 \\ \hline 
 $b$                  &   \begin{tabular}{c}  \vspace{-0.35cm}\\
   $\big(   {3\pi}\, , \,  \frac{3\pi}{2}\, , \,   \frac{3\pi}{2} \big)$    \vspace{0.1cm}
 \end{tabular}      &  $ \big( \frac{1}{4}\, , \, \frac{1}{4}\, , \, \frac{1}{2}\, , \, \frac{1}{2}\, , \, \frac{1}{2}  \big)$         & 2     
 \\ \hline 
$c$                   &   \begin{tabular}{c}  \vspace{-0.35cm}\\
   $\big(   \frac{8\pi}{3}\, , \,  \frac{5\pi}{3}\, , \,   \frac{5\pi}{3}  \big)$    \vspace{0.1cm}
 \end{tabular}      &    $ \big( \frac{1}{3}\, , \, \frac{1}{6}\, , \, \frac{1}{2}\, , \, \frac{1}{2}\, , \, \frac{1}{2}  \big)$         & 6 
 \\ \hline 
\end{tabular}\bk
\caption{}
\label{tato}
\end{table} 
   \item Excluding the cases associated to the labels $a,b$ and $c$, one gets thirteen {\it a priori} new arithmetic complex hyperbolic lattices. It would be interesting to know more about them.   \sk 
   
   To achieve this goal, the following approach, albeit computational, could be fruitful :  for any label $\ell\in \{a,b,\ldots,p\}$, using the results  of \cite[\S4.2.4]{GhazouaniPirio1},  it is possible to give an explicit basis of the fundamental group of the corresponding leaf $\mathscr F_{\!\ell}=\mathscr F_{\theta(\ell)}(M(\ell))$. 
   Then, combining the results of \cite[\S4.4]{GhazouaniPirio1} with the monodromy formulae  of 
   \cite[\S6]{Mano}, it should be possible to construct an  explicit version
   $h_\ell : \pi_1(\mathscr F_{\!\ell})\rightarrow {\rm PU}(1,n-1)$ 
    of the holonomy map 
     which would  allow to study  $\Gamma_{\!\ell}={\rm Im}(h_\ell)$ quite concretely.  
   \end{enumerate} 
}
\end{rem}

\subsection{Holonomy in the $1$-dimensional case.}

Subsection \ref{1dimension} is the first step towards a geometric description of the moduli spaces $\mathscr{F}_{[\rho]}$ when $g = 1$ and $n=2$. A comprehensive description of this case is carried on in the paper \cite{GhazouaniPirio1}.

 The following Proposition, which is just a suitable reformulation of 
 Poincar\'e's theorem on fundamental domains of Fuchsian groups, gives an easily verifiable sufficient criterion for a (connected component of a) leaf $\mathscr{F}$ to have discrete holonomy in $\mathrm{PU}(1,1)$.

\begin{prop}
The metric completion\, $\overline{\mathscr{F}} $ of \,$\mathscr{F}$ is a lattice quotient of \,${\mathbb{CH}}^1$ if and only if all the cone angles at points of\, $\overline{\mathscr{F}} \setminus \mathscr{F}$ are integer parts of \,$2\pi$. 
\end{prop}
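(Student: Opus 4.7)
The plan is to use the fact that in the one-dimensional case ($g=1$, $n=2$), Sections 8, 9 and 10 together guarantee that $\overline{\mathscr{F}}$ is a complete hyperbolic cone-surface of finite area with finitely many cone points and finitely many cusps. Up to replacing $\mathscr{F}$ by one of its finitely many connected components (on which the complex hyperbolic structure is locally modelled on ${\mathbb{CH}}^1 \simeq \mathbb{H}^2$), the question reduces to the classical problem of recognising when such an object is an orbifold, i.e. a global quotient $\mathbb{H}^2/\Gamma$ with $\Gamma \subset \mathrm{PSL}(2,\mathbb{R}) = \mathrm{Aut}({\mathbb{CH}}^1)$ a lattice.

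For the forward direction ($\Rightarrow$), suppose $\overline{\mathscr{F}} = \mathbb{H}^2/\Gamma$ for some lattice $\Gamma$. A point of $\overline{\mathscr{F}} \setminus \mathscr{F}$ which is a cone point corresponds to the $\Gamma$-orbit of a point $x \in \mathbb{H}^2$ with non-trivial finite stabiliser $\mathrm{Stab}_\Gamma(x)$; since every finite subgroup of $\mathrm{PSL}(2,\mathbb{R})$ is cyclic and consists of elliptic rotations around a common fixed point, $\mathrm{Stab}_\Gamma(x)$ is generated by a rotation of angle $2\pi/k$, and the cone angle at the image of $x$ is exactly $2\pi/k$. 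Thus every cone angle in the orbifold locus must be of the form $2\pi/k$ with $k\in \mathbb Z_{\geq 2}$.

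For the converse ($\Leftarrow$), assume every cone angle at points of $\overline{\mathscr{F}} \setminus \mathscr{F}$ is of the form $2\pi/k_i$ for some integer $k_i \geq 2$. I would first choose a geodesic triangulation of $\overline{\mathscr{F}}$ whose vertex set contains all the cone points and the cusps (choosing, near each cusp, a horocyclic neighbourhood cut out by short horocycles, as in the standard treatment of non-compact Fuchsian orbifolds). Developing the complement of the $1$-skeleton in $\mathbb{H}^2$ via the developing map of the ${\mathbb{CH}}^1$-structure, one obtains a locally finite hyperbolic polygon $P \subset \mathbb{H}^2$ together with a side-pairing by elements of $\mathrm{PSL}(2,\mathbb{R})$ (given by the holonomy of the structure). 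The hypothesis on the cone angles ensures that the sum of the polygon angles around each equivalence class of vertices lying over a cone point equals $2\pi/k_i$, so the cycle condition of Poincaré's polygon theorem is satisfied at every elliptic cycle, with the cycle transformation being elliptic of order exactly $k_i$. At the cusps, completeness of $\overline{\mathscr{F}}$ together with the horocyclic cutting ensures that the parabolic cycle condition is satisfied. Poincaré's theorem then yields that the side-pairings generate a discrete subgroup $\Gamma \subset \mathrm{PSL}(2,\mathbb{R})$ admitting $P$ as a fundamental domain, with $\overline{\mathscr{F}} \simeq \mathbb{H}^2/\Gamma$; finiteness of the area of $\overline{\mathscr{F}}$ (Section 8) forces $\Gamma$ to be a lattice.

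The main obstacle is the careful bookkeeping in the converse direction: one must verify that the developing map of the ${\mathbb{CH}}^1$-structure extends across the completed strata in the expected way, and that the cycle and completeness hypotheses of Poincaré's theorem are genuinely met (in particular at cusps, where the statement is subtle). This can be avoided by appealing directly to the general principle that a complete hyperbolic cone-surface of finite area with all cone angles of the form $2\pi/k$ is a good orbifold, hence a quotient of its universal orbifold cover $\mathbb{H}^2$ by a torsion-free kernel action promoted to $\Gamma$ — but this is ultimately the same content as Poincaré's theorem.
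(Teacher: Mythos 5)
Your proof is correct and follows exactly the route the paper intends: the paper itself gives no argument for this proposition, stating only that it is ``a suitable reformulation of Poincar\'e's theorem on fundamental domains of Fuchsian groups,'' and your two directions (finite subgroups of $\mathrm{PSL}(2,\mathbb R)$ are cyclic elliptic, respectively Poincar\'e's polygon theorem with elliptic and parabolic cycle conditions) supply the content of that reformulation, using the structural input from Theorems \ref{cone-manifold} and the finite-volume statement of Section \ref{cusps} in exactly the way the paper sets up. One small slip: in the converse you restrict to cone angles $2\pi/k_i$ with $k_i\geq 2$, but ``integer parts of $2\pi$'' should be read to include $k_i=1$; a point of $\overline{\mathscr{F}}\setminus \mathscr{F}$ with cone angle exactly $2\pi$ is a smooth point of the extended $\mathbb{CH}^1$-structure and contributes no elliptic cycle, so it causes no difficulty and your argument goes through verbatim.
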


This Proposition combined with the analysis carried on in \cite{GhazouaniPirio1} allows us to find several such $\mathscr{F}$ which are lattice quotients ({\it cf.}  \cite[\S6.1]{GhazouaniPirio1} for more details).



\appendix 

\section{\bf Complex hyperbolic geometry.}
\label{CHG}

\subsection{Complex hyperbolic space.}
On the complex vector space $\mathbb{C}^{n+1}$ of dimension $n+1$, 
we consider the  Hermitian form $ \langle \cdot , \cdot \rangle $ of signature $(1,n) $ defined by $$ \langle z, w \rangle =  z_0 \overline{w_0} - \sum_{i=1}^n{z_i\overline{w_i}} $$
for $z=(z_0,\ldots,z_n)$  and $w=(w_0,\ldots,w_n)$ in $ \mathbb C^{n+1}$.\sk

All the definitions to come do not depend on the choice of the Hermitian metric of signature $(1,n)$ since two such forms are linearly conjugate.  Recall that $\mathbb{CP}^n$ is the set of complex lines in  $\mathbb{C}^{n+1}$. We define $\mathbb{C}\mathbb H^n$,  the \textbf{complex hyperbolic space of dimension} $\boldsymbol{n}$,  to be the subset of $\mathbb{CP}^n$ formed by the  lines in $\mathbb C^{n+1}$ on which $ \langle \cdot , \cdot \rangle$ is positive: 
$$
\mathbb{C}\mathbb H^n=\Big\{ \, [z]\in \mathbb{CP}^n\, \big\lvert \, z\in \mathbb C^{n+1}, \;  \langle z , z \rangle >0 
\Big\}\, .
$$

 We denote by $\mathrm{PU}(1,n)$ the set of linear automorphisms of $\mathbb{C}^{n+1}$ which preserve $\langle \cdot, \cdot \rangle$.  It acts  projectively  on $\mathbb{C}\mathbb H^n$ and satisfies the following properties:

\begin{itemize}

\item its action on $\mathbb{C}\mathbb H^n$ is  transitive; \sk
\item $\mathrm{PU}(1,n)$ is  exactly the group ${\rm Aut}(\mathbb{C}\mathbb H^n)$ of biholomorphisms of $\mathbb{C}\mathbb H^n$; \sk
\item there exists a Riemannian metric on $\mathbb{C}\mathbb H^n$ (unique up to rescaling), for which $\mathrm{PU}(1,n)$ is exactly the set of holomorphic isometries. This metric is called the \textbf{complex hyperbolic metric};  \sk
\item this metric has sectional curvature comprised between $-\frac{1}{4}$ and $-1$. 
Its holomorphic sectional curvature is constant. 
\end{itemize}

The stabiliser of a point in $\mathbb{C}\mathbb H^n$ (which is exactly the stabiliser of a positive line $\mathrm{PU}(1,n)$) is conjugate to $\mathrm{U}(n) \subset \mathrm{PU}(1,n)$, which is the maximal compact subgroup of $\mathrm{PU}(1,n)$. 
The complex hyperbolic space 
$\mathbb{C}\mathbb H^n$ is therefore isometric to the rank one (Hermitian) symmetric space $\mathrm{PU}(1,n)/\mathrm{U}(n) $. It is the non-compact dual of $\mathbb C\mathbb P^n$. \mk

The distance for the complex hyperbolic metric can be explicitly computed by means of the initial Hermitian form:

\begin{lemma}
\label{formula1} 
Let $[{z}]$ and $[{w}]$ be two points in $\mathbb{C}\mathbb H^n \subset \mathbb{CP}^n$ with $z,w\in \mathbb C^{n+1}$. 
\begin{enumerate}
\item 
The complex hyperbolic distance $\alpha$ between $[{z}]$ and $[{w}]$ satisfies 
$$ \cosh^2\left(\frac{\alpha}{2}\right) = \frac{\langle {z}, {w} \rangle  \cdot \langle w,  {z} \rangle }{ \langle {z}, {z} \rangle \cdot \langle {w}, {w} \rangle  }\, .  $$ 
\item The geodesic curve linking $[z]$ to $[w]$ in $\mathbb{C}\mathbb H^n$ is 
the projectivisation of the linear segment $[z,w]= \{   z+tw\, \lvert t\in [0,1]\, \}$
linking $z$ to $w$ in $\mathbb C^{n+1}$.
\end{enumerate}
\end{lemma}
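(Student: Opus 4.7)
The plan is to reduce both assertions to an explicit one-dimensional computation by exploiting the symmetries available. First, I would observe that both sides of the claimed identity in (1) are well defined on $\mathbb{C}\mathbb{H}^n \times \mathbb{C}\mathbb{H}^n$: the right-hand side is manifestly invariant under separate rescalings $z\mapsto \lambda z$ and $w\mapsto \mu w$ with $\lambda,\mu\in\mathbb{C}^*$, and both sides are $\mathrm{PU}(1,n)$-invariant since elements of $\mathrm{U}(1,n)$ preserve the Hermitian form and descend to (holomorphic) isometries of $\mathbb{C}\mathbb{H}^n$. Next, for distinct points $[z],[w]$, the complex plane $V=\mathbb{C}z+\mathbb{C}w\subset\mathbb{C}^{n+1}$ carries the restriction of $\langle\cdot,\cdot\rangle$, which necessarily has signature $(1,1)$ (it contains the positive vectors $z,w$, but its signature cannot be $(2,0)$ since $\langle\cdot,\cdot\rangle$ has signature $(1,n)$). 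Since $\mathrm{U}(1,n)$ acts transitively on signature-$(1,1)$ planes in $\mathbb{C}^{n+1}$, I can move $V$ onto the plane spanned by the first two coordinate vectors, and $\mathbb{P}(V)\cap \mathbb{C}\mathbb{H}^n$ becomes the standard totally geodesic copy of $\mathbb{C}\mathbb{H}^1$ inside $\mathbb{C}\mathbb{H}^n$. Both sides of the distance formula restrict to their intrinsic analogues on this $\mathbb{C}\mathbb{H}^1$, so the problem reduces to the case $n=1$.

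Second, in the case $n=1$, I would normalize $[z]=[1:0]$ by an element of $\mathrm{PU}(1,1)$ and pick an arbitrary representative $w=(w_0,w_1)$ of $[w]$. In the affine chart $z_0=1$, the identification $[z_0:z_1]\mapsto z_1/z_0$ realises $\mathbb{C}\mathbb{H}^1$ as the open unit disk $\mathbb{D}$, with $[z]$ at the origin and $[w]$ at $\zeta=w_1/w_0\in\mathbb{D}$. Because the complex hyperbolic metric on $\mathbb{C}\mathbb{H}^1$ is the unique (up to scaling) $\mathrm{PU}(1,1)$-invariant Riemannian metric and has constant holomorphic sectional curvature normalised to $-1$, it coincides with the Poincar\'e metric of curvature $-1$ on $\mathbb{D}$; the classical formula gives $d([z],[w])=2\tanh^{-1}|\zeta|$, hence $\cosh^2(\alpha/2)=(1-|\zeta|^2)^{-1}$. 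On the other hand, a direct computation yields $\langle z,z\rangle=1$, $\langle w,w\rangle=|w_0|^2(1-|\zeta|^2)$, and $\langle z,w\rangle\langle w,z\rangle=|w_0|^2$, so the ratio equals $(1-|\zeta|^2)^{-1}$, matching the left-hand side. By the invariance arguments of the first step, this proves (1) in full generality.

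Finally, for (2), I would keep the same normalization and, after a further diagonal rotation in $\mathrm{PU}(1,1)$, arrange that $\zeta\in (0,1)$ and $w_0,w_1>0$. The projectivisation of the real segment $\{(1-t)z+tw : t\in[0,1]\}$ then reads, in the affine coordinate $z_1/z_0$, as $t\mapsto tw_1/((1-t)+tw_0)$, which is a real increasing function of $t$ running from $0$ to $\zeta$; that is, it is precisely the Poincar\'e radial geodesic from $0$ to $\zeta$. (The wording of the lemma is slightly ambiguous, but the natural interpretation, and what one verifies, is that the projectivisation of the real convex hull $\mathbb{R}_{\geq 0}\,z+\mathbb{R}_{\geq 0}\,w$ meets $\mathbb{C}\mathbb{H}^n$ in the geodesic segment between $[z]$ and $[w]$.) Returning to the general $n$ via the reduction of paragraph one concludes the proof. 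The only non-routine input is the transitivity of $\mathrm{U}(1,n)$ on signature-$(1,1)$ two-planes together with the identification of the one-dimensional complex hyperbolic space with the Poincar\'e disk; once these are in hand, the computation is entirely elementary.
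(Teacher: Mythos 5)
The paper itself does not prove this lemma; it only cites Goldman \cite[\S3.3.5]{Goldman}. Your proof is therefore a genuine self-contained alternative, and for part (1) it follows essentially the same outline as the standard argument: reduce to a complex line via the transitivity of $\mathrm{U}(1,n)$ on signature-$(1,1)$ planes, then compute in the Poincar\'e disk model of $\mathbb{C}\mathbb{H}^1$ with holomorphic sectional curvature $-1$. Your algebraic verification of non-degeneracy of $V=\mathbb{C}z+\mathbb{C}w$ (a $(2,0)$ or degenerate restriction is impossible when $z$ is positive, since $z^\perp$ is negative definite) and the final one-line computation $\langle z,w\rangle\langle w,z\rangle/(\langle z,z\rangle\langle w,w\rangle)=(1-|\zeta|^2)^{-1}=\cosh^2(\tanh^{-1}|\zeta|)$ are correct and match the stated curvature normalization.

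For part (2), you correctly sense that the statement is imprecise, but your proposed "fix" (projectivize the cone $\mathbb{R}_{\geq 0}z+\mathbb{R}_{\geq 0}w$) has exactly the same defect as the original wording: the image in $\mathbb{CP}^n$ genuinely depends on which lifts $z,w$ of $[z],[w]$ one chooses, not just on the points $[z],[w]$. Concretely, with $z=(1,0)$ and $w=(e^{i\pi/4},e^{i\pi/4}/2)$ (a lift of $[1:1/2]$), the curve $t\mapsto \bigl[(1-t)z+tw\bigr]$ still joins $0$ to $1/2$ in $\mathbb{D}$ but its intermediate points are not real, so it is not the Poincar\'e geodesic. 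The correct invariant condition is that the lifts satisfy $\langle z,w\rangle\in\mathbb{R}_{>0}$; this is precisely what your normalization $w_0>0$ arranges (with $z=(1,0)$, $\langle z,w\rangle=\overline{w_0}$), and any two such choices of lifts differ by a common scalar, so the projective image is then well defined. Your computation $t\mapsto tw_1/((1-t)+tw_0)$ is correct under that normalization, and this is exactly the form in which the paper later invokes part (2) (Lemma \ref{foliation}: there the polarization of the area form between the two lifts is $A_1(\widehat\xi,\widehat\xi)>0$, so the condition holds). So the argument is sound once you state explicitly that the lifts must be chosen with $\langle z,w\rangle$ real positive; as written, your parenthetical restatement is still lift-dependent and should be replaced by this condition.
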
 
\noindent  (For some proofs, see \cite[\S3.3.5]{Goldman})

\subsection{Coordinates.}
\subsubsection{\bf The ball model}
In order to have coordinates on  $\mathbb{C}\mathbb H^n $,  one can take affine coordinates of $\mathbb{CP}^n$.  Since $z_0\neq 0$ if $[z]=[z_0:\cdots:z_n]$ belongs to 
$ \mathbb C\mathbb H^n$, the latter is contained in the affine chart 
$
\{ 
 z_0 \neq 0 \}$ of $\mathbb C\mathbb P^n$. 
 
 In the $z_0 =1$ normalisation, it comes that 
$z_1,\ldots,z_n$ provide a global system of holomorphic coordinates which identify 
$ \mathbb C\mathbb H^n$ with the complex $n$-ball: 
$$ \bigg \{ \ \big(z_i\big)_{i=1}^n \in \mathbb{C}^n \; \big  | \; \sum_{i=1}^n{ \big|z_i\big|^2} < 1  \bigg\}\, .  $$ 

In this model of the complex hyperbolic space, the hyperbolic metric identifies with the 
Bergman metric of the complex $n$-ball. \sk 

Although we do not use it  in the present text, the complex ball is a 
very classical model for $\mathbb{C}\mathbb H^n $ which is worth being mentioned. We will not say anything more about it but one can find  a comprehensive presentation in \cite{Goldman}.

\subsubsection{\bf Pseudo-horospherical coordinates.}
More important for our purpose is a special kind of affine coordinates on 
$\mathbb{C}\mathbb H^n $ which are very close,  in spirit,  to the \textit{horospherical coordinates} introduced by Goldman and Parker  in \cite{GoldmanParker}.\sk 

Let $\xi=(\xi_0,\ldots,\xi_n)$ be a system of linear coordinates on $\mathbb C^{n+1}$ such that the expression of the Hermitian form $\langle \cdot, \cdot \rangle$ in these  can be written out 
$$ \langle {\xi}, {\xi} \rangle =  \frac{i}{2}\Big(\xi_n\overline{\xi_0} - \xi_0 \overline{\xi_n}\Big) + a\big(\widehat{\xi},\widehat{\xi}\big) $$
for  a Hermitian form $a$ of signature $(1,n-1)$ and where $\widehat{\xi}$ stands for 
$(\xi_0, \ldots, \xi_{n-1})$.

\begin{lemma}
If $\xi=(\xi_i)_{i=0}^n$ is such that $\langle {\xi}, {\xi} \rangle>0$ then $\xi_0\neq 0$. 
\end{lemma}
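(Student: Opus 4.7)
The plan is to proceed by contradiction: suppose $\xi_0=0$ and $\langle \xi,\xi\rangle>0$, and derive a contradiction with the signature being $(1,n)$.

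The key observation, which drives everything, is that the last standard basis vector $e_n=(0,\ldots,0,1)$ is automatically $\langle \cdot,\cdot\rangle$-null: indeed $(e_n)_0=0$ and $\widehat{e_n}=(0,\ldots,0)$, so the given formula gives $\langle e_n,e_n\rangle=0$. First I would then compute, using the assumption $\xi_0=0$, that
\[
\langle \xi,e_n\rangle \;=\; \frac{i}{2}\bigl(\xi_n\cdot 0 \;-\; 0\cdot 1\bigr) + a\bigl(\widehat{\xi},(0,\ldots,0)\bigr) \;=\; 0,
\]
so that $e_n$ lies in the orthogonal complement $\xi^{\perp}$ of $\xi$.

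Next, since $\xi$ is assumed positive, it is in particular non-null, so $\mathbb{C}^{n+1}=\mathbb{C}\xi\oplus \xi^{\perp}$ as an orthogonal direct sum, the restriction of $\langle\cdot,\cdot\rangle$ to $\xi^{\perp}$ is non-degenerate, and the signatures add. Because $\langle \xi,\xi\rangle>0$ contributes the unique positive direction allowed by the signature $(1,n)$ of $\langle\cdot,\cdot\rangle$, the form restricted to $\xi^{\perp}$ must be of signature $(0,n)$, i.e.\ negative definite.

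This yields the contradiction: the vector $e_n\in\xi^{\perp}$ is nonzero and satisfies $\langle e_n,e_n\rangle=0$, which is impossible in a negative-definite Hermitian space. Hence $\xi_0\ne 0$ whenever $\langle\xi,\xi\rangle>0$. The argument is elementary linear algebra once one notices the cancellation that makes $\xi\perp e_n$ automatic, so no real obstacle arises; the only thing to be careful about is invoking the correct signature bookkeeping under the orthogonal decomposition along a non-isotropic vector.
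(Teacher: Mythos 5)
Your argument is correct and takes a genuinely different route from the paper's. The paper's proof proceeds by normal form: it diagonalizes the Hermitian form $a$ (by a change of coordinates fixing $\xi_0$) so that $\langle\xi,\xi\rangle=\frac{i}{2}(\xi_n\overline{\xi_0}-\xi_0\overline{\xi_n})+\sum_{j=0}^{n-1}\epsilon_j|\xi_j|^2$ with exactly one $\epsilon_j=+1$, and then runs a signature count to rule out $\epsilon_0=-1$ (since the $(\xi_0,\xi_n)$ block together with the remaining positive $\epsilon_j$ would force total signature $(2,n-1)$); once $\epsilon_0=+1$, setting $\xi_0=0$ makes all positive contributions vanish. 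Your proof instead pinpoints the structural reason directly: $e_n$ is a null vector for $\langle\cdot,\cdot\rangle$ (visible immediately from the defining formula), and when $\xi_0=0$ the polarization gives $\langle\xi,e_n\rangle=-\tfrac{i}{2}\xi_0=0$, so $e_n$ lives in $\xi^{\perp}$, which must be negative definite by Sylvester since $\langle\xi,\xi\rangle>0$ and the ambient signature is $(1,n)$ --- contradiction. Your version is shorter, coordinate-free, and makes the obstruction (an isotropic vector in the orthogonal complement) explicit rather than hiding it in a normal-form computation; the paper's version, in exchange, yields the extra concrete fact that $\epsilon_0=+1$ in the diagonalized coordinates. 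Both are sound; there is no gap in your argument.
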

\begin{proof} One verifies that, up to a linear change of coordinates letting $\xi_0$ invariant, one can assume that $\langle {\xi}, {\xi} \rangle=\frac{i}{2}(\xi_n\overline{\xi_0} - \xi_0 \overline{\xi_n})+\sum_{j=0}^{n-1} \epsilon_j \xi_j\overline{\xi_j}$
 for some $\epsilon_j $ belonging to $\{-1,0,1\}$. By assumption, $a(\widehat{\xi},\widehat{\xi})=\sum_{j=0}^{n-1} \epsilon_j \xi_j\overline{\xi_j}$ has signature $(1,n-1)$ hence exactly one of the $\epsilon_j$'s  is equal to 1, all the others being equal to -1.
 
If  $\epsilon_0=-1$, then $\frac{i}{2}(\xi_n\overline{\xi_0} - \xi_0 \overline{\xi_n})-\xi_0\overline{\xi_0}$ has signature $(1,1)$.  Since $\sum_{j=1}^{n-1} \epsilon_j \xi_j\overline{\xi_j}$ has signature $(1,n-2)$ (because $\epsilon_j=1$  for some $j\geq 1$), this would imply that $\langle \cdot, \cdot\rangle$ has signature $(2,n-1)$, a contradiction. 
  \end{proof}
\mk

From the preceding lemma, it follows that the complex hyperbolic space admits a model contained in the affine chart $
\{ 
\xi_0 \neq 0 \}$ of $\mathbb C\mathbb P^n$.  
Then,  under the normalization $\xi_0 = 1$, the $\xi_k$'s for $k=1,\ldots,n$ provide global affine coordinates on this model which will be called \textbf{pseudo-horospherical coordinates}.  

In such coordinates, the associated quadratic form is given by 
$ \langle {\xi}, {\xi} \rangle =  \mathrm{Im}(\xi_n) + a(\widehat{\xi},\widehat{\xi}) $ with 
$\widehat{\xi}=(1,\xi_1,\ldots,\xi_{n-1})$  and consequently, this model  of the complex hyperbolic space $\mathbb C\mathbb H^n$ consists in    the set of $\xi=(\widehat{\xi},\xi_n) \in \mathbb C^n$ such that  
$$ \mathrm{Im}\big(\xi_n\big) >  -  a\big( \widehat{\xi}, \widehat{\xi}\big)\, .$$

In the standard (homogeneous) coordinates $z=(z_0,z_1,\ldots,z_n)$ on $\mathbb C^{n+1}$, the formula for the complex hyperbolic metric is the following 
$$ g = - \frac{4}{\langle z, z \rangle^2}  \begin{vmatrix}
\langle z, z \rangle & \langle dz, z \rangle \\
\langle z, dz \rangle & \langle dz, dz \rangle
\end{vmatrix}  \, . $$

 A straightforward calculation gives the following formula for the expression 
 of this metric in pseudo-horo\-spherical coordinates: 
$$ g = - \frac{4}{\langle \xi,\xi \rangle^2} \Big( \langle \xi,\xi \rangle \cdot a\big(d\hat{\xi}, d\hat{\xi}\big) - a\big(\hat{\xi}, d\hat{\xi}\big) \cdot a\big(d\hat{\xi}, \hat{\xi}\big) - \mathrm{Im}\Big(d\xi_n \cdot a\big(\hat{\xi}, d\hat{\xi}\big) \Big) - \big|d\xi_n\big|^2 \Big)\, .  $$

 Introducing $u = \langle \xi, \xi \rangle$ and $s = \mathrm{Re}(\xi_n)$, we therefore have $\xi_n = s + i(u - a(\hat{\xi}, \hat{\xi}) )$.  In the coordinates system $(s,u,\xi_1, \ldots, \xi_{n-1})$ on the pseudo-horospherical model of $\mathbb C\mathbb H^n$ we are considering, the metric tensor $g$  writes down 
\begin{equation}
\label{E:gInPseudoHorosphericalCoordinates}
 g = \frac{4}{u^2} \left( \frac{du^2}{4} +  \Big(\frac{ds}{2} + \mathrm{Im}(\omega)\Big)^2+ \mathrm{Re}(\omega)^2 - u\cdot \Omega \right) 
 \end{equation}
where $\omega = a(\hat{\xi}, d\hat{\xi})$ and $\Omega =a(d\hat{\xi}, d\hat{\xi})$. 
\sk 

We now introduce the family of open sets in $\mathbb{C}\mathbb H^n$: 
$$ U_{K,\lambda} = \left\{ \big[1,\xi_1, \ldots, \xi_n\big] \in \mathbb{C}\mathbb H^n \,  \Big| \,  \big|\xi_1\big|, \ldots, \big|\xi_{n-1}\big|, \big|\mathrm{Re}(\xi_n)\big| < K \, \text{ and } \,  \mathrm{Im}\big(\xi_n\big) > \lambda  \right\rbrace $$
\noindent with $K, \lambda > 0$.

\begin{lemma}
\label{cylindricalcalculation}
Let $K$ and $\lambda$ be arbitrary positive constants. 
\begin{enumerate}
\item  The complex hyperbolic volume  of  $ U_{K,\lambda}$  is finite. \sk
\item If $\gamma : [0,1] \longrightarrow  U_{K,\lambda}$ is path such that  $\gamma(t) = (\xi_1(t), \ldots, \xi_n(t))$ for any $t\in [0,1]$, then its length $L(\gamma)$ for the complex hyperbolic metric satisfies
 $$ L(\gamma) \geq  \Big|\log\big(\xi_n(1)\big) - \log\big(\xi_n(0)\big)\Big|\, .  $$
\end{enumerate}
\end{lemma}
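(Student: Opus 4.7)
The plan is to prove both parts by direct computation in the pseudo-horospherical coordinates $(s,u,\xi_1,\ldots,\xi_{n-1})$, starting from the explicit formula \eqref{E:gInPseudoHorosphericalCoordinates} for the metric. For part $(1)$, I will first derive the volume form. Factoring $g=\frac{4}{u^2}H$ with $H$ the quadratic form inside the parentheses of \eqref{E:gInPseudoHorosphericalCoordinates}, one has $\det g=(4/u^2)^{2n}\det H$. The cleanest way to evaluate the relevant determinant is via the Kähler potential $\varphi=-\log u$, using the identity $\partial\bar\partial\log u=(\partial\bar\partial u)/u-(\partial u\wedge\bar\partial u)/u^2$ together with $\partial u=\tfrac{d\xi_n}{2i}+\bar\omega$; a computation entirely analogous to what I would do in the $n=2$ case shows that the Kähler determinant equals $c\,u^{-(n+1)}$ for an explicit positive constant $c$, giving
\[
dV_g\;=\;\frac{c}{u^{n+1}}\,ds\,du\,dV_{\mathrm{Eucl}}(\xi_1,\ldots,\xi_{n-1}).
\]

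The finiteness of $\mathrm{Vol}(U_{K,\lambda})$ then follows. On $U_{K,\lambda}$ the coordinates $\xi_1,\ldots,\xi_{n-1}$ lie in a bounded Euclidean polydisc and $s$ in a bounded interval; since $a(\hat\xi,\hat\xi)$ is bounded on that compact region, the condition $\mathrm{Im}(\xi_n)>\lambda$ combined with the domain constraint $u>0$ forces $u$ to be bounded below by a positive constant (either trivially, or after restricting to the cusp regime $\lambda$ large, which is the regime where the lemma is applied). The integral $\int u_0^\infty u^{-(n+1)}\,du$ then converges and the remaining factors contribute a bounded Euclidean volume.

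For part $(2)$, the key observation is that every term inside the parenthesis of \eqref{E:gInPseudoHorosphericalCoordinates} is non-negative: the three squared terms are obvious, while $-u\Omega\geq 0$ follows from the fact that since $a$ has signature $(1,n-1)$ with its positive direction carried by $\xi_0$, its restriction $\Omega$ to the subspace $d\xi_0=0$ is negative semi-definite, and $u>0$. Keeping only the $\frac{du^2}{4}$ contribution gives the pointwise bound $g(\gamma',\gamma')\geq (u'/u)^2$, so
\[
L(\gamma)\;\geq\;\int_0^1\frac{|u'(r)|}{u(r)}\,dr\;\geq\;\bigl|\log u(1)-\log u(0)\bigr|.
\]
To pass from $u$ to $\xi_n$, I will exploit that the fibre $\hat\xi=\mathrm{const}$ is a totally geodesic copy of the Poincaré half-plane $\{u>0\}$ with induced metric $|d\xi_n|^2/u^2$, and that $\xi_n$ remains in the simply connected region $\{|\mathrm{Re}(z)|<K,\,\mathrm{Im}(z)>\lambda\}\subset\mathbb{C}^\times$ where the principal branch of $\log\xi_n$ is single-valued. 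Using the additional non-negative term $(\tfrac{ds}{2}+\mathrm{Im}\,\omega)^2/u^2$ to control $s=\mathrm{Re}(\xi_n)$ and combining with the radial bound above, one assembles the estimate $L(\gamma)\geq |\log\xi_n(1)-\log\xi_n(0)|$ after comparing the Bergman length element to the pull-back $|d\xi_n|^2/|\xi_n|^2$ of the flat logarithmic metric on $\mathbb{C}^\times$.

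The main technical obstacle I expect is precisely this last comparison in part $(2)$: the clean bound gives $|\log u(1)-\log u(0)|$, a real quantity measuring only the ``radial'' hyperbolic displacement, while the statement requires the complex-valued $|\log\xi_n(1)-\log\xi_n(0)|$. One must simultaneously control both $\mathrm{Im}(\xi_n)$ (through $u$) and $\mathrm{Re}(\xi_n)$ (through the $ds$-term), absorb the $\omega$ contribution coming from the transverse $\hat\xi$-coordinates, and ensure that the bounds on $|\xi_j|$ and $\mathrm{Im}(\xi_n)>\lambda$ provided by the hypothesis $\gamma\subset U_{K,\lambda}$ are strong enough to dominate the correction terms; this is exactly why the uniform constants $K,\lambda$ appear in the statement.
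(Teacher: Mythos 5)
Your overall route matches the paper's: both parts are proved by direct inspection of the metric in the pseudo-horospherical coordinates $(s,u,\xi_1,\dots,\xi_{n-1})$, using that the volume density has the form $\text{(bounded)}\cdot u^{-\alpha}$ with $\alpha>1$ for part (1), and that $g\geq u^{-2}\,du^2$ for part (2). Two points of detail deserve comment. For the sign of $\Omega$, you argue from ``the positive direction of $a$ is carried by $\xi_0$''; this is what you want, but it is not given a priori, since $a$ is an arbitrary Hermitian form of signature $(1,n-1)$ in the $\hat\xi$ variables and its restriction to $\{d\xi_0=0\}$ could in principle still have a positive direction. The paper's argument is the clean way to see it: the summand $-u\Omega/u^2$ is the only term in $g$ that grows linearly in $u$, so if $\Omega$ had a positive direction, $g$ would fail to be positive definite for $u$ large enough, contradicting that $g$ is the complex hyperbolic metric. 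You should replace your signature remark by this positivity-of-$g$ argument.

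The concern you raise at the end of part (2) is well-founded, and in fact the paper's own proof does not address it: the chain of inequalities $g\geq du^2/u^2$ gives only $L(\gamma)\geq|\log u(1)-\log u(0)|$, while the statement asks for $|\log\xi_n(1)-\log\xi_n(0)|$, and after ``follows directly'' the paper simply stops. These two quantities are \emph{not} comparable pointwise without a correction term: a short path moving only $s=\mathrm{Re}(\xi_n)$ keeps $u$ constant but changes $\log\xi_n$, so the naive inequality can fail. What \emph{is} true on $U_{K,\lambda}$ (once $u$ is bounded away from $0$, which also requires taking $\lambda$ large relative to $\sup|a(\hat\xi,\hat\xi)|$ as you note in part (1)) is that $\log(\xi_n/u)$ is bounded, giving $L(\gamma)\geq|\log\xi_n(1)-\log\xi_n(0)|-C(K,\lambda)$. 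That weaker inequality with an additive constant is exactly what Section 8 actually uses (to rule out Cauchy sequences with $c(T)\to\infty$), so the gap is harmless for the paper, but as a standalone statement the lemma is a slight overclaim, and your instinct to flag the ``passage from $u$ to $\xi_n$'' as the real technical obstacle is correct. If you want to close it cleanly, don't try to dominate the flat logarithmic metric of $\mathbb{C}^\times$; just prove the bound with the additive constant, which is all you need.
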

\begin{proof}
In the coordinates system $(s,u,\xi_1, \ldots, \xi_{n-1})$ on $U_{K,\lambda}$, 
the complex hyperbolic volume element writes down 
$$\sqrt{\mathrm{det}(g)}\,  ds  du d\xi_1d\overline{\xi_1} \cdot \cdots  \cdot d\xi_{n-1}d\overline{\xi_{n-1}}.$$ 

Since both  $\omega$ and $\Omega$ depend  continuously on $\xi_1, \ldots, \xi_{n-1}$,  one gets that 
$$ \sqrt{\mathrm{det}(g)} = \frac{f\big(\xi_1, \ldots, \xi_{n-1}\big)}{u^{2n+2}} $$ 
for some positive and continuous function $f$ which thereby is bounded on $U_{K,\lambda}$. 
 The finiteness of the volume of $ U_{K,\lambda} $ follows directly from evaluating the associated integral. 
\sk 

The second point of the lemma follows directly from the fact that $g \geq u^{-2}{du^2}$ on $ U_{K,\lambda} $. To see this, one has to prove that $\Omega$ is negative. But if $\Omega$ was not, since $  {du^2}/{4} +  ({ds}/{2} + \mathrm{Im}(\omega))^2+ \mathrm{Re}(\omega)^2 $ does not depend on $u$,  
one would deduce from \eqref{E:gInPseudoHorosphericalCoordinates} that $g$  
would not be positive for large values of $u$, a contradiction. \end{proof}


\section{\bf Cone-manifolds}
\label{conemanifolds}

\subsection{Generalities}

This section strongly builds on \cite{McMullen}, in particular the use of joints for describing spherical cone-manifolds. \sk 

Let $X$ be a complete homogeneous Riemannian manifold and let  $G$ be its isometry group (or more generally a subgroup of its isometry group). We develop material on cone-manifolds in this specific case.
For any point $p\in X$, one denotes by $X_p$ the set of geodesic rays emanating from it and  $G_p=  \mathrm{Stab}_G(p)$ stands for its stabiliser. 
 
  A $\boldsymbol{(X,G)}${\bf-cone-manifold} is a geometric object built inductively as follows:
\begin{itemize}
\item if $X$ is $1$-dimensional, a $(X,G)$-cone-manifold is just a $(X,G)$-manifold;\sk
\sk 
\item otherwise, a $(X,G)$-cone-manifold is  a topological space such that any point in it has a neighbourhood isomorphic to a cone over a $(X_p,G_p)$-cone-manifold. 
\end{itemize}

One just remarks that $X_p$ is just the unit sphere at $p$ in $X$ and therefore $G_p$ can naturally be seen as a subgroup of $\mathrm{O}(n)$ where $n $ is the 
dimension of $X$.\sk

 A simple example of a non trivial cone-manifold is a Euclidean cone. If $X=\mathbb{R}^2$ and $G = \mathrm{Iso}(\mathbb{R}^2)$, $X_p = S^1$ and $G_p = \mathrm{O}(2)$. A $(X_p, G_p)$-manifold is nothing else but a  circle of length $\theta$ and a cone over it is a cone of angle $\theta$. Finally, remark that any $(X,G)$-manifold is also  a $(X,G)$-cone-manifold in a natural way.

\subsection{Cones are cone-manifolds.}
Let $X$ be a connected Riemannian manifold such that $G$ is the component of the identity of its isometry group. Let $X'$ be a totally geodesic submanifold of codimension $2$ in $X$ such that $\mathrm{Stab}_G(X')$ is $S^1=\mathbb R/\mathbb Z$, \textit{i.e.} it acts by rotation of angle $\theta$ around $X'$ for any $\theta \in S^1$. 

We explain the general construction of the \textit{cone of angle} $\theta$ \textit{over} $X'$. The metric completion $Y$ of the universal covering of $X \setminus X'$ is an infinite cyclic cover of $X$ branched along $X'$. There is a group $\mathbb{R}$ of isometries lifting the action of $S^1$ by rotation to $Y$ and if $\theta \in ]0, +\infty[$, one defines $X_{\theta}$ the \textbf{cone of angle} $\theta$ \textbf{over} $X'$ to be the quotient of $Y$ by the action of the rotation of angle $\theta$ on $Y$. The image in $X_{\theta}$  of the preimage of $X'$ in $Y$ 
is called the {\bf singular locus of the cone}.

\begin{prop}
$X_{\theta}$ is a $(X,G)$-cone manifold.
\end{prop}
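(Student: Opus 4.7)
The plan is to proceed by induction on $\dim X$, treating separately the regular points of $X_\theta$ (those lying outside the image of the preimage of $X'$) and the singular points (those in this image).

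At a regular point $p$, the quotient map $Y \to X_\theta$ is a local isometry, and hence a neighbourhood of $p$ in $X_\theta$ is isometric to an open subset of $X$. In particular, it carries a genuine $(X,G)$-structure, which realises it as an $(X,G)$-cone-manifold in the trivial way (a small metric ball around $p$ in $X$ is the cone over its boundary sphere, and the latter is a spherical $(X_p,G_p)$-manifold since $X$ is homogeneous). So the issue is to show that each point $q$ in the singular locus admits a neighbourhood isometric to a cone over some $(X_q,G_q)$-cone-manifold. Because $X'$ is totally geodesic of codimension $2$ and $\mathrm{Stab}_G(X') \simeq S^1$ acts by rotation around $X'$, one has an orthogonal decomposition $T_qX = T_qX' \oplus N_qX'$ with $\dim_{\mathbb{R}} N_qX' = 2$, which splits the unit sphere $X_q\simeq S^{n-1}$ as the spherical join $S(T_qX')\ast S(N_qX')= S^{n-3}\ast S^1$, where the first factor is a round sphere and the second is the unit circle acted on transitively by the rotation group. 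The exponential map identifies a small ball of radius $\varepsilon$ around $q$ in $X$ with the metric cone of radius $\varepsilon$ over this join, in such a way that the branch locus $X'$ corresponds to the cone over the $S^{n-3}$ factor.

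I would then use the fact that the branched covering $Y \to X$ and the subsequent quotient $Y \to X_\theta$ affect only the $N_qX'$ factor. Concretely, the cone construction replaces the unit circle $S^1$ of length $2\pi$ by a circle $S^1_\theta$ of length $\theta$, so the link of $q$ in $X_\theta$ is identified with the spherical join $S^{n-3}\ast S^1_\theta$, and the ball of radius $\varepsilon$ around $q$ in $X_\theta$ is isometric to the metric cone over this join. The remaining step is to verify that $S^{n-3}\ast S^1_\theta$ is an $(X_q,G_q)$-cone-manifold, to which end one invokes the inductive hypothesis applied in lower dimension: the $S^1_\theta$ factor is itself a $1$-dimensional $(X_q',G_q')$-cone-manifold (for the appropriate restriction $X_q'\subset X_q$), and its spherical join with the round sphere $S^{n-3}$ is locally a cone over a $(X_q,G_q)$-cone-manifold at every point, exactly by the same decomposition argument applied one dimension lower.

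The main obstacle, and the only delicate point, is to verify cleanly that this splitting of the link as a spherical join is compatible with the stabiliser data $(X_q,G_q)$, i.e.\ that the rotational $S^1$-action lifts correctly to $Y$, survives the quotient, and recurs as a codimension-$2$ totally geodesic sub-structure in $X_q$ of the type required by the inductive hypothesis. Once this compatibility is established, everything else reduces to bookkeeping: regular points are handled by the homogeneity of $X$, singular points by the inductive cone description above, and together they exhibit $X_\theta$ as an $(X,G)$-cone-manifold.
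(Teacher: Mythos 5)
Your proof takes essentially the same route as the paper's: induction on $\dim X$, regular points handled trivially because $X_\theta$ is a genuine $(X,G)$-manifold away from the singular locus, and for a singular point the key observation (in both) is that the link is itself a cone of angle $\theta$ over a codimension-$2$ totally geodesic subsphere of $X_q$, so the inductive hypothesis applies in dimension $n-1$. The paper phrases this via the geodesic sphere $S$ of radius $1$ at $q\in X'$ and $S' = X'\cap S$, while you phrase it via the splitting $T_qX = T_qX'\oplus N_qX'$ and the join $S^{n-3}\ast S^1_\theta$; these are the same object, and your final step would be cleaner if you said explicitly that $S^{n-3}\ast S^1_\theta$ \emph{is} the cone of angle $\theta$ over $S^{n-3}$ in $(X_q,G_q)$, so the inductive hypothesis applies directly rather than ``by the same decomposition argument applied one dimension lower.''
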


\begin{proof} The proof goes by induction on the dimension of $X$. Away from its  singular locus, $X_{\theta}$ is a $(X,G)$-manifold hence the proposition is clear here. \sk 

Let $p$ be a point of the singular locus. The set $W$  of points of $X_{\theta}$ that can be joined to $p$ by a geodesic path of length $1$ happens to be a cone of angle $\theta$ for a sphere $S$ of radius $1$ at a point $q \in X'$ with isometry group $\mathrm{Stab}_G(q)$.  A neighbourhood of $p$ in $X_{\theta}$ is then the cone over $W$.  We want to show that $W$ is actually a $(S,\mathrm{Stab}_G(q)$ cone-manifold. This will be done by showing that $W$ is actually a cone of angle $\theta$ and applying the induction hypothesis.
\sk 

The intersection $S' = X' \cap S$ is a totally geodesic submanifold of $S$ for the metric induced by $X$ and $S^1 \subset \mathrm{Stab}_G(X') \subset \mathrm{Stab}_G(q)$. The universal cover of $S \setminus S'$ embeds in the one of $X \setminus X'$ and therefore the metric completion of the universal cover of $S \setminus S'$ embeds in the metric completion $Y$ of the universal cover of $X \setminus X'$. $W$ is then the quotient of the metric completion of the universal cover of $S \setminus S'$ by the rotation of angle $\theta$. Hence $W$ is a $(S,\mathrm{Stab}_G(q))$-cone and since $\mathrm{dim}(S) = \mathrm{dim}(X) - 1$, is a 
$(S,\mathrm{Stab}_G(q))$-cone-manifold. \end{proof}

\subsection{Joints}
We now restricts to the case when $X = {\mathbb{CH}}^n$ and $G = \mathrm{PU}(1,n)$. The unit sphere at a point$p$ in $X$ is $S^{2n-1} = \partial(B^n)$ where $B^n$ is the unit ball at $p$ and its isometry group is $\mathrm{U}(n) \subset G$. For every $k$ in $\{1, \ldots, n \}$, we can carry on the construction detailed below. 

\mk

The joint $A * B$ of two topological spaces $A$ and $B$ is the space you get by adjoining to every pair of points $(a,b) \in A \times B$ a segment $[a,b]$. This operation can be made geometrical if $A$ and $B$ are spherical manifold. One remarks that $S^{2(n+k)-1}$ is the joint of $S^{2n-1} * S^{2k-1}$ where $S^{2n-1}$ and $S^{2k-1}$ are embedded in $S^{2(n+k)-1}$ in a essentially unique way such that each points $x \in S^{2n-1}$ and $y \in S^{2k-1}$ are joined by a unique geodesic path of length $\frac{\pi}{2}$. This makes it very clear how one can endow the joint of $X$ a $(S^{2n-1}, \mathrm{U}(n))$-manifold and $Y$ a  $(S^{2k-1}, \mathrm{U}(k))$-manifold the structure of a $(S^{2(n+k)-1}, \mathrm{U}(n+k))$-manifold.  A good reference that deals with this construction is \cite[ Chapter I.5, p.63]{BridsonHaefliger}. 

This property of naturality extends in some way to cone-manifolds.

\begin{lemma}
\label{joint}
Let $M$ be a $(S^{2k-1}, \mathrm{U}(k))$-cone-manifold. Then the joint $S^{2(n-k)-1}*M$ has a natural structure of $(S^{2n-1}, \mathrm{U}(n))$-cone-manifold.
\end{lemma}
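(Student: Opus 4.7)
The plan is to proceed by induction on the dimension of $M$ (equivalently on $k$), exploiting that the joint structure is compatible with the inductive definition of cone-manifolds.

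For the base case $k=1$, a $(S^1,\mathrm{U}(1))$-cone-manifold $M$ is (locally) a circle of length $\theta\in (0,+\infty)$. Identifying $S^{2n-3}$ with the totally geodesic subsphere of $S^{2n-1}$ corresponding to the orthogonal splitting $\mathbb C^n=\mathbb C^{n-1}\oplus \mathbb C$, the joint $S^{2n-3}*M$ is precisely the cone of angle $\theta$ over $S^{2n-3}$ inside $S^{2n-1}$ (with its $\mathrm{U}(n)$-invariant round metric, in which $\mathrm{Stab}_{\mathrm{U}(n)}(S^{2n-3})\cong S^1$). The preceding proposition then immediately endows it with the desired $(S^{2n-1},\mathrm{U}(n))$-cone-manifold structure.

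For the inductive step, assume the result holds for all $(S^{2k'-1},\mathrm{U}(k'))$-cone-manifolds with $k'<k$. The orthogonal decomposition $\mathbb C^n=\mathbb C^{n-k}\oplus\mathbb C^k$ gives the isometric identification $S^{2n-1}=S^{2(n-k)-1}*S^{2k-1}$ (endowed with the standard joint metric), compatible with the inclusion $\mathrm{U}(n-k)\times\mathrm{U}(k)\hookrightarrow\mathrm{U}(n)$. I would then exhibit for each $p\in S^{2(n-k)-1}*M$ a neighbourhood that is a cone over a $(S^{2n-3},\mathrm{U}(n-1))$-cone-manifold, distinguishing three cases.

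First, if $p$ lies in the smooth locus (interior of the joint over a smooth point of $M$, or a point of $S^{2(n-k)-1}$, or a point of $M$ with a neighbourhood that is actually a $(S^{2k-1},\mathrm{U}(k))$-manifold), then using local charts on $M$ and transporting via the above identification, a neighbourhood of $p$ is locally isomorphic to an open subset of $S^{2n-1}$, which is trivially a $(S^{2n-1},\mathrm{U}(n))$-cone-manifold. Second, if $p$ is a singular point of $M$, then by definition a neighbourhood of $p$ in $M$ is of the form $\mathrm{Cone}(M')$ for some $(S^{2k-3},\mathrm{U}(k-1))$-cone-manifold $M'$; I claim that a small neighbourhood of $p$ in the joint is isometric to $\mathrm{Cone}(S^{2(n-k)-1}*M')$, and by the inductive hypothesis $S^{2(n-k)-1}*M'$ is a $(S^{2n-3},\mathrm{U}(n-1))$-cone-manifold. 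Third, if $p$ lies in the interior of a segment $[x,m]$ with $m$ singular, a local product decomposition reduces the analysis to a combination of the first two cases.

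The main obstacle is case (ii): proving that the cone neighbourhood of $p$ inside $M$ upgrades to a cone neighbourhood inside the joint whose link is precisely $S^{2(n-k)-1}*M'$. Concretely, this requires checking that the metric sphere of small radius $\epsilon$ around $p$ in $S^{2(n-k)-1}*M$ decomposes as the spherical joint of $S^{2(n-k)-1}$ (the locus at distance $\pi/2$ from $p$) with the scaled sphere of radius $\epsilon$ around $p$ in $M$ (itself isometric to $M'$ by the cone structure), and that this decomposition is compatible with the natural $\mathrm{U}(n-1)$-action at the apex arising from $\mathrm{U}(n-k)\times\mathrm{Stab}_{\mathrm{U}(k)}(\text{apex of }\mathrm{Cone}(M'))\subset\mathrm{U}(n-1)$. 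Once this geometric/metric computation is carried out (it is essentially a direct consequence of the formula for the joint metric on $S^{2n-1}$ together with the elementary fact that $\mathrm{Cone}(A*B)\cong\mathrm{Cone}(A)*B$ near the apex), the inductive hypothesis applied to $M'$ concludes the proof.
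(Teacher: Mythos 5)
Your approach is close in spirit to the paper's (induction plus a case analysis on the position of $p$ in the joint), but there is a genuine gap in your case (i). You place every point $p\in S^{2(n-k)-1}$ in the smooth locus and claim that a neighbourhood of such a $p$ is locally isomorphic to an open subset of $S^{2n-1}$. This is false whenever $M$ has cone singularities. Indeed, the singular set of $S^{2(n-k)-1}*M$ is $S^{2(n-k)-1}*\mathrm{Sing}(M)$, which contains the whole sphere $S^{2(n-k)-1}$ as its $t=0$ slice; concretely, the link of $p\in S^{2(n-k)-1}$ in the joint is $S^{2(n-k)-2}*M$, and this is a round sphere only when $M$ itself is round. (To see this cleanly: near a point of the first sphere factor, the joint is metrically a product of a small ball in $S^{2(n-k)-1}$ with a truncated spherical cone over $M$; the cone point inherits all the singularities of $M$.) So $p\in S^{2(n-k)-1}$ must be treated as a separate singular case, not absorbed into the smooth locus.

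This is also where your choice of induction parameter becomes problematic. You induct on $k$ alone, and your cases (ii) and (iii) do reduce to $(n-1,k-1)$, so they are covered. But the missing case $p\in S^{2(n-k)-1}$ produces a link of the form $S^{2(n-k)-2}*M$, whose sphere factor is even-dimensional and whose $M$-factor has the \emph{same} $k$; to bring this back into the scope of the lemma one has to peel off a suspension and reduce the ambient $n$ rather than the internal $k$. This is exactly why the paper runs a double induction on $n$ and on $i=n-k$ (its case $(2)$ uses the decrease in $n-k$), and a single induction on $k$ does not by itself close the argument. Your cases (ii) and (iii) are essentially the paper's case $(1)$ (with its subcases on the codimension of the stratum containing $y$), and are fine modulo the vagueness of "local product decomposition" in (iii), which should really be stated, as you do later, as the identification of the link of $p$ with a spherical joint $S^{2(n-k)-1}*L_y$ of the sphere factor with the link $L_y$ of $y$ in $M$. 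The fix is therefore to add the paper's second parameter to your induction and handle $p\in S^{2(n-k)-1}$ explicitly by exhibiting its link as a joint with a smaller sphere.
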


\begin{proof}
The proof goes by double induction on $n$ and $i  = (n-k)$. To be more precise, we assume that the lemma is true for all $(n',k')$ such that either $n' < n$ or $n'=n$ and and $k<k'$. Take $p \in S^{2(n-k)-1}*M$. We distinguish two cases :\mk 

\begin{enumerate}
\item {\bf $\boldsymbol{p}$ does not belong to $\boldsymbol{S^{2(n-k)-1}}$}. In that case $p$ belongs to an arc $]x,y]$ with $x \in S^{2(n-k)-1}$ and $y \in M$. Denote by $M_i$ the union of the strata of codimension $i$ of $M$. If $i=0$, \textit{i.e.} if $y$ is a regular point in $M$, then $p$ is a regular point of $S^{2(n-k)-1}*M$. For $i \geq 1$,    $S^{2(n-k)-1}*M_i$ is a $(S^{2n-3},U(n-1))$-cone-manifold by the induction hypothesis. In that case $p$ has a neighbourhood which is a cone over the joint $S^{2(k+i)-1}*V(y)$ where $V(y)$ is a $(S^{2(n-k-i)-1}, U(n-k-i))$-cone-manifold over which a neighbourhood of $y$ in $M_i$ is a cone.
\mk 

\item {\bf $\boldsymbol{p}$ belongs to $\boldsymbol{S^{2(n-k)-1}}$}. In that case a neighbourhood of $p$ in $S^{2(n-k)-1}*M$ is a cone over the joint $S^{2(n-k-1)-1}*M$ and the induction hypothesis allows to conclude.
\end{enumerate}
\end{proof}

\subsection{Strata}

A ${\mathbb{CH}}^n$-cone-manifold $X$ has a stratified structure $X_0 \sqcup X_1 \sqcup \cdots \sqcup X_n$ where $X_k$ is a ${\mathbb{CH}}^{n-k}$-manifold whose metric completion is $X_k \sqcup \cdots \sqcup X_n$. Every point $p \in X$ has a neighbourhood which is the cone over the joint $S^{2(n-i)-1}*X(p)$ where $X_p$ is a $(S^{2i - 1}, \mathrm{U}(i))$-cone-manifold. $X_k$ is defined to be the set of points for which the biggest integer $i$ for which a neighbourhood of $p$ has the latter structure is equal to $k$.

\subsection{Totally geodesic subcone-manifolds.}
 We assume here that $X$ is a Riemannian manifold which is either ${\mathbb{CH}}^n$ or $S^{k}$ and $G$ is either $\mathrm{PU}(1,n)$ or a subgroup of $\mathrm{O}(k)$. $X_p$ is the unit sphere at a point $p \in X$ and $G_p = \mathrm{Stab}_G(\{p\}) $. If $X$ is a $(X,G)$-cone-manifold, a totally geodesic sub-cone-manifold $Y$ of $X$ is a subset of $X$ such that the intersection of $Y$ with each stratum of $X$ is a totally geodesic submanifold of the stratum.

\begin{lemma}
\label{stab}
Let $p$ be a point of $X$ and $Y$ be a totally geodesic submanifold of $X$ such that $p \in Y$. Then $X_p \cap Y$ is a totally geodesic submanifold of $X_p$.

\end{lemma}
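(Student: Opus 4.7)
The plan is to reduce the statement to the elementary fact that a linear subsphere of a round sphere is totally geodesic, by passing through the tangent space at $p$.

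First I would unpack the notation. In the setup of the appendix, $X_p$ is the unit tangent sphere at $p$, which by the exponential map is naturally identified with a small geodesic sphere centered at $p$ inside $X$; under this identification, $X_p\cap Y$ corresponds to the set of unit vectors in $T_pX$ tangent to $Y$, i.e.\ to the unit sphere of $T_pY \subset T_pX$. Since $Y$ is a totally geodesic submanifold of $X$ passing through $p$, the subset $T_pY$ is a genuine linear subspace of $T_pX$, and locally near $p$ one has $Y = \exp_p(T_pY)$.

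Next I would argue that in both of the relevant model spaces $X = \mathbb C\mathbb H^n$ and $X=S^k$, the unit tangent sphere $X_p$ inherits from the Riemannian metric on $X$ a round metric of constant curvature~$1$, and geodesics of $X_p$ correspond exactly to great circles (i.e.\ to intersections of $X_p$ with $2$-planes in $T_pX$ through the origin). Now $T_pY$ is a linear subspace of $T_pX$, so $X_p \cap T_pY$ is the unit sphere of $T_pY$; and any great circle of this subsphere is the intersection of $T_pY$ with a $2$-plane through the origin, hence remains a great circle of the ambient sphere $X_p$. Therefore $X_p \cap Y$ is totally geodesic in $X_p$, as desired.

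The only point that might look like an obstruction is that in the lemma's statement $X$ can be taken to be the sphere $S^k$, and $G$ just a subgroup of $\mathrm{O}(k)$; but the group $G$ does not actually enter the proof since total geodesicity is a purely metric notion. The argument above is therefore uniform in the two cases, and no further work is needed.
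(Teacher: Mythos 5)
Your proof is correct, but it takes a different route from the paper's. The paper's one-line proof invokes the existence of a subgroup $G'_p\subset G_p$ whose fixed-point set is precisely $Y$; since the fixed-point set of a group of isometries is totally geodesic, $X_p\cap Y$ — the fixed locus of $G'_p$ acting on $X_p$ — is totally geodesic in $X_p$. You bypass the group entirely: identifying $X_p$ with the round unit sphere in $T_pX$, you observe that $X_p\cap Y$ is the unit sphere of the linear subspace $T_pY$, and then invoke the elementary fact that a linear subsphere of a round sphere is totally geodesic (a geodesic of the subsphere lies in a $2$-plane through the origin contained in $T_pY$, which is also a $2$-plane through the origin in $T_pX$, hence a great circle of $X_p$). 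Your argument is more elementary and, as you correctly point out, does not depend on the group $G$ at all, whereas the paper's argument implicitly assumes that $G_p$ is large enough to contain such a subgroup $G'_p$ with $\mathrm{Fix}(G'_p)=Y$ — a hypothesis glossed over by the phrase ``in all the cases we are considering.'' Both approaches yield the lemma in the settings actually used in Appendix B, but your version is cleaner and more general.
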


\begin{proof} This is a consequence that in all the cases we are considering there exists a subgroup $G'_p$ of $G_p$ such that $\mathrm{Stab}(G'_p) = Y$.
\end{proof}

\begin{prop}
A totally geodesic subcone-manifold $Y$ of a Riemannian cone-manifold $M$ endowed with the natural metric structure coming from its embedding is also a cone-manifold.
\end{prop}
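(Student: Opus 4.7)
My plan is to argue by induction on the dimension of the ambient cone-manifold $M$, which is the natural parameter given the inductive definition of cone-manifolds stated at the beginning of Section B.1. The base case, when $\dim M = 1$, is trivial since a $(X,G)$-cone-manifold of dimension one is a $(X,G)$-manifold and a totally geodesic submanifold of a $(X,G)$-manifold is again a $(X,G)$-manifold (possibly empty or a union of points, which we consider a $0$-dimensional cone-manifold).

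For the inductive step, fix $p \in Y$. By definition of a $(X,G)$-cone-manifold structure on $M$, there exists a neighbourhood $U$ of $p$ in $M$ which is isometric to the cone of radius $\varepsilon > 0$ over a $(X_p, G_p)$-cone-manifold $V_p$, where $X_p$ is the unit sphere at $p$ in $X$ and $G_p = \mathrm{Stab}_G(p)$. The plan is to show that $Y \cap U$ is itself the cone (of the same radius $\varepsilon$) over $Y \cap V_p$, and that $Y \cap V_p$ is a totally geodesic sub-cone-manifold of the $(X_p, G_p)$-cone-manifold $V_p$. Granting both of these facts, the induction hypothesis applies to $V_p$ (which has strictly smaller dimension than $M$) and gives that $Y \cap V_p$ is a $(X_p, G_p)$-cone-manifold, whence $Y \cap U$ is a cone over such, so that $Y$ is a $(X,G)$-cone-manifold at $p$.

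The identification $Y \cap U \simeq C_\varepsilon(Y \cap V_p)$ follows from the fact that $Y$ is totally geodesic stratum-by-stratum in $M$: a point of $U$ lies on a unique geodesic ray from $p$, and such a ray is contained in $Y$ as soon as its initial tangent direction lies in $Y \cap V_p$, because geodesics that start tangent to $Y$ remain in $Y$ (this uses the total geodesicity inside each stratum together with the way strata fit together in a cone neighbourhood). To check that $Y \cap V_p$ is totally geodesic in $V_p$, one intersects with each stratum of $V_p$: strata of $V_p$ correspond bijectively to strata of $U\setminus\{p\}$ meeting $\partial U$, and under this correspondence $Y\cap V_p$ on each stratum of $V_p$ is precisely the trace of a totally geodesic submanifold of a stratum of $M$, which is totally geodesic in the sphere-stratum by Lemma~\ref{stab}.

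The main obstacle I anticipate is the careful verification that $Y \cap V_p$ really meets each stratum of $V_p$ in a totally geodesic submanifold, \emph{and} that these pieces fit together to form a totally geodesic sub-cone-manifold in the sense of the paper. The subtlety is that the notion of ``totally geodesic sub-cone-manifold'' has only been defined stratum by stratum, so one must check consistency between the ambient stratification of $M$ (inducing a stratification on $Y$) and the stratification of $V_p$ inherited as a link; here the argument relies on the fact, implicit in the construction of Section~B.4, that if $p$ lies in a codimension-$2k$ stratum $M_k$ of $M$, then a neighbourhood of $p$ in $M$ decomposes as a cone over a join $S^{2(n-k)-1}*V(p)$ (where $V(p)$ is the transverse $(S^{2k-1}, \mathrm{U}(k))$-cone-manifold), and Lemma~\ref{joint} together with Lemma~\ref{stab} allows one to propagate the total geodesicity along the join factors. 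Once this compatibility is secured, the induction closes immediately.
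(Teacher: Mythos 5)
Your proposal is correct and follows essentially the same route as the paper's proof: reduce to a cone-neighbourhood of $q \in Y$ in $M$, use Lemma~\ref{stab} to see that the trace of $Y$ on the link is a totally geodesic sub-cone-manifold of one lower dimension, and close by induction. The only cosmetic difference is the induction parameter ($\dim M$ for you, $\dim Y$ in the paper — both drop by one when passing to the link), and you spell out somewhat more explicitly the compatibility of the stratifications of $M$, of $Y$, and of the link, a point the paper treats tersely.
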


\begin{proof} The proof goes by induction on $\mathrm{dim}(Y)$. Take $q$ in $Y$. $q$ has neighbourhood in $M$ which is a cone over a $(X_p,G_p)$-manifold $X'$, where $X_p$ is the unit sphere at a point $p \in X$ and $G_p = \mathrm{Stab}_G(\{p\})$. According to Lemma \ref{stab} $X' \cap Y$ is also a totally geodesic cone manifold of dimension $\mathrm{dim}(Y) -1$. The induction hypothesis ensures that $X' \cap Y$ is also a cone-manifold and therefore $p$ has a neighbourhood which is a cone over a cone-manifold.
\end{proof}

\subsection{\bf  Higher dimensional complex hyperbolic cones.}

We now give local models for some specific complex hyperbolic cone manifolds. In particular we generalise the notion of cone previously defined in the particular case of complex hyperbolic geometry. Let $X$ be a complete complex hyperbolic cone-manifold of dimension $k$ and let $p$ a point being a stratum of codimension $k$. We denote by $X_0$ the set of regular points, which is open in $X$. Consider the trivial product ${\mathbb{CH}}^n \times X_0$. There is a unique complex hyperbolic structure on ${\mathbb{CH}}^n \times X_0$ such that
\begin{itemize}

\item  each fiber $ \{ *\} \times X_0$ is locally totally geodesic;\sk 

\item any fiber $ \{ *\} \times X_0$ intersects ${\mathbb{CH}}^n \times \{ p\}$ orthogonally.\mk 
\end{itemize}

\noindent The metric completion of ${\mathbb{CH}}^n \times X_0$ is then ${\mathbb{CH}}^n \times X$. Here is the good moment to explain the notion of orthogonality in a $(\mathbb{C}\mathbb H^n, \mathrm{PU}(1,n))$-cone-manifold. Let $Y$ and $Z$ be two totally geodesic sub-cone-manifold of $X$,  a $(\mathbb{C}\mathbb H^n, \mathrm{PU}(1,n))$-cone-manifold. Assume that $Y$ and $Z$ intersect only at a point $p$. We say that they intersect orthogonally if every pair of regular points $p \in Y$ and $q \in Z$ is contained in an open set $U$ of $X$ such that 
\begin{itemize}
\item $U$ is isometric to an open subset of $\mathbb{C}\mathbb H^n$; \sk
\item $Y\cap U$ and $Z\cap U$ are respectively identified with open subsets of copies of $\mathbb{CH}^i$ and $\mathbb{CH}^{j}$ in $\mathbb{C}\mathbb H^n$ which intersect orthogonally.
\end{itemize}

\begin{prop}
\label{highercones}
${\mathbb{CH}}^n \times X$ seen as the metric completion of  $ {\mathbb{CH}}^n \times X_0$ is a complex hyperbolic cone-manifold.
\end{prop}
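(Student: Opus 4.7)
The plan is to check the cone-manifold condition pointwise: for each $(q,x)\in{\mathbb{CH}}^n\times X$ I will exhibit a neighbourhood isometric to a cone over a $(S^{2(n+k)-1},\mathrm{U}(n+k))$-cone-manifold. Two cases are separated according to whether $x\in X_0$ or $x$ lies in a positive-codimension stratum of $X$. Before starting, observe that although the two defining conditions of the ${\mathbb{CH}}$-structure on ${\mathbb{CH}}^n\times X_0$ refer only to the base slice ${\mathbb{CH}}^n\times\{p\}$, they propagate to every slice ${\mathbb{CH}}^n\times\{x\}$ with $x\in X_0$ by parallel transport along the totally geodesic fibres. In particular, for $x\in X_0$, a neighbourhood of $(q,x)$ in ${\mathbb{CH}}^n\times X_0$ carries two orthogonally intersecting, locally totally geodesic leaves of respective dimensions $n$ and $k$, and hence is locally isometric to a neighbourhood of the corresponding transverse intersection point of two orthogonal totally geodesic submanifolds $\mathbb{CH}^n,\mathbb{CH}^k\subset\mathbb{CH}^{n+k}$. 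So such a $(q,x)$ is a regular point of the (putative) cone-manifold.

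Now fix $(q,x)$ with $x$ in a stratum of codimension $i\leq k$. The cone-manifold structure of $X$ furnishes a neighbourhood $W$ of $x$ in $X$ isometric to a cone of small radius $\epsilon>0$ with apex $x$ over a $(S^{2i-1},\mathrm{U}(i))$-cone-manifold $V_x$. The key claim is that a neighbourhood of $(q,x)$ in ${\mathbb{CH}}^n\times W$ is isometric to a metric cone with apex $(q,x)$ over the join $S^{2n-1}\ast V_x$. Granting this claim, Lemma~\ref{joint} upgrades $S^{2n-1}\ast V_x$ to a $(S^{2(n+i)-1},\mathrm{U}(n+i))$-cone-manifold. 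Since any such cone-manifold embeds canonically into a $(S^{2(n+k)-1},\mathrm{U}(n+k))$-cone-manifold when $i<k$ via further joining with the round $S^{2(k-i)-1}$ coming from the product ${\mathbb{CH}}^n\times(X\setminus W)$-directions that are tangent to codimension-$0$ strata at $x$, one concludes that $(q,x)$ has the required local model.

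To justify the key claim I would analyse the geometry of the totally geodesic fibration of ${\mathbb{CH}}^n\times(W\cap X_0)$ by the leaves $\{q'\}\times(W\cap X_0)$ and ${\mathbb{CH}}^n\times\{y\}$: the first family degenerates to ${\mathbb{CH}}^n\times\{x\}$ as $y\to x$, each leaf of the second family intersects the base orthogonally, and geodesics from $(q,x)$ into ${\mathbb{CH}}^n\times W$ decompose uniquely according to the joint construction, either as geodesics lying in the base ${\mathbb{CH}}^n\times\{x\}$, or along fibres $\{q\}\times\gamma$ for $\gamma$ a geodesic ray in $W$ starting at $x$, or as combinations in between. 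The exponential-type map assembling these rays then provides the isometry with the cone over $S^{2n-1}\ast V_x$. The main obstacle will be verifying this exponential-type map is actually an isometry across the singular locus: infinitesimally, the orthogonal product structure determines everything, but globally one needs to match metric completions. The cleanest route is to first treat the smooth fibre case by identifying a neighbourhood of $(q,y)$ for $y\in X_0$ close to $x$ with an open subset of $\mathbb{CH}^{n+k}$, then pass to the branched cover of $\mathbb{CH}^{n+i}$ used in the definition of the cone of angle $\theta$ over a totally geodesic $\mathbb{CH}^n$, read off the joint structure on the universal cover there, and descend. Induction on the codimension $i$ takes care of the remaining strata.
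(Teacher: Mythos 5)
Your overall strategy matches the paper's: describe the link of each point of ${\mathbb{CH}}^n\times X$ as a join and invoke Lemma~\ref{joint}. But there is a concrete error in how you set up the link at a singular point, and the attempted repair does not close the gap. At a point $x$ in a codimension-$i$ stratum of the ${\mathbb{CH}}^k$-cone-manifold $X$, a small neighbourhood $W$ of $x$ in $X$ is a cone over the \emph{full} link $L_x=S^{2(k-i)-1}\ast V_x$, which is a $(S^{2k-1},\mathrm{U}(k))$-cone-manifold; it is not a cone over the transverse factor $V_x$ alone, which is only $(2i-1)$-dimensional. Consequently your ``key claim'' is dimensionally inconsistent: the cone over $S^{2n-1}\ast V_x$ has real dimension $2(n+i)<2(n+k)$ whenever $i<k$, so it cannot be a neighbourhood of $(q,x)$ in ${\mathbb{CH}}^n\times X$. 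Saying afterwards that $S^{2n-1}\ast V_x$ ``embeds canonically into'' a $(S^{2(n+k)-1},\mathrm{U}(n+k))$-cone-manifold does not fix this: what the definition of cone-manifold requires is that the link of $(q,x)$ \emph{is} such a cone-manifold, not that some lower-dimensional space sits inside one; and the reference to ``${\mathbb{CH}}^n\times(X\setminus W)$-directions tangent to codimension-$0$ strata at $x$'' does not identify the missing $S^{2(k-i)-1}$, which should come from the tangent directions along the stratum $X_i$ through $x$, i.e.\ from inside $W$, not from its complement.

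The repair is simple and then coincides exactly with the paper's argument. Take $L_x$ to be the full $(S^{2k-1},\mathrm{U}(k))$-cone-manifold link of $x$ in $X$; the correct key claim is that a neighbourhood of $(q,x)$ in ${\mathbb{CH}}^n\times X$ is a cone over $S^{2n-1}\ast L_x$, which Lemma~\ref{joint} identifies directly as a $(S^{2(n+k)-1},\mathrm{U}(n+k))$-cone-manifold. (The paper states the argument for a point $q\in X$ of maximal codimension $i=k$, where $L_q=V_q$ and the round-sphere factor you mislaid is empty, so the issue does not arise there.) Your sketch in the final paragraph --- decompose geodesics emanating from $(q,x)$ according to the orthogonal product structure, assemble an exponential-type map, and pass through the branched covers used to define cones of angle $\theta$ --- is the right way to justify the corrected claim.
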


\begin{proof} Let $q$ be a point in $X$ which has maximal codimension. There is a neighbourhood of $q$ in $X$ which is a cone over a $(S^{2k-1}, U(k))$-cone-manifold $X'$. According to Lemma \ref{joint},  the spherical joint $ X' * S^{2n-1}$ has a natural structure of $(S^{2(n+k) - 1}, U(n+k))$-cone-manifolds, of which a neighbourhood of $q$ in ${\mathbb{CH}}^n \times X$ is a cone over.
\end{proof}

\bibliographystyle{alpha}
\bibliography{biblio}

\end{document}